\newlist{steps}{enumerate}{1}
\setlist[steps, 1]{label = Step \arabic*:}
\theoremstyle{plain}
\newtheorem{theorem}{Theorem}[subsection]
\newtheorem{definition}[theorem]{Definition}
\newtheorem{proposition}[theorem]{Proposition}
\newtheorem{lemma}[theorem]{Lemma}
\newtheorem{remark}[theorem]{Remark}
\newtheorem{corollary}[theorem]{Corollary}
\theoremstyle{definition}
\newtheorem{exmp}[theorem]{Example}
\newtheorem{rmk}[theorem]{Remark}
\numberwithin{equation}{subsection}
\title{Cohomology of Lie Groupoid Modules and the Generalized van Est Map}
\author{Joshua Lackman\thanks{University of Toronto; \href{mailto:jlackman@math.toronto.edu}{jlackman@math.toronto.edu}}}
\date{}
\begin{document}
\maketitle
\begin{abstract}
The van Est map is a map from Lie groupoid cohomology (with respect to a sheaf taking values in a representation) to Lie algebroid cohomology. We generalize the van Est map to allow for more general sheaves, namely to sheaves of sections taking values in a (smooth or holomorphic) $G$-module, where $G$-modules are structures which differentiate to representations. Many geometric structures involving Lie groupoids and stacks are classified by the cohomology of sheaves taking values in $G$-modules and not in representations, including $S^1$-groupoid extensions and equivariant gerbes. Examples of such sheaves are $\mathcal{O}^*$ and $\mathcal{O}^*(*D)\,,$ where the latter is the sheaf of invertible meromorphic functions with poles along a divisor $D\,.$ We show that there is an infinitesimal description of $G$-modules and a corresponding Lie algebroid cohomology. We then define a generalized van Est map relating these Lie groupoid and Lie algebroid cohomologies, and study its kernel and image. Applications include the integration of several infinitesimal geometric structures, including Lie algebroid extensions, Lie algebroid actions on gerbes, and certain Lie $\infty$-algebroids. 
\end{abstract}
\tableofcontents
\section{Introduction}
Many geometric structures involving Lie groupoids and stacks are classified by sheaf cohomology with respect to sheaves more general than sheaves of sections taking values in a (smooth) representation; often what is needed are sheaves of sections taking values in (smooth or holomorphic) $G$-modules. These include rank one representations, $S^1$-groupoid extensions and equivariant gerbes. In particular, these structures are related to flat connections on line bundles and on gerbes, to group actions on principal bundles, to the prequantization of symplectic groupoids and higher representations of Lie groups, and to the basic gerbe on a compact simple Lie group. All of these examples have known infinitesimal descriptions, and the generalized van Est map will show how to obtain these infinitesimal descriptions from the global geometric structure. This paper generalizes work done by Brylinski, Crainic, Weinstein and Xu in~\cite{brylinski},~\cite{Crainic} and~\cite{weinstein}, respectively.
\\\\The van Est map, as defined in~\cite{Crainic}, is a ``differentiation'' map; its domain is the cohomology of a Lie groupoid $G$ with coefficients taking values in a (smooth) representation, and its codomain is the cohomology of a Lie algebroid $\mathfrak{g}$ with coefficients taking values in a (smooth) $\mathfrak{g}$-representation. The generalized van Est map will have as its domain the cohomology of a Lie groupoid with coefficients taking values in a (smooth or holomorphic) $G$-module, which are generalizations of representations, and were defined by Tu in~\cite{Tu}; the codomain of this generalized van Est map will be 
the cohomology of a generalized Deligne complex, which is a generalization of the Chevalley-Eilenberg complex. \\\\The main theorem we prove determines the kernel and image of the van Est map up to a certain degree, depending on the connectivity of the source fibers of the groupoid. Applications include effective criteria for the integration of several infinitesimal geometric structures, including rank one representations, Lie algebroid extensions, Lie algebroid actions on gerbes, and certain Lie $\infty$-algebroids.
\\\\The results we prove help to compute not only the cohomology of sheaves on Lie groupoids, but also sheaves on stacks, as defined by Behrend and Xu in~\cite{Kai}. This is because a Lie groupoid defines a stack and a $G$-module defines a sheaf on a stack, and the cohomology of these two sheaves are isomorphic.
\subsection*{Outline of Paper}
The paper is organized as follows: section $0$ is a brief review of simplicial manifolds, Lie groupoids and stacks, but it is important for setting up notation, results and constructions which will be used in the next sections. This section contains all of the results about stacks that are needed for this paper. Section $1$ contains a review and a generalization of the Chevalley-Eilenberg complex. Section $2$ is where we define the van Est map and prove the main theorem of the paper. The next sections of the paper concern applications of the main theorem, various new constructions of geometric structures involving Lie groupoids, and examples.
\\\\$\mathbf{Notation:}$ For the rest of the paper, we use the following notation: given a smooth (or holomorphic) surjective submersion $\pi:M\to X\,,$ we let $\mathcal{O}(M)$ denote the sheaf of smooth (or holomorphic) sections of $\pi\,.$
\subsection*{Acknowledgements}
The author would like to thank his advisor Marco Gualtieri for introducing him to the problem and for many helpful discussions about the topic, as well as for helping him edit a preliminary draft of this paper. In addition, the author would also like to thank Francis Bischoff for many fruitful discussions and for suggesting edits to the preliminary draft. The author was supported by the University of Toronto's Faculty of Arts and Science Top Doctoral Award.

\subsection{Simplicial Manifolds}
In this section we briefly review simplicial manifolds, sheaves on simplicial manifolds and their cohomology. 
\begin{definition}Let $Z^\bullet$ be a (semi) simplicial manifold, ie. a contravariant functor from the (semi) simplex category to the category of manifolds. A sheaf $\mathcal{S}_\bullet$ on $Z^\bullet$ is a sheaf $\mathcal{S}_n$ on $Z^n\,,$ for all $n\ge 0\,,$ such that for each morphism $f:[n]\to[m]$ we have a morphism $\mathcal{S}(f):Z(f)^{-1}\mathcal{S}_n\to  \mathcal{S}_m\,,$ and such that $\mathcal{S}(f\circ g)=\mathcal{S}(f)\circ \mathcal{S}(g)\,.$ A morphism between sheaves $\mathcal{S}_\bullet$ and $\mathcal{G}_\bullet$ on $Z^\bullet$ is a morphism of sheaves $u^n:\mathcal{S}_n\to \mathcal{G}_n$ for each $n\ge 0$ such that for $f:[n]\to [m]$ we have that
$u^m\circ \mathcal{S}(f)=\mathcal{G}(f)\circ u^n\,.$  We let $\textrm{Sh}(Z^\bullet)$ denote the category of sheaves on $Z^\bullet\,.$ 
$\blacksquare$\end{definition}
\begin{definition}
Given a sheaf $\mathcal{S}_\bullet$ on a (semi) simplicial manifold $Z^\bullet\,,$ we define $Z^n:=\text{Ker}\big[\Gamma(\mathcal{S}_n)\xrightarrow{\delta^*} \Gamma(\mathcal{S}_{n+1})\big]\,,\,B^n:=\text{Im}\big[\Gamma(\mathcal{S}_{n-1})\xrightarrow{\delta^*}\Gamma(\mathcal{S}_n)\big]\,,$
where $\delta^*$ is the alternating sum of the face maps, ie.
\begin{align*}
    \delta^*=\sum_{i=0}^n(-1)^id_{n,i}^{-1}\,,
    \end{align*}
 where $d_{n,i}:Z^n\to Z^{n-1}$ is the $i^{\textrm{th}}$ face map. We then define the naive cohomology (see \cite{nlab}) \begin{equation*}H^n_{\text{naive}}(Z^\bullet\,,\mathcal{S}_\bullet):=Z^n/B^n\,.
\end{equation*}
$\blacksquare$\end{definition}
\begin{definition}[see~\cite{Deligne}]\label{cohomology simplicial} Given a (semi) simplicial manifold $Z^\bullet\,,$ $\textrm{Sh}(Z^\bullet)$ has enough injectives, and we define \begin{equation*}
H^n({Z^\bullet\,,\mathcal{S}_\bullet}):=R^n\Gamma_\text{inv}(\mathcal{S}_\bullet)\,,
\end{equation*}
where $\Gamma_\text{inv}:\textrm{Sh}(Z^\bullet)\to \mathbf{Ab}$ is given by $\mathcal{S}_\bullet \mapsto\text{Ker}\big[\Gamma(\mathcal{S}_0)\xrightarrow{\delta^*} \Gamma(\mathcal{S}_1)\big]\,.$
$\blacksquare$\end{definition}
\theoremstyle{definition}\begin{rmk}\label{acyclic resolution}
As usual, in addition to injective resolutions one can use acylic resolutions to compute cohomology.
\end{rmk}
\theoremstyle{definition}\begin{rmk}[see \cite{Deligne}]\label{double complex}
A convenient way to compute $H^*({Z^\bullet\,,\mathcal{S}_\bullet})$ is to choose a resolution 
\begin{align*}
    0\xrightarrow{}\mathcal{S}_\bullet\xrightarrow{}\mathcal{A}^0_\bullet\xrightarrow{\partial_\bullet^0}\mathcal{A}^1_\bullet\xrightarrow{\partial_\bullet^1}\cdots
\end{align*}
such that 
\begin{align*}
    0\xrightarrow{}\mathcal{S}_n\xrightarrow{}\mathcal{A}^0_n\xrightarrow{\partial_n^0}\mathcal{A}^1_n\xrightarrow{\partial_n^1}\cdots
\end{align*}
is an acyclic resolution of $\mathcal{S}_n\,,$ for all $n\ge 0\,,$ and then take the cohomology of the total complex of the double complex $C^p_q=\Gamma(A^p_q)\,,$  with differentials $\delta^*$ and $\partial^p_q\,.$
\end{rmk}
$\mathbf{}$
\\The following theorem is a well-known consequence of the Grothendieck spectral sequence: 
\begin{theorem}[Leray Spectral Sequence]\label{spectral}
Let $f:X^\bullet\to Y^\bullet$ be a morphism of simplicial topological spaces, and let $\mathcal{S}_\bullet$ be a sheaf on $X^\bullet\,.$ Then there is a spectral sequence $E^{pq}_*\,,$ called the Leray spectral sequence, such that $E^{pq}_2=H^p(Y^\bullet,R^qf_*(\mathcal{S}_\bullet))$ and such that 
\begin{align*}
 E^{pq}_2\Rightarrow H^{p+q}(X^\bullet,\mathcal{S}_\bullet)\,.
\end{align*}
\end{theorem}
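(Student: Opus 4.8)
The plan is to realize the functor computing $H^*(X^\bullet,-)$ as a composite and invoke the Grothendieck spectral sequence. Write $\Gamma^X_{\mathrm{inv}}:\mathrm{Sh}(X^\bullet)\to\mathbf{Ab}$ and $\Gamma^Y_{\mathrm{inv}}:\mathrm{Sh}(Y^\bullet)\to\mathbf{Ab}$ for the invariant-sections functors of \Cref{cohomology simplicial}, so that by definition $H^n(X^\bullet,\mathcal{S}_\bullet)=R^n\Gamma^X_{\mathrm{inv}}(\mathcal{S}_\bullet)$, and likewise over $Y^\bullet$. The morphism $f$ has components $f_n:X^n\to Y^n$ commuting with all face and degeneracy maps, and I would define the simplicial pushforward $f_*:\mathrm{Sh}(X^\bullet)\to\mathrm{Sh}(Y^\bullet)$ level-wise by $(f_*\mathcal{S})_n:=(f_n)_*\mathcal{S}_n$, with structure maps induced from those of $\mathcal{S}_\bullet$ using that $f$ is simplicial. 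The whole proof then amounts to checking the three hypotheses of the Grothendieck spectral sequence for the pair $(f_*,\Gamma^Y_{\mathrm{inv}})$.

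First I would verify the factorization $\Gamma^X_{\mathrm{inv}}=\Gamma^Y_{\mathrm{inv}}\circ f_*$. Since global sections are unchanged under pushforward, $\Gamma\big((f_*\mathcal{S})_n\big)=\Gamma\big((f_n)_*\mathcal{S}_n\big)=\Gamma(\mathcal{S}_n)$ for $n=0,1$, and one checks that the alternating face map $\delta^*$ on $f_*\mathcal{S}_\bullet$ corresponds to $\delta^*$ on $\mathcal{S}_\bullet$ under this identification, again because $f$ is simplicial. Taking kernels gives
\[
\Gamma^Y_{\mathrm{inv}}(f_*\mathcal{S}_\bullet)=\mathrm{Ker}\big[\Gamma(\mathcal{S}_0)\xrightarrow{\delta^*}\Gamma(\mathcal{S}_1)\big]=\Gamma^X_{\mathrm{inv}}(\mathcal{S}_\bullet)\,.
\]
Left-exactness of $f_*$ is immediate, as each $(f_n)_*$ is left-exact and exactness in $\mathrm{Sh}(Y^\bullet)$ is tested level-wise.

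The main point — and the step I expect to require the most care — is showing that $f_*$ sends injective objects of $\mathrm{Sh}(X^\bullet)$ to $\Gamma^Y_{\mathrm{inv}}$-acyclic objects of $\mathrm{Sh}(Y^\bullet)$. The clean way to get this is by adjunction: I would construct the inverse-image functor $f^{-1}:\mathrm{Sh}(Y^\bullet)\to\mathrm{Sh}(X^\bullet)$, defined level-wise by $(f^{-1}\mathcal{G})_n=f_n^{-1}\mathcal{G}_n$, and check that $(f^{-1},f_*)$ is an adjoint pair on simplicial sheaves (the level-wise adjunctions $(f_n^{-1},(f_n)_*)$ are standard, and one verifies compatibility with the simplicial structure maps). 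Because $f^{-1}$ is exact — exactness is checked on stalks, where it reduces to the exactness of the ordinary inverse image — its right adjoint $f_*$ preserves injectives. In particular $f_*$ carries injectives to injectives, which are $\Gamma^Y_{\mathrm{inv}}$-acyclic, so the acyclicity hypothesis holds.

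With the factorization, left-exactness, and preservation of injectives established, the Grothendieck spectral sequence for the composite $\Gamma^X_{\mathrm{inv}}=\Gamma^Y_{\mathrm{inv}}\circ f_*$ yields a spectral sequence with
\[
E^{pq}_2=R^p\Gamma^Y_{\mathrm{inv}}\big(R^qf_*\,\mathcal{S}_\bullet\big)=H^p\big(Y^\bullet,R^qf_*(\mathcal{S}_\bullet)\big)
\]
converging to $R^{p+q}\Gamma^X_{\mathrm{inv}}(\mathcal{S}_\bullet)=H^{p+q}(X^\bullet,\mathcal{S}_\bullet)$, which is exactly the asserted statement.
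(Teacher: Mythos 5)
Your proposal is correct and is precisely the argument the paper has in mind: the paper states this result without proof as ``a well-known consequence of the Grothendieck spectral sequence,'' and your write-up supplies exactly that consequence, via the factorization $\Gamma^X_{\mathrm{inv}}=\Gamma^Y_{\mathrm{inv}}\circ f_*$, preservation of injectives by $f_*$ through the exact left adjoint $f^{-1}$, and the Grothendieck spectral sequence for the composite. The routine verifications you flag (level-wise structure maps via base change, compatibility of $\delta^*$ with the identification $\Gamma((f_n)_*\mathcal{S}_n)=\Gamma(\mathcal{S}_n)$) are indeed the only details left, and they go through as you indicate.
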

\subsection{Stacks}
Here we briefly review the theory of differentiable stacks. A differentiable stack is in particular a category, and first we will define the objects of the category, and then the morphisms. All manifolds and maps can be taken to be in the smooth or holomorphic categories. The following definitions can be found in~\cite{Kai}.
\begin{definition}
Let $G\rightrightarrows G^0$ be a Lie groupoid. A $G$-principal bundle (or principal $G$-bundle) is a manifold $P$ together with a surjective submersion $P\overset{\pi}{\to}M$ and a map $P\overset{\rho}{\to} G^0\,,$ called the moment map, such that there is a right $G$-action on $P\,,$ ie.
a map $P\sideset{_{\rho}}{_{t}}{\mathop{\times}} G\to P\,,$ denoted $(p,g)\mapsto p\cdot g\,,$ such that
\begin{itemize}
    \item $\pi(p\cdot g)=\pi(p)$
    \item $\rho(p\cdot g)=s(g)$
    \item $(p\cdot g_1)\cdot g_2=p\cdot(g_1g_2)$
\end{itemize}
and such that 
\begin{align*}
    P\sideset{_{\rho}}{_{t}}{\mathop{\times}} G\to P\sideset{_{\pi}}{_{\pi}}{\mathop{\times}} P\,,\;(p,g)\mapsto (p,p\cdot g)
\end{align*}
is a diffeomorphism.
$\blacksquare$\end{definition}
\begin{definition}
A morphism between $G$-principal bundles $P\to M$ and $Q\to N$ is given by a commutative diagram of smooth maps
\[
\begin{tikzcd}
P\arrow{r}{\phi}\arrow{d} & Q\arrow{d}
\\M\arrow{r} & N
\end{tikzcd}
\]
such that $\phi(p\cdot g)=\phi(p)\cdot g\,.$ In particular this implies that $\rho\circ\phi(p)=\rho(p)\,.$
$\blacksquare$\end{definition}
\begin{definition}
Let $G\rightrightarrows G^0$ be a Lie groupoid. Then we define $[G^0/G]$ to be the category of $G$-principal bundles, together with its natural functor to the category of manifolds (which takes a $G$-principal bundle to its base manifold). We call $[G^0/G]$
a (differentiable or holomorphic) stack.
$\blacksquare$\end{definition}
\theoremstyle{definition}\begin{rmk}
Given a Lie groupoid $G\rightrightarrows G^0\,,$ there is a canonical Grothendieck topology on $[G^0/G]\,,$ hence we can talk about sheaves on stacks and their cohomology. What is most important to know for the next sections is that a sheaf on $[G^0/G]$ is in particular a contravariant functor 
\[F:[G^0/G]\to \mathbf{Ab}\,.
\]
See Section~\ref{appendix:Cohomology of Sheaves on Stacks} for details.
\end{rmk}
\subsection{Groupoid Modules}
We now define Lie groupoid modules. Their importance is due to the fact that these are the structures which differentiate to representations; they will be one of the main objects we study in this paper.
\begin{definition}
Let $X$ be a manifold. A family of groups over $X$ is a Lie groupoid $M\rightrightarrows X$ such that the source and target maps are equal. A family of groups will be called a family of abelian groups if the multiplication on $M$ induces the structure of an abelian group on its source fibers, ie. if
$s(a)=s(b)$ then $m(a,b)=m(b,a)\,.$
$\blacksquare$\end{definition}
\theoremstyle{definition}\begin{exmp}\label{trivial}
Let $A$ be an abelian Lie group and let $X$ be a manifold. Then $X{\mathop{\times}} A$ is naturally a family of abelian groups, with the source
and target maps being the projection onto the first factor $p_1:X{\mathop{\times}} A\to X\,.$ This will be called a trivial family of abelian groups, and will be denoted $A_X\,.$
\end{exmp}
\theoremstyle{definition}\begin{exmp}\label{foag}
One way of constructing families of abelian groups is as follows: Let $A$ be an abelian group, and let $\text{Aut}(A)$ be its 
automorphism group. Then to any principal $\text{Aut}(A)$-bundle $P$ we have a canonical family of abelian groups - it is given by the fiber bundle
whose fibers are $A$ and which is associated to $P\,.$ Families of abelian groups constructed in this way are locally trivial in the sense
that locally they are isomorphic to the trivial family of abelian groups given by $A_{\mathbb{R}^n}\,,$ for some $n$ (compare this with vector bundles).
\end{exmp}
\begin{definition}(see \cite{Tu}$\,$): Let $G\rightrightarrows G^0$ be a Lie groupoid. A $G$-module $M$ is a family of abelian groups together with an action of $G$ on $M$ such that for $a\,,b\in G^0\,,$ $G(a,b):M_a\to M_b$ acts by homomorphisms $($here $G(a,b)$ is the set
of morphisms with source $a$ and target $b)\,.$ If $M$ is a vector bundle\footnote{Here we are implicitly using the fact that the forgeful functor from the category of finite dimensional vector spaces to the category of simply connected abelian Lie groups
is an equivalence of categories.}, $M$ will be called a representation of $G\,.$
$\blacksquare$\end{definition}
\theoremstyle{definition}\begin{exmp}
Let $G\rightrightarrows G^0$ be a groupoid and let $A$ be an abelian group. Then $A_{G^0}$ is a family of abelian groups (see Example~\ref{trivial}), and it is a $G$-module with the trivial action, that is $g\in G(x,y)$ acts by $g\cdot (x,a)=(y,a)\,.$ We will call this a trivial
$G$-module.
\end{exmp}
\theoremstyle{definition}\begin{exmp}\label{SL2}
Let $G=SL(2,\mathbb{Z})\,,$ which is the mapping class group of the torus. Every $T^2$-bundle over $S^1$ is isomorphic to one with transition functions in $SL(2,\mathbb{Z})\,,$ with the standard open cover of $S^1$ using two open sets. All of these are naturally $\Pi_1(S^1)$-modules since $SL(2,\mathbb{Z})$ is discrete. In particular, the Heisenberg manifold is a $\Pi_1(S^1)$-module. Explicitly,
consider the matrix 
\begin{align*}
    \begin{pmatrix}
    1 & 1 \\
    0 & 1 
    \end{pmatrix}\in SL(2,\mathbb{Z})\,.
\end{align*}
This matrix defines a map from $T^2\to T^2\,,$ and it corresponds to a Dehn twist. The total space of the corresponding $T^2$-bundle is diffeomorphic to the Heisenberg manifold $H_M$, which is the quotient of the Heisenberg group by the right action of the integral Heisenberg subgroup on itself, ie. we make the identification
\begin{align*}
    \begin{pmatrix}
    1 & a & c\\
    0 & 1 & b \\
    0 & 0 & 1
    \end{pmatrix}\sim \begin{pmatrix}
    1 & a+n & c+k+am\\
    0 & 1 & b+m \\
    0 & 0 & 1
    \end{pmatrix}\,,
\end{align*}
where $a\,,b\,,c\in\mathbb{R}$ and $n\,,m\,,k\in\mathbb{Z}\,.$ The projection onto $S^1$ is given by mapping to $b\,.$
\\\\The fiberwise product associated to the bundle $H_M\to S^1$ is given by
\begin{align*}
      \begin{pmatrix}
    1 & a & c\\
    0 & 1 & b \\
    0 & 0 & 1
    \end{pmatrix}\cdot  \begin{pmatrix}
    1 & a' & c'\\
    0 & 1 & b \\
    0 & 0 & 1
    \end{pmatrix}=  \begin{pmatrix}
    1 & a+a' & c+c'\\
    0 & 1 & b \\
    0 & 0 & 1
    \end{pmatrix}\,.
\end{align*}
See Example~\ref{module example} for more.
\end{exmp}
\begin{definition}
Let $M\,,N$ be $G$-modules. A morphism $f:M\to N$ is a morphism of the underlying groupoids such that if $s(g)=s(m)\,,$ then $f(g\cdot m)=g\cdot f(m)\,.$
$\blacksquare$\end{definition}
\begin{proposition}
Let $M\to X$ be a family of abelian groups. Then $H^1(X,\mathcal{O}(M))$ classifies principal $M$-bundles over $X$ for which $\rho=\pi\,.$
\end{proposition}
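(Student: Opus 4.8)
The plan is to recognize a principal $M$-bundle $P\to X$ with $\rho=\pi$ as a smoothly varying family of torsors for the sheaf of abelian groups $\mathcal{O}(M)$, and then to match isomorphism classes of such objects with $\check{H}^1(X,\mathcal{O}(M))\cong H^1(X,\mathcal{O}(M))$ through transition cocycles. The crucial first observation is that the defining diffeomorphism $P\sideset{_{\rho}}{_{t}}{\mathop{\times}} M\xrightarrow{\sim} P\sideset{_{\pi}}{_{\pi}}{\mathop{\times}} P$ forces each fibre $P_x:=\pi^{-1}(x)$ to be a free and transitive space for the group acting over $x$; since $s=t$ on a family of groups and $\rho=\pi$, this acting group is the abelian group $M_x=s^{-1}(x)$, so $P_x$ is precisely an $M_x$-torsor. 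The hypothesis that the family is \emph{abelian} is exactly what will make the target the genuine group $H^1$ (the derived functor of \Cref{cohomology simplicial} for $X$ viewed as a constant simplicial manifold, i.e.\ ordinary sheaf cohomology), rather than only a pointed set.

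First I would establish local triviality. Because $\pi$ is a surjective submersion, over a suitable open cover $\{U_i\}$ of $X$ there are smooth local sections $\sigma_i\colon U_i\to P$. For each $i$ the map $M|_{U_i}\to P|_{U_i}$, $g\mapsto \sigma_i(s(g))\cdot g$, is a diffeomorphism: it is a fibrewise bijection by freeness and transitivity, and its smooth inverse is produced by composing $p\mapsto(\sigma_i(\pi(p)),p)$ with the inverse of the defining diffeomorphism above. Thus $P$ is locally isomorphic to $M$ equipped with its right-translation action, and in particular is locally trivial as a principal $M$-bundle.

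Second, I would extract the transition cocycle. On each $U_{ij}:=U_i\cap U_j$, freeness and transitivity yield a unique $g_{ij}\in\mathcal{O}(M)(U_{ij})$ with $\sigma_j=\sigma_i\cdot g_{ij}$, smoothness following once more from the defining diffeomorphism. Associativity of the action gives $g_{ij}+g_{jk}=g_{ik}$ on triple overlaps (written additively, using that the fibres are abelian), so $\{g_{ij}\}$ is a \v{C}ech $1$-cocycle for $\mathcal{O}(M)$. Changing $\sigma_i$ to $\sigma_i\cdot h_i$ with $h_i\in\mathcal{O}(M)(U_i)$ alters $g_{ij}$ by the coboundary $h_j-h_i$, and any isomorphism $\phi\colon P\to P'$ of principal $M$-bundles over $X$ carries $\sigma_i$ to sections $\phi\circ\sigma_i$ inducing the same cocycle; hence $P\mapsto[\{g_{ij}\}]$ is a well-defined map from isomorphism classes to $\check{H}^1(X,\mathcal{O}(M))$, sending the trivial bundle $M$ to the zero class. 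The inverse construction glues the pieces $M|_{U_i}$ along right multiplication by $g_{ij}$, and verifying that the two constructions are mutually inverse up to isomorphism and coboundary is routine.

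Finally I would invoke the classical identification $\check{H}^1(X,\mathcal{O}(M))\cong H^1(X,\mathcal{O}(M))$ (valid for any abelian sheaf, and unconditionally in degree one), completing the proof. The main obstacle is not the cohomological bookkeeping but the analytic input in the previous two steps: establishing local triviality and the smoothness of the transition sections $g_{ij}$. Both rest entirely on the defining diffeomorphism in the notion of principal $M$-bundle, which is what upgrades the merely fibrewise free and transitive action into a smoothly varying $\mathcal{O}(M)$-torsor; everything downstream is then the standard torsor-versus-$H^1$ dictionary specialized to the abelian sheaf $\mathcal{O}(M)$.
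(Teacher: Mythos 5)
Your proposal is correct: the defining diffeomorphism $P\sideset{_{\rho}}{_{t}}{\mathop{\times}} M\xrightarrow{\sim}P\sideset{_{\pi}}{_{\pi}}{\mathop{\times}} P$ does make each fibre $P_x$ a torsor for $M_x$ with a smooth division map, local sections of the surjective submersion $\pi$ give the local trivializations, and the \v{C}ech cocycle dictionary together with $\check{H}^1(X,\mathcal{O}(M))\cong H^1(X,\mathcal{O}(M))$ yields the classification. The paper states this proposition without any proof, treating it as the standard torsor-versus-$H^1$ dictionary, and your argument is precisely that standard argument, so it supplies the omitted details rather than diverging from the paper's approach.
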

Before concluding this section we will make a remark on notation:
\theoremstyle{definition}\begin{rmk}Given a family of abelian groups $E\overset{\pi}{\to} Y\,,$ we can form its sheaf of sections, which as previously stated we denote by $\mathcal{O}(E)\,.$ In addition, given a map $f:X\to Y$ we get a family of abelian groups on $X\,,$ given by $f^*E=X{\mathop{\times}}_Y E\,.$
\end{rmk}
\subsection{Sheaves on Lie Groupoids and Stacks}
In this section we discuss the relationship between sheaves on $[G^0/G]\,,$ sheaves on $\mathbf{B}^\bullet G$ and $G$-modules ($\mathbf{B}^\bullet G$ is the nerve of $G\,,$ see appendix~\ref{appendix:Cohomology of Sheaves on Stacks} for more).
\subsubsection{Sheaves: Lie Groupoids to Stacks}\label{sheaves on stacks}
Here we discuss how to obtain a sheaf on the stack $[G^0/G]$ from a $G$-module.
\\\\Let $M$ be a $G$-module for $G\rightrightarrows G^0\,.$ We obtain a sheaf on $[G^0/G]$ as follows: consider the object of $[G^0/G]$ given by 
\[
\begin{tikzcd}
P \arrow{r}{\rho}\arrow{d}{\pi} & G^0 \\
X
\end{tikzcd}
\]
We can form the action groupoid $G\ltimes P$ and consider the $(G\ltimes P)$-module given by $\rho^*M\,.$ To $P$ we assign the abelian group $\Gamma_\text{inv}(\rho^*\mathcal{O}(M))$ (ie. the sections invariant under the $G\ltimes P$ action). To a morphism between objects of $[G^0/G]$ the functor just assigns the set-theoretic pullback. This defines a sheaf on $[G^0/G]\,,$ denoted $\mathcal{O}(M)_{[G^0/G]}\,.$

\subsubsection{Sheaves: Stacks to Lie Groupoids}\label{stacks to lie groupoids}
Here we discuss how to obtain a sheaf on $\mathbf{B}^\bullet G$ from a sheaf on $[G^0/G]\,,$ and we define the cohomology of a groupoid with coefficients taking values in a module.
\\\\Let $G\rightrightarrows G^0$ be a Lie groupoid and let $\mathcal{S}$ be a sheaf on $[G^0/G]\,.$ Consider the object of $[G^0/G]$ given by 
\[
\begin{tikzcd}
P \arrow{r}{\rho}\arrow{d}{\pi} & G^0 \\
X
\end{tikzcd}
\]
We can associate to each open set $U\subset X$ the object of $[G^0/G]$ given by
\[
\begin{tikzcd}
P\vert_U \arrow{r}{\rho}\arrow{d}{\pi} & G^0 \\
U
\end{tikzcd}
\]
We get a sheaf on $X$ by assigning to $U\subset X$ the abelian group $\mathcal{S}(P\vert_U)\,.$ 
\\\\Now for all $n\ge 0\,,$ the spaces $\mathbf{B}^{n} G$ are canonically identified with $G$-principal bundles, by identifying $\mathbf{B}^{n} G$ with the object of $[G^0/G]$ given by

\begin{equation}\label{canonical object}
\begin{tikzcd}[row sep=large, column sep=large]
 \mathbf{B}^{n+1} G \arrow{r}{d_{1,1}\circ p_{n+1}}\arrow{d}{d_{n+1,n+1}} & G^0 \\
\mathbf{B}^{n} G
\end{tikzcd}
\end{equation}
where $p_{n+1}$ is the projection onto the $(n+1)^{\text{th}}$ factor. Hence given a sheaf $\mathcal{S}$ on $[G^0/G]$ we obtain a sheaf on $\mathbf{B}^{n} G\,,$ for all $n\ge 0\,,$ denoted $\mathcal{S}(\mathbf{B}^{n} G)\,,$ and together these form a sheaf on $B^\bullet G\,.$ Furthermore, given a $G$-module $M$ we have that \begin{align*}
    \mathcal{O}(M)_{[G^0/G]}({G^0})\cong \mathcal{O}(M)\,.
\end{align*}
Moreover, we have the following lemma:
\begin{lemma}
Let $M$ be a $G$-module. Then the sheaf on $\mathbf{B}^\bullet G$ given by $\mathcal{O}(M)_{[G^0/G]}(\mathbf{B}^{\bullet}G)\,,$ is isomorphic to the sheaf of sections of the simplicial family of abelian groups given by
\begin{align*}
    \mathbf{B}^\bullet(G\ltimes M)\to \mathbf{B}^\bullet G\,.
\end{align*}
\end{lemma}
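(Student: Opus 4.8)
The plan is to prove the isomorphism \emph{levelwise} — producing a natural isomorphism of sheaves on each $\mathbf{B}^n G$ — and then to verify that these levelwise maps are compatible with the simplicial structure, so that together they form an isomorphism of sheaves on $\mathbf{B}^\bullet G$. First I would unwind both sides at level $n$. By the construction of Section~\ref{stacks to lie groupoids}, the value of $\mathcal{O}(M)_{[G^0/G]}(\mathbf{B}^n G)$ on an open set $U\subseteq \mathbf{B}^n G$ is computed by evaluating the stack sheaf on the object \eqref{canonical object} restricted over $U$, and by the definition in Section~\ref{sheaves on stacks} this is $\Gamma_{\text{inv}}\big((\rho^*\mathcal{O}(M))|_{\pi^{-1}(U)}\big)$, the $G$-invariant sections of $\rho^*M$ over $\pi^{-1}(U)\subseteq \mathbf{B}^{n+1}G$, where $\pi$ drops the last arrow, $\pi(g_1,\dots,g_{n+1})=(g_1,\dots,g_n)$, and $\rho(g_1,\dots,g_{n+1})=s(g_{n+1})$. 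On the other side, unwinding the nerve of $G\ltimes M$ shows that the fiber of $\mathbf{B}^n(G\ltimes M)\to\mathbf{B}^n G$ over $(g_1,\dots,g_n)$ is canonically the group $M_{t(g_1)}$ attached to the initial vertex (using the convention $s(g_i)=t(g_{i+1})$), so a section over $U$ assigns to each $(g_1,\dots,g_n)$ an element of $M_{t(g_1)}$.

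The conceptual heart is the standard associated-bundle identity: for any principal $G$-bundle $\pi\colon P\to X$ with moment map $\rho$ and any $G$-module $M$, the $G$-invariant sections of $\rho^*M$ over $P$ are exactly the sections of the associated family of abelian groups $P\times_G M\to X$, computed fiberwise. Applied to \eqref{canonical object} this identifies $\mathcal{O}(M)_{[G^0/G]}(\mathbf{B}^n G)$ with the sheaf of sections of $\mathbf{B}^{n+1}G\times_G M\to\mathbf{B}^n G$. I would then identify this associated family with $\mathbf{B}^n(G\ltimes M)$: choosing in each class the representative whose last arrow is the identity exhibits the fiber as $M_{s(g_n)}$ over $(g_1,\dots,g_n)$, and translating by the composite arrow $g_1\cdots g_n\in G\big(s(g_n),t(g_1)\big)$ carries this onto the fiber $M_{t(g_1)}$ described above. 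Concretely, the resulting level-$n$ map sends a section $\sigma$ of $\mathbf{B}^n(G\ltimes M)\to\mathbf{B}^n G$ to the invariant section
\begin{align*}
(g_1,\dots,g_{n+1})\longmapsto (g_1\cdots g_{n+1})^{-1}\cdot \sigma(g_1,\dots,g_n)\in M_{s(g_{n+1})}\,,
\end{align*}
with inverse given by restriction to the degeneracy section $(g_1,\dots,g_n)\mapsto(g_1,\dots,g_n,1_{s(g_n)})$ followed by translation by $g_1\cdots g_n$. One checks directly that the formula lands in the correct fiber and is $G$-invariant, and that the two assignments are mutually inverse, the latter because every point of a $\pi$-fiber lies in the $G$-orbit of the degeneracy section, so invariance determines the invariant section from its value there.

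The main obstacle I expect is the last step: checking that these levelwise isomorphisms commute with the simplicial structure maps, that is, that for every $f\colon[n]\to[m]$ the transition map $\mathcal{S}(f)$ of the stack sheaf corresponds, under the above identifications, to the structure map induced by the simplicial maps of $\mathbf{B}^\bullet(G\ltimes M)$. It suffices to verify this on the generating cofaces and codegeneracies. The delicate point is bookkeeping the twisting cocycle $g_1\cdots g_{n+1}$: the structure maps of $\mathcal{O}(M)_{[G^0/G]}$ are pullbacks of invariant sections along the induced morphisms of principal bundles, whereas on the $\mathbf{B}^\bullet(G\ltimes M)$ side the face maps act through the groupoid multiplication of the action groupoid, and one must confirm these agree. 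The two extreme face maps act on the distinguished vertices $t(g_1)$ and $s(g_{n+1})$ and behave differently from the interior ones (which compose adjacent arrows); it is precisely the presence of the composite-arrow twist that reconciles all of them. This is where I anticipate the genuine work to lie, the level-$n$ identification itself being essentially formal once the associated-bundle identity is in hand.
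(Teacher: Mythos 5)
Your proposal is correct, and there is in fact nothing in the paper to compare it against: the lemma is stated without proof, as an immediate consequence of the constructions in Sections~\ref{sheaves on stacks} and~\ref{stacks to lie groupoids}, so your argument supplies what the paper treats as routine. Your unwinding of both sides is accurate. In particular, the moment map you use, $\rho(g_1,\dots,g_{n+1})=s(g_{n+1})$, is the one forced by the paper's principal-bundle axioms (the right action is multiplication on the last arrow, and one needs $\rho(p\cdot g)=s(g)$), so it is the correct reading of the diagram defining the canonical object. Your levelwise map $\sigma\mapsto\bigl[(g_1,\dots,g_{n+1})\mapsto(g_1\cdots g_{n+1})^{-1}\cdot\sigma(g_1,\dots,g_n)\bigr]$ does land in the fiber $M_{s(g_{n+1})}$ of $\rho^*M$, is invariant, and is inverse to restriction along the degeneracy section followed by translation by $g_1\cdots g_n$; your orbit argument is exactly why invariance pins down the section from its value on that degeneracy section.

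The step you defer --- compatibility with the simplicial structure --- is indeed the only remaining item, and it holds exactly as you predict, by a short computation rather than any new idea. The structure maps of $\mathcal{O}(M)_{[G^0/G]}(\mathbf{B}^\bullet G)$ are pullbacks of invariant sections along the d\'ecalage lifts $d_{m+1,i}\colon\mathbf{B}^{m+1}G\to\mathbf{B}^{m}G$, $0\le i\le m$ (the $G$-equivariant morphisms of canonical objects covering $d_{m,i}$), while on sections of $\mathbf{B}^\bullet(G\ltimes M)\to\mathbf{B}^\bullet G$ the face maps act untwisted for $i\ge 1$ and with the twist $g_1\cdot(-)$ for $i=0$. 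For $i=0$, one side gives $(g_2\cdots g_{m+1})^{-1}\cdot\sigma(g_2,\dots,g_m)$ and the other gives $(g_1\cdots g_{m+1})^{-1}\cdot g_1\cdot\sigma(g_2,\dots,g_m)$, and these agree; for interior faces, and for $i=m$ (whose lift composes the last two arrows), the total composite $g_1\cdots g_{m+1}$ is unchanged and the two sides agree on the nose; degeneracies are immediate. So there is no gap in the mathematics, only an unexecuted verification whose shape you described correctly --- the composite-arrow twist is precisely what reconciles the extreme face $d_0$ with the rest. If you write it up, those face cases plus the degeneracies are all that needs to be recorded.
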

\theoremstyle{definition}\begin{definition}
Let $G\rightrightarrows G^0$ be a Lie groupoid and let $M$ be a $G$-module. We define 
\[
H^*(G,M):=H^*(\mathbf{B}^{\bullet}G,\mathcal{O}(M)_{[G^0/G]}(\mathbf{B}^{\bullet}G))\,.
\]
$\blacksquare$\end{definition}
\theoremstyle{definition}\begin{rmk}[See~\cite{Kai}]\label{morita inv}
Let $G\rightrightarrows G^0$ and $K\rightrightarrows K^0$ be Lie groupoids and let $\phi:G\to K$ be a Morita morphism. Then the pullback $\phi^*$ induces an equivalence of categories \[
\phi^*:[K^0/K]\to[G^0/G]\,.
\]
Furthermore, let $\mathcal{S}$ be a sheaf on $[G^0/G]\,.$ Then the pushforward sheaf $\phi_*\mathcal{S}:=\mathcal{S}\circ\phi^*$ is a sheaf on $[K^0/K]$ and we have a natural isomorphism 
\[
H^*(\mathbf{B}^{\bullet}G,\mathcal{S}(\mathbf{B}^{\bullet}G))\cong H^*(\mathbf{B}^{\bullet}K,\phi_*\mathcal{S}(\mathbf{B}^{\bullet}K))\,.
\]
\end{rmk}
\subsection{Godement Construction for Sheaves on Stacks}
Here we discuss a version of the Godement resolution for sheaves on stacks, and we show how it can be used to compute cohomology.
\begin{definition}
Let $G\rightrightarrows G^0$ be a Lie groupoid and let $\mathcal{S}$ be a sheaf on $[G^0/G]\,.$ We define the Godement resolution of $\mathcal{S}$ as follows:
Consider the object of $[G^0/G]$ given by
\[
\begin{tikzcd}
P \arrow{r}{\rho}\arrow{d}{\pi} & G^0 \\
X
\end{tikzcd}
\]
and consider the corresponding sheaf on $X$ (see~\Cref{stacks to lie groupoids}), denoted by $\mathcal{S}(X)\,.$
We can then consider, for each $n\ge 0\,,$ the $n^\text{th}$ sheaf in the Godement resolution of $\mathcal{S}(X)\,,$ denoted $\mathbb{G}^n(\mathcal{S}(X))\,,$ and to $P$ we assign the abelian group $\Gamma(\mathbb{G}^n(\mathcal{S}(X)))\,.$ These define sheaves on $[G^0/G]$ which we denote by $\mathbb{G}^n(\mathcal{S})\,.$ 
$\blacksquare$\end{definition}
For a sheaf $\mathcal{S}$ on $[G^0/G]$ we obtain a resolution by using $\mathbb{G}^\bullet(\mathcal{S})$ in the following way: 
\begin{align*}
    \mathcal{S}\xhookrightarrow{}\mathbb{G}^1(\mathcal{S})\to \mathbb{G}^2(\mathcal{S})\to\cdots\,.
    \end{align*}
The sheaves $ \mathbb{G}^n(\mathcal{S})$ are not in general acyclic on stacks, however the sheaves $\mathbb{G}^n(\mathcal{S})(\mathbf{B}^m G)$ are acyclic on $\mathbf{B}^m G$ and hence can be used to compute cohomology (see Theorem~\ref{stack groupoid cohomology} and Remark~\ref{double complex}).
\subsection{Examples}\label{important examples}
The constructions in the previous sections will be important in Section~\ref{van Est map} when defining the van Est map; it is crucial that modules define sheaves on stacks in order to use the Morita invariance of cohomology. Here we exhibit examples of the constructions from the previous sections which will be used in Section~\ref{van Est map}. 
\begin{proposition}\label{example surj sub}Let $f:Y\to X$ be a surjective submersion, and consider the submersion groupoid $Y{\mathop{\times}}_f Y\rightrightarrows Y\,.$ This groupoid is Morita equivalent to the trivial $X\rightrightarrows X$ groupoid, hence their associated stacks, $[Y/(Y{\mathop{\times}}_f Y)]$ and $[X/X]\,,$ are categorically equivalent. 
\end{proposition}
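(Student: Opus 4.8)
The plan is to exhibit an explicit morphism of Lie groupoids and check that it is a Morita morphism; the asserted equivalence of stacks is then immediate from Remark~\ref{morita inv}. Define $\phi \colon (Y \times_f Y \rightrightarrows Y) \to (X \rightrightarrows X)$ by letting $\phi = f$ on objects and sending an arrow $(y_0, y_1)$, which by definition satisfies $f(y_0) = f(y_1)$, to the identity arrow at $f(y_0) \in X$. First I would verify that $\phi$ is a morphism of Lie groupoids: compatibility with the source, target, unit and multiplication maps all reduce to the defining identity $f(y_0) = f(y_1)$ of the fiber product together with the fact that $X \rightrightarrows X$ has only identity arrows.

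The substance of the argument is the verification of the two conditions defining a Morita (weak) equivalence, in the sense of~\cite{Kai}. For the essential-surjectivity (submersion) condition I would form the space $Y \times_{f, X} X$ of objects of $Y$ together with an arrow of $X$ meeting their image; since every arrow of $X$ is a unit, this space is canonically identified with $Y$, and the induced map to the object manifold $X$ is precisely $f$, which is a surjective submersion by hypothesis. For full faithfulness I would show that the square built from the source--target map $Y \times_f Y \to Y \times Y$, the map $f \times f \colon Y \times Y \to X \times X$, and the diagonal $X \to X \times X$ is cartesian: the fiber product $(Y \times Y) \times_{X \times X} X$ consists of triples $(y_0, y_1, x)$ with $f(y_0) = f(y_1) = x$, and the canonical map from $Y \times_f Y$ is a diffeomorphism onto it.

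With both conditions established, $\phi$ is a Morita morphism, so $Y \times_f Y$ and the trivial groupoid $X \rightrightarrows X$ are Morita equivalent. The equivalence of the associated stacks $[Y/(Y \times_f Y)]$ and $[X/X]$ then follows directly from Remark~\ref{morita inv}, which records that a Morita morphism induces an equivalence of the corresponding categories of principal bundles through pullback.

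I do not anticipate a serious obstacle: the content is entirely a matter of unwinding the definition of a Morita morphism and checking the two standard conditions. The only point requiring genuine care is phrasing full faithfulness as a fiber-product (pullback) diagram and confirming that the comparison map is a diffeomorphism rather than merely a smooth bijection; here the submersion hypothesis on $f$ is exactly what guarantees the smooth structures agree.
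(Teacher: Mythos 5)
Your proposal is correct and takes essentially the same route as the paper: the canonical morphism $(y_0,y_1)\mapsto f(y_0)$ is exactly the Morita map the paper has in mind (checked against its definition of Morita morphism), and invoking Remark~\ref{morita inv} produces precisely the pullback functor $f^*(N,\rho)=N\times_{\rho,f}Y$ that the paper writes out explicitly as the stack equivalence. No gaps.
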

We now describe the functor $f^*:[X/X]\to[Y/(Y{\mathop{\times}}_f Y)]$ which gives this equivalence:
\\\\An $X\rightrightarrows X$ principal bundle is given by a manifold $N$ together with a map $\rho:N\to X$ (the $\pi$ map here is the identity map $N\to N)\,.$ To such an object, we let $f^*(N,\rho)=N\sideset{_\rho}{_{f}}{\mathop{\times}} Y\,.$ This is a $Y{\mathop{\times}}_f Y$ principal bundle in the following way:
\[\begin{tikzcd}
N\sideset{_\rho}{_{f}}{\mathop{\times}} Y \arrow{r}{\rho = p_2}\arrow{d}{\pi = p_1} & Y \\
N
\end{tikzcd}
\]
The functor $f^*$ is an equivalence of stacks. 
\\\\Now suppose we have a sheaf $\mathcal{S}$ on the stack $[Y/(Y{\mathop{\times}}_f Y)]\,,$ then we obtain a sheaf on $[X/X]$ by using the pushforward of $f\,,$ ie. to an object $(N,\rho)\in [X/X]$ we associate the abelian group $f_*\mathcal{S}(N,\rho):=\mathcal{S}(f^*(N,\rho))\,.$ We then obtain a sheaf on the simplicial space $\mathbf{B}^\bullet(X\rightrightarrows X)$ as follows: First note that $\mathbf{B}^n(X\rightrightarrows X)=X$ for all $n\ge 0\,,$ so the sheaves are the same on all levels. Now let $U\xhookrightarrow{\iota} X$ be open. Then $(U,\iota)\in [X/X]\,,$ so to this object we assign the abelian group $f_*\mathcal{S}(U,\iota)\,.$
\begin{proposition}\label{key example}Suppose $M$ is a $Y{\mathop{\times}}_f Y$-module and $f$ has a section $\sigma:X\to Y\,.$ We then obtain a sheaf (and its associated Godement sheaves) on $[X/X]\,,$ and in particular we obtain a sheaf (and its associated Godement sheaves) on $X\in [X/X]\,,$ which we describe as follows:
\end{proposition}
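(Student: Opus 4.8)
The plan is to unwind the pushforward construction of Section~\ref{stacks to lie groupoids} at the object $(X,\mathrm{id}_X)\in[X/X]$ and show that the resulting sheaf on the space $X$ is nothing but $\mathcal{O}(\sigma^*M)\,,$ the sheaf of sections of the family of abelian groups $\sigma^*M\to X\,;$ the associated Godement sheaves will then turn out to be the ordinary Godement sheaves of $\mathcal{O}(\sigma^*M)\,.$ First I would compute $f^*(U,\iota)$ for an open $U\xhookrightarrow{\iota}X\,.$ Since $f^*(U,\iota)=U\sideset{_\iota}{_f}{\mathop{\times}}Y\cong f^{-1}(U)$ with moment map the inclusion $f^{-1}(U)\hookrightarrow Y\,,$ the pushforward sheaf assigns to $U$ the group $\Gamma_\text{inv}(\rho^*\mathcal{O}(M))$ over $f^{-1}(U)\,,$ where invariance is taken with respect to the action groupoid $(Y{\mathop{\times}}_fY)\ltimes f^{-1}(U)\,.$ This action groupoid is precisely the submersion groupoid of the restriction $f^{-1}(U)\to U\,,$ so the group in question is the space of sections of $M$ over $f^{-1}(U)$ that are invariant under the fiberwise pair-groupoid action.

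The heart of the argument is a descent statement for the module $M\,.$ Because $Y{\mathop{\times}}_fY$ contains a unique arrow between any two points of a common $f$-fiber, the module structure supplies, for each $y\in Y\,,$ a canonical isomorphism $M_y\cong M_{\sigma(f(y))}$ given by the action of the unique arrow of $Y{\mathop{\times}}_fY$ connecting $y$ to $\sigma(f(y))$ (which exists since $f\circ\sigma=\mathrm{id}$). Transitivity of the action guarantees that these identifications satisfy the evident cocycle relation, and smoothness (holomorphicity) of both the action and of $\sigma$ guarantees that they vary smoothly; together they assemble into an isomorphism of families of abelian groups $M\cong f^*\sigma^*M\,.$ Under this isomorphism a section of $M$ over $f^{-1}(U)$ is invariant exactly when it is pulled back from a section of $\sigma^*M$ over $U\,,$ so that $\Gamma_\text{inv}(\mathcal{O}(M)|_{f^{-1}(U)})\cong\Gamma(U,\mathcal{O}(\sigma^*M))\,.$

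Checking that these isomorphisms commute with restriction to smaller opens is immediate, since every map in sight is a restriction of the single global identification $M\cong f^*\sigma^*M\,;$ hence we get an isomorphism of presheaves, and therefore of sheaves, between the sheaf produced on $X$ and $\mathcal{O}(\sigma^*M)\,.$ The Godement part is then purely formal: by the definition of the Godement resolution for sheaves on stacks, the sheaf $\mathbb{G}^n(\mathcal{S})$ evaluated at the object $(X,\mathrm{id}_X)$ is the $n^{\text{th}}$ Godement sheaf of the sheaf $\mathcal{S}(X)$ on $X\,,$ which we have just identified with $\mathcal{O}(\sigma^*M)\,;$ thus the associated Godement sheaves on $X$ are the ordinary $\mathbb{G}^n(\mathcal{O}(\sigma^*M))\,.$

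The main obstacle is the descent step of the second paragraph: one must verify carefully that the fiberwise identifications coming from the module structure are globally well defined, compatible under groupoid composition, and smooth (or holomorphic), so that $M$ genuinely splits as $f^*\sigma^*M\,.$ The essential inputs making this work are the uniqueness of arrows in the submersion groupoid $Y{\mathop{\times}}_fY$ (so that there is no ambiguity in the comparison maps) together with the section $\sigma\,,$ which provides a global choice of reference fiber $M_{\sigma(f(y))}$ over each point of $X\,.$
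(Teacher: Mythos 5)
Your proposal is correct and follows essentially the same route as the paper: compute $f^*(U,\iota)\cong Y\vert_U\to U\,,$ identify $\Gamma_\text{inv}(\rho^*\mathcal{O}(M))$ with $\Gamma(\sigma\vert_U^*\mathcal{O}(M))\,,$ and conclude that the sheaf on $X$ is $\sigma^*\mathcal{O}(M)$ with Godement sheaves $\mathbb{G}^\bullet(\sigma^*\mathcal{O}(M))\,.$ The only difference is one of detail: your descent step, trivializing $M\cong f^*\sigma^*M$ via the unique arrows of the submersion groupoid, is precisely the justification the paper compresses into the phrase ``we then see that.''
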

We use the notation in Proposition~\ref{example surj sub}. We have that
\[f^*(U,\iota)=\begin{tikzcd}
U_{\iota}{\mathop{\times}}_f Y \arrow{r}{\rho= p_2}\arrow{d}{\pi= p_1} & Y \\
U
\end{tikzcd}
=\begin{tikzcd}
Y\vert_U \arrow{r}{}\arrow{d}{f} & Y \\
U
\end{tikzcd}
\]
We then see that $\,\Gamma_\text{inv}(\rho^*\mathcal{O}(M))\cong \Gamma(\sigma\vert_U^*\mathcal{O}(M))\,,$ 
hence the sheaf we get on $X$ is simply $\sigma^*\mathcal{O}(M)\,.$ Furthermore, the sheaves we get on $X$ by applying the Godement construction to $\mathcal{O}(M)_{[Y/(Y{\mathop{\times}}_f Y)]}$ are simply $\mathbb{G}^\bullet(\sigma^*\mathcal{O}(M))\,.$
\begin{lemma}\label{acyclic edit}
Suppose we have a sheaf $\mathcal{S}$ on the stack $[Y/(Y{\mathop{\times}}_f Y)]\,,$ then the associated Godement sheaves $\mathbf{G}^\bullet(\mathcal{S})$ are acyclic.
\end{lemma}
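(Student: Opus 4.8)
The plan is to reduce the statement to the classical fact that the canonical (Godement) flasque resolution of a sheaf on a manifold consists of acyclic sheaves, and to transport this across the Morita equivalence of Proposition~\ref{example surj sub}. I read ``acyclic'' here as acyclic in the stack sense, i.e.\ $H^k([Y/(Y\times_f Y)],\mathbb{G}^n(\mathcal{S}))=0$ for all $k>0$ and all $n\ge 0$. This is strictly stronger than the levelwise acyclicity on the spaces $\mathbf{B}^m G$, which already holds for an arbitrary Lie groupoid; the extra vanishing is exactly what the submersion groupoid supplies, precisely because it is equivalent to the honest manifold $X$, on which Godement sheaves are genuinely acyclic.

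First I would invoke Morita invariance of cohomology (Remark~\ref{morita inv}) for the Morita morphism $\phi:Y\times_f Y\to X$ of Proposition~\ref{example surj sub}, whose induced functor is the $f^*$ described there. This identifies $H^k([Y/(Y\times_f Y)],\mathbb{G}^n(\mathcal{S}))$ with $H^k([X/X],\phi_*\mathbb{G}^n(\mathcal{S}))$. Since $\mathbf{B}^\bullet(X\rightrightarrows X)$ is the constant simplicial manifold $X$, the right-hand side is simply the ordinary sheaf cohomology $H^k(X,\phi_*\mathbb{G}^n(\mathcal{S}))$ of a sheaf on the manifold $X$.

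Next I would identify $\phi_*\mathbb{G}^n(\mathcal{S})$ with the classical $n$-th Godement sheaf $\mathbb{G}^n(\phi_*\mathcal{S})$ of the pushforward sheaf $\phi_*\mathcal{S}$ on $X$. This is the technical heart of the argument: the Godement construction on the stack was defined levelwise out of the ordinary Godement construction on the base of each principal bundle, so evaluating it on the objects $(U,\iota)\in[X/X]$ returns exactly $\Gamma(U,\mathbb{G}^n(\phi_*\mathcal{S}))$. Concretely this is the content of Proposition~\ref{key example}, which computes the sheaf on $X$ and its Godement sheaves as $\sigma^*\mathcal{O}(M)$ and $\mathbb{G}^\bullet(\sigma^*\mathcal{O}(M))$ in the presence of a section. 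Once this identification is in place, the classical fact that Godement sheaves are flasque, and hence acyclic, gives $H^k(X,\mathbb{G}^n(\phi_*\mathcal{S}))=0$ for $k>0$; combining with the previous step yields the Lemma.

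The step I expect to be the main obstacle is the identification $\phi_*\mathbb{G}^n(\mathcal{S})\cong\mathbb{G}^n(\phi_*\mathcal{S})$, i.e.\ checking that the stack-theoretic Godement functor commutes with the Morita pushforward and genuinely recovers the classical Godement resolution on $X$. The subtlety is that Proposition~\ref{key example} is stated with a section $\sigma$ available, whereas the present Lemma assumes none. I would resolve this by noting that the asserted isomorphism is a statement about sheaves on $X$, hence local on $X$, and that $f$ does admit local sections because it is a surjective submersion; agreement of stalks then suffices. Alternatively, since the Godement sheaves are defined intrinsically (stalkwise) on the bases of the principal bundles, their Morita pushforward is computed levelwise and therefore matches the classical construction irrespective of the existence of a global section of $f$.
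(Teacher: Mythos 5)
Your proposal is correct and follows essentially the same route as the paper's own (much terser) proof: Morita equivalence of $[Y/(Y{\mathop{\times}}_f Y)]$ with $[X/X]$, Morita invariance of stack cohomology, and acyclicity of the classical Godement sheaves on the manifold $X$. The identification $\phi_*\mathbb{G}^n(\mathcal{S})\cong\mathbb{G}^n(\phi_*\mathcal{S})$ that you single out as the technical heart is exactly the step the paper leaves implicit, and your second resolution of the section issue is the right one: since the stack-level Godement construction is defined basewise/stalkwise, the identification holds with no section of $f$ needed.
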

\begin{proof}
This follows from the fact that $[Y/(Y{\mathop{\times}}_f Y)]\,,$ is Morita equivalent to $[X/X]\,,$ since cohomology is invariant under Morita equivalence of stacks, and the fact that the Godement sheaves on a manifold are acyclic. 
\end{proof}
\theoremstyle{definition}\begin{rmk}
Let $X$ be a manifold and let $X\rightrightarrows X$ be the trivial Lie groupoid. Let $\mathcal{S}$ be a sheaf on $[X/X]\,.$ Then we recover the usual cohomology: 
\[
H^*(\mathbf{B}^\bullet X,\mathcal{S}(\mathbf{B}^\bullet X))
=H^*(X,\mathcal{S}(X))\,.
\]
This will be important in computing the cohomology of submersion groupoids, since they are Morita equivalent to trivial groupoids.
\end{rmk}
\section{Chevalley-Eilenberg Complex for Modules}\label{Chevalley}
In this section we review the Chevalley-Eilenberg complex associated to a representation of a Lie algebroid. Then we generalize Lie
algebroid representations to Lie algebroid modules and define their Chevalley-Eilenberg complex. These will be used in Section~\ref{van Est map}.
\subsection{Lie Algebroid Representations}
\begin{definition}Let $\mathfrak{g}\overset{\pi}{\to} Y$ be a Lie algebroid, with anchor map $\alpha:\mathfrak{g}\to TY\,,$ and recall that $\mathcal{O}(\mathfrak{g})$ denotes the sheaf of sections of $\mathfrak{g}\overset{\pi}{\to} Y\,.$ A representation of $\mathfrak{g}$ is a vector bundle
 $E\to Y$ together with a map\begin{align*}
     \mathcal{O}(\mathfrak{g})\otimes\mathcal{O}(E)\to\mathcal{O}(E)\,,\,X\otimes s\mapsto L_X(s)
 \end{align*}
 such that for all open sets  $\,U\subset Y$ and for all $f\in \mathcal{O}_Y(U)\,,X\in \mathcal{O}(\mathfrak{g})(U)\,,
 s\in \mathcal{O}(E)(U)\,,$ we have that
 \begin{enumerate}
     \item $L_{fX}(s)=fL_X(s)\,,$
     \item $L_X(fs)=fL_X(s)+(\alpha(X)f)s\,,$
     \item $L_{[X,Y]}(s)=[L_X,L_Y](s)\,.$
 \end{enumerate}
 $\blacksquare$\end{definition}
\begin{definition}
Let $E$ be a representation of $\mathfrak{g}\,.$ Let $\mathcal{C}^n(\mathfrak{g},E)$ denote the sheaf of $E$-valued
 $n$-forms on 
 $\mathfrak{g}\,,$ ie. the sheaf of sections of $\Lambda^n \mathfrak{g}^*\otimes E\,.$ There is a canonical differential\footnote{Meaning in particular that $d_\text{CE}^2=0\,.$}
 \begin{align*}
     d_\text{CE}:\mathcal{C}^n(\mathfrak{g},E)\to \mathcal{C}^{n+1}(\mathfrak{g},E)\,,\,n\ge0
 \end{align*}
 defined as follows: let $\omega\in \mathcal{C}^n(\mathfrak{g},E)(V) $ for some open set $V\,.$ Then for $X_1\,,\ldots \,, X_{n+1}\in \pi^{-1}(m)\,,\,m\in V\,,$ choose local extensions $\mathbf{X}_1\,,\ldots\,,\mathbf{X}_{n+1}$ of these vectors, ie. choose
 \begin{align*}
     p\mapsto\mathbf{X}_1(p)\,,\ldots\,,p\mapsto\mathbf{X}_{n+1}(p)\in \mathcal{O}(\mathfrak{g})(U)\,,
     \end{align*}
     for some open set $U$ such that $m\in U\subset V\,,$ and such that $\mathbf{X}_i(m)=X_i$ for all $1\le i\le n+1)\,.$ Then let
 \begin{align*}
     d_{\text{CE}}\omega(X_1\,,\ldots\,,X_{n+1})&=\sum_{i<j}(-1)^{i+j-1}\omega([\mathbf{X}_i,\mathbf{X}_j],\mathbf{X}_1,\ldots ,\hat{\mathbf{X}}_i,\ldots
     , \hat{\mathbf{X}}_j,\ldots , \mathbf{X}_{n+1})
\vert_{p=m}     \\& +\sum_{i=1}^{n+1}(-1)^i L_{\mathbf{X}_i}(\omega(\mathbf{X}_i,\ldots , \hat{\mathbf{X}}_i ,\ldots , \mathbf{X}_{n+1}))\vert_{p=m}\,.
 \end{align*}
This is well-defined and independent of the chosen extensions.
 $\blacksquare$\end{definition}
\subsection{Lie Algebroid Modules}
We will now define Lie algebroid modules and define their Chevalley-Eilenberg complexes; these will look like the Chevalley-Eilenberg complexes associated to representations, except for possibly in degree zero (though representations will be seen to be special 
cases of Lie algebroid modules).
\begin{definition}
Let $\mathfrak{g}\to Y$ be a Lie algebroid, and let $M$ be a family of  abelian groups, with Lie algebroid $\mathfrak{m}$ and exponential map $\exp:\mathfrak{m}\to M\,.$\footnote{Note that $\mathfrak{m}$ is just a vector bundle and the exponential map is given by the fiberwise exponential map taking a Lie algebra to its corresponding Lie group.} Then a $\mathfrak{g}$-module structure on $M$ is given by the following: a $\mathfrak{g}$-representation structure on $\mathfrak{m}$ $($ie. a morphism  $\mathcal{O}(\mathfrak{g})\otimes\mathcal{O}(\mathfrak{m})\to\mathcal{O}(E)\,,\,X\otimes s\mapsto L_X(s))\,,$ together with a morphism of sheaves \begin{align*}
     \mathcal{O}(\mathfrak{g})\otimes_{\mathbb{Z}}\mathcal{O}(M)\to\mathcal{O}(\mathfrak{m})\,,\,X\otimes_{\mathbb{Z}} s\mapsto \tilde{L}_X(s)
 \end{align*}
  such that for all open sets  $\,U\subset Y$ and for all $f\in \mathcal{O}_Y(U)\,,X\in \mathcal{O}(\mathfrak{g})(U)\,,
 s\in \mathcal{O}(M)(U)\,,\sigma\in \mathcal{O}(\mathfrak{m})(U)\,, $ we have that
 \begin{enumerate}
     \item $\tilde{L}_{fX}(s)=f\tilde{L}_X(s)\,,$
     \item $\tilde{L}_{[X,Y]}(s)=(L_X\tilde{L}_Y-L_Y\tilde{L}_X)(s)\,,$
     \item $\tilde{L}_X(\exp{\sigma})=L_X(\sigma)\,.$
 \end{enumerate}
If $M$ is endowed with such a structure we call it a $\mathfrak{g}$-module.
$\blacksquare$\end{definition}
\begin{definition}\label{forms}Let $\mathfrak{g}\to X$ be a Lie algebroid and let $M$ be a $\mathfrak{g}$-module. We then define sheaves on $X\,,$ called ``sheaves of $M$-valued forms", as follows: let
\begin{align*}
&\mathcal{C}^0(\mathfrak{g},M)=\mathcal{O}(M)\,,
\\&   \mathcal{C}^n(\mathfrak{g},M)=\mathcal{O}(\Lambda^n \mathfrak{g}^*\otimes \mathfrak{m})\,,\;n> 0\,.
\end{align*}
Furthermore, for $s\in\mathcal{O}(M)(U)\,,$ we define $d_\text{CE}\log f$ by $d_\text{CE}\log f(X):=\tilde{L}_X(s)\,.$ We then have a cochain complex of sheaves given by
\begin{equation}\label{CE}
    \mathcal{C}^0(\mathfrak{g},M)\xrightarrow{d_{\text{CE}}\log}\mathcal{C}^1(\mathfrak{g},M)\xrightarrow{d_\text{CE}}\mathcal{C}^2(\mathfrak{g},M)\xrightarrow{d_\text{CE}}\cdots\,.
\end{equation}
 $\blacksquare$\end{definition}
\begin{definition}
The sheaf cohomology of the above complex of sheaves is denoted by $H^*(\mathfrak{g},M)\,.$
$\blacksquare$\end{definition}
\begin{definition}
Let $M\,,N$ be $\mathfrak{g}$-modules. A morphism $f:M\to N$ is a morphism of the underlying families of abelian groups such that the induced map $df:\mathfrak{m}\to\mathfrak{n}$ satisfies $\tilde{L}_X(f\circ s)=df\circ \tilde{L}_X(s)\,,$ for all local sections $X$ of $\mathfrak{g}$ and $s$ of $M\,.$
$\blacksquare$\end{definition}

 \theoremstyle{definition}\begin{exmp}
Here we will show that the notion of $\mathfrak{g}$-modules naturally extends the notion of $\mathfrak{g}$-representations. Let $E$ be a representation of $\mathfrak{g}\,.$ By thinking of the fibers of $E$ as abelian groups it defines a family of  abelian groups. The exponential map $E\overset{\exp}{\to} E$ is the identity, hence its kernel is the zero section and $E$ naturally defines a $\mathfrak{g}$-module where $d_\text{CE}\log =d_\text{CE}\,.$ So the definition of a $\mathfrak{g}$-module and its Chevalley-Eilenberg complex recovers the definition of a $\mathfrak{g}$-representation and its Chevalley-Eilenberg complex given by Crainic in~\cite{Crainic}.
\end{exmp}
 \theoremstyle{definition}\begin{exmp}\label{representation}
The group of isomorphism classes of $\mathfrak{g}$-representations on complex line bundles is isomorphic to $H^1(\mathfrak{g},\mathbb{C}^*_M)\,,$ where $\mathbb{C}^*_M$ is the $\mathfrak{g}$-module for which $\tilde{L}_X s=\mathrm{dlog}\,s(\alpha(X))\,,$ for a local section $s$ of $\mathbb{C}^*_M\,.$ The corresponding statement holds for real line bundles, with $\mathbb{C}^*_M$ replaced by $\mathbb{R}^*_M\,.$
\end{exmp}
\theoremstyle{definition}\begin{exmp}(Deligne Complex)
Let $X$ be a manifold and  $\mathfrak{g}=TX\,.$ Then letting $M=\mathbb{C}^*_X\,,$ we have that $\mathfrak{m}=\mathbb{C}_X$ naturally carries a representation of $TX\,,$ ie. where the differentials are the de Rham differentials. Letting $\exp:\mathfrak{m}\to M$ be the usual exponential map, it follows that $M$ is a $\mathfrak{g}$-module, and in fact the complex~\eqref{CE} in this case is known as the Deligne complex.
\end{exmp}
For a less familiar example we have the following:
\theoremstyle{definition}\begin{exmp}\label{module example}
Consider the space $S^1$ and the group $\mathbb{Z}/2\mathbb{Z}=\{-1,1\}\,.$ This group is contained in the automorphism  groups of $\mathbb{Z}\,,\mathbb{R}$ and $\mathbb{R}/\mathbb{Z}\,,$ hence we get nontrivial families of abelian groups over $S^1$  as follows (compare with Example~\ref{foag}): Let $A$ be any of the groups $\mathbb{Z}\,,\mathbb{R}\,,\mathbb{R}/\mathbb{Z}\,.$ Now cover $S^1$ in the standard way  using two open sets $U_0\,,U_1\,,$ and glue together the bundles $U_0{\mathop{\times}} A\,,U_1{\mathop{\times}} A$ with the transition functions
$-1\,,1$ on the two connected components of $U_0\cap U_1\,.$ Denote these families of abelian groups by $\tilde{\mathbb{Z}}\,,\tilde{\mathbb{R}}\,,\widetilde{\mathbb{R}/\mathbb{Z}}$ respectively. The space $\tilde{\mathbb{R}}$ is toplogically
the M\"{o}bius strip, and $\widetilde{\mathbb{R}/\mathbb{Z}}$ is topologically the Klein bottle.
\\\\Next, there is a canonical flat connection on these bundles of groups which is compatible with the fiberwise group structures, hence these families of abelian groups are modules for $\Pi_1(S^1)\,,$ the fundamental groupoid of $S^1\,.$ 
\\\\Furthermore, the $TS^1$-representation associated to the $TS^1$-module of $\tilde{\mathbb{Z}}$ is the rank $0$ vector bundle over $S^1\,,$ and the $TS^1$-representations associated to the $TS^1$-modules of $\tilde{\mathbb{R}}\,,\,\widetilde{\mathbb{R}/\mathbb{Z}}$ are isomorphic to the Mobius strip, ie. the line bundle obtained by gluing together $U_0{\mathop{\times}}\mathbb{R}\,,\,U_ 1{\mathop{\times}} \mathbb{R}$ using the same transition functions as discussed above. The Chevalley-Eilenberg differential, on each local trivialization $U_0{\mathop{\times}}\mathbb{R}\,,\,U_1{\mathop{\times}}\mathbb{R}\,,$ is just the de Rham differential.
\\\\The cohomology groups are $H^i(TS^1,\tilde{\mathbb{R}})=0$ in all degrees, and 
\[
H^i(TS^1,\widetilde{\mathbb{R}/\mathbb{Z}})=\begin{cases}
      \mathbb{Z}/2\mathbb{Z}, & \text{if}\ i= 0 \\
      0, & \text{if}\ i>0\,.
    \end{cases}
\]
\end{exmp}
\begin{theorem}\label{g-module}
Suppose $G\rightrightarrows G^0$ is a Lie groupoid. There is a natural functor
\begin{align*}
    F:G\text{-modules}\to \mathfrak{g}\text{-modules}\,.
\end{align*}
Furthermore, if $G$ is source simply connected then this functor restricts to an equivalence of categories on the subcategories of $G$-modules and $\mathfrak{g}$-modules for which $\exp:\mathfrak{m}\to M$ is a surjective submersion.
\end{theorem}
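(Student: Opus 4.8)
The plan is to build $F$ by differentiating the groupoid action in the source direction, and to prove the equivalence by observing that, once $\exp\colon\mathfrak m\to M$ is a surjective submersion, a module is completely encoded by its linearization $\mathfrak m$ together with the lattice bundle $\ker\exp$, which reduces the statement to the classical equivalence of representation categories. Concretely, given a $G$-module $M$ I keep the same underlying family of abelian groups and take $\mathfrak m=\mathrm{Lie}(M)$. Differentiating the fibrewise-linear $G$-action on $M$ yields a $G$-representation on $\mathfrak m$, and differentiating this along source fibres gives, by Crainic's construction \cite{Crainic}, a $\mathfrak g$-representation $L$ on $\mathfrak m$. For the remaining datum $\tilde L$, let $X\in\Gamma(\mathfrak g)$ have right-invariant vector field $\overrightarrow X$ with flow $\phi_t$; writing $g_t:=\phi_t(1_x)\in G(x,\gamma(t))$, where $\gamma(0)=x$ and $\dot\gamma(0)=\alpha(X_x)$, I set, for $s\in\mathcal O(M)(U)$,
\[
\tilde L_X(s)(x):=\frac{d}{dt}\Big|_{t=0}\big(g_t^{-1}\cdot s(\gamma(t))-s(x)\big)\in\mathfrak m_x .
\]
Pulling back by $g_t^{-1}$ keeps the whole curve inside the single fibre $M_x$ and based at the identity $0_x$, so the derivative genuinely lands in $\mathfrak m_x=T_{0_x}M_x$, and additivity of $\tilde L_X$ in $s$ is immediate since $g_t^{-1}$ acts by group homomorphisms.

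Next I would verify the three module axioms. Axiom (1), $\tilde L_{fX}=f\tilde L_X$, holds because $\overrightarrow{fX}=(f\circ t)\overrightarrow X$, so the integral curves issuing from $1_x$ agree to first order up to the scalar $f(x)$. Axiom (3) follows from $g\cdot\exp\sigma=\exp(g\cdot\sigma)$ together with the fact that, the fibres being abelian, $\exp$ is a homomorphism with $d\exp_{0}=\mathrm{id}$, which turns the defining limit into the one computing $L_X\sigma$. Axiom (2), the bracket relation, is the flatness of the induced infinitesimal action and is established exactly as in the representation case of \cite{Crainic}, using associativity of the $G$-action and the comparison between brackets of right-invariant vector fields and the algebroid bracket. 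On morphisms, $F$ sends $f\colon M\to N$ to itself; its fibre derivative $df$ intertwines the representations, and naturality of the limit defining $\tilde L$ gives $\tilde L_X(f\circ s)=df\circ\tilde L_X(s)$, so $F(f)$ is a morphism of $\mathfrak g$-modules, and functoriality is clear.

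For the equivalence, assume now $\exp\colon\mathfrak m\to M$ is a surjective submersion on both sides. Then each fibre $M_x$ is connected and $\exp$ is a covering homomorphism, so $\Lambda:=\ker\exp$ is a bundle of lattices and $0\to\Lambda\to\mathfrak m\xrightarrow{\exp}M\to0$ is exact. Since $\exp$ is a surjective submersion, locally $s=\exp\sigma$, and axiom (3) forces $\tilde L_X(s)=L_X\sigma$; this is well defined because $\tilde L_X(\exp\lambda)=\tilde L_X(0)=0$ for $\lambda\in\mathcal O(\Lambda)$, i.e. $\Lambda$ is automatically $L$-flat. Hence $\tilde L$ is redundant, and the relevant subcategory on the $\mathfrak g$-side is equivalent to the category of pairs $(\mathfrak m,\Lambda)$ of a $\mathfrak g$-representation with an $L$-flat lattice bundle, recovering $M=\mathfrak m/\Lambda$; likewise the $G$-side is equivalent to pairs $(\mathfrak m,\Lambda)$ of a $G$-representation with a $G$-invariant lattice bundle. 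When $G$ is source simply connected, the Lie functor is an equivalence between $G$- and $\mathfrak g$-representations (Lie's second theorem applied to the action morphism $\mathfrak g\to\mathfrak{gl}(\mathfrak m)$; cf. \cite{Crainic}), and under it $G$-invariant lattices correspond bijectively and naturally to $L$-flat lattices, because parallel transport along (simply connected) source fibres is single-valued. Assembling these identifications yields the asserted equivalence.

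The main obstacle is this last integration step rather than the construction of $F$: promoting the representation equivalence to source-simply-connected $G$ via Lie II for algebroids, and showing that a flat lattice integrates to a unique $G$-invariant one. Essential surjectivity fails without trivial monodromy of $\Lambda$ around source-fibre loops, which is precisely where simple connectivity is used. Among the verifications for $F$ itself, axiom (2) is the most computational, but it is formally identical to the flatness computation for representations, so no genuinely new difficulty arises there.
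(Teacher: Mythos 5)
Your proposal is correct and takes essentially the same route as the paper: the functor is built from the same group-difference quotient $\tilde L_X(s)(x)=\frac{d}{dt}\big|_{t=0}\big(g_t^{-1}\cdot s(\gamma(t))-s(x)\big)$, and the equivalence rests on the same two ingredients the paper uses — $\mathrm{Rep}(G)\cong\mathrm{Rep}(\mathfrak{g})$ for source simply connected $G$, and the observation that sections of $\ker\exp$ are flat, so the integrated $G$-action preserves $\ker\exp$ and descends through $\exp$ to give essential surjectivity, with full faithfulness again from the representation equivalence. Your repackaging of both sides as pairs (representation, flat lattice bundle) and your direct verification of axiom (3) via $d\exp_0=\mathrm{id}$ are only cosmetic variations on the paper's argument.
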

\begin{proof}$\mathbf{}$
\\\\$\mathbf{1.}$ For the first part, let $M$ be a $G$-module and for $x\in G^0\,$ let $\gamma:(-1,1)\to G(x,\cdot)$ be a curve in the source fiber such that $\gamma(0)=\text{Id}(x)\,.$ We define 
    \begin{align*}
    \tilde{L}_{\dot{\gamma}(0)}r:=\frac{d}{d\epsilon}\Big\vert_{\epsilon=0}r(x)^{-1}[\gamma(\epsilon)^{-1}\cdot r(t(\gamma(\epsilon)))]
    \end{align*}
for a local section $r$ of $\mathcal{O}(M)\,.$ One can check that this is well-defined and that property 1 is satisfied. Now note that the action of $G$ on $M$ induces a linear action of $G$ on $\mathfrak{m}\,,$ and we get a $\mathfrak{g}$-representation on $\mathfrak{m}$ by defining
\begin{align*}
    L_{\dot{\gamma}(0)}\sigma:=\frac{d}{d\epsilon}\Big\vert_{\epsilon=0}\sigma(x)^{-1}[\gamma(\epsilon)^{-1}\cdot \sigma(t(\gamma(\epsilon)))]
    \end{align*}
for a local section $\sigma$ of $\mathfrak{m}\,.$ With these definitions property 2 is satisfied. 
\\\\Now note that this action of $G$ on $\mathfrak{m}$ preserves the kernel of $\exp:\mathfrak{m}\to M\,.$ Let $\sigma$ be a local section of $\mathfrak{m}$
around $x$ such that
$\exp{\sigma}=e\,.$ Then 
\begin{align*}
    L_{\dot{\gamma}(0)}\sigma=\frac{d}{d\epsilon}\Big\vert_{\epsilon=0}\sigma(x)^{-1}[\gamma(\epsilon)^{-1}\cdot \sigma(t(\gamma(\epsilon)))]\,,
    \end{align*}
and since the $G$-action preserves the kernel of 
$\exp\,,$ which is discrete, we have that 
\begin{align*}
    \gamma(\epsilon)^{-1}\cdot \sigma(t(\gamma(\epsilon)))=\sigma(x)\,,
    \end{align*}
hence $L_{\dot{\gamma}(0)}(\sigma)=0\,,$ therefore $L(\sigma)=\tilde{L}(\exp{\sigma})=0\,,$ from which property 3 follows. Since it can be seen that morprhisms of $G$-modules induce morphisms of $\mathfrak{g}$-modules, this completes the proof.
\\\\$\mathbf{2.}$ For the second part, let $M$ be a $\mathfrak{g}$-module for which $\exp:\mathfrak{m}\to M$ is a surjective submersion, and suppose $G$ is source simply connected. Then in particular $\mathfrak{m}$ is a $\mathfrak{g}$-representation, and it is known that for source simply connected groupoids Rep$(G)\cong\text{Rep}(\mathfrak{g})\,,$ hence $\mathfrak{m}$ 
integrates to a $G$-representation. Property 3 implies that the $G$-action preserves the kernel of $\exp\,,$
hence the action of $G$ on $\mathfrak{m}$ descends to $M\,.$ More explicitly: let $g\in G(x,y)$ and let $m\in M_x\,,$ ie. the source fiber of $M$ over $x\,.$ Let 
$\tilde{m}\in\mathfrak{m}_x$ be such that $\exp{\tilde{m}}=m$ and define
\begin{align*}
    g\cdot m=\exp{(g\cdot{\tilde{m}})}\,.
\end{align*}
This is well-defined since the action of $G$ preserves the kernel of $\exp\,.$ Hence the functor is essentially surjective. Now again using the fact that for source simply connected groupoids
Rep$(G)\cong\text{Rep}(\mathfrak{g})\,,$ it follows that the functor is fully faithful, and since it is also essentially surjective, this completes the proof.
 \end{proof}
Note that by composing with $\exp$ we obtain a natural map 
\begin{align*} 
V_{\mathfrak{g}}^*\overset{\exp}{\to} V_{G}^*\,.
\end{align*}
Claim: this map is an isomorphism. To see this, let $f\in V^*_{\mathfrak{g}}$ and suppose $\exp{f}=0\,.$ Let $v\in V$ and $t\in \mathbb{R}\,.$ Then $\exp{f(tv)}=0\implies \exp{tf(v)}=0.$ But taking $t\ne 0$ small enough such that $tf(v)\in U$ we get that $tf(v)=0\,,$ hence $f(v)=0\,.$
So $\exp$ is injective. 
\\\\Now let $f\in V^*_G\,.$ Let $v\in V\,.$ We must have $f(0)=0\,,$ so the map $[0,1]\to G$ defined by $t\mapsto f(tv)$ defines a path in $G$ starting at $0\,,$ therefore since $\mathfrak{g}$ is a covering space of $G\,,$ by the unique path lifting property there is a unique path $X_v:[0,1]\to\mathfrak{g}$ starting at $0$ which lifts $G\,.$ So define $X\in V^*_{\mathfrak{g}}$ by $X(v)=X_v(1)\,.$ Then $X(tv)=X_{tv}(1)$ and this defines a path lifting $f(tv)$ and starting at $0\,,$ hence $X(tv)=tX(v)\,.$ $X(a+b)=X_{a+b}(1)$

\\\\so by continuity for small enough $t\ne 0\in\mathbb{R}$ we have that $f(tv)\in \exp(U)\,.$ Hence there exists a unique $X_{tv}\in\mathfral{g}$ such that $\exp{X_{tv}}=f(tv)\,.$ Claim: $\exp{\frac{X_{tv}}{t}}=f(v).$
Now define $X\in V^*_{\mathfrak{g}}$ by $X(v)=\frac{X_{tv}}{t}\,.$

Let $\Lambda^0 V^*:=G\,.$ Let $\omega\in \Lambda^n V^*\,.$
\end{comment}

\section{van Est Map}\label{van Est map}
\subsection{Definition}
In this section we will discuss a generalization of the van Est map that appears in~\cite{Crainic}. It will be a map
$H^*(G,M)\to H^*(\mathfrak{g},M)\,,$ for a $G$-module $M\,,$ which will be an isomorphism up to a certain degree which depends on the connectivity of the source fibers of $G\,.$ Let us remark that one doesn't need to know the details of the map to understand the main theorem of this paper, \Cref{van Est image}, and if the reader wishes they may skip ahead to~\Cref{main theorem section}.
\\\\Given a groupoid $G\rightrightarrows G^0\,,$ $G$ naturally defines a principal $G$-bundle with the moment map given by $t\,,$ ie. the action is given by the left multiplication of $G$ on itself. Being consistent with the previous notation, we denote the resulting action groupoid by $G\ltimes G$ and note that it is isomorphic to $G_s{\mathop{\times}}_{s} G\rightrightarrows G\,,$ hence it is Morita equivalent to the trivial $G^0\rightrightarrows G^0$ groupoid. 
\begin{definition}
We let $\mathbf{E}^\bullet G:=\mathbf{B}^\bullet(G\ltimes G)\,.$
The simplicial map $\kappa:\mathbf{E}^\bullet G\to\mathbf{B}^\bullet G$ induced by the groupoid morphism $\pi_1:G\ltimes G\to G$ makes $\mathbf{E}^\bullet G$ into a simplicial principal $G$-bundle, and the fiber above $(g^1\,,\ldots\,,g^n)\in \mathbf{B}^n G$ is $t^{-1}(s(g_n))\,.$ 
$\blacksquare$\end{definition}
\theoremstyle{definition}\begin{rmk}Note that $G\ltimes G$ is a groupoid object in $[G^0/G]\,,$ and as a principal $G$-bundle it is the canonical object associated to $G$ via diagram~\ref{canonical object} 
\end{rmk}
\begin{definition}
Let $\Omega_{\kappa\;q}^p(\kappa^*M)$ denote the sheaf of sections of $\Lambda^p T^*_\kappa\, \mathbf{E}^qG\,(\kappa^*M)\,,$ the $\kappa$-foliated covectors taking values in $\kappa^*M\,.$ Succinctly, from $M$ we get a family of abelian groups on $\mathbf{B}^q G\,,$ given by \[
\mathbf{B}^q(G\ltimes M)\to \mathbf{B}^qG\,,
\]
which we denote by $M_{\mathbf{B}^qG}\,;$ we then have that $\kappa^*M_{\mathbf{B}^q G}$ is a module for the submersion groupoid 
\[\mathbf{E}^qG\times_{\mathbf{B}^qG} \mathbf{E}^qG\rightrightarrows \mathbf{E}^qG\,,
\]
and $\Omega_{\kappa\;q}^p(\kappa^*M)$ is the sheaf of $\kappa^*M_{\mathbf{B}^q G}$-valued p-forms associated to the corresponding Lie algebroid module (see~\Cref{forms}). Explicitly, $\Omega_{\kappa\;q}^0(\kappa^*M)$ is the sheaf of sections of~\footnote{Really, we should write $\kappa^*M_{\mathbf{B}^q G}\to\mathbf{E}^q G\,,$ but for notational simplicitly we suppress the subscript.}
\[
\kappa^*M\to \mathbf{E}^q G\,,
\]
and for $p\ge 1\,,$ $\Omega_{\kappa\;q}^p(\kappa^*M)$ is the sheaf of $\kappa$-foliated $p$-forms taking values in $\kappa^*\mathfrak{m}\,.$ There is a differential 
\begin{equation}
\Omega_{\kappa\;q}^0(\kappa^*M)\xrightarrow{\text{dlog}} \Omega_{\kappa\;q}^1(\kappa^*M)
\end{equation}
which is defined as follows: let $U$ be an open set in $\mathbf{E}^q G$ and let $X_g$ be a vector tangent to a $\kappa$-fiber at a point $g\in U\,.$ Let $f\in \Omega_{\kappa\;q}^0(\kappa^*M)(U)\,.$ Define $\text{dlog}\,f\in \Omega_{\kappa\;q}^1(\kappa^*M)(U)$ by 
\[
\text{dlog}\,f\,(X_g)=f(g)^{-1}f_*(X_g)\,,
\]
where in order to identify this with a point in $\kappa^*\mathfrak{m}_g$ we are implicity using the canonical identification of $\kappa^*M_g$ with $\kappa^*M_{g'}$ for any two points $g\,,g'$ in the same $\kappa$-fiber (here $\kappa^*M_g$ is the fiber of $\kappa^*M$ over $g)\,.$ We also use the canonical identification of $\kappa^*\mathfrak{m}_g$ with $\kappa^*\mathfrak{m}_{g'}$ for any two points $g\,,g'$ in the same $\kappa$-fiber to define the differentials for $p>0:$
\begin{equation}
\Omega_{\kappa\;q}^p(\kappa^*M)\overset{\text{d}}{\to}\Omega_{\kappa\;q}^{p+1}(\kappa^*M)\,.
\end{equation}
$\blacksquare$\end{definition}
$\mathbf{}$
\begin{theorem}
There is an isomorphism
\begin{equation*}
Q:H^*(\mathbf{E}^\bullet G\,,\kappa^{-1}\mathcal{O}(M))\to H^*(\mathfrak{g}\,,M)\,.
\end{equation*}
\end{theorem}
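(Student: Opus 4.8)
The plan is to resolve $\kappa^{-1}\mathcal{O}(M)$ on $\mathbf{E}^\bullet G$ by the complex of $\kappa$-foliated forms and then to match the resulting hypercohomology spectral sequence with the one computing $H^*(\mathfrak{g},M)\,;$ the map $Q$ will be induced by restriction to the unit section. First I would show that
\begin{equation*}
0\to \kappa^{-1}\mathcal{O}(M)\to \Omega^0_{\kappa}(\kappa^*M)\xrightarrow{\mathrm{dlog}}\Omega^1_{\kappa}(\kappa^*M)\xrightarrow{\mathrm{d}}\Omega^2_{\kappa}(\kappa^*M)\to\cdots
\end{equation*}
is an exact sequence of sheaves on the simplicial manifold $\mathbf{E}^\bullet G\,.$ This is a fibrewise statement: the fibres of $\kappa$ are $t$-fibres of $G$ and $\kappa^*\mathfrak{m}$ carries a flat $\kappa$-foliated connection, so exactness in positive degrees is the foliated Poincar\'e lemma; exactness at $\Omega^0_\kappa$ identifies $\ker(\mathrm{dlog})$ with the sections of $\kappa^*M$ that are locally constant along $\kappa$-fibres, which is exactly $\kappa^{-1}\mathcal{O}(M)\,,$ and exactness at $\Omega^1_\kappa$ uses property $3$ of a $\mathfrak{g}$-module, $\tilde L_X(\exp\sigma)=L_X(\sigma)\,,$ to realize a closed $\mathfrak{m}$-valued foliated $1$-form as $\mathrm{dlog}$ of a local section of $\kappa^*M\,.$ Thus $\kappa^{-1}\mathcal{O}(M)$ is quasi-isomorphic to $\Omega^\bullet_\kappa(\kappa^*M)\,,$ and the column filtration gives a spectral sequence
\begin{equation*}
E_1^{p,q}=H^q\bigl(\mathbf{E}^\bullet G,\Omega^p_\kappa(\kappa^*M)\bigr)\Rightarrow H^{p+q}\bigl(\mathbf{E}^\bullet G,\kappa^{-1}\mathcal{O}(M)\bigr)\,.
\end{equation*}

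I would then compute the $E_1$ page. The simplicial manifold $\mathbf{E}^\bullet G=\mathbf{B}^\bullet(G\ltimes G)$ is the nerve of the submersion groupoid of the source map $s:G\to G^0\,,$ which by Proposition~\ref{example surj sub} is Morita equivalent to the trivial groupoid $G^0\rightrightarrows G^0\,,$ and $s$ admits the unit section $u:G^0\to G\,.$ For each $p$ the sheaf $\Omega^p_\kappa(\kappa^*M)$ is the sheaf on $\mathbf{E}^\bullet G$ associated to a module of this submersion groupoid (a representation when $p\ge 1)\,,$ so by Proposition~\ref{key example}, Lemma~\ref{acyclic edit} and the Morita invariance of Remark~\ref{morita inv} its cohomology is computed on $G^0$ after pulling back along $u\,.$ Restriction to the units identifies $u^*\Omega^p_\kappa(\kappa^*M)$ with $\Lambda^p\mathfrak{g}^*\otimes\mathfrak{m}=\mathcal{C}^p(\mathfrak{g},M)$ for $p\ge 1$ and with $\mathcal{O}(M)=\mathcal{C}^0(\mathfrak{g},M)$ for $p=0$ (Definition~\ref{forms}), so that
\begin{equation*}
E_1^{p,q}=H^q\bigl(G^0,\mathcal{C}^p(\mathfrak{g},M)\bigr)\,.
\end{equation*}

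This is precisely the $E_1$ page of the hypercohomology spectral sequence of the complex of sheaves $\mathcal{C}^\bullet(\mathfrak{g},M)$ on $G^0\,,$ which converges to $H^*(\mathfrak{g},M)\,.$ Since restriction to the unit section is a morphism of complexes of sheaves, it induces a morphism between the two spectral sequences which is an isomorphism on $E_1\,,$ hence an isomorphism on the abutments; this produces $Q\,.$ I expect the main obstacle to be the verification underlying the $E_1$ identification: that pulling back to the units intertwines the $\kappa$-foliated differentials $\mathrm{dlog}$ and $\mathrm{d}$ with the Chevalley--Eilenberg differentials $d_{\mathrm{CE}}\log$ and $d_{\mathrm{CE}}\,,$ and that the $\mathfrak{g}$-module structure on $\mathfrak{m}$ governing $\mathrm{d}$ is the one produced by the differentiation functor of Theorem~\ref{g-module}. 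This ``differentiation at the identity'' is the infinitesimal heart of the van Est construction, and it must be done naturally in $p$ so that the identifications assemble into a single morphism of spectral sequences.
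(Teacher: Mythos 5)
Your proposal is correct and takes essentially the same route as the paper's proof: the same resolution of $\kappa^{-1}\mathcal{O}(M)$ by the $\kappa$-foliated complex, the same Morita-invariance and acyclicity input (Proposition~\ref{example surj sub}, Proposition~\ref{key example}, Lemma~\ref{acyclic edit}, Remark~\ref{morita inv}) to reduce each term to $G^0\,,$ and the same restriction-to-the-identity-bisection identification with the complex $\mathcal{C}^\bullet(\mathfrak{g},M)\,,$ including the verification you flag that the foliated differentials restrict to $d_{\mathrm{CE}}\log$ and $d_{\mathrm{CE}}\,.$ The only difference is bookkeeping: you compare the two column-filtration hypercohomology spectral sequences, whereas the paper totalizes the Godement double complex of invariant sections directly.
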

\begin{proof}Form the sheaf $\kappa^{-1}\mathcal{O}(M)$ on $\mathbf{E}^\bullet G\,.$ This sheaf is not in general a sheaf on the stack $[G/(G\ltimes G)]\,,$ but it is resolved by sheaves on stacks
in the following way:\footnote{These are sheaves on stacks because \begin{equation*}
\Lambda^n T^*_\kappa G(\kappa^*M)\cong\Lambda^n T^*_tG(t^*M)
\end{equation*}
(where $t$ is the target map),
and the latter are $(G\ltimes G)$-modules.}
\begin{equation}\label{deligne1}
    \kappa^{-1}\mathcal{O}(M)_\bullet\xhookrightarrow{} \mathcal{O}(\kappa^*M)_\bullet\overset{\text{dlog}}{\to}\Omega_{\kappa\,\bullet}^1(\kappa^*M)
    \overset{\text{d}}{\to}\Omega_{\kappa\,\bullet}^2(\kappa^*M)\to\cdots\,.
\end{equation}
We let, for all $q\ge 0\,,$
\begin{align}\label{C}C^\bullet_q:=\mathcal{O}(\kappa^*M)_q\to\Omega_{\kappa\,q}^1(\kappa^*M)
    \to\Omega_{\kappa\,q}^2(\kappa^*M)\to\cdots\,.
    \end{align}
We can then take the Godement resolution of $C^\bullet_q$ and get a double complex for each $q\ge 0:$ \begin{align*}
C^\bullet_q\xhookrightarrow{} \mathbb{G}^0(C^\bullet_q)\to \mathbb{G}^1(C^\bullet_q)\to\cdots\,.
\end{align*}
\\All of the sheaves $\mathbb{G}^p(C^r_\bullet)$ are sheaves on stacks, and it follows that these sheaves are acyclic (as sheaves on stacks) since $G\ltimes G\rightrightarrows G$ is Morita equivalent to a submersion groupoid, and \Cref{acyclic edit}. Hence $\mathbb{G}^p(C^r_\bullet)$ can be used to compute cohomology (see Remark~\ref{acyclic resolution}) and we have that
\begin{align*}
    H^*(\mathbf{E}^\bullet G, \kappa^{-1}\mathcal{O}(M))\cong  H^*(\text{Tot}(\Gamma_\text{inv}(\mathbb{G}^\bullet (C^\bullet_0))))\,.
\end{align*}
Now we have that 
\begin{align*}
    \Gamma_\text{inv}(\mathbb{G}^p(C^q_0)))=\Gamma(\mathbb{G}^p(i^*C^q_0))\,, 
\end{align*}
where $i:G^0\to G$ is the identity bisection. Since all of the differentials in~\eqref{C} preserve invariant sections, they descend to differentials on 
$\Gamma(\mathbb{G}^\bullet(i^*C^\bullet_0))\,,$ hence 
\begin{align*}
    H^*(\text{Tot}(\Gamma_\text{inv}(\mathbb{G}^\bullet (C^\bullet_0))))\cong H^*(\mathfrak{g},M)\,.
\end{align*}
\end{proof}
\theoremstyle{definition}\begin{definition}\label{van Est}
Let $M$ be a $G$-module. We define a map 
\begin{equation*}
H^*(G\,,M)\to H^*(\mathfrak{g}\,,M)
\end{equation*}
given by the composition
\begin{equation*}
H^*(G\,,M)\overset{\kappa^{-1}}{\to} H^*(\mathbf{E}^\bullet G\,,\kappa^{-1}\mathcal{O}(M)_\bullet)\overset{Q}{\to}H^*(\mathfrak{g}\,,M)\,.
\end{equation*}
This is the van Est map; we denote it by $VE\,.$
\end{definition}
\theoremstyle{definition}\begin{rmk}
Taking $M=\mathbb{C}_{G^0}$ as a smooth abelian groups with the trivial $G$-action, the sheaves in the resolution of $\kappa^{-1}\mathcal{O}(M)$ in~\eqref{deligne1} are already acyclic (as sheaves on stacks). Hence our map coincides with the map \begin{align*}
    H^*(G\,,M)&\to H^*(\mathbf{E}^\bullet G\,,\kappa^{-1}\mathcal{O}(M)_\bullet)
    \\&=H^*(\Gamma_\text{inv}(\mathcal{O}(\kappa^*M)_0)\to\Gamma_\text{inv}(\Omega_{\kappa\,0}^1(M))
  \to\cdots)
  \to H^*(\mathfrak{g}\,,M)\,,
\end{align*}
which is the van Est map as described in \cite{Meinrenken}.
\end{rmk} 
\subsection{van Est for Truncated Cohomology}
In order to emphasize geometry on the space of morphisms rather than on the space of objects we perform a truncation. That is, we truncate the contribution of $G^0$ to $H^*(G,M)$ by considering instead the cohomology 
\[
H^*(\mathbf{B}^\bullet G,\mathcal{O}(M)^0_\bullet)\,,
\]
where $\mathcal{O}(M)^{0}_n=\mathcal{O}(M)_n$ for all $n\ge 1\,,$ and where $\mathcal{O}(M)^{0}_0$ is the trivial sheaf on $G^0\,,$ ie. the sheaf that assigns to every open set the group containing only the identity.
\\\\We define 
\[
H^*_0(G,M):=H^{*+1}(\mathbf{B}^\bullet G,\mathcal{O}(M)^{0}_\bullet)\,.
\]
There is a canonical map 
\[
H^*_0(G,M)\to H^{*+1}(G,M)\
\]
induced by the morphism of sheaves on $\mathbf{B}^\bullet G$ given by $\mathcal{O}(M)^{0}_\bullet\xhookrightarrow{}\mathcal{O}(M)_\bullet\,.$
Similarly, we can truncate $M$ from $H^*(\mathfrak{g},M)$ by considering instead 
\[
H^*_0(\mathfrak{g},M):=H^{*+1}(0\to\mathcal{C}^1(\mathfrak{g},M)\to\mathcal{C}^2(\mathfrak{g},M)\to\cdots)\,.
\]
Then in like manner there is a canonical map 
\[
H^*_0(\mathfrak{g},M)\mapsto H^{*+1}(\mathfrak{g},M)
\]
induced by the inclusion of the truncated complex into the full one.
\begin{theorem}\label{canonical edit}
There is a canonical map $VE_0$ lifting $VE\,,$ ie. such that the following diagram commutes:
\begin{equation}\label{diagram}
\begin{tikzcd}
H^*_0(G,M) \arrow{d}{}\arrow{r}{VE_0} & H^*_0(\mathfrak{g},M)\arrow{d} \\
H^{*+1}(G,M)\arrow{r}{VE} & H^{*+1}(\mathfrak{g},M) 
\end{tikzcd}
\end{equation}
where $H_0^*$ denotes truncated cohomology, as define above.
\end{theorem}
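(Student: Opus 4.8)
The plan is to present $VE$ as a morphism between two long exact sequences sharing the common term $H^*(G^0,\mathcal O(M))$, and to read off $VE_0$ as the induced map on the truncated terms.

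First I would record the two long exact sequences. On the groupoid side, the inclusion of simplicial sheaves $\mathcal O(M)^0_\bullet\hookrightarrow\mathcal O(M)_\bullet$ has cokernel $\mathcal Q_\bullet$ supported in simplicial degree $0$ and equal there to $\mathcal O(M)$ on $G^0$; since $H^*(\mathbf B^\bullet G,\mathcal Q_\bullet)=H^*(G^0,\mathcal O(M))$ the associated sequence is
\[
\cdots\to H^n(G^0,\mathcal O(M))\to H^n_0(G,M)\xrightarrow{\iota_G}H^{n+1}(G,M)\xrightarrow{r}H^{n+1}(G^0,\mathcal O(M))\to\cdots,
\]
with $\iota_G$ the canonical map of the statement and $r$ the restriction to the units. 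On the algebroid side, the inclusion of the truncated complex into $\mathcal C^\bullet(\mathfrak g,M)$ has cokernel $\mathcal O(M)=\mathcal C^0(\mathfrak g,M)$ in degree $0$, again with hypercohomology $H^*(G^0,\mathcal O(M))$, giving the parallel sequence with $\iota_{\mathfrak g}$ and a degree-zero edge map $\pi$.

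Next I would realize the algebroid sequence via the Godement bicomplex $\Gamma(\mathbb G^\bullet(\mathcal C^\bullet(\mathfrak g,M)))$ computing $H^*(\mathfrak g,M)$, which is the $\Gamma_{\mathrm{inv}}$-model of $\mathrm{Tot}(\Gamma_{\mathrm{inv}}(\mathbb G^\bullet(C^\bullet_0)))$ from the proof of the preceding theorem, obtained through $\Gamma_{\mathrm{inv}}(\mathbb G^p(C^r_\bullet))=\Gamma(\mathbb G^p(i^*C^r_0))$ for the identity bisection $i\colon G^0\to G$. Its form-degree filtration has quotient $\Gamma(\mathbb G^\bullet(\mathcal O(M)))$ with cohomology $H^*(G^0,\mathcal O(M))$ (by \Cref{acyclic edit} and Morita invariance) and subcomplex in form degree $\ge 1$ computing $H^*_0(\mathfrak g,M)$. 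Thus $\pi$ is the form-degree-zero edge map, induced on $\mathbf E^\bullet G$ by the sheaf inclusion $\kappa^{-1}\mathcal O(M)_\bullet\hookrightarrow\mathcal O(\kappa^*M)_\bullet$ of \eqref{deligne1}.

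The heart of the argument is the identity $\pi\circ VE=r$. Unwinding definitions, $\pi\circ VE$ pulls a groupoid cocycle back along $\kappa$ into form degree $0$ and then applies the inclusion $\kappa^{-1}\mathcal O(M)_\bullet\hookrightarrow\mathcal O(\kappa^*M)_\bullet$; because $\kappa\circ i=\mathrm{id}_{G^0}$, the Morita identification sends the result to the restriction of the cocycle to the units, which is exactly $r$. Granting this, exactness gives $\pi\circ VE\circ\iota_G=r\circ\iota_G=0$, so $VE\circ\iota_G$ factors through $\ker\pi=\operatorname{im}\iota_{\mathfrak g}$; the resulting factorization is the sought map $VE_0$, and the square of the statement, $\iota_{\mathfrak g}\circ VE_0=VE\circ\iota_G$, holds by construction. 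To make $VE_0$ canonical rather than merely a diagram-chase lift, I would upgrade this to an actual morphism of the two short exact sequences of complexes, using the acyclic subsheaf $\widetilde{\mathcal O(M)}_\bullet\subset\mathcal O(\kappa^*M)_\bullet$ of sections trivial on the units to pin down a form-degree-$\ge 1$ representative of $VE\circ\iota_G(\alpha)$ for each truncated class $\alpha$; commutativity with the connecting maps is then automatic from naturality.

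I expect the main obstacle to be precisely this last point: the cochain-level verification of $\pi\circ VE=r$ together with the canonical choice of lift. Concretely, one must control the zig-zag defining $Q$ well enough to show that van Est carries a normalized groupoid cocycle that is trivial on the units to a Chevalley--Eilenberg class whose form-degree-zero part vanishes, so that it lands canonically in the truncated complex $0\to\mathcal C^1(\mathfrak g,M)\to\mathcal C^2(\mathfrak g,M)\to\cdots$; the auxiliary acyclic sheaf of unit-trivial sections is the device I would use to carry this out at the level of complexes.
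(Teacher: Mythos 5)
Your proposal is correct, and it reaches the paper's construction by a different route. You begin with the two long exact sequences induced by $\mathcal O(M)^0_\bullet\hookrightarrow\mathcal O(M)_\bullet$ and by the inclusion of the truncated Chevalley--Eilenberg complex, prove the edge identity $\pi\circ VE=r$, and then factor $VE\circ\iota_G$ through $\ker\pi=\operatorname{im}\iota_{\mathfrak g}$. As you yourself flag, this chase alone cannot produce a \emph{canonical} $VE_0$: since $\iota_{\mathfrak g}$ has kernel $\operatorname{im}\bigl(d_{\mathrm{CE}}\log:H^*(G^0,\mathcal O(M))\to H^*_0(\mathfrak g,M)\bigr)$, each class only lifts up to that ambiguity, so existence of lifts is all the exactness argument yields. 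Your proposed fix --- working at the level of complexes with the subsheaves of sections trivial on the units --- is precisely the device the paper uses: it introduces the normalized sheaves $\widehat{\mathcal O(\kappa^*M)}_\bullet$ and $\widehat{\kappa^{-1}\mathcal O(M)}_\bullet$ (sections equal to the identity on $G^0$), shows the Godement sheaves of the former are acyclic, and then defines $VE_0$ outright as the composition
\begin{align*}
H^*_0(G,M)\xrightarrow{\kappa^{-1}} H^{*+1}(\mathbf{E}^\bullet G,\kappa^{-1}\mathcal O(M)^0_\bullet)\to H^{*+1}(\mathbf{E}^\bullet G,\widehat{\kappa^{-1}\mathcal O(M)}_\bullet)\cong H^{*+1}(\mathbf{E}^\bullet G,0\to\Omega^1_{\kappa\,\bullet}(M)\to\cdots)\cong H^*_0(\mathfrak g,M)\,,
\end{align*}
so that commutativity of \eqref{diagram} holds by construction, with no long exact sequence or factorization argument needed. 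The trade-off: the paper's proof is shorter and gives canonicity immediately; your framing proves more in one stroke, since the compatibility with the edge maps $r$ and $\delta^*$ that you establish along the way is exactly the content of the enlarged diagram in \Cref{commute}, which the paper has to state separately.
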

\begin{proof}
Consider the ``normalized" sheaf on $\mathbf{E}^\bullet G$ given by $\widehat{\mathcal{O}(\kappa^*M)}_\bullet\,,$ where $\widehat{\mathcal{O}(\kappa^*M)}_n=\mathcal{O}(\kappa^*M)_n$ for $n\ge 1\,,$ and where $\widehat{\mathcal{O}(\kappa^*M)}_0$ is the subsheaf of $\mathcal{O}(\kappa^*M)_0$ consisting of local sections which are the identity on $G^0\,.$ Then \[
H^*(\mathbf{E}^\bullet G,\mathbb{G}^n(\widehat{\mathcal{O}(\kappa^*M)}_\bullet))=0\,,
\] and in particular, $\mathbb{G}^n(\widehat{\mathcal{O}(\kappa^*M)}_\bullet)$ is acyclic.
$\mathbf{}$
\\\\Now consider the sheaf $\widehat{\kappa^{-1}\mathcal{O}(M)}_\bullet$ on $\mathbf{E}^\bullet G$ given by $\widehat{\kappa^{-1}\mathcal{O}(M)}_n=\kappa^{-1}\mathcal{O}(M)_n$ for $n\ge 1\,,$ and such that $\widehat{\kappa^{-1}\mathcal{O}(M)}_0$ is the subsheaf of $\kappa^{-1}\mathcal{O}(M)_0$ consisting of local sections which are the identity on $G^0\,.$
Then there is a canonical embedding $\kappa^{-1}\mathcal{O}(M)^0_\bullet\xhookrightarrow{}\widehat{\kappa^{-1}\mathcal{O}(M)}_\bullet\,,$ hence we get a map
\begin{align*}
    H^*(\mathbf{B}^\bullet G,\mathcal{O}(M)^0_\bullet)\to H^*(\mathbf{E}^\bullet G,\widehat{\kappa^{-1}\mathcal{O}(M)}_\bullet)\,.
\end{align*}
Now we have that the following inclusion is a resolution:
\begin{align*}
    \widehat{\kappa^{-1}\mathcal{O}(M)}_\bullet\xhookrightarrow{} \widehat{\mathcal{O}(\kappa^*M)}_\bullet\to\Omega_{\kappa\,\bullet}^1(M)
    \to\Omega_{\kappa\,\bullet}^2(M)\to\cdots\,.
\end{align*}
Then one can show that 
\begin{align*}
   & H^*(\mathbf{E}^\bullet G,\widehat{\kappa^{-1}\mathcal{O}(M)}_\bullet\to\Omega_{\kappa\,\bullet}^1(M)
    \to\Omega_{\kappa\,\bullet}^2(M)\to\cdots)
    \\&\cong H^{*}(\mathbf{E}^\bullet G,0\to\Omega_{\kappa\,\bullet}^1(M)
    \to\Omega_{\kappa\,\bullet}^2(M)\to\cdots)\,,
\end{align*}
and since $\Omega_{\kappa\,\bullet}^1(M)\to\Omega_{\kappa\,\bullet}^2(M)\to\cdots$ is a complex of sheaves on stacks, by a similar argument made when defining the van Est map in the previous section we get that 
\begin{align*}
    H^{*+1}(\mathbf{E}^\bullet G,0\to\Omega_{\kappa\,\bullet}^1(M)\to\Omega_{\kappa\,\bullet}^2(M)\to\cdots)\cong H^*_0(\mathfrak{g},M)\,.
\end{align*}
Then $VE_0$ is the map 
\begin{align*}
   & H^*_0(G,M)=H^{*+1}(\mathbf{B}^\bullet G,\mathcal{O}(M)^0_\bullet)\xrightarrow{\kappa^{-1}} H^{*+1}(\mathbf{E}^\bullet G,\kappa^{-1}\mathcal{O}(M)^0_\bullet)
   \\&\to H^{*+1}(\mathbf{E}^\bullet G,\widehat{\kappa^{-1}\mathcal{O}(M)}_\bullet)
   \,{\cong}\, H^{*+1}(\mathbf{E}^\bullet G,\widehat{\mathcal{O}(\kappa^*M)}_\bullet\to\Omega_{\kappa\,\bullet}^1(M)
    \to\cdots)
    \\&{\cong}\,H^{*+1}(\mathbf{E}^\bullet G,0\to\Omega_{\kappa\,\bullet}^1(M)
    \to\cdots)
    \xrightarrow{\cong} H^{*}_0(\mathfrak{g},M)\,.
\end{align*}
\end{proof}
\theoremstyle{definition}\begin{rmk}
The van Est map (including the truncated version) factors through a local van Est map defined on the cohomology of the local groupoid [see~\cite{Meinrenken}], ie. to compute the van Est map one can first localize the cohomology classes to a neighborhood of the identity bisection.
\end{rmk}
\subsection{Properties of the van Est Map}
In this section we discuss some properties of the van Est map; the main results pertain to its kernel and image.
\\\\Recall that given a sheaf $\mathcal{S}_\bullet$ on a (semi) simplicial space $X^\bullet\,,$ we calculate its cohomology by taking an injective resolution $0\to\mathcal{S}_\bullet\to \mathcal{I}^0_\bullet\to\mathcal{I}^1_\bullet\to\cdots$
and computing 
\begin{align*}
    H^*(\,\Gamma_\text{inv}(\mathcal{I}^0_0)\to\Gamma_\text{inv}(\mathcal{I}^1_0)\to\cdots)\,.
\end{align*}
By considering the natural injection $\Gamma_\text{inv}(\mathcal{I}_0^n)\hookrightarrow\Gamma(\mathcal{I}^n_0)$ we get a map
\begin{equation}\label{restriction}
    r:H^*(X^\bullet,\mathcal{S}_\bullet)\to H^*(X^0,\mathcal{S}_0)\,.
    \end{equation}
Similarly, for a cochain complex of abelian groups $\mathcal{A}^0\to \mathcal{A}^1\to\cdots$ there is a map 
\begin{align*}
H^*(\mathcal{A}^0\to \mathcal{A}^1\to\cdots)\overset{r}{\to} H^*(\mathcal{A}^0)\,.
\end{align*}
Using this, we have the following result, which gives an enlargement of diagram \eqref{diagram}:
\begin{lemma}\label{commute}
The following diagram is commutative:
\[
\begin{tikzcd}
H^{*}(G^0,\mathcal{O}(M))\arrow{r}{\delta^*}\arrow{d}{\parallel} &H^*_0(G,M)\arrow{r}{}\arrow{d}{VE_0} &H^{*+1}(G,M) \arrow{r}{r} \arrow{d}{VE} & H^{*+1}(G^0,\mathcal{O}(M))\arrow{d}{\parallel} \\
H^{*}(G^0,\mathcal{O}(M))\arrow{r}{d_{CE}\log}& H^*_0(\mathfrak{g},M)\arrow{r}{}& H^{*+1}(\mathfrak{g},M) \arrow{r}{r}& H^{*+1}(G^0,\mathcal{O}(M))
\end{tikzcd}
\]
\end{lemma}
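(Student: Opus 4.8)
The plan is to recognize both rows of the diagram as the long exact sequences attached to a truncation short exact sequence, and to exhibit the van Est map as a morphism between these two short exact sequences; the commutativity of all three squares then becomes the naturality of the connecting homomorphism and of the induced maps in a long exact sequence. On the groupoid side, consider the short exact sequence of sheaves on $\mathbf{B}^\bullet G$
\[
0\to \mathcal{O}(M)^0_\bullet\to \mathcal{O}(M)_\bullet\to \iota_*\mathcal{O}(M)\to 0\,,
\]
where $\iota_*\mathcal{O}(M)$ is the sheaf concentrated in simplicial degree $0$ with value $\mathcal{O}(M)$ on $G^0\,.$ Since $H^*(\mathbf{B}^\bullet G,\iota_*\mathcal{O}(M))\cong H^*(G^0,\mathcal{O}(M))$ and $H^*_0(G,M)=H^{*+1}(\mathbf{B}^\bullet G,\mathcal{O}(M)^0_\bullet)\,,$ the associated long exact sequence is precisely the top row, with $\delta^*$ the connecting map, $r$ the map induced by the quotient, and $H^*_0(G,M)\to H^{*+1}(G,M)$ induced by the inclusion. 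On the Lie algebroid side the analogous truncation short exact sequence of complexes of sheaves
\[
0\to(0\to\mathcal{C}^1(\mathfrak{g},M)\to\cdots)\to \mathcal{C}^\bullet(\mathfrak{g},M)\to \mathcal{C}^0(\mathfrak{g},M)\to 0
\]
has for its long exact sequence exactly the bottom row, with connecting map $d_{\mathrm{CE}}\log$ and quotient-induced map $r\,.$

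Second, I would assemble the van Est construction into a morphism from the first short exact sequence to the second, at the level of the Godement-resolved double complexes used to define $Q$ and $VE_0\,.$ The machinery for this is already in place in the proof of \Cref{canonical edit}: there the normalized sheaves $\widehat{\kappa^{-1}\mathcal{O}(M)}_\bullet$ and $\widehat{\mathcal{O}(\kappa^*M)}_\bullet$ realize the subcomplex, while $\kappa^{-1}\mathcal{O}(M)_\bullet$ and its resolution realize the full complex; the middle vertical induces $VE\,,$ the left vertical induces $VE_0\,,$ and the middle square is the one already established in \Cref{canonical edit}. Granting that this is genuinely a morphism of short exact sequences, the left square is the naturality square of the connecting homomorphism and the middle square is the naturality square of the map induced by the inclusion of the subcomplex, so both commute once the induced map on the degree-$0$ quotients is identified.

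Third, I would identify the map induced by van Est on the degree-$0$ quotient $H^*(G^0,\mathcal{O}(M))$ with the identity, which simultaneously closes the right square. In simplicial degree $0$ the map $\kappa\colon \mathbf{E}^\bullet G\to\mathbf{B}^\bullet G$ is $\kappa_0=t\colon G\to G^0\,,$ and the isomorphism $Q$ restricts sections along the identity bisection $i\colon G^0\to G\,;$ since $t\circ i=\mathrm{id}_{G^0}\,,$ the composite $i^*\circ\kappa_0^{-1}$ acts as the identity on $\mathcal{O}(M)$ over $G^0\,.$ Hence the right vertical map on quotients is the identity, giving $r\circ VE=r$ (the right square) and, by naturality of the connecting homomorphism, $VE_0\circ\delta^*=d_{\mathrm{CE}}\log$ (the left square, since $d_{\mathrm{CE}}\log$ is by definition the connecting map on the Lie algebroid side).

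The main obstacle is the claim used in the second step: that the entire van Est zigzag $VE=Q\circ\kappa^{-1}$—which passes through the pullback along $\kappa\,,$ the Godement resolutions, and the restriction to the identity bisection—respects the truncation filtration compatibly on both the groupoid and the Lie algebroid sides, so that it genuinely defines a morphism of the two short exact sequences rather than merely inducing the three individual maps in isolation. Concretely, one must check that the normalized subsheaves of sections that are the identity on $G^0$ (respectively the positive-degree parts of the $\kappa$-foliated form complex) are carried into one another by every arrow of the zigzag; this is exactly the normalization bookkeeping already carried out in \Cref{canonical edit}, and the remaining work is to verify that those identifications are natural in the short exact sequence, after which the three squares follow formally.
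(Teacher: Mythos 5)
The paper offers no proof of Lemma~\ref{commute}: it is stated as an ``enlargement'' of diagram~\eqref{diagram}, whose commutativity is exactly Theorem~\ref{canonical edit} (the middle square), and the two outer squares are asserted without argument. So there is nothing to compare your proof against; you are supplying an argument the paper omits, and the route you choose is the natural one. Your three structural points are all correct and they do carry the proof: (i) the top row is the long exact sequence of $0\to\mathcal{O}(M)^0_\bullet\to\mathcal{O}(M)_\bullet\to Q_\bullet\to 0$ on $\mathbf{B}^\bullet G$, where $Q_\bullet$ is the quotient concentrated in simplicial degree $0$; computing with the double complex of Remark~\ref{double complex} gives $H^*(\mathbf{B}^\bullet G,Q_\bullet)\cong H^*(G^0,\mathcal{O}(M))$, the quotient-induced map is the paper's $r$, and the snake construction identifies the connecting map with $\delta^*$; (ii) the bottom row is likewise the long exact sequence of the brutal truncation of the Chevalley--Eilenberg complex, with connecting map $d_{\mathrm{CE}}\log$ by inspection; (iii) since $\kappa_0=t$ on $\mathbf{E}^0G=G$ and $t\circ i=\mathrm{id}_{G^0}$, the map the zigzag induces on the quotient terms is the identity of $H^*(G^0,\mathcal{O}(M))$, and this one computation closes both outer squares by naturality of the connecting homomorphism and of the quotient maps.

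The step you flag as the main obstacle --- that $VE$ and $VE_0$ genuinely assemble into a morphism of the two short exact sequences --- is the technical heart, but it is routine for exactly the reasons you indicate, and none of the checks can fail: $\kappa^{-1}$ is exact, so it preserves the truncation sequence; the inclusion $\kappa^{-1}\mathcal{O}(M)^0_\bullet\hookrightarrow\widehat{\kappa^{-1}\mathcal{O}(M)}_\bullet$ from Theorem~\ref{canonical edit}, together with the identity on $\kappa^{-1}\mathcal{O}(M)_\bullet$, is a morphism of short exact sequences; the foliated resolutions of the sub and of the whole induce on the degree-zero quotients a map of sheaves supported on the bisection which is a stalkwise isomorphism (both quotient stalks at $x\in G^0$ are $\mathcal{O}(M)_x$, because $t|_{G^0}=\mathrm{id}$ and every germ of a section of $M$ extends to a germ of a section of $\kappa^*M$ by pullback along $t$); and the Godement functor is exact with flasque values, so the three long exact sequences are realized by genuine short exact sequences of total complexes of global sections, through which all of your maps are honest chain maps. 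Writing those verifications down would make your outline a complete proof; I see no gap in the underlying ideas.
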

\begin{lemma}\label{kernel}
Suppose that $X\overset{\pi}{\to} Y$ is a surjective submersion with $(n-1)$-connected fibers, for some $n>0\,,$ and with a section $\sigma\,.$  Consider an exact sequence of of families of abelian groups on $Y$ given by
\begin{align*}
    0\to Z\xrightarrow{}\mathfrak{m}\xrightarrow{\exp{}}M\,.
\end{align*} Let $\omega\in H^0(X,\Omega^n_{\pi}(\pi^*\mathfrak{m}))$ be closed (ie. $\omega$ is a closed, foliated $n$-form on $X$) and suppose that
\begin{equation}\label{in kernel}
\int_{S^n(\pi^{-1}(y))}\omega \in Z\;\;\text{     for all } y \in Y \text{ and all } S^n(\pi^{-1}(y))\,,
\end{equation}
where $S^n(\pi^{-1}(y))$ is an $n$-sphere contained in the source fiber over $y\,.$ 
Let $[\omega]$ denote the class $\omega$ defines in $H^n(X,\pi^{-1}\mathcal{O}(\mathfrak{m}))\,.$ Then $\exp{[\omega]}=0\,.$ 
\begin{proof}
From Equation~\ref{in kernel} we know that $\exp{\omega}\vert_{\pi^{-1}(y)}=0$ for each $y\in Y\,,$ therefore since the source fibers of $X$ are $(n-1)$-connected, by Theorem~\ref{spectral theorem} we have that 
\begin{align*}
    \exp{[\omega]}=\pi^{-1}\beta
    \end{align*}
for some $\beta\in H^n(Y,\mathcal{O}(M))\,.$ Since $\pi\circ \sigma:Y\to Y$ is the identity this implies that
\begin{align*}
    \exp{\sigma^{-1}[\omega]}=\beta\,,
    \end{align*}
    but $\sigma^{-1}[\omega]=0$ since $\omega$ is a global foliated form. Hence $\beta=0\,,$ hence $\exp{[\omega]}=0\,.$
\end{proof}
\end{lemma}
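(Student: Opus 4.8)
The plan is to pass to the base $Y$ in three moves: first show that $\exp[\omega]$ restricts to the zero class on every fibre $F_y:=\pi^{-1}(y)$, then use the Leray spectral sequence of $\pi$ to realize $\exp[\omega]$ as a class pulled back from $Y$, and finally use the section $\sigma$ to see that this base class is zero. For the fibrewise step, note that on $F_y$ the sheaf $\pi^{-1}\mathcal{O}(\mathfrak{m})$ is the constant sheaf with stalk $\mathfrak{m}_y$, and $\omega|_{F_y}$ is a genuine closed $\mathfrak{m}_y$-valued $n$-form, so $[\omega]|_{F_y}$ is its de Rham class, whose pairing with a sphere $S^n\subset F_y$ is exactly the period $\int_{S^n}\omega$. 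Applying the homomorphism $\exp\colon\mathfrak{m}_y\to M_y$ turns these periods into $\exp\big(\int_{S^n}\omega\big)$, which vanish by hypothesis \eqref{in kernel} since $\int_{S^n}\omega\in Z_y=\ker\exp$. The point is that the spherical periods already determine the class: because $F_y$ is $(n-1)$-connected, Hurewicz gives that $H_n(F_y)$ is generated by images of maps $S^n\to F_y$, while $H_{n-1}(F_y)=0$ kills the $\mathrm{Ext}$-term in the universal coefficient theorem, so evaluation $H^n(F_y,M_y)\xrightarrow{\ \sim\ }\mathrm{Hom}(H_n(F_y),M_y)$ is an isomorphism. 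Hence $\exp[\omega]|_{F_y}=0$ for every $y$.

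Next I would feed fibrewise vanishing into the Leray spectral sequence of $\pi$ (Theorem~\ref{spectral}) with coefficients $\pi^{-1}\mathcal{O}(M)$. The $(n-1)$-connectivity forces $R^q\pi_*\pi^{-1}\mathcal{O}(M)=0$ for $1\le q\le n-1$ and $R^0\pi_*\pi^{-1}\mathcal{O}(M)=\mathcal{O}(M)$, so in total degree $n$ the only surviving $E_2$-terms are $E_2^{n,0}=H^n(Y,\mathcal{O}(M))$ and $E_2^{0,n}$. The resulting edge sequence
\[
0\to H^n(Y,\mathcal{O}(M))\xrightarrow{\pi^{-1}}H^n(X,\pi^{-1}\mathcal{O}(M))\xrightarrow{\mathrm{res}}H^0(Y,R^n\pi_*\pi^{-1}\mathcal{O}(M))
\]
is exact, where $\mathrm{res}$ records the fibrewise restrictions. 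Since $\mathrm{res}(\exp[\omega])=0$ by the previous step, exactness yields $\exp[\omega]=\pi^{-1}\beta$ for a unique $\beta\in H^n(Y,\mathcal{O}(M))$.

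Finally I would apply $\sigma^{-1}$. From $\pi\circ\sigma=\mathrm{id}_Y$ we get $\beta=\sigma^{-1}\pi^{-1}\beta=\sigma^{-1}\exp[\omega]=\exp(\sigma^{-1}[\omega])$, so it suffices to see $\sigma^{-1}[\omega]=0$ in $H^n(Y,\mathcal{O}(\mathfrak{m}))$. In the smooth setting this is immediate because $\mathfrak{m}$ is a vector bundle, so $\mathcal{O}(\mathfrak{m})$ is fine and $H^n(Y,\mathcal{O}(\mathfrak{m}))=0$ for $n\ge 1$; more robustly, $\sigma^{-1}[\omega]=0$ because $\omega$ is a global \emph{foliated} $n$-form of positive degree and $\sigma$ is transverse to the $\pi$-foliation, so the class it represents pulls back trivially along the section. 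Either way $\beta=0$ and therefore $\exp[\omega]=\pi^{-1}\beta=0$. I expect the genuine obstacle to be the first step: converting the integrality condition \eqref{in kernel} into fibrewise vanishing of $\exp[\omega]$, since this is precisely where both the $(n-1)$-connectivity (through Hurewicz and the vanishing of $H_{n-1}$) and the hypothesis $\int_{S^n}\omega\in Z$ are essential, and where one must check that cohomology with coefficients in the possibly non-discrete group $M_y$ is still detected by spheres. Once fibrewise vanishing holds, the spectral sequence and the section argument are essentially formal.
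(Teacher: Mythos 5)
Your proposal is correct and follows essentially the same route as the paper's proof: fibrewise vanishing of $\exp{[\omega]}$ deduced from the period hypothesis, descent to a class $\pi^{-1}\beta$ with $\beta\in H^n(Y,\mathcal{O}(M))$ via the connectivity/Leray exact sequence (this is exactly the content of Theorem~\ref{spectral theorem}), and finally $\beta=\exp{\sigma^{-1}[\omega]}=0$ using the section. The differences are only presentational: you spell out the fibrewise step (de Rham periods, Hurewicz, universal coefficients) that the paper merely asserts, and your edge-sequence derivation reproduces the appendix theorem the paper cites.
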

\begin{corollary}\label{groupoid kernel}
Suppose that $G\rightrightarrows X$ is source $(n-1)$-connected for some $n>0\,.$ Consider an exact sequence of families of abelian groups  on $X$ given by
\begin{align*}
    0\xrightarrow{} Z\xrightarrow{}\mathfrak{m}\xrightarrow{\exp{}} M\,.
\end{align*} Let $\omega\in H^0(\mathcal{C}^0(\mathfrak{g},\mathfrak{m}))$ be closed (ie. it is a closed $n$-form in the Chevalley-Eilenberg complex) and suppose that
\begin{equation}\label{in kernel 2}
\int_{S^n(s^{-1}(x))}\omega \in Z\;\;\text{     for all } x \in X \text{ and all } S^n(s^{-1}(x))\,,
\end{equation}
where in the above we have left translated $\omega$ to a source-foliated $n$-form, and where $S^n(s^{-1}(x))$ is an $n$-sphere contained in the source fiber over $x\,.$
Let $[\omega]$ denote the class $\omega$ defines in $H^n(\mathbf{E}^\bullet G,\kappa^{-1}\mathfrak{m})\,.$ Then $r(\exp{([\omega])})=0\,,$ where $r$ is as in Equation~\ref{restriction}. 
\end{corollary}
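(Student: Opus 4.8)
The plan is to reduce the statement to \Cref{kernel}, applied on simplicial level $0$. The simplicial map $\kappa\colon\mathbf{E}^\bullet G\to\mathbf{B}^\bullet G$ restricts in degree $0$ to a surjective submersion $\kappa_0\colon\mathbf{E}^0 G=G\to\mathbf{B}^0 G=G^0$; its fibers are the source (equivalently, via inversion, the target) fibers of $G$, hence $(n-1)$-connected by hypothesis, and the unit map $u\colon G^0\to G$ is a section of $\kappa_0$. Thus the pair $(\kappa_0,u)$, together with the given exact sequence $0\to Z\to\mathfrak{m}\xrightarrow{\exp}M$, fulfils the hypotheses of \Cref{kernel}.

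First I would pin down the level-$0$ restriction $r[\omega]$. By construction, $[\omega]\in H^n(\mathbf{E}^\bullet G,\kappa^{-1}\mathcal{O}(\mathfrak{m}))$ is represented by a cocycle in the total complex of the double complex obtained by resolving $\kappa^{-1}\mathcal{O}(\mathfrak{m})$ vertically by the foliated de Rham sheaves $\Omega^\bullet_{\kappa\,\bullet}(\kappa^*\mathfrak{m})$ and differentiating horizontally by $\delta^*$, exactly as in the definition of the van Est isomorphism $Q$. The map $r$ of \eqref{restriction} sends such a total cocycle to its component of simplicial degree $0$. Left-translating the closed Chevalley--Eilenberg form $\omega$ yields a closed $\kappa$-foliated $n$-form $\tilde\omega\in\Gamma(\mathbf{E}^0 G,\Omega^n_{\kappa\,0}(\kappa^*\mathfrak{m}))$, and this $\tilde\omega$ is precisely the simplicial-degree-$0$ part of the total cocycle representing $[\omega]$. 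Hence $r[\omega]=[\tilde\omega]$, the class that $\tilde\omega$ determines in $H^n(G,\kappa_0^{-1}\mathcal{O}(\mathfrak{m}))$ through the level-$0$ instance of the acyclic resolution used to build $Q$.

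With this identification, the integrality hypothesis \eqref{in kernel 2}---after interchanging source and target fibers by the inversion diffeomorphism---becomes $\int_{S^n(\kappa_0^{-1}(x))}\tilde\omega\in Z$ for every $x\in G^0$ and every $n$-sphere in the fiber, which is exactly condition \eqref{in kernel} of \Cref{kernel} for the data $(\kappa_0,u,\tilde\omega)$. Applying \Cref{kernel} gives $\exp[\tilde\omega]=0$ in $H^n(G,\kappa_0^{-1}\mathcal{O}(M))$. Since both $r$ and the coefficient map $\exp\colon\mathcal{O}(\mathfrak{m})\to\mathcal{O}(M)$ are natural, we conclude $r(\exp[\omega])=\exp_*\bigl(r[\omega]\bigr)=\exp_*[\tilde\omega]=0$, as claimed.

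The step I expect to be delicate is the identification $r[\omega]=[\tilde\omega]$: it requires checking that passing from a Chevalley--Eilenberg cocycle to its left-translation is compatible with the double complex computing $H^n(\mathbf{E}^\bullet G,\kappa^{-1}\mathcal{O}(\mathfrak{m}))$, so that $\tilde\omega$ really is the leading simplicial-degree-$0$ term of the representing total cocycle. The remaining ingredients---that source and target fibers are interchanged by inversion, so that the connectivity and integrality conditions match, and the naturality of $r$ and $\exp_*$---are routine.
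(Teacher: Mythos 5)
Your proposal is correct and takes essentially the same route as the paper: the paper's entire proof of this corollary is the single line ``This follows directly from Lemma~\ref{kernel},'' and your argument---applying that lemma at simplicial level $0$ to $\kappa_0\colon\mathbf{E}^0G=G\to G^0$ with the unit section, identifying $r[\omega]$ with the class of the translated foliated form, and concluding by naturality of $r$ and $\exp$---is precisely the verification that citation suppresses. The two points you flag as delicate (that $r$ picks out the simplicial-degree-$0$ component of the representing total cocycle, and the interchange of source and target fibers via inversion so that the connectivity and period hypotheses match those of Lemma~\ref{kernel}) are exactly the right ones to check.
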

\begin{proof}
This follows directly from Lemma~\ref{kernel}.
\end{proof}
\subsection{Main Theorem}\label{main theorem section}
Before proving the main theoem of the paper, we will discuss translation of Lie algebroid objects: similarly to how one can translate Lie algebroid forms to differential forms along the source fibers, one can translate all Lie algebroid cohomology classes (eg. 1-dimensional Lie algebroid representations) to cohomology classes along the source fibers (in the case of a Lie algebroid representation, translation will result in a principal bundle with flat connection along the source fibers). 
We will describe it in degree 1, the other cases are similar. 
\theoremstyle{definition}\begin{definition}\label{translation}

Let $G\rightrightarrows G^0$ be a Lie groupoid and let $M$ be a $G$-module. Let $\{U_i\}_i$ be an open cover of $G^0$ and let $\{(h_{ij},\alpha_i\}_{ij}$ represent a class in $H^1(\mathfrak{g},M)$ (here the $h_{ij}$  are sections of $M$ over $U_i\cap U_j\,,$ and the $\alpha_i$ are Lie algebroid 1-forms taking values in $\mathfrak{m}$). Then on $\mathbf{B}^1G$ we get a class in foliated cohomology (foliated with respect to the source map), ie. a class in
\begin{align}
    H^1(\mathcal{O}(s^*M)\overset{\text{dlog}}{\to}\Omega_{s}^1(s^*M)
    \overset{\text{d}}{\to}\Omega_{s}^2(s^*M)\to\cdots)\,,
\end{align}
defined as follows:
we have an open cover of $\mathbf{B}^1G$ given by $\{t^{-1}(U_i)\}_i\,.$ We then get a principal $s^*M$-bundle over $\mathbf{B}^1G$ given by transition functions $t^*h_{ij}$ defined as follows: for $g\in t^{-1}(U_{ij})$ let \begin{align}
    t^*h_{ij}(g):=g^{-1}\cdot h_{ij}(t(g))\,.
\end{align}Similarly, we define foliated 1-forms $t^*\alpha_i$ on $t^{-1}(U_i)$ as follows: for $g\in t^{-1}(U_i)$ with $s(g)=x\,,$ and for $V_g\in T_g(s^{-1}(x))\,,$ let
\begin{align*}
t^*\alpha_i(V_g):=g^{-1}\cdot\alpha_{i}(R_{g^{-1}}V_g)\,,
\end{align*}
where $R_{g^{-1}}$ denotes translation by $g^{-1}\,.$ Then the desired class is given by the cocycle $\{(t^*h_{ij},t^*{\alpha_i})\}_i$ on the open cover $\{t^{-1}(U_i)\}\,.$ For each $x\in G^0\,,$ by restricting the cocycle to $s^{-1}(x)$ we also get a class in \begin{align*}
H^1(s^{-1}(x),\mathcal{O}(M_x)\xrightarrow{\text{dlog}}\Omega^1\otimes\mathfrak{m}_x\to\Omega^2\otimes\mathfrak{m}_x\to\cdots)\,.
\end{align*}
\\Similarly, we can translate any class $\alpha\in H^{\bullet}(\mathfrak{g},M)$ to a class in
\begin{align}\label{class}
    H^{\bullet}(\mathcal{O}(s^*M)\overset{\text{dlog}}{\to}\Omega_{s}^1(s^*M)
    \overset{\text{d}}{\to}\Omega_{s}^2(s^*M)\to\cdots)\,,
\end{align}
and we denote 
this class by $t^*\alpha\,.$ Furthermore, for each $x\in G^0$ we obtain as class in
\begin{align*}
H^\bullet(s^{-1}(x),\mathcal{O}(M_x)\xrightarrow{\text{dlog}}\Omega^1\otimes\mathfrak{m}_x\to\Omega^2\otimes\mathfrak{m}_x\to\cdots)\,,
\end{align*} and we denote this class by $t_x^*\alpha\,.$ 
\\\\Alternatively, given a class $\alpha\in H_0^{\bullet}(\mathfrak{g},M)\,,$ we can translate this to a class in 
\begin{align}
    H^\bullet(0\to\Omega_{s}^1(s^*M)
    \overset{\text{d}}{\to}\Omega_{s}^2(s^*M)\to\cdots)\,,
\end{align}
and we denote this class by $t_0^*\alpha\,.$ In this case the notation $t^*\alpha$ will be used to mean the class obtained in in~\ref{class} by first viewing $\alpha$ as a class in $H^\bullet(\mathfrak{g},M)\,.$
$\blacksquare$
\end{definition}
\begin{proposition}
With the previous definition, we have the following commutative diagram:
\[
\begin{tikzcd}
H^\bullet_0(\mathfrak{g},M) \arrow{d}{}\arrow{r}{t_0^*} & H^{\bullet+1}(0\to\Omega_{s}^1(s^*M)
    \overset{\text{d}}{\to}\cdots)
\arrow{d} \\
H^{\bullet+1}(\mathfrak{g},M)\arrow{r}{t^*} 
&   H^{\bullet+1}(\mathcal{O}(s^*M)\to\Omega_{s}^1(s^*M)
    \overset{\text{d}}{\to}\cdots)
\end{tikzcd}
\]
$\blacksquare$
\end{proposition}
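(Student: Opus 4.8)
The plan is to observe that both $t^*$ and $t_0^*$ are induced by a single cochain-level construction that preserves the grading in the Chevalley-Eilenberg (respectively foliated) direction, so that the relevant square of maps of complexes commutes strictly, before passing to cohomology.

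First I would make the translation of Definition~\ref{translation} explicit as a morphism of (\v{C}ech) double complexes. Fixing an open cover $\{U_i\}$ of $G^0\,,$ a class in $H^\bullet(\mathfrak{g},M)$ is represented by a \v{C}ech-Chevalley-Eilenberg cocycle whose component of bidegree $(p,q)$ over $U_{i_0\cdots i_p}$ is a section $h$ of $\Lambda^q\mathfrak{g}^*\otimes\mathfrak{m}$ (or of $M$ when $q=0$). The translation sends $h$ to the foliated form $t^*h$ over $t^{-1}(U_{i_0\cdots i_p})\subseteq\mathbf{B}^1 G$ defined by $g\mapsto g^{-1}\cdot h(t(g))\,,$ using right-translation $R_{g^{-1}}$ on the form arguments exactly as in Definition~\ref{translation}. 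Denote this cochain map by $T\,.$ The essential point is that $T$ is \emph{bidegree-preserving}: it carries the $q=0$ piece $\mathcal{O}(M)$ to $\mathcal{O}(s^*M)\,,$ carries the $q=p$ piece $\mathcal{C}^p(\mathfrak{g},M)=\mathcal{O}(\Lambda^p\mathfrak{g}^*\otimes\mathfrak{m})$ to $\Omega_s^p(s^*M)\,,$ and respects the \v{C}ech degree (since $t^{-1}(U_{i_0\cdots i_p})$ is the corresponding intersection of the $t^{-1}(U_i)$).

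Second I would record that $T$ is genuinely a chain map, i.e. that it intertwines the differentials: $\text{dlog}(t^*h)=t^*(d_\text{CE}\log h)$ in degree zero, $\text{d}(t^*\alpha)=t^*(d_\text{CE}\alpha)$ in positive degree, and commutation with the \v{C}ech differential. This is precisely the computation underlying the well-definedness already asserted in Definition~\ref{translation}, so it may be cited; concretely it reduces to the compatibility of the left-logarithmic derivative with right-translation along the source fibers, a standard Maurer-Cartan-type identity. This step is the only nontrivial input, and I expect it to be the main (though minor) obstacle, the subtlety being to phrase $T$ carefully enough on cochains that its restriction to the truncated complex is literally the same formula.

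Finally, because $T$ preserves the Chevalley-Eilenberg/foliated degree, it restricts to a chain map between the truncated subcomplexes $0\to\mathcal{C}^1(\mathfrak{g},M)\to\cdots$ and $0\to\Omega_s^1(s^*M)\to\cdots\,,$ and this restriction is exactly the cochain map inducing $t_0^*\,,$ while the full $T$ induces $t^*\,.$ The vertical arrows in the diagram are induced by the inclusions $\iota$ of these truncated subcomplexes into the full ones. Since the two horizontal maps are given by the same $T$ and the vertical maps are inclusions, the square of chain maps commutes on the nose, $T\circ\iota=\iota\circ T\,,$ and everything here past the chain-map property is purely formal. Passing to cohomology then yields the asserted commutative square.
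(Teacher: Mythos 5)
Your proposal is correct, and it coincides with the paper's own treatment: the paper states this proposition with no written proof, regarding it as immediate from Definition~\ref{translation}, precisely because $t^*$ and $t_0^*$ are induced by one and the same degree-preserving, chain-level translation formula, which restricts to the truncated subcomplex and hence commutes strictly with the inclusions. Your write-up simply supplies the details the paper leaves tacit (the \v{C}ech bookkeeping and the chain-map identity $\mathrm{dlog}\,(t^*h)=t^*(d_{\mathrm{CE}}\log h)$), so there is nothing further to compare.
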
 
The importance of the previous definition is due to the fact that given a class in $\alpha\in H^{\bullet}(\mathfrak{g},M)\,,$ the class $t^*\alpha$ defines a class in $H^{\bullet}(E^\bullet G\,,\kappa^{-1}\mathcal{O}(M))$ (or if $\alpha\in H^\bullet_0(\mathfrak{g},M)\,,$ then $t_0^*\alpha$ defines a class in $H^{\bullet}(E^\bullet G\,,\widehat{\kappa^{-1}\mathcal{O}(M)})\,,$ see~\Cref{van Est map}). 
We are now ready to state and prove the main theorem of the paper:
\begin{theorem}[Main Theorem]\label{van Est image}
Suppose $G\rightrightarrows G^0$ is source $n$-connected and that $M$ is a $G$-module fitting into the exact sequence
\[
0\to Z\to \mathfrak{m}\overset{exp}{\to} M\,,
\] where $\mathfrak{m}$ is the Lie algebroid of $M\,.$ Then the van Est map  $\,VE:H^*(G,M)\to H^*(\mathfrak{g},M)$ is an isomorphism in degrees $\le n$ and injective in degree $(n+1)\,.$ The image of $VE$ in degree $(n+1)$ are the classes $\alpha\in H^{n+1}(\mathfrak{g},M)$ such that for all $x\in G^0\,,$ the translated class $t_x^*\alpha$ (see Definition~\ref{translation}) is trivial in \begin{align*}
H^{n+1}(s^{-1}(x),\mathcal{O}(M_x)\xrightarrow{\text{dlog}}\Omega^1\otimes\mathfrak{m}_x\to\Omega^2\otimes\mathfrak{m}_x\to\cdots)\,.
\end{align*} The same statement holds for $VE_0:H_0^*(G,M)\to H_0^*(\mathfrak{g},M)$ with a degree shift, that is: the truncated van Est map  $VE_0:H_0^*(G,M)\to H_0^*(\mathfrak{g},M)$ is an isomorphism in degrees $\le n-1$ and injective in degree $n\,.$ The image of $VE_0$ in degree $n$ are the classes $\alpha\in H_0^{n}(\mathfrak{g},M)$ such that for all $x\in G^0\,,$ the translated class $t_x^*\alpha$ is trivial in \begin{align*}
H^{n+1}(s^{-1}(x),\mathcal{O}(M_x)\xrightarrow{\text{dlog}}\Omega^1\otimes\mathfrak{m}_x\to\Omega^2\otimes\mathfrak{m}_x\to\cdots)\,.
\end{align*}
In particular, let $\omega$ be a closed Lie algebroid $(n+1)$-form, ie.
\[
\omega \in \ker\big[\Gamma(\mathcal{C}^{n+1}(\mathfrak{g},M))\xrightarrow{d_\text{CE}}\Gamma(\mathcal{C}^{n+2}(\mathfrak{g},M))\big]\,.
\]
Then $[\omega]\in H^{n}_0(\mathfrak{g},M)$ is in the image of $VE_0$ if and only if \begin{equation}\label{van Est condition}
\int_{S^{n+1}_x}\omega \in Z\;\;\text{     for all } x \in G^0 \text{ and all } S^{n+1}_x\,,
\end{equation}
 where $S^{n+1}_x$ in an $(n+1)$-sphere contained in the source fiber over $x\,.$\footnote{For the case of smooth Lie groups, this seems to be shown in~\cite{wockel}, although our proof is still different.}
 \end{theorem}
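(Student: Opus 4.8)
The plan is to reduce the whole statement to a single computation on one source fibre, organised in two stages: first pinning down the isomorphism/injectivity range of $VE$, then identifying the image in the top degree and extracting the period criterion \eqref{van Est condition}. Since $VE = Q\circ\kappa^{-1}$ with $Q$ an isomorphism, the first stage is really a statement about $\kappa^{-1}:H^k(\mathbf{B}^\bullet G,\mathcal{O}(M))\to H^k(\mathbf{E}^\bullet G,\kappa^{-1}\mathcal{O}(M))$. The map $\kappa$ is a simplicial principal $G$-bundle whose fibre over $(g^1,\dots,g^n)$ is the source fibre $t^{-1}(s(g_n))$, which is $n$-connected by hypothesis. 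Because $M$ is a family of abelian groups, $\mathcal{O}(M)$ is a sheaf of abelian groups and the Leray spectral sequence (Theorem~\ref{spectral}) applies to $\kappa$: the higher direct images $R^q\kappa_*\kappa^{-1}\mathcal{O}(M)$ vanish for $1\le q\le n$ (an $n$-connected space has no constant-coefficient cohomology in degrees $1,\dots,n$), while $R^0\kappa_* = \mathcal{O}(M)$. A routine analysis of the edge maps then shows $\kappa^{-1}$ is an isomorphism for $k\le n$, injective for $k=n+1$, and that its image in degree $n+1$ is exactly the kernel of the edge homomorphism into $H^0(\mathbf{B}^\bullet G,R^{n+1}\kappa_*\kappa^{-1}\mathcal{O}(M))$, i.e.\ the classes that restrict to zero on every $\kappa$-fibre. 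This establishes the isomorphism/injectivity assertion for $VE$.

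Next I would match ``restricts trivially to every fibre'' with the condition on $t_x^*\alpha$. Transporting the edge-map description across $Q$ and the translation construction of Definition~\ref{translation}, the restriction of a class $t^*\alpha$ to the $\kappa$-fibre over $x\in G^0$ is precisely the translated fibre class $t_x^*\alpha$ in the Deligne-type complex $\mathcal{O}(M_x)\xrightarrow{\mathrm{dlog}}\Omega^1\otimes\mathfrak{m}_x\to\cdots$ of $s^{-1}(x)$; so the image of $VE$ in degree $n+1$ is exactly the set of $\alpha$ with $t_x^*\alpha=0$ for all $x$. The forward implication is automatic, since a class in the image is pulled back from the base and hence restricts trivially to fibres; the converse is the edge-map statement just proved. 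The truncated version for $VE_0$ follows by a diagram chase feeding this isomorphism-and-image data through the commuting squares of Theorem~\ref{canonical edit} and Lemma~\ref{commute}, which shift the conclusions down by one cohomological degree.

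Finally I would specialize to a closed Chevalley--Eilenberg $(n+1)$-form $\omega$, so $[\omega]\in H^n_0(\mathfrak{g},M)$, and derive \eqref{van Est condition}. By the image description, $[\omega]\in\operatorname{im}(VE_0)$ iff $t_x^*[\omega]=0$ in the fibre complex for every $x$. On the $n$-connected fibre $F=s^{-1}(x)$ I would use the exponential short exact sequence of complexes
\[
0\to\underline{Z}\to\Omega^\bullet(\mathfrak{m}_x)\xrightarrow{\exp}\mathcal{D}^\bullet\to 0,
\]
where $\mathcal{D}^\bullet$ is the fibre complex with $\mathcal{O}(M_x)$ replaced by its identity-component subsheaf (so that $\exp$ is surjective, local logarithms existing; since $n+1\ge 2$ this does not change $\mathbb{H}^{n+1}$). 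The associated long exact sequence, together with the vanishing of $H^k(F,\underline{Z})$ and $H^k(F,\mathfrak{m}_x)=H^k_{\mathrm{dR}}(F)\otimes\mathfrak{m}_x$ for $1\le k\le n$, shows that $t_x^*[\omega]$ is the image of the de Rham class $[\omega]_F\in H^{n+1}(F,\mathfrak{m}_x)$ and vanishes iff $[\omega]_F$ lifts to $H^{n+1}(F,Z)$. Because $F$ is $n$-connected, Hurewicz gives $H_{n+1}(F)\cong\pi_{n+1}(F)$, generated by classes of $(n+1)$-spheres, and universal coefficients gives $H^{n+1}(F,A)\cong\operatorname{Hom}(H_{n+1}(F),A)$; under this identification $[\omega]_F$ is the period homomorphism $c\mapsto\int_c\omega$, which lifts to $Z$ iff $\int_{S^{n+1}_x}\omega\in Z$ for every sphere. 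This is exactly \eqref{van Est condition}. The easy implication (periods in $Z$ force the vanishing) is already packaged in Corollary~\ref{groupoid kernel} and Lemma~\ref{kernel}; the new content is the converse, supplied by the Hurewicz--universal-coefficients identification.

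I expect the main obstacle to be the second paragraph: determining the image of $\kappa^{-1}$ in the top degree and matching the spectral-sequence edge map with the translation $t_x^*$. The coefficients are group-valued (mediated by $\exp$ and the discrete subsheaf $Z$) rather than linear, so one must resolve $\kappa^{-1}\mathcal{O}(M)$ by the foliated Deligne complex and verify that the edge homomorphism is compatible both with this resolution and with the fibrewise translation of Definition~\ref{translation}. The degree-zero subtlety (failure of surjectivity of $\exp$ when the fibres of $M$ are disconnected) is precisely what pushes the clean period statement into the truncated setting $H^*_0$, and keeping the bookkeeping between the $\mathfrak{g}$-side truncation and the untruncated fibre complex straight is where the argument must be handled with care.
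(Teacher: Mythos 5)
Your proposal is correct and follows essentially the same route as the paper: the paper's proof consists precisely of the Leray/$n$-acyclic-fibers argument you describe (packaged as Theorem~\ref{spectral theorem} in the appendix via the Bernstein--Lunts criterion, Theorem~\ref{n acyclic}), the identification of fiber restriction with the translated class $t_x^*\alpha\,,$ and Corollary~\ref{groupoid kernel} for the period criterion; your Hurewicz/universal-coefficients computation on an $n$-connected fiber is exactly the justification the paper leaves implicit in the proof of Lemma~\ref{kernel}. The only divergence is the truncated case, where the paper does not run a diagram chase through Lemma~\ref{commute} (whose rows it never shows to be exact) but instead applies Theorem~\ref{spectral theorem} directly to the truncated sheaf $\kappa^{-1}\mathcal{O}(M)^0_\bullet$ and compares it with the normalized sheaf $\widehat{\kappa^{-1}\mathcal{O}(M)}_\bullet\,,$ which is where the one-degree shift actually arises.
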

\begin{proof}
The statement regarding $VE$ follows from the fact that 
\begin{equation}\label{iso of coho}
 \  H^*(\mathfrak{g},M)= H^*(\mathbf{E}^\bullet G,\kappa^{-1}\mathcal{O}(M)_\bullet)
\end{equation}
and Theorem~\ref{spectral theorem}. For the statement regarding $VE_0$ we use the fact that
\begin{align*}
    H^*_0(\mathfrak{g},M)\cong  H^{*+1}(\mathbf{E}^\bullet G,\widehat{\kappa^{-1}\mathcal{O}(M)}_\bullet)\,, \end{align*}
and the fact that the map
\begin{align*}
    H^{*+1}(\mathbf{E}^\bullet G,\kappa^{-1}\mathcal{O}(M)^0_\bullet)\xrightarrow{}H^{*+1}(\mathbf{E}^\bullet G,\widehat{\kappa^{-1}\mathcal{O}(M)}_\bullet) 
\end{align*}
is an isomorphism in degrees $\le n-1$ and is injective in degree $n\,.$
Furthermore, Theorem~\ref{spectral theorem} implies that \begin{align*}
    H^*(\mathbf{B}^{\bullet}G,\mathcal{O}(M)^0_\bullet)\xrightarrow{\kappa^{-1}}H^*(E\mathbf{G}^{\bullet},\kappa^{-1}\mathcal{O}(M)^0_\bullet)
    \end{align*}
is an isomorphism in degrees $\le n-1$ and is injective in degree $n\,,$ hence we get that the map
$H^*_0(G,M)\to H^*_0(\mathfrak{g},M)$ is an isomorphism in degrees $\le n-1$ and injective in degree $n\,.$ The statement regarding its image in degree $n$ follows from Corollary~\ref{groupoid kernel}.
\end{proof}
\theoremstyle{definition}\begin{exmp}This is a continuation of Example~\ref{module example}.
The source fibers of $\Pi_1(S^1)\rightrightarrows S^1$ are contractible, hence Theorem~\ref{van Est image} shows that
the cohomology groups are $H^i(\Pi_1(S^1),\tilde{\mathbb{R}})=0$ in all degrees, and 
\[
H^i(\Pi_1(S^1),\widetilde{\mathbb{R}/\mathbb{Z}})=H^{i+1}(\Pi_1(S^1),\tilde{\mathbb{Z}})=\begin{cases}
      \mathbb{Z}/2\mathbb{Z}, & \text{if}\ i= 0 \\
      0, & \text{if}\ i\ne 0\,,
    \end{cases}
\] and this result agrees with the computation done in Example~\ref{module example}.
We also have that $\Pi_1(S^1)$ is Morita equivalent to the fundamental group $\pi_1(S^1)\cong\mathbb{Z}\,,$ and the associated $\mathbb{Z}$-modules are the abelian groups $\mathbb{Z}\,,\,\mathbb{R}\,,\,\mathbb{R}/\mathbb{Z}\,,$ where even integers act trivially and odd integers act by inversion. One can also use this information to compute $H^{i}(\Pi_1(S^1),\tilde{\mathbb{Z}})$
and indeed find that \[
H^{i}(\Pi_1(S^1),\tilde{\mathbb{Z}})=\begin{cases}
      \mathbb{Z}/2\mathbb{Z}, & \text{if}\ i= 1 \\
      0, & \text{if}\ i\ne 1\,.
    \end{cases}
\]\end{exmp} 

\subsection{Groupoid Extensions and the van Est Map}
To every extension 
\begin{equation}\label{ab ext}
     1\to A\to E\to G\to 1
    \end{equation}
    of a Lie groupoid $G$ by an abelian group $A$ (see~\ref{abelian extensions}) one can associate a class in $H^1_0(\mathfrak{g},A)$ (where $\mathfrak{g}$ is the Lie algebroid of $G$) in two ways: one is given by the extension class of the short exact sequence $0\to \mathfrak{a}\to\mathfrak{e}\to\mathfrak{g}\to 0$ determined by~\ref{ab ext}, and the other is given by applying the van Est map to the class in $H^1_0(G,A)$ determined by~\ref{ab ext}. Here we will show that these two classes are the same.
\begin{theorem}\label{groupoid extension}
Let $M$ be a $G$-module and consider an extension of the form
\begin{align*}
    1\to M\to E\to G\to 1
\end{align*}
and let $\alpha\in H^1_0(G,M)$ be its isomorphism class. Then the isomorphism class of the Lie algebroid associated to $VE(\alpha)\in H^1_0(\mathfrak{g},M)$ is equal to the isomorphism class of the Lie algebroid $\mathfrak{e}$ of $E\,.$
\end{theorem}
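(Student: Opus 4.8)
The plan is to represent both classes by explicit cocycles and to check that they build the same Lie algebroid; throughout I work with the truncated map $VE_0\,,$ since $\alpha\in H^1_0(G,M)\,.$ First I would choose a normalized (locally smooth) section $\sigma\colon G\to E$ of the extension carrying the identity bisection of $G$ to that of $E\,.$ Its failure to be multiplicative, $c(g,h)=\sigma(g)\sigma(h)\sigma(gh)^{-1}\,,$ is a normalized section of $\mathcal{O}(M)_2$ on $\mathbf{B}^2G$ representing $\alpha\,.$ In the associated local trivialization $E\cong G\times_{G^0}M$ the multiplication reads
\[
\bigl(m\cdot\sigma(g)\bigr)\bigl(n\cdot\sigma(h)\bigr)=m\,(g\cdot n)\,c(g,h)\,\sigma(gh)\,,
\]
where $g\cdot n$ is the $G$-module action on $M\,.$

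Second, I would identify $\mathfrak{e}$ together with its extension class. Differentiating $\sigma$ along the source fibers produces a vector-bundle splitting $s=d\sigma\colon\mathfrak{g}\to\mathfrak{e}$ of $0\to\mathfrak{m}\to\mathfrak{e}\to\mathfrak{g}\to 0\,,$ so that $\mathfrak{e}\cong\mathfrak{g}\oplus\mathfrak{m}$ as a bundle. Differentiating the displayed multiplication gives the bracket
\[
\bigl[(X,u),(Y,v)\bigr]=\bigl([X,Y],\,L_Xv-L_Yu+\omega(X,Y)\bigr)\,,
\]
where $\omega\in\Gamma(\Lambda^2\mathfrak{g}^*\otimes\mathfrak{m})=\mathcal{C}^2(\mathfrak{g},M)$ is the logarithmic, antisymmetrized second derivative of $c$ along commuting source directions at the identity bisection. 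One checks $d_{\text{CE}}\omega=0\,,$ so $[\omega]\in H^1_0(\mathfrak{g},M)\,;$ by construction this is precisely the class whose associated Lie algebroid is $\mathfrak{e}\,.$

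Third, and this is the technical heart, I would show $VE_0(\alpha)=[\omega]\,.$ Since the van Est map factors through the local van Est map, it suffices to work near the identity bisection. I would trace $\kappa^{-1}c$ through the double complex obtained by applying the Godement construction to the Deligne resolution~\eqref{deligne1}, and run the zig-zag realizing the isomorphism $Q\,,$ i.e.\ restriction of invariant sections to the identity bisection $i\colon G^0\to G\,.$ Equivalently, using~\Cref{translation}, I would produce a Chevalley--Eilenberg $2$-cocycle $\beta$ whose translation $t^*\beta$ to the foliated complex on $\mathbf{E}^\bullet G$ is cohomologous to $\kappa^{-1}[c]\,,$ and then compute $\beta\,.$ The van Est recipe in degree two differentiates $c$ twice along commuting source-fiber directions and antisymmetrizes, which is exactly the prescription producing $\omega$ in the previous step, so $\beta=\omega\,.$ The main obstacle is the bookkeeping along the zig-zag: matching signs and normalization conventions between the simplicial differential $\delta^*$ and $d_{\text{CE}}\,,$ and tracking the $\mathfrak{g}$-action terms, so that the chain-level output of $Q\circ\kappa^{-1}$ is identified with $\omega$ itself rather than merely up to a coboundary one must afterwards control.

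Finally, since $VE_0(\alpha)$ and the extension class of $\mathfrak{e}$ are represented by the same cocycle $\omega\,,$ and since reconstructing a Lie algebroid from a class in $H^1_0(\mathfrak{g},M)$ inverts the second step above, the Lie algebroid associated to $VE_0(\alpha)$ is isomorphic to $\mathfrak{e}\,,$ as claimed.
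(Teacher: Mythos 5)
Your overall strategy---choose a section adapted to the identity bisection, differentiate its failure to be multiplicative to obtain the curvature $2$-form of the induced splitting of $0\to\mathfrak{m}\to\mathfrak{e}\to\mathfrak{g}\to 0\,,$ and then match that $2$-form with the van Est image via the ``differentiate twice along source directions and antisymmetrize'' recipe---is the same as the paper's. But there is a genuine gap at your very first step: you assume a globally defined (even if only ``locally smooth'') section $\sigma\colon G\to E\,,$ so that $c(g,h)=\sigma(g)\sigma(h)\sigma(gh)^{-1}$ is a global section of $\mathcal{O}(M)_2$ over $\mathbf{B}^2G$ representing $\alpha\,.$ In general no such $\sigma$ exists: by definition $E\to G$ is a principal $M$-bundle, and it may be nontrivial. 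This is precisely why $H^1_0(G,M)$ is sheaf (hyper)cohomology rather than naive cohomology, and why a class in it need not be representable by a single smooth cocycle $c\,.$ A set-theoretic section that is smooth only near the identity bisection does not define a class in the sheaf cohomology the theorem is about, and the tubular-neighborhood/homotopy-invariance argument that would trivialize $E$ over a neighborhood of $G^0\xhookrightarrow{}G$ is not available as stated (the paper allows non-Hausdorff $G\,,$ and in any case you would have to supply this argument). Note also a logical misordering: you invoke localization near the identity bisection only in your third step, after the global $\sigma$ has already been used to define $c$ and the global splitting $d\sigma\,.$

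The paper's proof repairs exactly this point. It chooses an open cover $\{U_i\}_i$ of the identity bisection with local sections $\sigma_i\colon U_i\to E$ sending identities to identities, and represents $\alpha$ by the mixed \v{C}ech data $g_{ij}=\sigma_i^{-1}\cdot\sigma_j$ and $h_{ijk}=\sigma_k^{-1}\cdot\sigma_i\cdot\sigma_j\,.$ Correspondingly, on the infinitesimal side the class of $\mathfrak{e}$ is represented not by one global form but by the hypercohomology cocycle $\{(\omega_i,g_{ij*})\}_{ij}\,$: local curvature forms $\omega_i$ coming from the splittings induced by the $\sigma_i\,,$ glued by the pushforward isomorphisms $g_{ij*}\,.$ The key local identification $VE(h_{iii})=[\omega_i]$ is then your antisymmetrized second-derivative step, which the paper does not re-derive but attributes to the argument of Theorem 5 in Crainic's paper---you describe the same computation but leave it as ``bookkeeping,'' so you should likewise either carry it out or cite it. Your argument as written is therefore correct only when the underlying principal $M$-bundle of the extension is trivial (or trivialized near the identity bisection by an argument you have not given); to obtain the theorem in the stated generality you need the \v{C}ech refinement and the resulting comparison of glued cocycles.
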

\begin{proof}
Let $\{U_i\}_i$ be an open cover of $G^0\xhookrightarrow{} G$ on which there are local sections $\sigma_i:U_i\to E$ such that $\sigma$ takes $G^0\xhookrightarrow{}G$ to $G^0\xhookrightarrow{}E\,.$ These define a class $\alpha\in H^1_0(G,M)$ by taking $g_{ij}=\sigma^{-1}_i\cdot \sigma_j$ on $U_i\cap U_j\,,$ and where $h_{ijk}=\sigma_k^{-1}\cdot \sigma_i\cdot \sigma_j$ on $p_1^{-1}(U_i)\cap p_2^{-1}(U_j)\cap m^{-1}(U_k)\subset \mathbf{B}^{2}G\,.$ The sections $\sigma_{ii}$ induce a splitting of 
\begin{align*}
    0\to\mathfrak{m}\vert_{U_i}\to\mathfrak{e}\vert_{U_i}\to\mathfrak{g}\vert_{U_i}\to 0\,,
\end{align*}
which in turn gives a canonical closed 2-form $\omega\in C^2(\mathfrak{g}\vert_{U_i},M)\,,$ and the isomorphism given by $g_{ij}:E\vert_{U_i\cap U_j}\to E\vert_{U_i\cap U_j}$ induces an isomorphism  $\mathfrak{e}\vert_{U_i\cap U_j}\to \mathfrak{e}\vert_{U_i\cap U_j}$ given by $g_{ij*}$ (ie. the pushforward). Now the argument in Theorem 5 in~\cite{Crainic} implies that $VE(h_{iii})=[\omega_i]\,,$ and then one can check that $VE(\alpha)$ is the class given by $\{(\omega_i,g_{ij*})\}_{ij}\,.$
\end{proof}
\section{Applications}
\subsection{Groupoid Extensions and Multiplicative Gerbes}
Here we describe applications of the main theorem (\Cref{van Est image}) to the integration of Lie algebroid extensions, to representations, and to multiplicative gerbes. 
\\\\If we take $M=E$ to be a representation in Theorem~\ref{van Est image}, then $Z=\{0\}$ and we obtain the following result, due to Crainic (see~\cite{Crainic}).
\begin{theorem}[\cite{Crainic}]\label{}
Suppose $G\rightrightarrows X$ is source $(n-1)$-connected and that $E$ is a $G$-representation. Then the van Est map  $\,VE:H^*(G,E)\to H^*(\mathfrak{g},E)$ is an isomorphism in degrees $\le n-1$ and is injective in degree $n\,.$  Furthermore, $\omega\in H^n(\mathfrak{g},E)$ is in the image of $VE$ if and only if \begin{equation}\label{}
\int_{S^n_x}\omega=0\;\;\text{     for all } x \in X \text{ and all } S^n_x\,,
\end{equation}
 where $S^n_x$ in an $n$-sphere contained in the source fiber over $x\,.$ 
\end{theorem}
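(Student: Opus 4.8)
The plan is to obtain this statement as the special case $M=E$ of the Main Theorem, \Cref{van Est image}, and then to unwind its abstract image condition into the stated period condition. First I would record the reduction: a $G$-representation $E$ is a $G$-module whose underlying family of abelian groups has exponential map $\exp\colon E\to E$ equal to the identity, so $\mathfrak{m}=E=M$, the map $\exp$ is trivially a surjective submersion, and the sequence $0\to Z\to\mathfrak{m}\xrightarrow{\exp}M$ has $Z=\{0\}$. Under this identification the generalized Chevalley--Eilenberg complex of $\mathfrak{g}$ on $E$ coincides with Crainic's complex for the representation (with $d_{\mathrm{CE}}\log=d_{\mathrm{CE}}$), and $H^*(G,E)$, $H^*(\mathfrak{g},E)$ and $VE$ reduce to the classical objects. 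Applying \Cref{van Est image} with $n$ replaced by $n-1$ then immediately gives that $VE$ is an isomorphism in degrees $\le n-1$ and injective in degree $n$.

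It remains to identify the image in degree $n$. By \Cref{van Est image}, $\omega\in H^n(\mathfrak{g},E)$ lies in the image of $VE$ if and only if, for every $x\in X$, the translated class $t_x^*\omega$ (see \Cref{translation}) is trivial in the source-fiber cohomology $H^n\big(s^{-1}(x),\,\mathcal{O}(E_x)\to\Omega^1\otimes\mathfrak{m}_x\to\cdots\big)$. Since $\exp$ is the identity, on each fiber this complex is the de Rham complex of $s^{-1}(x)$ with coefficients in the flat bundle obtained by translating $E_x$ (translation of a representation yields a flat foliated connection, as noted before \Cref{van Est image}). The crucial point is that $s^{-1}(x)$ is $(n-1)$-connected, hence simply connected, so this flat bundle is trivial as a local system and the fiber cohomology is ordinary de Rham cohomology $H^n(s^{-1}(x);E_x)$.

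The last step is to convert the vanishing of $t_x^*\omega$ into periods. As $s^{-1}(x)$ is $(n-1)$-connected, the Hurewicz theorem gives $H_n(s^{-1}(x))\cong\pi_n(s^{-1}(x))$, so its degree-$n$ homology is generated by the classes of maps $S^n_x\to s^{-1}(x)$; by the de Rham theorem and universal coefficients a closed $n$-form represents the zero class in $H^n(s^{-1}(x);E_x)$ exactly when all periods $\int_{S^n_x}\omega$ vanish. Since $Z=\{0\}$ this is precisely the stated condition, so the image is characterized as claimed. The only step carrying genuine content beyond \Cref{van Est image} is this Hurewicz--de Rham translation, and its one subtlety is to use simple connectivity of the fiber to trivialize the local system, so that integration over $n$-spheres actually detects the class; everything else is a direct specialization.
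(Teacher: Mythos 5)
Your proposal is correct and takes essentially the same route as the paper: the paper obtains this theorem exactly by specializing Theorem~\ref{van Est image} to $M=E$ a representation, so that $\mathfrak{m}=E$, $\exp$ is the identity, and $Z=\{0\}$. The Hurewicz/de Rham step you spell out --- converting triviality of the translated class $t_x^*\omega$ on the $(n-1)$-connected source fibers into vanishing of periods over $n$-spheres --- is not a departure from the paper but precisely the content the paper has already packaged into Lemma~\ref{kernel} and Corollary~\ref{groupoid kernel}, which feed the period form of the image condition into the Main Theorem; you are filling in a detail the paper leaves implicit rather than arguing differently.
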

Now we will prove a result about the integration of Lie algebroid extensions, which generalizes the above result in the $n=2$ case. At least in the case where $M=S^1$ this is due to Crainic and Zhu (see~\cite{zhu}), but there proof is different.
\begin{theorem}\label{central extension}
Consider the exponential sequence $0\to Z \to\mathfrak{m}\overset{\exp}{\to} M\,.$ Let
\begin{align}\label{LA extension}
    0\to\mathfrak{m}\to \mathfrak{a} \to \mathfrak{g}\to 0
\end{align}
be the central extension of $\mathfrak{g}$ associated to $\omega\in H^2(\mathfrak{g},\mathfrak{m})\,.$ Suppose that $\mathfrak{g}$ has a simply connected integration $G\rightrightarrows X$ and that 
\begin{align}\label{condition Z}
    \int_{S^2_x}\omega\in Z
\end{align}
for all $x\in X$ and $S^2_x\,,$ where $S^2_x$ in a $2$-sphere contained in the source fiber over $x\,.$  Then $\mathfrak{a}$ integrates to a unique extension
\begin{align}\label{LG extension}
1\to M\to A \to G\to 1\,.
\end{align}
\end{theorem}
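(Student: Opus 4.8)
The plan is to reduce the statement to the Main Theorem (\Cref{van Est image}) with $n=1$, by translating both the Lie algebroid extension $\mathfrak{a}$ and the desired groupoid extension $A$ into classes in the truncated cohomology groups $H^1_0(\mathfrak{g},M)$ and $H^1_0(G,M)$, which the truncated van Est map $VE_0$ intertwines. The facts I would rely on are that degree-$2$ cocycles classify such extensions and that \Cref{groupoid extension} identifies the Lie algebroid of an integrating extension with the van Est image of its class.

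First I would represent $\mathfrak{a}$ by a cocycle. Choosing a splitting of $0\to\mathfrak{m}\to\mathfrak{a}\to\mathfrak{g}\to 0$ as vector bundles --- possible since $\mathfrak{g}$ is a vector bundle --- the curvature of the splitting is a $d_\text{CE}$-closed Lie algebroid $2$-form $\omega\in\Gamma(\mathcal{C}^2(\mathfrak{g},M))$ whose class in $H^2(\mathfrak{g},\mathfrak{m})$ is the extension class; a different splitting changes $\omega$ by $d_\text{CE}$ of a global $1$-form. Because $\mathcal{C}^n(\mathfrak{g},M)=\mathcal{C}^n(\mathfrak{g},\mathfrak{m})$ for every $n\ge 1$, this same closed form determines a class $[\omega]\in H^1_0(\mathfrak{g},M)$, exactly the class appearing in the ``In particular'' clause of \Cref{van Est image}; since $\mathfrak{m}$ is a vector bundle, this class records the isomorphism type of $\mathfrak{a}$ faithfully.

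Next I would invoke \Cref{van Est image} with $n=1$. The hypothesis that $\mathfrak{g}$ has a simply connected integration provides a source $1$-connected $G$, so the truncated map $VE_0\colon H^1_0(G,M)\to H^1_0(\mathfrak{g},M)$ is injective, and its image in degree $1$ consists precisely of the classes represented by a closed $2$-form $\omega$ with $\int_{S^2_x}\omega\in Z$ for all $x$ and all $S^2_x$. This is exactly the hypothesis~\eqref{condition Z}, so $[\omega]$ lies in the image of $VE_0$; by injectivity there is a unique class $\alpha\in H^1_0(G,M)$ with $VE_0(\alpha)=[\omega]$.

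Finally I would realize $\alpha$ geometrically: by the classification of abelian extensions (Section~\ref{abelian extensions}), $H^1_0(G,M)$ classifies extensions $1\to M\to A\to G\to 1$ up to isomorphism, so $\alpha$ is the class of such an $A$, unique up to isomorphism. By \Cref{groupoid extension} the Lie algebroid of $A$ has class $VE_0(\alpha)=[\omega]$, i.e.\ is isomorphic to $\mathfrak{a}$, so $A$ integrates $\mathfrak{a}$; uniqueness of the integrating extension then follows at once from the injectivity of $VE_0$, since any two such extensions would have the same van Est image $[\omega]$ and hence the same class in $H^1_0(G,M)$. I expect the main obstacle to be organizational rather than conceptual: pinning down the dictionary between the curvature/$2$-cocycle descriptions of $\mathfrak{a}$ and of $A$ and the hypercohomology classes $[\omega]$ and $\alpha$, so that \Cref{groupoid extension} and the extension classification apply verbatim. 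Once this identification is in place, the conclusion is a formal consequence of the Main Theorem.
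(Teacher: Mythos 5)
Your proposal is correct and takes essentially the same route as the paper: both reduce the statement to the Main Theorem (\Cref{van Est image}) with $n=1$, which identifies $H^1_0(G,M)$ via the injective map $VE_0$ with the subgroup of classes in $H^1_0(\mathfrak{g},M)$ whose source-fiber periods lie in $Z$, and then invoke \Cref{groupoid extension} to identify the Lie algebroid of the integrating groupoid extension with the van Est preimage. The paper's proof is simply a terser version of yours, with the cocycle dictionary and the existence/uniqueness bookkeeping left implicit.
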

In particular, if $G$ and $M$ are Hausdorff then $\mathfrak{a}$ admits a Hausdorff integration.\footnote{This generalizes a theorem proved by Crainic in~\cite{Crainic}, with a different proof.} 
\begin{proof}
By Theorem~\ref{van Est image} $H^1_0(G,M)$ is isomorphic to the subgroup of $H^1_0(\mathfrak{g},M)$ which have periods in $Z$ along the source fibers. Hence by Theorem~\ref{groupoid extension} the Lie algebroid extension in~\ref{LA extension} integrates to an extension of the form~\ref{LG extension}. Since in particular $\mathbf{B}^1A$ is a principal $M$-bundle over $G\,,$ it must be Hausdorff if $G$ and $M$ are.
\end{proof}
\begin{remark}Note that in fact a stronger result than the above theorem can be made. Suppose we have an extension 
\begin{align}\label{nonabelian}
0\to\mathfrak{m}\xrightarrow{\iota}\mathfrak{a}\xrightarrow{\pi} \mathfrak{g}\to 0\,,
\end{align}where now $\mathfrak{m}$ isn't assumed to be abelian, so that $M$ is a nonabelian module. However, suppose there is a splitting of~\eqref{nonabelian} such that the curvature $\omega$ takes values in the center of $\mathfrak{m}\,,$ denoted $Z(\mathfrak{m})$ (which we assume is a vector bundle). Then two things occur: First, let $\sigma:\mathfrak{g}\to\mathfrak{a}$ denote the splitting. Then we get an action of $\mathfrak{g}$ on $\mathfrak{m}$ defined by $
\iota(L_{X}W):=[\sigma(X),\iota(W)]\,,$ for $X\in \mathcal{O}(\mathfrak{g})\,,W\in \mathcal{O}(\mathfrak{m})$ (here we are defining $L_X W\,.$ One can check that this is in the image of $\iota$ and so defines a local section of $\mathcal{O}(\mathfrak{m})\,,$ and that this action is compatible with Lie brackets). Assume that this action integrates to an action of $G$ on $M\,,$ making $M$ into a (nonabelian) G-module. 
\\\\The second thing that occurs is that we get a central extension given by
\begin{align}\label{nonabelian2}
0\to Z(\mathfrak{m})\to (Z(\mathfrak{m})\oplus\mathfrak{g},\omega)\to \mathfrak{g}\to 0\,,
\end{align}
where $\omega$ is the curvature of $\sigma\,,$ and $\mathfrak{g}$ acts on $Z(\mathfrak{m})$ as above. The extension~\eqref{nonabelian2} is a reduction of~\eqref{nonabelian} in the following sense: we can form the Lie algebroid $Z(\mathfrak{m})\oplus \mathfrak{m}$ and this Lie algeboid has a natural action of $Z(\mathfrak{m})\,,$ and the quotient is isomorphic to $\mathfrak{m}\,.$ Similarly, we can form the Lie algebroid $(Z(\mathfrak{m})\oplus\mathfrak{g},\omega)\oplus \mathfrak{m}\,,$ and this Lie algebroid also has a natural action of $Z(\mathfrak{m})\,,$ and the quotient is isomorphic to $\mathfrak{a}\,.$ Therefore, the extension~\eqref{nonabelian} is associated to the extension~\eqref{nonabelian2} in a way that is analogous to the reduction of the structure group of a principal bundle.
\\\\Assume now that the extension~\eqref{nonabelian2} integrates to an extension 
\begin{align}\label{abelian ext}
1\to Z(M)\xrightarrow{\iota} E \xrightarrow{\pi}  G\to 1\,,
\end{align}
where $G$ is the source simpy connected groupoid integrating $\mathfrak{g}\,.$ Then we can form the product Lie groupoid $E _{s}{\times}_{s} M:$ the multiplication is given by \begin{align*}
(e,m)(e',m')=(ee',m(\pi(e)^{-1}\cdot m'))\,,
\end{align*} where $t(e)=s(e')\,.$ Similarly to the Lie algebroid extension case, the family of abelian groups $Z(M)$ acts on the family of groups $Z(M)_{s}{\mathop{\times}}_s M\,,$ as well as on the Lie groupoid $E _{s}{\mathop{\times}}_s M\,,$ and the quotient of the former is isomorphic to $M\,,$ and the quotient of the latter integrates $\mathfrak{a}$ in~\eqref{nonabelian}. This gives us an extension
\begin{align}
    1\to M\to A\to G\to 1
\end{align}
integrating~\eqref{nonabelian}. Therefore, if  we can integrate~\eqref{nonabelian2} we can also integrate~\eqref{nonabelian}.
\\\\One should notice the similarity between the construction we've just described and the construction described in Lemma 3.6 in~\cite{rui}, in the special case of a regular Lie algebroid (ie. where the anchor map has constant rank), and where the extension is given by
\begin{align}
    0\to \text{ker}(\alpha)\to \mathfrak{a}\xrightarrow{\alpha}\text{im}(\alpha)\to 0\,,
\end{align}
where $\alpha$ is the anchor map of $\mathfrak{a}\,.$ The obstruction to integration described there coincides with the obstruction given by~\Cref{van Est image} for the integration of~\eqref{nonabelian2}, and we've shown that the vanishing of this obstruction is sufficient for the integration of~\eqref{nonabelian}, and hence of $\mathfrak{a}\,,$ to exist. 
\end{remark}
The above results concerned the degree $1$ case in truncated cohomology. We will now apply the main theorem to the integration of rank one representations, which concerns degree $1$ in nontruncated cohomology. First we make use of the following result:
\begin{proposition}\label{G representations}
The group of isomorphism classes of representations of $G\rightrightarrows G^0$ on complex line bundles is isomorphic to $H^1(G\,,\mathbb{C}^*_{G^0})\,.$ The corresponding statement for real line bundles holds, with $\mathbb{C}^*_{G^0}$ replaced by $\mathbb{R}^*_{G^0}\,.$ See Example~\ref{representation}.
\end{proposition}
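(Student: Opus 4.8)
The plan is to identify both sides of the asserted isomorphism with the Picard group of the stack $[G^0/G]$. First I would unwind what a representation of $G$ on a complex line bundle is: it consists of a complex line bundle $L\to G^0$ together with a bundle isomorphism $\mu\colon s^*L\xrightarrow{\sim} t^*L$ over $G=\mathbf{B}^1G$ which is unital along the identity bisection and associative over $\mathbf{B}^2G$, i.e. $\mu_{gh}=\mu_g\circ\mu_h$ for composable $(g,h)$; an isomorphism of such representations is a bundle isomorphism of the underlying $L$'s covering the identity of $G^0$ and intertwining the $\mu$'s, and the tensor product of line bundles equips the set of isomorphism classes with an abelian group structure. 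This is exactly the data of a line bundle on the differentiable (or holomorphic) stack $[G^0/G]$, obtained by descent: a line bundle on the stack restricts, along the canonical atlas object $G^0\in[G^0/G]$ of diagram~\eqref{canonical object}, to $L$, and its descent isomorphism over the groupoid is $\mu$. Thus the left-hand group is $\mathrm{Pic}([G^0/G])$.

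Next I would compute the right-hand side. By definition $H^1(G,\mathbb{C}^*_{G^0})=H^1(\mathbf{B}^\bullet G,\mathcal{O}(\mathbb{C}^*_{G^0})_{[G^0/G]}(\mathbf{B}^\bullet G))$, and since $\mathbb{C}^*_{G^0}$ is the trivial module the associated sheaf on $\mathbf{B}^n G$ is simply $\mathcal{O}^*$, the sheaf of invertible smooth (or holomorphic) functions. Crucially, $\mathcal{O}^*$ is not acyclic — even smoothly $H^1(-,\mathcal{O}^*)$ is nonzero (it records the Picard group of each level) — so one cannot use naive cohomology alone. Instead I would follow Remark~\ref{double complex}: choose an open cover $\mathcal{U}_q$ of each $\mathbf{B}^qG$ and form the double complex $C^{p,q}=\check{C}^p(\mathcal{U}_q,\mathcal{O}^*)$ with the \v{C}ech differential and the simplicial differential $\delta^*$ (written multiplicatively), whose total cohomology computes $H^*(G,\mathbb{C}^*_{G^0})$.

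Then I would read off degree $1$. A total $1$-cocycle is a pair $(g_{ij},c)$, where $g_{ij}$ is a \v{C}ech $1$-cochain on $G^0$ and $c$ is a \v{C}ech $0$-cochain on $G=\mathbf{B}^1G$. The vanishing of the $(2,0)$, mixed, and $(0,2)$ components of its total differential say, respectively, that $\{g_{ij}\}$ is an honest transition cocycle — hence defines a line bundle $L$ on $G^0$ — that $c$ glues to a global bundle isomorphism $\mu\colon s^*L\to t^*L$, and that $c$ is multiplicative over $\mathbf{B}^2G$, i.e. that $\mu$ is associative. Total coboundaries correspond exactly to refinements of local frames, i.e. to isomorphisms of the resulting representations. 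This gives a bijection between $H^1(G,\mathbb{C}^*_{G^0})$ and isomorphism classes of $G$-representations on line bundles, under which the multiplicative group structure of cocycles matches the tensor product; hence the two abelian groups are isomorphic. The real case is identical upon replacing $\mathbb{C}^*$ by $\mathbb{R}^*$ and $\mathcal{O}^*$ by the sheaf of nonvanishing real functions.

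The hard part will be the two foundational points underlying this bookkeeping. First, one must justify that the double complex $C^{p,q}$ actually computes $H^*(G,\mathbb{C}^*_{G^0})$: this requires that the \v{C}ech sheaves $\check{C}^p(\mathcal{U}_q,\mathcal{O}^*)$ be acyclic on each level $\mathbf{B}^qG$ (so that they form an admissible resolution in the sense of Remark~\ref{acyclic resolution}), which in the smooth case follows from softness after choosing a good cover, and in the holomorphic case requires choosing covers by Stein opens. Second, the precise matching of total-complex cocycles and coboundaries with equivariant line bundles and their isomorphisms is genuinely a descent argument on the nerve, and the care lies in checking it is a group isomorphism in both directions (well-defined on classes, surjective, and injective). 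Conceptually this is just the statement $H^1([G^0/G],\mathcal{O}^*)=\mathrm{Pic}([G^0/G])$, and invoking Morita invariance (Remark~\ref{morita inv}) together with the infinitesimal analogue in Example~\ref{representation} gives a shorter, less computational route; I would present the explicit cocycle description as the self-contained argument and remark that the stacky Picard interpretation recovers it.
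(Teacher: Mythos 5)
The paper offers no proof to compare against: Proposition~\ref{G representations} is asserted bare, with only a cross-reference to its unproved infinitesimal analogue (Example~\ref{representation}), so your argument has to be judged on its own. In outline it is correct and is the standard descent argument, and it sits well inside the paper's framework: unwinding a line-bundle representation as a pair $(L,\mu)$ with $\mu\colon s^*L\xrightarrow{\sim}t^*L$ multiplicative over $\mathbf{B}^2G$ (unitality is automatic, since $\mu_{1_x}$ is an invertible idempotent, so you need not impose it), identifying such pairs with descent data, i.e.\ line bundles on $[G^0/G]$, extracting from a total $1$-cocycle of the \v{C}ech--simplicial double complex exactly the data $(g_{ij},c)$ of transition functions plus a multiplicative gluing, and checking that total coboundaries account precisely for isomorphisms while tensor product matches multiplication of cocycles. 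Your closing remark that this is $H^1([G^0/G],\mathcal{O}^*)=\mathrm{Pic}([G^0/G])$ is consistent with Theorem~\ref{stack groupoid cohomology}, and Morita invariance then comes for free from Remark~\ref{morita inv}.

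Two technical points in your ``hard part'' need repair, because as stated they pull against each other. First, for the simplicial differential $\delta^*$ to act on \v{C}ech cochains at all, the covers $\mathcal{U}_q$ must be compatible with the face maps (each $\mathcal{U}_{q+1}$ refining $d^{-1}\mathcal{U}_q$ for every face map $d$, with chosen refinements); the natural compatible choice, $\mathcal{U}_1=\{s^{-1}U_i\cap t^{-1}U_j\}_{ij}$ and its iterated pullbacks on $\mathbf{B}^2G$, is in general \emph{not} a good cover, while an arbitrary good cover of each level is not compatible with the face maps. So ``choose a good cover of each $\mathbf{B}^qG$'' does not by itself produce a double complex of the kind required by Remark~\ref{double complex}; the standard fixes are to pass to a colimit over compatible systems of covers, or---closer to the paper's own machinery---to resolve levelwise by Godement sheaves $\mathbb{G}^\bullet$ (which form a simplicial sheaf automatically and are acyclic, cf.\ Remark~\ref{acyclic resolution}) and only localize to a cover at the end when reading off $(g_{ij},c)$. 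Second, your acyclicity claim via ``softness'' of $\mathcal{O}^*$ is wrong as stated: $\mathcal{O}^*$ is not soft even in the smooth category (a nonvanishing function on a closed annulus with winding number one admits no nonvanishing extension to the plane). What you actually need is $H^k(U_{i_0\cdots i_p},\mathcal{O}^*)=0$ for $k\ge 1$ on all intersections, which in the smooth case follows from the exponential sequence $0\to\mathbb{Z}\to\mathcal{O}\to\mathcal{O}^*\to 0$ together with contractibility, and in the holomorphic case from Cartan's Theorem B plus $H^{k+1}(U,\mathbb{Z})=0$ on contractible Stein intersections. With these two repairs your proof is complete.
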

The following statement is already known, we are just giving a cohomological proof.
\begin{theorem}
Let $G\rightrightarrows G^0$ be a source simply connected Lie groupoid. Then $\text{Rep}(G,1)\cong \text{Rep}(\mathfrak{g},1)\,,$ where
$\text{Rep}(G,1)\,,\,\text{Rep}(\mathfrak{g},1)$ are the categories of 1-dimensional representations, ie. representations on line bundles.
\end{theorem}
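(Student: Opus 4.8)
The plan is to exhibit the claimed equivalence as the differentiation functor $F$ of Theorem~\ref{g-module}, restricted to representations on line bundles, and to verify its essential surjectivity and full faithfulness cohomologically by invoking the van Est isomorphism in the appropriate degrees. Since $G$ is source simply connected its source fibers are $1$-connected, so $G$ is source $n$-connected with $n=1$; this is exactly the range in which Theorem~\ref{van Est image} gives an isomorphism in degrees $\le 1$. Note that a rank one representation, being a vector bundle, has $\exp$ equal to the identity (a surjective submersion), so $F$ is already known to be defined and, by the second part of Theorem~\ref{g-module}, to be an equivalence; the content of the argument below is to recover this conclusion from the van Est map alone.

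For objects I would argue essential surjectivity. A rank one complex representation is the same datum as a line bundle carrying a $G$-action, so by Proposition~\ref{G representations} the isomorphism classes of $\text{Rep}(G,1)$ form the group $H^1(G,\mathbb{C}^*_{G^0})$, with $\mathbb{C}^*_{G^0}$ given the trivial $G$-action, while by Example~\ref{representation} the isomorphism classes of $\text{Rep}(\mathfrak{g},1)$ form $H^1(\mathfrak{g},\mathbb{C}^*_M)$. The compatibility to record is that $\mathbb{C}^*_M$ is precisely $F(\mathbb{C}^*_{G^0})$: differentiating the trivial $G$-module $\mathbb{C}^*_{G^0}$ produces the $\mathfrak{g}$-module for which $\tilde L_X s=\mathrm{dlog}\,s(\alpha(X))$. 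Since $\mathbb{C}^*_{G^0}$ sits in the exponential sequence
\begin{align*}
0\to 2\pi i\,\mathbb{Z}_{G^0}\to \mathbb{C}_{G^0}\xrightarrow{\exp}\mathbb{C}^*_{G^0},
\end{align*}
with $\mathfrak{m}=\mathbb{C}_{G^0}$, Theorem~\ref{van Est image} with $n=1$ shows that
\begin{align*}
VE\colon H^1(G,\mathbb{C}^*_{G^0})\xrightarrow{\ \cong\ } H^1(\mathfrak{g},\mathbb{C}^*_{G^0})
\end{align*}
is an isomorphism. As $VE$ is induced by differentiation, it realizes the bijection on isomorphism classes induced by $F$; in particular $F$ is surjective on isomorphism classes, hence essentially surjective.

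For morphisms I would argue full faithfulness. Given line bundle representations $L_1,L_2$, the bundle $\mathrm{Hom}(L_1,L_2)=L_1^*\otimes L_2$ is again a rank one representation, and a morphism $L_1\to L_2$ in $\text{Rep}(G,1)$ is exactly a $G$-invariant section of it, that is, an element of $H^0(G,L_1^*\otimes L_2)$; likewise morphisms $F L_1\to F L_2$ in $\text{Rep}(\mathfrak{g},1)$ are the flat sections, i.e.\ $H^0(\mathfrak{g},L_1^*\otimes L_2)$. Because $L_1^*\otimes L_2$ is a genuine representation its kernel family is trivial ($Z=0$), so Theorem~\ref{van Est image} with $n=1$ applies in degree $0$ and yields $H^0(G,L_1^*\otimes L_2)\cong H^0(\mathfrak{g},L_1^*\otimes L_2)$; unwinding shows this isomorphism is $F$ on morphism sets. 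Hence $F$ is fully faithful, and together with essential surjectivity this proves $\text{Rep}(G,1)\cong\text{Rep}(\mathfrak{g},1)$. The real case is identical, replacing $\mathbb{C}^*_{G^0}$ by $\mathbb{R}^*_{G^0}$ and $\mathbb{C}_{G^0}$ by $\mathbb{R}_{G^0}$.

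The main obstacle I anticipate is bookkeeping rather than a hard estimate: one must check the naturality that lets us identify the abstract van Est isomorphism of cohomology groups with the concrete differentiation functor $F$. Concretely this means verifying that the cocycle presentations underlying Proposition~\ref{G representations} and Example~\ref{representation} transform under $VE$ (given by the $\mathrm{dlog}$ resolution of Section~\ref{van Est map}) the same way a representation's structure cocycle transforms under fiberwise differentiation, and that $F(\mathbb{C}^*_{G^0})=\mathbb{C}^*_M$ on the nose. Once this identification is in place, the degree $1$ and degree $0$ statements of Theorem~\ref{van Est image} supply essential surjectivity and full faithfulness respectively, with no further integration input needed.
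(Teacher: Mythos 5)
Your proposal is correct and follows essentially the same route as the paper, whose own proof deduces the statement directly from Proposition~\ref{G representations}, Example~\ref{representation}, and the degree~$\le 1$ isomorphism of Theorem~\ref{van Est image} applied with $n=1$ to the exponential sequence $0\to 2\pi i\,\mathbb{Z}_{G^0}\to\mathbb{C}_{G^0}\xrightarrow{\exp}\mathbb{C}^*_{G^0}\,.$ The only addition is that you make the categorical content explicit — full faithfulness via the degree~$0$ van Est isomorphism for the representation $L_1^*\otimes L_2$, and the identification of $VE$ with the differentiation functor $F$ on isomorphism classes — points the paper's one-line proof leaves implicit.
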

\begin{proof}
This follows directly from Theorem~\ref{van Est image}, Example~\ref{representation} and Proposition~\ref{G representations}.
\end{proof}
Now for the degree $2$ case in truncated cohomology: we use the main theorem to give a proof of an integration result concerning the multiplicative gerbe on compact, simple and simply connected Lie groups (see~\cite{konrad}). 
\begin{theorem}
Let $G$ be a simply connected Lie group. Then for each $\alpha\in H^2_0(\mathfrak{g},\mathbb{R})$ which is integral on $G\,,$ there is a class in $H^2_0(G,S^1)$ integrating it.
\end{theorem}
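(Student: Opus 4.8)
The plan is to read this off from the Main Theorem (Theorem~\ref{van Est image}), applied in degree $2$ to the trivial $G$-module $M=S^1$, whose exponential sequence is $0\to\mathbb{Z}\to\mathbb{R}\xrightarrow{\exp}S^1$, so that here $\mathfrak{m}=\mathbb{R}$ and $Z=\mathbb{Z}$. First I would check the connectivity hypothesis. Viewing $G$ as a Lie groupoid $G\rightrightarrows\{*\}$ over a point, its unique source fiber is $G$ itself. Since $G$ is connected and simply connected, $\pi_0(G)=\pi_1(G)=0$, and since $\pi_2$ of any Lie group vanishes, $\pi_2(G)=0$ as well; hence $G$ is $2$-connected and the groupoid is source $2$-connected. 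Thus the Main Theorem applies with $n=2$.

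Next I would identify the two Lie algebroid cohomologies in play. The trivial representation $\mathbb{R}$ and the module $S^1$ have the same infinitesimal object $\mathfrak{m}=\mathbb{R}$, so by \Cref{forms} the sheaves $\mathcal{C}^k(\mathfrak{g},S^1)$ and $\mathcal{C}^k(\mathfrak{g},\mathbb{R})$ agree for all $k\ge 1$ and differ only in degree $0$. Since the truncated complexes discard degree $0$, they coincide exactly, giving a canonical isomorphism $H^2_0(\mathfrak{g},S^1)\cong H^2_0(\mathfrak{g},\mathbb{R})$. Under this identification $\alpha$ is represented by a closed Lie algebroid (here Lie algebra) $3$-form $\omega$.

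Now with $n=2$ and $M=S^1$, the Main Theorem asserts that $VE_0\colon H^2_0(G,S^1)\to H^2_0(\mathfrak{g},S^1)$ is injective and that $[\omega]$ lies in its image precisely when $\int_{S^3}\omega\in\mathbb{Z}$ for every $3$-sphere $S^3$ in the source fiber over the base point, i.e.\ in $G$. It remains only to match this period condition with the hypothesis that $\alpha$ is \emph{integral on $G$}. The translation of Definition~\ref{translation}, specialized to a group over a point, sends $\omega$ to a translation-invariant closed $3$-form $\widetilde\omega$ on $G$ whose periods are its de Rham periods, so having integral periods over all $3$-spheres is equivalent to the de Rham class $[\widetilde\omega]$ being integral, which is what \emph{integral on $G$} means. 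Moreover, since $\pi_2(G)=0$, the Hurewicz theorem gives $\pi_3(G)\xrightarrow{\sim}H_3(G;\mathbb{Z})$, so $H_3(G;\mathbb{Z})$ is generated by images of $3$-spheres and testing on spheres detects integrality. Therefore $\alpha$ lies in the image of $VE_0$, and any preimage in $H^2_0(G,S^1)$ is the desired integrating class.

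The step I expect to be the main obstacle is this last identification: verifying that the translation of Definition~\ref{translation} for a Lie group over a point produces exactly the invariant representative whose spherical periods realize the notion of \emph{integral on $G$}, and that the reduction from all integral $3$-cycles to $3$-spheres via Hurewicz is valid (which is precisely where $\pi_2(G)=0$ is used). The connectivity check and the truncation identification are then routine, and injectivity of $VE_0$ shows moreover that the integrating class is unique.
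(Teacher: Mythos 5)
Your proposal is correct and takes essentially the same route as the paper: the paper's entire proof is the observation that simply connected Lie groups are $2$-connected, so that the Main Theorem (Theorem~\ref{van Est image}) applies with $n=2$ to the module $S^1$ with exponential sequence $0\to\mathbb{Z}\to\mathbb{R}\to S^1$. The details you supply --- the identification $H^2_0(\mathfrak{g},S^1)\cong H^2_0(\mathfrak{g},\mathbb{R})$ via the truncated complexes, and the Hurewicz argument matching sphere periods with integrality of the invariant representative --- are exactly the steps the paper leaves implicit, and they check out.
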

\begin{proof}
It is well known that simply connected Lie groups are 2-connected, so Theorem~\ref{van Est image} immediately gives the result.
\end{proof}
\subsection{Group Actions and Lifting Problems}
In this section we apply~\Cref{van Est image} to study the problems of lifting projective representations to representations, and to lifting Lie group actions to principal torus bundles.
\subsubsection{Lifting Projective Representations}
\begin{theorem}
Let $G$ be a simply connected Lie group and let $V$ be a finite dimensional complex vector space. Let $\rho:G\to \text{PGL } (V)$ be a homomorphism. Then $G$ lifts to a homomorphism $\tilde{\rho}:G\to\text{GL } (V)\,.$ If $G$ is semisimple, this lift is unique.
\end{theorem}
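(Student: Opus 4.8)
The plan is to realize the obstruction to lifting $\rho$ as a central extension class and to kill that class using the main theorem. First I would pull back the central extension $1\to\mathbb{C}^*\to\text{GL}(V)\to\text{PGL}(V)\to 1$ along $\rho$ to obtain a central extension of Lie groups
\[
1\to\mathbb{C}^*\to E\xrightarrow{p} G\to 1\,,\qquad E:=G\times_{\text{PGL}(V)}\text{GL}(V)\,,
\]
where $\mathbb{C}^*$ carries the trivial $G$-action. A lift $\tilde\rho:G\to\text{GL}(V)$ of $\rho$ is exactly a homomorphic section of $p$: its second component is the desired $\tilde\rho\,,$ and the fiber-product condition forces $[\tilde\rho(g)]=\rho(g)\,.$ Thus the lift exists if and only if this extension splits, that is, if and only if its class $\alpha\in H^1_0(G,\mathbb{C}^*)$ (in the sense of the Groupoid Extensions subsection) vanishes. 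Note $\mathbb{C}^*$ is a $G$-module fitting into the exponential sequence $0\to 2\pi i\,\mathbb{Z}\to\mathbb{C}\xrightarrow{\exp}\mathbb{C}^*\,,$ as required by Theorem~\ref{van Est image}.

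Next I would apply the van Est map. Since $G$ is simply connected it is $2$-connected ($\pi_1(G)=\pi_2(G)=0$, as recalled above), and, viewing $G$ as a groupoid over a point, its source fibers are $G$ itself, so $G$ is source $2$-connected; take $n=2$ in Theorem~\ref{van Est image}. Then $VE_0:H^*_0(G,\mathbb{C}^*)\to H^*_0(\mathfrak{g},\mathbb{C}^*)$ is an isomorphism in degrees $\le n-1=1\,.$ In particular it is an isomorphism in degree $1\,,$ so $\alpha=0$ if and only if $VE_0(\alpha)=0\,.$

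It remains to check that the infinitesimal obstruction vanishes. By Theorem~\ref{groupoid extension}, $VE_0(\alpha)$ is the class of the differentiated extension $0\to\mathbb{C}\to\mathfrak{e}\to\mathfrak{g}\to 0\,,$ which is the pullback along $d\rho:\mathfrak{g}\to\mathfrak{pgl}(V)$ of the extension $0\to\mathbb{C}\to\mathfrak{gl}(V)\to\mathfrak{pgl}(V)\to 0\,.$ This last extension splits canonically: the decomposition $\mathfrak{gl}(V)=\mathfrak{sl}(V)\oplus\mathbb{C}\cdot\mathrm{Id}$ identifies $\mathfrak{sl}(V)$ with $\mathfrak{pgl}(V)\,,$ since the projection is injective on $\mathfrak{sl}(V)$ (over $\mathbb{C}$ one has $\mathbb{C}\cdot\mathrm{Id}\cap\mathfrak{sl}(V)=0$) and the dimensions agree. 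This gives a Lie algebra section $\mathfrak{pgl}(V)\to\mathfrak{gl}(V)\,,$ and composing with $d\rho$ produces a homomorphic lift $\mathfrak{g}\to\mathfrak{gl}(V)\,.$ Hence $\mathfrak{e}$ splits, so $VE_0(\alpha)=0\,,$ and therefore $\alpha=0$ and the lift $\tilde\rho$ exists.

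For uniqueness when $G$ is semisimple, I would observe that any two lifts differ by a character: if $\tilde\rho_1,\tilde\rho_2$ both lift $\rho\,,$ then $\tilde\rho_1(g)\tilde\rho_2(g)^{-1}$ is a scalar $\chi(g)\in\mathbb{C}^*\,,$ and $\chi$ is a homomorphism, so the set of lifts is a torsor over $\text{Hom}(G,\mathbb{C}^*)\,.$ Using Theorem~\ref{van Est image} in degree $1$ (together with Proposition~\ref{G representations} and Example~\ref{representation}), one has $\text{Hom}(G,\mathbb{C}^*)\cong\text{Hom}(\mathfrak{g},\mathbb{C})\,,$ and the right-hand side vanishes because $\mathfrak{g}=[\mathfrak{g},\mathfrak{g}]$ for semisimple $\mathfrak{g}\,.$ Thus the lift is unique. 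The main obstacle is really the bookkeeping that matches the degree of the obstruction ($H^1_0$) with the range in which van Est is an isomorphism; once the extension class is placed in $H^1_0(G,\mathbb{C}^*)$ and the connectivity value $n=2$ is identified, the infinitesimal splitting via the trace decomposition is elementary.
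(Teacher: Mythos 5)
Your proposal is correct and follows essentially the same route as the paper: pull back the extension $1\to\mathbb{C}^*\to\mathrm{GL}(V)\to\mathrm{PGL}(V)\to 1$ along $\rho\,,$ split the corresponding Lie algebra extension via the trace projection (your $\mathfrak{gl}(V)=\mathfrak{sl}(V)\oplus\mathbb{C}\cdot\mathrm{Id}$ is the paper's map $X\mapsto X-\tfrac{\mathrm{tr}(X)}{\dim V}\mathbf{I}$), invoke Theorem~\ref{van Est image} to conclude the groupoid extension is trivial, and obtain uniqueness from the torsor over $\mathrm{Hom}(G,\mathbb{C}^*)\cong\mathrm{Hom}(\mathfrak{g},\mathbb{C})=0$ for semisimple $\mathfrak{g}\,.$ You merely make explicit the bookkeeping (the connectivity $n=2\,,$ the compatibility of $VE_0$ with differentiation of extensions via Theorem~\ref{groupoid extension}) that the paper leaves implicit.
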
 
\begin{proof}
We have a central extension
\begin{align}\label{GL extension}
    1\to \mathbb{C}^*\to \text{GL } (V)\to \text{PGL } (V)\to 1\,,
\end{align}
and the corresponding Lie algebra extension splits: the Lie algebra of $\text{PGL } (V)$ is isomorphic to $\mathfrak{g}\mathfrak{l}(V)/\mathbb{C}\,,$ where $\lambda\in\mathbb{C}$ acts on $X\in\mathfrak{g}\mathfrak{l}(V)$ by taking $X\mapsto X+\lambda\,\mathbf{I}\,.$ The map \begin{align*}
    \mathfrak{g}\mathfrak{l}(V)/\mathbb{C}\to\mathfrak{g}\mathfrak{l}(V)\,, X\mapsto X-\frac{\text{tr}(X)}{\text{dim}(V)}\mathbf{I}
    \end{align*}
    is a Lie algebra homomorphism. Therefore, since $G$ is simply connected,~\Cref{van Est image} implies that the extension of $G$ that we get by pulling back the extension given by \eqref{GL extension} via $\rho$ is trivial (since the pullback of a trivial Lie algebra extension is trivial). However, a trivialization of the pullback extension is the same thing as a lifting of the homomorphism $\rho$ to a homomorphism $\tilde{\rho}:G\to\text{GL } (V)\,,$ hence such a lifting exists. 
    \\\\Now for uniqueness: it is easy to see that the liftings of $\rho$ are a torsor for $\text{Hom}(G,\mathbb{C}^*)\,,$ but again by~\Cref{van Est image} we have that $\text{Hom}(G,\mathbb{C}^*)\cong \text{Hom}(\mathfrak{g},\mathbb{C})\,,$ and the right side is $0$ if $G$ is semisimple. Hence if $G$ is semisimple there is a unique lift.
    
\end{proof}
\begin{remark}
One can also use the above method to give a proof of Bargmann's theorem, that is, if $H^2(\mathfrak{g},\mathbb{R})=0\,,$ then every projective representation of a (infinite dimensional) Hilbert space lifts to a representation.
\end{remark}
\subsubsection{Lifting Group Actions to Principal Bundles}
Now we will look at a different lifting  problem, one involving compact, semisimple Lie groups. First let us remark the following well-known result:
\begin{lemma}
A compact Lie group is semisimple if and only if its fundamental group is finite.
\end{lemma}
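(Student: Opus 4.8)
The plan is to reduce the statement to the structure theory of compact Lie algebras together with Weyl's theorem on the compactness of the simply connected group integrating a compact semisimple Lie algebra. Throughout I take $G$ to be connected, since the fundamental group only sees the identity component, and I read ``semisimple'' as the condition that the Lie algebra $\mathfrak{g}$ of $G$ is semisimple.

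First I would invoke the structural decomposition of the Lie algebra. For a compact Lie group $G$ the algebra $\mathfrak{g}$ is a compact Lie algebra, and every such algebra splits as $\mathfrak{g} = \mathfrak{z}(\mathfrak{g}) \oplus [\mathfrak{g},\mathfrak{g}]$, where $\mathfrak{z}(\mathfrak{g})$ is the center and $[\mathfrak{g},\mathfrak{g}]$ is semisimple with negative-definite Killing form. By definition $G$ is semisimple precisely when $\mathfrak{z}(\mathfrak{g}) = 0$. So the task reduces to showing that $\pi_1(G)$ is finite if and only if $k := \dim \mathfrak{z}(\mathfrak{g})$ equals $0$.

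Next I would pass to the universal cover. Writing $S$ for the simply connected group with Lie algebra $[\mathfrak{g},\mathfrak{g}]$, the universal cover of $G$ is $\tilde{G} \cong \mathbb{R}^k \times S$. Weyl's theorem, the one genuinely nontrivial input, guarantees that $S$ is compact because its Lie algebra is compact and semisimple; its center $Z(S)$ is then finite. Consequently the center of $\tilde{G}$ is $\mathbb{R}^k \times Z(S)$, and the discrete central subgroup $\pi_1(G) = \ker(q)$, where $q : \tilde{G} \to G$ is the covering map, sits inside it.

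Finally I would run the argument in both directions. If $k = 0$ then $\tilde{G} = S$ is compact, so the discrete subgroup $\pi_1(G) \subseteq Z(S)$ is automatically finite. If $k > 0$, I would restrict $q$ to the factor $\mathbb{R}^k \times \{e\}$; its image is the connected central subgroup of $G$ with Lie algebra $\mathfrak{z}(\mathfrak{g})$, which is a $k$-dimensional torus $T^k$, and the induced covering $\mathbb{R}^k \to T^k$ has kernel a lattice of full rank $k$. This lattice lies inside $\ker q = \pi_1(G)$, so $\pi_1(G)$ is infinite. The main obstacle is not any individual step but the dependence on Weyl's theorem; once compactness of $S$ is granted the remainder is a clean dimension count. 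If a self-contained treatment is preferred, I would sketch Weyl's theorem via the negative-definiteness of the Killing form, which endows $\tilde{G}$ with a bi-invariant metric of strictly positive Ricci curvature, so that compactness follows from Bonnet--Myers.
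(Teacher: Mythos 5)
Your proof is correct, but note that the paper itself offers no proof at all: the lemma is stated there as a well-known remark, used only as input to Theorem 2.5.8 (the lifting theorem for compact semisimple group actions). So there is no argument in the paper to compare against, and your write-up is in effect supplying the missing proof. What you give is the standard structure-theoretic argument and all steps check out: the splitting $\mathfrak{g}=\mathfrak{z}(\mathfrak{g})\oplus[\mathfrak{g},\mathfrak{g}]$ for compact Lie algebras, Weyl's theorem for compactness of $S$, the identification $\pi_1(G)=\ker q\subseteq Z(\tilde G)=\mathbb{R}^k\times Z(S)$, and the full-rank lattice in the $k>0$ case (where the key point, which you correctly use, is that $q(\mathbb{R}^k\times\{e\})=\exp_G(\mathfrak{z}(\mathfrak{g}))$ equals the identity component of $Z(G)$, a closed and hence compact torus, forcing the kernel of $\mathbb{R}^k\to T^k$ to be cocompact). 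One small economy worth noting: the direction ``$\pi_1(G)$ finite $\Rightarrow$ semisimple'' does not need the lattice argument at all, since a finite cover of a compact space is compact, so $\tilde G\cong\mathbb{R}^k\times S$ compact immediately forces $k=0$; this also makes that direction independent of Weyl's theorem, which is then needed only for the converse.
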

Now the aim of the rest of this section is to prove the following result:
\begin{theorem}\label{compact group actions}
Let $G$ be a compact, semisimple Lie group acting on a manifold $X\,.$ Suppose $P\to X$ is a principal bundle for the $n$-torus $T^n\,.$ Then the action of $G$ on $X$ lifts to an action of $G$ on $P^{|\pi_1(G)|}$ (here $P^{|\pi_1(G)|}$ is the principal $T^n$-bundle whose torsor over $x\in X$ is the product of the torsor over $x$ in $P$ with itself $|\pi_1(G)|$ times), and the lift is unique up to isomorphism (ie. any two lifts differ by a principal bundle automorphism). 
\end{theorem}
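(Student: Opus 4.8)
The plan is to reinterpret a lift of the $G$-action as an equivariant structure, i.e.\ as a class in the cohomology of the action groupoid $\mathcal{G}:=G\ltimes X\rightrightarrows X$ with coefficients in the trivial module $T^n_X$, and then to apply Theorem~\ref{van Est image} to $\mathcal{G}$, whose source fibers are copies of $G$. By the classification of principal bundles on stacks (as in Proposition~\ref{G representations} and the Morita-invariance discussion), a lift of the $G$-action to a principal $T^n$-bundle $Q\to X$ is the same thing as a $\mathcal{G}$-equivariant structure on $Q$, and isomorphism classes of these are the fiber of the restriction map $r:H^1(\mathcal{G},T^n_X)\to H^1(X,\mathcal{O}(T^n))$ over $[Q]$. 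Since $r$ is a homomorphism of abelian groups, a nonempty fiber is a torsor over $\ker r$. Thus the theorem reduces to showing that $|\pi_1(G)|\cdot[P]$ lies in the image of $r$ (existence) and that $\ker r=0$ (uniqueness up to a bundle automorphism).

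First I would pin down the infinitesimal picture over the action algebroid $\mathfrak{g}\ltimes X$. By Definition~\ref{forms} the Chevalley--Eilenberg complexes of the module $T^n_X$ and of its Lie algebroid $\mathbb{R}^n_X$ agree in positive degree and differ only in degree $0$ by $\mathcal{O}(T^n)$ versus $\mathcal{O}(\mathbb{R}^n)$, whose kernel is $\underline{\mathbb{Z}^n}$. This yields a short exact sequence of complexes and a long exact sequence whose relevant segment is $H^1(\mathfrak{g}\ltimes X,\mathbb{R}^n)\to H^1(\mathfrak{g}\ltimes X,T^n)\xrightarrow{r_{\mathrm{alg}}}H^2(X;\mathbb{Z}^n)\to H^2(\mathfrak{g}\ltimes X,\mathbb{R}^n)$, where $r_{\mathrm{alg}}$ is the restriction map under the identification $H^1(X,\mathcal{O}(T^n))\cong H^2(X;\mathbb{Z}^n)$. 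Because the sheaves of $\mathbb{R}^n$-valued Lie algebroid forms are fine, $H^\bullet(\mathfrak{g}\ltimes X,\mathbb{R}^n)$ is the Lie algebra cohomology $H^\bullet(\mathfrak{g},C^\infty(X,\mathbb{R}^n))$, which I claim vanishes in degrees $1$ and $2$: $\mathfrak{g}$ is semisimple, so by Whitehead's lemmas combined with Haar averaging over the compact group $G$ (the module $C^\infty(X,\mathbb{R}^n)$ integrates to $G$ and is completely reducible) both groups are zero. Hence $r_{\mathrm{alg}}:H^1(\mathfrak{g}\ltimes X,T^n)\xrightarrow{\cong}H^1(X,\mathcal{O}(T^n))$ is an isomorphism, so every $T^n$-bundle carries a unique infinitesimal $\mathfrak{g}$-equivariant structure.

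Next I would feed this into the van Est machinery. The source fibers of $\mathcal{G}=G\ltimes X$ are diffeomorphic to $G$, which is connected (so $\mathcal{G}$ is source $0$-connected) but in general not simply connected. Applying Theorem~\ref{van Est image} with $n=0$ and $M=T^n_X$, the map $VE:H^1(\mathcal{G},T^n)\to H^1(\mathfrak{g}\ltimes X,T^n)$ is injective with image the classes $\alpha$ whose translation $t_x^*\alpha$ is trivial in $H^1(s^{-1}(x),\mathcal{O}(T^n)\to\cdots)$ for all $x$, and Lemma~\ref{commute} gives $r=r_{\mathrm{alg}}\circ VE$. Since the coefficients are trivial, the exponential sequence on the fiber shows this source-fiber complex is quasi-isomorphic to the constant sheaf $\underline{T^n}$ on $G$, so $t_x^*\alpha$ lives in $H^1(G;\underline{T^n})\cong\operatorname{Hom}(\pi_1(G),T^n)$; as $G$ is compact semisimple its fundamental group is finite abelian, whence this obstruction group is finite of exponent dividing $|\pi_1(G)|$. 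All of this is functorial for the $k$-th power homomorphism $T^n\xrightarrow{k}T^n$, which on $H^1(X,\mathcal{O}(T^n))\cong H^2(X;\mathbb{Z}^n)$ is multiplication by $k$ and sends $[P]$ to $[P^k]$. Taking $k=|\pi_1(G)|$ and writing $\alpha_P=r_{\mathrm{alg}}^{-1}[P]$, the infinitesimal class of $P^{|\pi_1(G)|}$ is $|\pi_1(G)|\cdot\alpha_P$, and its translations $|\pi_1(G)|\cdot t_x^*\alpha_P$ vanish since the target has exponent dividing $|\pi_1(G)|$. Hence $|\pi_1(G)|\cdot\alpha_P\in\operatorname{Im}(VE)$, so $[P^{|\pi_1(G)|}]=r_{\mathrm{alg}}(|\pi_1(G)|\cdot\alpha_P)$ lies in $\operatorname{Im}(r)$ and the lift exists. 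Uniqueness is then immediate: the injective $VE$ and $r=r_{\mathrm{alg}}\circ VE$ give $\ker r\hookrightarrow\ker r_{\mathrm{alg}}=0$, so the fiber of $r$ over $[P^{|\pi_1(G)|}]$ is a single isomorphism class, i.e.\ any two lifts differ by a bundle automorphism.

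The step I expect to be the main obstacle is the vanishing $H^1(\mathfrak{g},C^\infty(X,\mathbb{R}^n))=H^2(\mathfrak{g},C^\infty(X,\mathbb{R}^n))=0$ for the infinite-dimensional module $C^\infty(X,\mathbb{R}^n)$, since Whitehead's lemmas are classically stated for finite-dimensional coefficients; one must justify complete reducibility through Haar averaging over the compact group $G$ (or an explicit Casimir-operator homotopy on isotypic components). Once that vanishing, and hence the isomorphism $r_{\mathrm{alg}}$, is established, the remainder is a formal consequence of the kernel-and-image description in Theorem~\ref{van Est image} together with the fact that the obstruction group $\operatorname{Hom}(\pi_1(G),T^n)$ is finite and $|\pi_1(G)|$-torsion, which is exactly what raising $P$ to the power $|\pi_1(G)|$ annihilates.
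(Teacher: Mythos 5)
Your overall architecture is sound, and it is genuinely different from the paper's. The paper never inverts an infinitesimal restriction map: it first proves the simply connected case (Lemma~\ref{compact simply connected}) by interpreting a lift as a trivialization of the pullback along $(s,t)$ of the Atiyah extension $1\to T^n_X\to \text{At}(P)\to\text{Pair}(X)\to 1$, and kills the obstruction by showing $H^1_0(\tilde G\ltimes X,T^n_X)=0$ (properness via Proposition~\ref{vanishing cohomology prop}, then Theorem~\ref{van Est image} with $\mathbb{Z}^n_X$-coefficients, then the exponential sequence); only afterwards does it treat general $G$, by lifting the $\tilde G$-action to $P$, pushing the resulting class through $VE$, multiplying by $|\pi_1(G)|$, and invoking the image criterion of Theorem~\ref{van Est image}. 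You replace that entire first half by the assertion that $r_{\mathrm{alg}}:H^1(\mathfrak{g}\ltimes X,T^n)\to H^1(X,\mathcal{O}(T^n))$ is an isomorphism, i.e.\ that every $T^n$-bundle carries a unique infinitesimal equivariant structure; this dispenses with the gauge groupoid and with any global statement about $\tilde G$. The second half of your argument --- the identification of the source-fiber obstruction group with $\text{Hom}(\pi_1(G),T^n)$, its $|\pi_1(G)|$-torsion, functoriality under the power map, injectivity of $VE$ in degree one, and $r=r_{\mathrm{alg}}\circ VE$ from Lemma~\ref{commute} --- is exactly the paper's mechanism (your torsion computation is the paper's Corollary~\ref{power is zero}), and your $\ker r=0$ uniqueness argument is a cleaner repackaging of the paper's.

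The gap is precisely where you flagged it, and as written it is a real one: Whitehead's lemmas are a theorem about finite-dimensional modules, and $C^\infty(X,\mathbb{R}^n)$ is not completely reducible in the algebraic sense --- the $G$-finite vectors are only dense --- so ``Haar averaging plus Casimir on isotypic components'' does not yet prove $H^1(\mathfrak{g},C^\infty(X,\mathbb{R}^n))=H^2(\mathfrak{g},C^\infty(X,\mathbb{R}^n))=0$; one would have to control the passage from the dense $G$-finite subspace to the completion, or invert the Casimir on the complement of the invariants, which is delicate since the orbital Laplacian is not elliptic for non-free actions. Fortunately the vanishing you need is true and follows from the paper's own toolkit with no functional analysis: semisimplicity makes $\pi_1(G)$ finite, so the universal cover $\tilde G$ is again compact; hence $\tilde G\ltimes X$ is proper and $H^k(\tilde G\ltimes X,\mathbb{R}^n_X)=0$ for $k\ge 1$ by Proposition~\ref{vanishing cohomology prop}; and $\tilde G\ltimes X$ has $2$-connected source fibers and the \emph{same} Lie algebroid $\mathfrak{g}\ltimes X$, so Theorem~\ref{van Est image} (applied to the representation $\mathbb{R}^n_X$, where $Z=0$) gives $H^1(\mathfrak{g}\ltimes X,\mathbb{R}^n_X)=H^2(\mathfrak{g}\ltimes X,\mathbb{R}^n_X)=0$. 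Substituting this for the Whitehead step closes your proof completely --- at the mild cost of reintroducing the universal cover you were trying to avoid, though only inside the vanishing lemma rather than as the organizing principle of the argument.
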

In particular, if $G$ is compact and simply connected, then actions of $G$ on a manifold $X$ lift to all principal $T^n$-bundles over $X\,.$
\theoremstyle{definition}\begin{exmp}
Consider the standard action of $SO(3)$ on $S^2\,.$ We have that $\pi_1(SO(3))=\mathbb{Z}/2\mathbb{Z}\,,$ hence $|\pi_1(SO(3))|=2\,.$ Therefore,~\Cref{compact group actions} implies that the action of $SO(3)$ on $S^2$ lifts to an action on all even degree prinicpal $S^1$-bundles over $S^2\,,$ in a unique way up to isomorphism. On the other hand, since $SU(2)$ is simply connected, its standard action on $S^2$ lifts to an action on all principal $S^1$-bundles over $S^2\,,$ again in a unique way up to isomorphism.
\end{exmp}
Before proving~\Cref{compact group actions}, we will prove the following result, which is interesting in its own right and is related to the Riemann-Hilbert correspondence\footnote{In particular, this result determines exactly when a flat connection on a gerbe integrates to an action of the fundamental groupoid on the gerbe.}
\begin{lemma}\label{fundamental group}
Let $X$ be a connected manifold with universal cover $\tilde{X}$ and suppose that $\pi_k(X)=0$ for all $2\le k\le m\,.$ Let $T^n\xrightarrow{\text{dlog}} \mathbf{\Omega}^\bullet$ be the Deligne complex. Then $H^k(\pi_1(X),T^n)\cong H^k(X,T^n\xrightarrow{\text{dlog}} \mathbf{\Omega}^\bullet)$ for all $2\le k\le m\,,$ and the following sequence is exact: \begin{align}
    0\to H^{m+1}(\pi_1(X),T^n)\to H^{m+1}(X,T^n\to \mathbf{\Omega}^\bullet)\to H^{m+1}(\tilde{X},T^n\to \mathbf{\Omega}^\bullet)\,.
\end{align}
\end{lemma}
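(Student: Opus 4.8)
The plan is to realize both sides as instances of the cohomologies compared by the Main Theorem (\Cref{van Est image}), applied to the fundamental groupoid $\Pi_1(X)\rightrightarrows X$.

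First I would set up the identifications. The Deligne complex $T^n\xrightarrow{\mathrm{dlog}}\mathbf{\Omega}^\bullet$ is precisely the Chevalley--Eilenberg complex $\mathcal{C}^\bullet(TX,T^n_X)$ of the tangent algebroid $\mathfrak{g}=TX$ acting on the trivial $TX$-module $T^n_X$, whose Lie algebroid is $\mathbb{R}^n_X$ with the de Rham differential and whose exponential sequence is $0\to\mathbb{Z}^n\to\mathbb{R}^n_X\xrightarrow{\exp}T^n_X$; thus $H^k(X,T^n\to\mathbf{\Omega}^\bullet)=H^k(TX,T^n_X)$. On the groupoid side, $TX$ is integrated by $\Pi_1(X)$, whose source fiber over $x$ is the universal cover $\tilde X$ with projection given by the target map. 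Under the hypothesis $\pi_k(X)=0$ for $2\le k\le m$, together with $\pi_0(\tilde X)=\pi_1(\tilde X)=0$ and the fact that a covering induces isomorphisms on $\pi_k$ for $k\ge 2$, the fiber $\tilde X$ is $m$-connected, so $\Pi_1(X)$ is source $m$-connected. Finally, $\Pi_1(X)$ is Morita equivalent to the one-object groupoid $\pi_1(X)$, and under this equivalence the trivial module $T^n_X$ corresponds to $T^n$ with trivial $\pi_1(X)$-action; by Morita invariance of cohomology (Remark~\ref{morita inv}) this gives $H^k(\pi_1(X),T^n)\cong H^k(\Pi_1(X),T^n_X)$ in every degree.

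With these identifications the first claim is immediate: applying \Cref{van Est image} with $G=\Pi_1(X)$, $M=T^n_X$ and connectivity $n=m$ shows that $VE:H^k(\Pi_1(X),T^n_X)\to H^k(TX,T^n_X)$ is an isomorphism for $k\le m$. Composing with the Morita isomorphism and the Deligne identification yields $H^k(\pi_1(X),T^n)\cong H^k(X,T^n\to\mathbf{\Omega}^\bullet)$ for all $k\le m$, in particular for $2\le k\le m$ (the degrees $k=0,1$ reducing to the classical statements about global sections and $T^n$-local systems). For the exact sequence in degree $m+1$ I would invoke the injectivity part of \Cref{van Est image}: $VE$ is injective in degree $m+1$, and its image consists exactly of those $\alpha\in H^{m+1}(TX,T^n_X)$ whose translated class $t_x^*\alpha$ (Definition~\ref{translation}) vanishes in the source-fiber cohomology for every $x$. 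Here the source fiber is $\tilde X$, the fiber module is $M_x=T^n$ with $\mathfrak{m}_x=\mathbb{R}^n$, so the source-fiber complex is exactly the Deligne complex on $\tilde X$ with cohomology $H^{m+1}(\tilde X,T^n\to\mathbf{\Omega}^\bullet)$. Because the module is trivial, the translation in Definition~\ref{translation} reduces the group action to the identity, so $t_x^*$ is nothing but the pullback $\pi^*$ along the universal covering $\pi:\tilde X\to X$; and since $X$ is connected the condition ``for all $x$'' collapses to the single equation $\pi^*\alpha=0$. Thus the image of $VE$ equals $\ker[\pi^*:H^{m+1}(X,T^n\to\mathbf{\Omega}^\bullet)\to H^{m+1}(\tilde X,T^n\to\mathbf{\Omega}^\bullet)]$, which together with injectivity and the Morita identification produces exactly the asserted exact sequence.

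The main obstacle I expect is bookkeeping rather than conceptual: carefully checking that the translation operator $t_x^*$ of Definition~\ref{translation}, specialized to the trivial module, literally agrees with the covering pullback $\pi^*$ (both on the transition functions $h_{ij}$ and on the foliated forms $\alpha_i$, where the right translation $R_{g^{-1}}$ must be matched with the differential of the covering map), and confirming that the source-fiber Chevalley--Eilenberg complex of $T^n_X$ is identified on the nose with the Deligne complex of $\tilde X$. One should also verify that the Morita correspondence sends the trivial $\Pi_1(X)$-module $T^n_X$ to trivial $\pi_1(X)$-coefficients, so that the left-hand side is genuinely ordinary group cohomology with trivial coefficients.
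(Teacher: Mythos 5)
Your proposal is correct and takes essentially the same route as the paper: the paper's own proof is just the two steps you describe, namely Morita invariance identifying $H^\bullet(\pi_1(X),T^n)$ with $H^\bullet(\Pi_1(X),T^n_X)$, followed by an application of \Cref{van Est image} to the source $m$-connected groupoid $\Pi_1(X)$. Your write-up simply makes explicit the bookkeeping (source fibers are $\tilde{X}$, the translated class $t_x^*\alpha$ becomes the covering pullback, the fiberwise complex is the Deligne complex of $\tilde{X}$) that the paper leaves implicit.
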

\begin{proof}
We have that $\pi_1(X)$ is Morita equivalent to $\Pi_1(X)\,,$ so by Morita invariance \begin{align*}
H^{\bullet}(\pi_1(X),T^n)\cong H^{\bullet}(\Pi_1(X),T_X^n)\,.\end{align*}
The result then follows from~\Cref{van Est image}.
\end{proof}
\begin{corollary}\label{power is zero}
Let $G$ be a connected Lie group and as usual let $\mathbf{B}^1G$ be the underlying manifold. Then for every class $\alpha \in H^1(\mathbf{B}^1G,T^n\to \mathbf{\Omega}^\bullet)\,,$ we have that $\alpha^{|\pi_1(G)|}=1\,.$ 
\end{corollary}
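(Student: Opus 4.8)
The plan is to reduce the statement to a computation in the representation variety $\mathrm{Hom}(\pi_1(G),T^n)$. First I would identify the group $H^1(\mathbf{B}^1G,T^n\to\mathbf{\Omega}^\bullet)$ with $\mathrm{Hom}(\pi_1(G),T^n)$ by the same method used to prove Lemma~\ref{fundamental group}, now carried out in degree $1$. The fundamental groupoid $\Pi_1(G)\rightrightarrows G$ of the manifold $G$ has source fibers equal to the universal cover $\tilde G$, which is simply connected, so $\Pi_1(G)$ is source $1$-connected. Applying the Main Theorem~\ref{van Est image} with $n=1$ to the $\Pi_1(G)$-module $T^n_G$ (equipped with its canonical flat structure, so that the associated Chevalley--Eilenberg complex is exactly the Deligne complex $T^n\to\mathbf{\Omega}^\bullet$), the van Est map
\[
H^1(\Pi_1(G),T^n_G)\xrightarrow{\ \cong\ }H^1(TG,T^n_G)=H^1(\mathbf{B}^1G,T^n\to\mathbf{\Omega}^\bullet)
\]
is an isomorphism. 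Combining this with the Morita equivalence $\pi_1(G)\simeq\Pi_1(G)$ of Remark~\ref{morita inv}, and with the fact that $H^1$ of a group with trivial coefficients is its group of homomorphisms, I obtain $H^1(\mathbf{B}^1G,T^n\to\mathbf{\Omega}^\bullet)\cong H^1(\pi_1(G),T^n)\cong\mathrm{Hom}(\pi_1(G),T^n)$.

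Next I would track the multiplicative structure through this chain of isomorphisms. The group operation on $H^1(\mathbf{B}^1G,T^n\to\mathbf{\Omega}^\bullet)$, i.e. tensor product of flat $T^n$-bundles, corresponds under the identification to pointwise multiplication of homomorphisms, so the $N$-th power $\alpha^{N}$ corresponds to the homomorphism $\gamma\mapsto\phi(\gamma)^{N}$, where $\phi\in\mathrm{Hom}(\pi_1(G),T^n)$ represents $\alpha$ and $N=|\pi_1(G)|$.

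The final step is purely group-theoretic. For a connected Lie group, $\pi_1(G)$ is a finitely generated abelian group, and in the situations where $|\pi_1(G)|$ is finite (for instance $G$ compact and semisimple, as in the intended application to Theorem~\ref{compact group actions}) Lagrange's theorem gives $\gamma^{N}=e$ for every $\gamma\in\pi_1(G)$. Hence $\phi(\gamma)^{N}=\phi(\gamma^{N})=\phi(e)=1$ for all $\gamma$, so the homomorphism $\gamma\mapsto\phi(\gamma)^{N}$ is trivial; that is, $\alpha^{|\pi_1(G)|}=1$.

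I expect the main obstacle to be the first step: establishing that the degree-$1$ van Est map is an isomorphism for $\Pi_1(G)$ and checking that the resulting identification with $\mathrm{Hom}(\pi_1(G),T^n)$ is compatible with the two group structures (tensor product on one side, pointwise multiplication on the other). Once the cohomology is replaced by homomorphisms out of a finite group, the conclusion is immediate from Lagrange's theorem; the genuine content lies in the cohomological bookkeeping of the first step, all of which is supplied by the Morita invariance statement of Remark~\ref{morita inv} together with the Main Theorem~\ref{van Est image}.
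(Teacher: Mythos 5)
Your proposal is correct and follows essentially the same route as the paper: the paper's proof invokes Lemma~\ref{fundamental group} (itself proved via the Morita equivalence $\pi_1(G)\simeq\Pi_1(G)$ and Theorem~\ref{van Est image}) to identify $H^1(\mathbf{B}^1G,T^n\to\mathbf{\Omega}^\bullet)$ with $\mathrm{Hom}(\pi_1(G),T^n)$, and then concludes exactly as you do from the fact that every homomorphism out of a finite abelian group is killed by raising to the power $|\pi_1(G)|$. Your extra care in carrying out the van Est identification in degree $1$ (the lemma is stated for degrees $\ge 2$) and in checking compatibility of the group structures only makes explicit what the paper leaves implicit.
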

\begin{proof}
From~\Cref{fundamental group} we have the well-known result that $H^1(\mathbf{B}^1G,T^n\to \mathbf{\Omega}^\bullet)\cong H^1(\pi_1(G),T^n)\,.$ The latter is equal to $\text{Hom}(\pi_1(G),T^n)\,,$ however every $f\in \text{Hom}(\pi_1(G),T^n)$ satisfies $f^{|\pi_1(G)|}=1\,,$ completing the proof.
\end{proof}
We now state a proposition that will be needed for the proof of~\Cref{compact group actions} (for a proof of this proposition, see~\cite{Crainic}).
\begin{proposition}\label{vanishing cohomology prop}
Let $G\rightrightarrows X$ be a proper Lie groupoid (ie. the map $(s,t):G\to X\times X$ is a proper map). Let $E\to X$ be a representation of $G\,.$ Then $H^k(G,E)=0$ for all $k\ge 1\,.$ 
\end{proposition}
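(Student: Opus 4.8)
The plan is to produce an explicit contracting homotopy for the cochain complex computing $H^*(G,E)$, following the averaging argument of Crainic. First I would reduce to naive cohomology: since $E$ is a representation, i.e. a vector bundle, the sheaf $\mathcal{O}(E)_n$ on each level $\mathbf{B}^n G$ is the sheaf of smooth sections of a vector bundle and hence is fine, so it is acyclic. By Remark~\ref{double complex} and Remark~\ref{acyclic resolution}, $H^*(G,E)$ is then computed by the naive complex $C^n=\Gamma(\mathbf{B}^n G,\,t^*E)$ with differential $\delta^*$ the alternating sum of face maps, the zeroth of which incorporates the action $g_1\cdot(-)$. It therefore suffices to contract this complex in positive degrees.

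The averaging data come from properness. A Lie groupoid always carries a left-invariant Haar system, a smooth family of measures $\lambda^x$ on the target fibers $t^{-1}(x)$ with $(L_g)_*\lambda^{s(g)}=\lambda^{t(g)}$; and because $G$ is proper one may choose a cutoff function, a nonnegative $\chi\in C^\infty(G^0)$ with the normalization $\int_{t^{-1}(x)}\chi(s(g))\,d\lambda^x(g)=1$ for all $x$, together with a support condition ensuring the integrals below converge to smooth sections. With these I would define $h\colon C^n\to C^{n-1}$ by appending an extra arrow in the last slot and integrating it out against the cutoff weight:
\[
(hc)(g_1,\ldots,g_{n-1})=(-1)^n\int_{t^{-1}(s(g_{n-1}))} c(g_1,\ldots,g_{n-1},\bar g)\,\chi(s(\bar g))\,d\lambda^{s(g_{n-1})}(\bar g).
\]
Here $\bar g$ ranges over arrows with $t(\bar g)=s(g_{n-1})$, so the inserted string stays composable and the value still lies in the fiber of $t^*E$ over $t(g_1)$.

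The identity $\delta^* h+h\,\delta^*=\mathrm{id}$ in degrees $\ge 1$ is then a formal computation modeled on the vanishing of the cohomology of a compact group with mass-one Haar measure. Expanding $\delta^*$ inside the integral in $h\delta^* c$, all interior face terms and the zeroth (action) term match the corresponding terms of $\delta^* h c$ with the sign $\epsilon_n=(-1)^n$ chosen so that they cancel in pairs; the face composing $g_n$ with the inserted arrow is reabsorbed by the substitution $\bar g\mapsto g_n\bar g$ using left-invariance of the Haar system, and the final degenerate face integrates against $\chi$ to reproduce $c(g_1,\ldots,g_n)$ by the normalization $\int\chi\,d\lambda=1$. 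Hence the complex is contractible for $n\ge 1$ and $H^k(G,E)=0$ for $k\ge 1$. The hard part will be the construction of the cutoff function — this is exactly where properness of $(s,t)$ is essential, as it guarantees both existence of $\chi$ with the required fiberwise normalization and the support control making the averaging integrals finite and smooth; the remaining sign bookkeeping and invariance substitutions are routine once that is in place.
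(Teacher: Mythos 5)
Your proposal is correct and is essentially the proof the paper points to: the paper does not prove this proposition itself but defers to Crainic (see~\cite{Crainic}), and your reduction to the naive complex (via fineness of the sheaves $\mathcal{O}(E)_n$ on each $\mathbf{B}^n G$) followed by the Haar-system/cutoff-function contracting homotopy is exactly Crainic's averaging argument, with properness entering precisely where you say it does. The sign bookkeeping also checks out: with your normalization $(-1)^n$ and the left-invariance substitution $\bar g\mapsto g_n\bar g\,,$ one gets $\delta^* h + h\,\delta^* = \mathrm{id}$ in all degrees $\ge 1\,.$
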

The key to proving~\Cref{compact group actions} is the following lemma:
\begin{lemma}\label{vanishing cohomology}
Let $G$ be a compact, simply connected Lie group acting on a manifold $X\,.$ Then $H^1_0(G\ltimes X,T^n_X)=0\,.$
\end{lemma}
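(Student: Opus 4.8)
The plan is to transport the computation to the Lie algebroid via the van Est map, linearize the module $T^n_X$ to the representation $\mathbb{R}^n_X$, and then kill the resulting cohomology using properness. The key connectivity input is that a simply connected Lie group is $2$-connected (every Lie group has $\pi_2=0$), and that each source fiber of the action groupoid $G\ltimes X\rightrightarrows X$ is diffeomorphic to $G$; hence $G\ltimes X$ is source $2$-connected. Write $\mathfrak{a}=\mathfrak{g}\ltimes X$ for its action Lie algebroid. The trivial module $T^n_X$ fits into the exponential sequence $0\to\mathbb{Z}^n_X\to\mathbb{R}^n_X\xrightarrow{\exp}T^n_X$ with Lie algebroid $\mathfrak{m}=\mathbb{R}^n_X$, so Theorem~\ref{van Est image} applies with source-connectivity $2$.

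Since $VE_0$ is an isomorphism in degrees $\le 1$ (the bound $n-1$ with $n=2$), I obtain
\[
H^1_0(G\ltimes X,T^n_X)\cong H^1_0(\mathfrak{a},T^n_X),
\]
so it suffices to show the right-hand group vanishes. I would then identify this truncated algebroid cohomology with an ordinary representation cohomology: by definition $H^1_0(\mathfrak{a},T^n_X)$ is the degree-$2$ hypercohomology of the truncated complex $0\to\mathcal{C}^1(\mathfrak{a},T^n_X)\to\mathcal{C}^2(\mathfrak{a},T^n_X)\to\cdots$. For $p\ge 1$ one has $\mathcal{C}^p(\mathfrak{a},T^n_X)=\mathcal{O}(\Lambda^p\mathfrak{a}^*\otimes\mathbb{R}^n_X)=\mathcal{C}^p(\mathfrak{a},\mathbb{R}^n_X)$ with identical Chevalley–Eilenberg differentials, and these are sheaves of smooth sections of vector bundles, hence fine and acyclic. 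Thus the hypercohomology is computed by the complex of global sections, and because $H^2$ ignores the degree-$0$ term—the only place where the module complex and the representation complex differ—one gets $H^1_0(\mathfrak{a},T^n_X)=H^2(\mathfrak{a},\mathbb{R}^n_X)$.

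Finally I would run van Est once more, now for the honest representation $\mathbb{R}^n_X$ (where $Z=0$). Since $G\ltimes X$ is source $2$-connected and degree $2\le 2$ lies in the isomorphism range, Theorem~\ref{van Est image} gives $H^2(\mathfrak{a},\mathbb{R}^n_X)\cong H^2(G\ltimes X,\mathbb{R}^n_X)$. Because $G$ is compact, the anchor $(s,t)\colon G\ltimes X\to X\times X$, $(g,x)\mapsto(x,gx)$, is proper (its fibers over a compact $K$ sit inside $G\times\pi_1(K)$), so $G\ltimes X$ is a proper Lie groupoid, and Proposition~\ref{vanishing cohomology prop} yields $H^2(G\ltimes X,\mathbb{R}^n_X)=0$. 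Chaining the isomorphisms gives $H^1_0(G\ltimes X,T^n_X)=0$.

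I expect the linearization step to be the main point needing care: one must check that replacing the module $T^n_X$ by its Lie algebroid $\mathbb{R}^n_X$ alters the Chevalley–Eilenberg complex only in degree $0$, which is invisible to $H^2$, and that softness of the form sheaves legitimately reduces both relevant hypercohomologies to the cohomology of global sections. The two van Est applications and the properness vanishing are then routine given Theorem~\ref{van Est image} and Proposition~\ref{vanishing cohomology prop}.
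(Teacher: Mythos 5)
Your proof is correct, but it takes a genuinely different route from the paper's. The paper works entirely on the groupoid side: properness gives $H^k(G\ltimes X,\mathbb{R}^n_X)=0$ for $k\ge 1$, the van Est theorem applied to the \emph{discrete} module $\mathbb{Z}^n_X$ (whose Lie algebroid is the zero bundle, so its truncated algebroid cohomology vanishes identically) gives $H^2_0(G\ltimes X,\mathbb{Z}^n_X)=0$ by injectivity in degree $2$, and the long exact sequence of $0\to\mathbb{Z}^n\to\mathbb{R}^n\to T^n\to 0$ then squeezes $H^1_0(G\ltimes X,T^n_X)$ between two vanishing groups. You instead pass to the algebroid immediately (van Est isomorphism in truncated degree $1$ for $T^n_X$), observe that truncation erases the difference between a module and its linearization---$\mathcal{C}^p(\mathfrak{a},T^n_X)=\mathcal{C}^p(\mathfrak{a},\mathbb{R}^n_X)$ for $p\ge 1$ with the same differentials, and these are fine sheaves, so $H^1_0(\mathfrak{a},T^n_X)\cong H^2(\mathfrak{a},\mathbb{R}^n_X)$---and then return to the groupoid via a second van Est isomorphism (degree $2$, representation coefficients) before invoking properness. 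As for what each approach buys: the paper's pattern needs only \emph{injectivity} of $VE_0$ at the critical degree, so it is the more robust template when less source-connectivity is available, and it never requires identifying truncated module cohomology with representation cohomology; your route avoids the coefficient long exact sequence altogether and avoids applying~\Cref{van Est image} to $\mathbb{Z}^n_X$, which is a degenerate instance of its hypotheses (there $\exp$ is not surjective). Your version also quietly sidesteps a small imprecision in the paper: the exact-sequence argument requires $H^1_0(G\ltimes X,\mathbb{R}^n_X)=0$, one degree below the vanishing range $k\ge 2$ that the paper's proof records (this does hold, since $\mathcal{O}(\mathbb{R}^n_X)$ is soft on $X$, but your argument never needs it).
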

\begin{proof}
Since $G$ is compact the action is proper, hence $G\ltimes X$ is a proper groupoid, hence from~\Cref{vanishing cohomology prop} we see that $H^k(G\ltimes X,\mathbb{R}^n_X)=0$ for all $k\ge 1\,.$ This implies that $H^k_0(G\ltimes X,\mathbb{R}^n_X)=0$ for all $k\ge 2\,.$  Since simply connected Lie groups are $2$-connected,~\Cref{van Est image} implies that $H^2_0(G\ltimes X,\mathbb{Z}^n_X)=0\,.$ Hence, from the short exact sequence
$0\to\mathbb{Z}^n\to\mathbb{R}^n\to T^n\to 0\,,$ we get that $H^1_0(G\ltimes X,T^n_X)=0\,.$ 
\end{proof}
We are now ready to prove~\Cref{compact group actions} for simply connected groups.
\begin{lemma}\label{compact simply connected}
Let $G$ be a compact, simply connected Lie group acting on a manifold $X\,.$ Suppose $P\to X$ is a principal bundle for the $n$-torus $T^n\,.$ Then the action of $G$ on $X$ lifts to an action of $G$ on $P\,,$ and the lift is unique up to isomorphism.
\end{lemma}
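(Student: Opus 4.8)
The plan is to recast the lifting problem cohomologically and then read off existence and uniqueness from the restriction map of Equation~\eqref{restriction}. First I would observe that a lift of the $G$-action on $X$ to $P$ is exactly a $G$-equivariant structure on $P$, i.e. a principal $T^n_X$-bundle over the quotient stack $[X/(G\ltimes X)]$. By the groupoid/stack version of the Proposition classifying principal $M$-bundles (applied with $M=T^n_X$), such objects are classified up to equivariant isomorphism by $H^1(G\ltimes X,T^n_X)$, where $G\ltimes X\rightrightarrows X$ is the action groupoid. The functor forgetting the equivariance sends such a bundle to its underlying bundle over $X$, and on $H^1$ this is precisely the restriction $r\colon H^1(G\ltimes X,T^n_X)\to H^1(X,T^n_X)$ of Equation~\eqref{restriction}, with $[P]\in H^1(X,T^n_X)$ its target class. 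Hence existence of a lift is surjectivity of $r$ onto $[P]$, and uniqueness up to a bundle automorphism (covering $\mathrm{id}_X$) is injectivity of $r$.

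For existence I would use the long exact sequence attached to the short exact sequence of sheaves $0\to \mathcal{O}(T^n_X)^0_\bullet\to\mathcal{O}(T^n_X)_\bullet\to Q_\bullet\to 0$ on $\mathbf{B}^\bullet(G\ltimes X)$, where $Q_\bullet$ is the quotient supported at level $0$ (so $H^*(Q_\bullet)=H^*(X,T^n_X)$); this is the sequence underlying the definition of truncated cohomology. It places the obstruction to lifting $[P]$ in the group $H^1_0(G\ltimes X,T^n_X)$. By Lemma~\ref{vanishing cohomology} this group vanishes, so the obstruction is zero and $r$ is surjective: every $P$ admits a lift.

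For uniqueness, the difference of two lifts of the same $P$ is a class $\delta\in\ker r\subset H^1(G\ltimes X,T^n_X)$, and I must show $\ker r=0$. Here I would bring in the exponential sequence $0\to\mathbb{Z}^n_X\to\mathbb{R}^n_X\to T^n_X\to 0$. Since $G$ is compact the action groupoid $G\ltimes X$ is proper, so Proposition~\ref{vanishing cohomology prop} gives $H^k(G\ltimes X,\mathbb{R}^n_X)=0$ for all $k\ge 1$; the resulting exponential long exact sequence then makes the connecting map $\beta\colon H^1(G\ltimes X,T^n_X)\xrightarrow{\;\sim\;}H^2(G\ltimes X,\mathbb{Z}^n_X)$ an isomorphism. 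Naturality of the exponential sequence under $r$ yields $\beta_X\circ r=r^{\mathbb{Z}}\circ\beta$, so $\beta(\delta)$ lies in the kernel of $r^{\mathbb{Z}}\colon H^2(G\ltimes X,\mathbb{Z}^n_X)\to H^2(X,\mathbb{Z}^n)$. Thus it suffices to prove this integral restriction is injective, for then $\beta(\delta)=0$ and hence $\delta=0$.

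The hard part will be exactly this last injectivity, and it is where simple connectedness (hence $2$-connectedness) of $G$ is used. For the discrete coefficient system $\mathbb{Z}^n$ the van Est comparison of \Cref{van Est image} degenerates, since the associated Lie algebroid $\mathfrak{m}$ is zero, so I cannot extract injectivity from the infinitesimal side; instead I would identify $H^*(G\ltimes X,\mathbb{Z}^n_X)$ with the Borel-equivariant cohomology $H^*(X_{hG},\mathbb{Z}^n)$ and invoke that a compact simply connected $G$ is $2$-connected, so $BG$ is $3$-connected and the Serre spectral sequence of $X\to X_{hG}\to BG$ forces the fibre-restriction to be injective (indeed an isomorphism) through degree $2$. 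Combining the three steps shows $r$ is in fact an isomorphism, giving both existence and uniqueness. A secondary point I would take care to verify is the precise identification of ``lifts up to bundle automorphism'' with $\ker r$ — that is, that an equivariant isomorphism of bundles over $[X/(G\ltimes X)]$ covering $\mathrm{id}_X$ is exactly a gauge transformation — so that injectivity of $r$ really does translate into the stated uniqueness.
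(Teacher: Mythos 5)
Your proof is correct, and its two halves have different relationships to the paper's argument. For existence you and the paper do essentially the same thing in different clothing: your connecting-map obstruction $\partial[P]\in H^1_0(G\ltimes X,T^n_X)$ coming from the truncation sequence is exactly the class of the central extension that the paper builds by pulling back the gauge-groupoid (Atiyah) extension $1\to T^n_X\to \mathrm{At}(P)\to\mathrm{Pair}(X)\to 1$ along $(s,t)$, and both arguments reduce to Lemma~\ref{vanishing cohomology}. For uniqueness you genuinely diverge. Both proofs must show that $\ker r=\mathrm{im}\big[H^0_0(G\ltimes X,T^n_X)\to H^1(G\ltimes X,T^n_X)\big]$ vanishes, and both use properness (Proposition~\ref{vanishing cohomology prop}) to kill the $\mathbb{R}^n$-coefficient groups in the exponential sequence; but the paper finishes on the truncated side, lifting a homomorphism $G\ltimes X\to T^n$ to $\mathbb{R}^n$ using $H^1_0(G\ltimes X,\mathbb{Z}^n_X)=0$, whereas you pass to degree $2$ integral cohomology via the connecting isomorphism $\beta$ and prove $H^2(G\ltimes X,\mathbb{Z}^n_X)\to H^2(X,\mathbb{Z}^n)$ is injective by identifying groupoid cohomology with Borel equivariant cohomology and running the Serre spectral sequence of $X\to X_{hG}\to BG$. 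This is valid, granting the standard (but external to the paper) comparison of sheaf cohomology of $\mathbf{B}^\bullet(G\ltimes X)$ with constant coefficients with $H^*(EG\times_G X)$; it buys independence from the van Est machinery at the cost of importing homotopy theory.

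One claim in your write-up is wrong, though you route around it rather than rely on it: you assert that for $\mathbb{Z}^n$ coefficients the van Est comparison of \Cref{van Est image} ``degenerates'' because $\mathfrak{m}=0$, so that no injectivity can be extracted from the infinitesimal side. When $\mathfrak{m}=0$ only the positive-degree terms of the Chevalley--Eilenberg complex vanish; the degree-zero term is still $\mathcal{O}(\mathbb{Z}^n_X)$, the locally constant sheaf, so $H^k(\mathfrak{g}\ltimes X,\mathbb{Z}^n_X)\cong H^k(X,\mathbb{Z}^n)$ (it is only the \emph{truncated} algebroid cohomology that vanishes). Since $G$ is $2$-connected, \Cref{van Est image}, combined with Lemma~\ref{commute} (which identifies $VE$ followed by the algebroid-side restriction with the groupoid-side restriction $r$ of Equation~\eqref{restriction}), says precisely that $H^k(G\ltimes X,\mathbb{Z}^n_X)\to H^k(X,\mathbb{Z}^n)$ is an isomorphism for $k\le 2$ --- exactly the injectivity you need, with no spectral sequence. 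Indeed the paper's own proof of Lemma~\ref{vanishing cohomology}, which you cite as a black box, applies \Cref{van Est image} to the discrete module $\mathbb{Z}^n_X$ in just this way. So your homotopy-theoretic detour is a correct alternative, not a forced one.
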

\begin{proof}
Consider the gauge groupoid of $P$ given by $\text{At}(P):=P\times P/T^n\rightrightarrows X\,,$ where the action of $T^n$ is the diagonal action (here the source and target maps are the projections onto the first and second factors, respectively, and a morphism with source $x$ and target $y$ is a $T^n$-equivariant morphism between the fibers of $P$ lying over $x$ and $y\,,$ respectively). The gauge groupoid fits into a central extension of $\text{Pair}(X)\,,$ ie.
\begin{align}\label{Atiyah sequence}
    1\to T^n_X\to \text{At}(P)\to\text{Pair}(X)\to 1\,.
\end{align}
A lift of the $G$-action to $P\to X$ is equivalent to a lift of the canonical homomorphism $G\ltimes X\xrightarrow{(s,t)}\text{Pair}(X)$ to $\text{At}(P)\,,$ which is equivalent to a trivialization of the central extension of $G\ltimes X$ given by pulling back, via $(s,t)\,,$ the central extension given by \eqref{Atiyah sequence}. From~\Cref{vanishing cohomology} we know that such a trivilization exists, hence the $G$-action lifts to $P\,.$
\\\\Uniqueness up to isomorphism follows from the fact that the isomorphism classes of different lifts are a torsor for the image of $H^0_0(G\ltimes X,T^n)$ in $H^1(G\ltimes X,T^n)\,,$ and that the image is trivial follows from the exponential sequence $1\to \mathbb{Z}^n\to\mathbb{R}^n\to T^n\to 1\,,$ since both $H^1(G\ltimes X,\mathbb{R}^n_X)$ and $H^1_0(G,\mathbb{Z}^n_X)$ are trivial (the former follows from~\Cref{vanishing cohomology prop}, the latter follows from~\Cref{van Est image}).
\end{proof}
Now we can prove~\Cref{compact group actions}. One way of doing this is to look at the action of $\pi_1(G)$ on its universal cover, another way is  the following:
\begin{proof}[Proof of Theorem \ref{compact group actions}]
Let $\tilde{G}$ be the universal cover of $G\,.$ From~\Cref{compact simply connected} we know that the corresponding action of $\tilde{G}$ on $X$ lifts to an action on $P\,,$ giving us a class $\alpha\in H^1(\tilde{G}\ltimes X,T^n)$ whose underlying principal bundle on $X$ is $P\,.$ Hence after applying the van Est map we get a class $\text{VE}(\alpha)\in H^1(\mathfrak{g}\ltimes X,T^n)\,,$ whose underlying principal bundle on $X$ is also $P\,.$ 
\\\\After translating $\text{VE}(\alpha)$, we get a flat $T^n$-bundle on each source fiber of $G\ltimes X\,,$ ie. for each $x\in X$ we get a flat $T^n$-bundle on $G$, which we denote by $P_{(G,x)}\,.$ Then by~\Cref{power is zero} we have that $P_{(G,x)}^{|\pi_1(G)|}$ is trivial. However, $P_{(G,x)}^{|\pi_1(G)|}$ is the right translation of $|\pi_1(G)|\cdot\text{VE}(\alpha)$ (where the $\mathbb{Z}$-action is the natural one on cohomology classes),
hence by~\Cref{van Est image} we get the existence of a lift of the $G$-action to $P^{|\pi_1(G)|}\,.$ 
\\\\Uniqueness follows from the same argument as in~\Cref{compact simply connected}.
 \end{proof}
\subsection{Quantization of Courant Algebroids}
In this section we will discuss applications of our main theorem to the quantization of Courant algebroids, as discussed in~\cite{gen kahler}. 
\\\\Let $C$ be a smooth Courant algebroid over $X$ associated to a $3$-form $\omega\,,$ and suppose that it is prequantizable, that is $\omega$ has integral periods. Let $g$ denote an $S^1$-gerbe prequantizing $\omega\,.$ Let $D\subset C$ be a Dirac structure. Then in particular, $D$ is a Lie algebroid, and as explained in~\cite{gen kahler} $g$ can be equipped with a flat $D$-connection, denoted $A\,.$ This determines a class $[(g,A)]\in H^2(D,S^1_X)\,.$ Suppose $D$ integrates to a Lie groupoid. We can then ask about the integrability of $[(g,A)]\,,$ or in other words: does the action of $D$ on $g$ integrate to an action of the corresponding source simpy connected groupoid on $g\,?$
Here we give a class of examples that does integrate, and it relates to the basic gerbe on a compact, simple Lie group. $($see~\cite{erbe},~\cite{Severa}$)\,.$
\theoremstyle{definition}\begin{exmp}
Let $G$ be a compact, simple Lie group with universal cover $\tilde{G}\,.$ 
and let $\langle \cdot,\cdot\rangle$ be the unique bi-invariant $2$-form which at the identity is equal to the Killing form. Associated to $\langle \cdot,\cdot\rangle$ is a bi-invariant and integral $3$-form $\omega$, called the Cartan $3$-form, given at the identity by
\begin{align*}\omega\vert_e=\frac{\langle \,[\,\cdot\,,\,\cdot\,]\,,\cdot\rangle\vert_e}{2}\,.
\end{align*}
The Dirac structure in this case, called the Cartan-Dirac structure, is the action Lie algebroid $\mathfrak{g}\ltimes G\,,$ where the action is the adjoint action of $\mathfrak{g}$ on $G\,.$
From this there is a canonical class $\alpha\in H^2(\mathfrak{g}\ltimes G,S^1_G)\,,$ whose underlying gerbe on $G$ is called the basic gerbe. The source simply connected integation of $\mathfrak{g}\ltimes G$ is $\tilde{G}\ltimes G\,,$ where the action of $\tilde{G}$ on $G$ is the one lifting the action of $G$ on itself by conjugation. Since the source fibers of $\tilde{G}\ltimes G$ are diffeomorphic to $\tilde{G}\,,$ which is necessarily $2$-connected, by Theorem~\ref{van Est image} we have that 
\begin{align*}
    H^2(\tilde{G}\ltimes G,S^1_G)\overset{VE}{\cong}H^2(\mathfrak{g}\ltimes G,S^1_G)\,.
\end{align*} Hence $\alpha$ integrates to a class in $H^2(\tilde{G}\ltimes G,S^1_G)\,.$
\end{exmp}To summarize, we have proven the following [see~\cite{erbe},~\cite{krepski}]:
\begin{theorem}[Integration of Cartan-Dirac structures]
Let $G$ be a compact, simple Lie group with universal cover $\tilde{G}\,.$ Then the adjoint action of $\mathfrak{g}$ on the basic gerbe (where the action is given by the Cartan-Dirac structure) integrates to an action of $\tilde{G}$ on the basic gerbe.
\end{theorem}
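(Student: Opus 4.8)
The plan is to realize this as a direct application of the main theorem, \Cref{van Est image}, taking the $G$-module to be $S^1_G$, the trivial family of circles over $G$, fitting into the exponential sequence $0\to\mathbb{Z}\to\mathbb{R}\overset{\exp}{\to}S^1\to 0$, so that in the notation of the main theorem we have $Z=\mathbb{Z}$, $\mathfrak{m}=\mathbb{R}$ and $M=S^1$. The infinitesimal datum to be integrated is the canonical class $\alpha\in H^2(\mathfrak{g}\ltimes G, S^1_G)$ attached to the basic gerbe together with its flat Cartan--Dirac connection; integrating the $\mathfrak{g}$-action to a $\tilde{G}$-action amounts precisely to exhibiting $\alpha$ as lying in the image of the van Est map into algebroid cohomology, since a degree-$2$ class of the relevant action groupoid with coefficients in $S^1_G$ is exactly the data of a $\tilde{G}$-equivariant structure on the basic gerbe.

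First I would identify the groupoid integrating the Cartan--Dirac Lie algebroid. The Cartan--Dirac structure is the action Lie algebroid $\mathfrak{g}\ltimes G$ for the adjoint action, and its source simply connected integration is the action groupoid $\tilde{G}\ltimes G\rightrightarrows G$, where $\tilde{G}$ acts on $G$ through the conjugation action of $G$ on itself, lifted along $\tilde{G}\to G$. The next, and key, step is the connectivity estimate: the source fiber of $\tilde{G}\ltimes G$ over any point of $G$ is canonically diffeomorphic to $\tilde{G}$, and since $\tilde{G}$ is a compact, simply connected Lie group it is $2$-connected — here I would invoke the classical facts that $\pi_1(\tilde{G})=0$ by construction and that $\pi_2$ of any Lie group vanishes. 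Hence $\tilde{G}\ltimes G$ is source $2$-connected.

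With $n=2$, \Cref{van Est image} then gives that the van Est map
\[
VE:H^*(\tilde{G}\ltimes G, S^1_G)\to H^*(\mathfrak{g}\ltimes G, S^1_G)
\]
is an isomorphism in all degrees $\le 2$. In particular it is an isomorphism in degree $2$, so $\alpha$ has a unique preimage $\tilde{\alpha}\in H^2(\tilde{G}\ltimes G, S^1_G)$. Unwinding the dictionary between groupoid cohomology classes with $S^1$-coefficients and equivariant gerbes, $\tilde{\alpha}$ is precisely an integration of the $\mathfrak{g}$-action to an action of $\tilde{G}$ on the basic gerbe, which is what was to be shown; note that no period obstruction of the form~\eqref{van Est condition} survives here, because in degrees $\le n$ the van Est map is an outright isomorphism rather than merely injective.

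The step I expect to be the main obstacle is not the cohomological bookkeeping but the two identifications bracketing it: on the infinitesimal side, that the flat Cartan--Dirac connection on the basic gerbe genuinely produces a well-defined class $\alpha\in H^2(\mathfrak{g}\ltimes G, S^1_G)$ in the sense of \Cref{forms} (this is the content imported from~\cite{gen kahler}), and on the global side, that a class in $H^2(\tilde{G}\ltimes G, S^1_G)$ is faithfully the same as a $\tilde{G}$-equivariant structure on the gerbe. Once these translations are in place, the argument is purely formal: everything rests on the source $2$-connectivity of $\tilde{G}\ltimes G$ feeding into \Cref{van Est image}.
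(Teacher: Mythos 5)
Your proposal is correct and follows essentially the same route as the paper: identify the Cartan--Dirac structure with the action algebroid $\mathfrak{g}\ltimes G$, note that its source simply connected integration $\tilde{G}\ltimes G$ has source fibers diffeomorphic to the $2$-connected group $\tilde{G}$, and then apply \Cref{van Est image} with $n=2$ to conclude that $VE:H^2(\tilde{G}\ltimes G,S^1_G)\to H^2(\mathfrak{g}\ltimes G,S^1_G)$ is an isomorphism, so the class of the basic gerbe with its Cartan--Dirac connection integrates. The only difference is that you spell out the exponential sequence $0\to\mathbb{Z}\to\mathbb{R}\to S^1\to 0$ and the gerbe/cohomology dictionary explicitly, which the paper leaves implicit.
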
 
\subsection{Integration of Lie $\infty$-Algebroids}
In this section we will discuss the integration and quantization of Lie $\infty$-algebroids. See~\cite{pym2} for more details. We consider Lie $\infty$-algebroids of the following form: 
\\\\Let $\mathfrak{g}$ be a Lie algebroid and let $M\to X$ be a $\mathfrak{g}$-module. Let $\omega\in C^n(\mathfrak{g},M)$ be closed, $n>2\,.$ We can define a two term Lie $(n-1)$-algebroid as follows: Let $\mathcal{L}=\mathfrak{m}\oplus\mathfrak{g}$ where $\mathfrak{m}$ has degree $2-n$ and $\mathfrak{g}$ has degree $0\,.$ Let all differentials be zero except for the degree $0$ and degree $-n$ differentials. Define the degree 
$0$ differential as follows: for $U$ an open set in $X$ and for $m_1\,,m_2\in\mathcal{O}(\mathfrak{m})(U)\,,g_1\,,g_2\in\mathcal{O}(\mathfrak{g})(U)\,,$ let
\begin{align*}
    [m_1+g_1,m_2+g_2]_0=[g_1,g_2]+d_{CE}m_2(g_1)-d_{CE}m_1(g_2)\,,
    \end{align*}
    where $[g_1,g_2]$ is the Lie bracket of $g_1\,,g_2$ in $\mathfrak{g}\,.$
    Define the degree $2-n$ bracket by as follows: for $g_1\,,\ldots\,,g_n\in\mathcal{O}(\mathfrak{g})(U)\,,$ let 
    \begin{align*}
        [g_1\,,\ldots\,,g_n]_n=\omega(g_1\,,\ldots\,,g_n)\,,
    \end{align*}
    otherwise if any of inputs is in $\mathcal{O}(\mathfrak{m})(U)$ let the bracket be zero. This defines a Lie $(n-1)$-algebroid. 
\\\\Since the universal cover of a $k$-dimensional torus $($for $k\ge 1)$ is contractible,~\Cref{van Est image} gives us the following result, at the level of cohomology:
\begin{corollary}
All Lie $(n-1)$-algebroids associated to closed $n$-forms on the $k$-dimensional torus $T^k$ integrate to multiplicative $(n-2)$-gerbes.
\end{corollary}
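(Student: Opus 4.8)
The plan is to apply the Main Theorem (\Cref{van Est image}) to the source simply connected groupoid $\Pi_1(T^k)\rightrightarrows T^k$, whose source fibers are copies of the universal cover $\mathbb{R}^k$ of $T^k$. The key structural observation is that since $\mathbb{R}^k$ is contractible, these source fibers are $m$-connected for every $m$; in particular $\Pi_1(T^k)$ is source $(n-1)$-connected for any $n$, which is exactly the connectivity hypothesis needed to invoke the main theorem in the relevant degree. I would first identify the Lie $(n-1)$-algebroid associated to a closed $n$-form $\omega\in C^n(TT^k,M)$ with a class in truncated Lie algebroid cohomology $H_0^{n-1}(\mathfrak{g},M)$, where $\mathfrak{g}=TT^k$ and $M$ is the relevant $\mathfrak{g}$-module (for the multiplicative $(n-2)$-gerbe statement, $M=\mathbb{C}^*_{T^k}$ with the Deligne-type structure, fitting into the exponential sequence $0\to\mathbb{Z}\to\mathbb{C}\overset{\exp}{\to}\mathbb{C}^*$). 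This identification is essentially the definition recalled in the section preamble, so it is routine.

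The second step is to verify the image criterion of \Cref{van Est image} in degree $n-1$ for $VE_0$. By the main theorem, the truncated van Est map $VE_0:H_0^{n-1}(G,M)\to H_0^{n-1}(\mathfrak{g},M)$ is an isomorphism in degrees $\le n-2$ and injective in degree $n-1$, with the class $[\omega]\in H_0^{n-1}(\mathfrak{g},M)$ lying in the image if and only if the period condition
\begin{equation*}
\int_{S^n_x}\omega\in Z\quad\text{for all }x\in T^k\text{ and all }S^n_x\subset s^{-1}(x)
\end{equation*}
holds, where $Z$ is the kernel of $\exp$. Here I would observe that because each source fiber $s^{-1}(x)\cong\mathbb{R}^k$ is contractible, every embedded $n$-sphere $S^n_x$ is null-homotopic, hence bounds, so by Stokes' theorem (using $d_{\mathrm{CE}}\omega=0$, i.e. $\omega$ is closed) the integral $\int_{S^n_x}\omega$ vanishes. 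Thus the period lies in $Z$ trivially — in fact it is zero — and the image condition is automatically satisfied for \emph{every} closed $n$-form.

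Consequently every such $[\omega]$ is in the image of $VE_0$, so it is integrated by a class in $H_0^{n-1}(\Pi_1(T^k),\mathbb{C}^*)$, which is precisely the cohomological datum of a multiplicative $(n-2)$-gerbe. Assembling these observations gives the corollary. I expect the only genuinely delicate point to be bookkeeping the degree shift and the truncation: one must be careful that the $n$-form $\omega$ defining a Lie $(n-1)$-algebroid corresponds to degree $n-1$ in the \emph{truncated} complex $H_0^\bullet$ and that the sphere appearing in the period condition is an $n$-sphere (matching the $(n+1)$-sphere of the theorem statement under the shift $n\mapsto n-1$). Everything else — the connectivity input and the vanishing of periods over contractible fibers — is immediate, so the proof reduces to a clean citation of \Cref{van Est image} once the translation of definitions is made explicit.
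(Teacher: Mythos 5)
Your proposal is correct and is essentially the paper's argument: the paper also integrates these classes by applying Theorem~\ref{van Est image} to $\Pi_1(T^k)$, whose source fibers are the contractible universal cover $\mathbb{R}^k\,.$ The only (harmless) difference is that the paper uses contractibility to make the source fibers $m$-connected for every $m\,,$ so that $VE_0$ is an isomorphism in the relevant degree and surjectivity is immediate, whereas you invoke only source $(n-1)$-connectedness and then check the period criterion by a Stokes argument --- a valid but redundant detour, since taking the connectivity parameter one step higher removes the need to verify any period condition.
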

We now apply the previous results to Lie $2$-algebras. As proved in~\cite{baez2}, all Lie $2$-algebras are equivalent to ones of the form
\begin{align}\label{lie 2}
V\to \mathfrak{g}\,,
\end{align}
where the only nonzero brackets are the degree $0$ and $-1$ brackets, and where the degree $-1$ bracket is given by a closed $3$-form. Furthermore, if $\omega\,,\omega'$ define equivalent Lie $2$-algebras, then $[\omega]=[\omega']$ in $H^3_0(\mathfrak{g},V)\,,$ implying that the map from Lie $2$-algebras to $H^3_0(\mathfrak{g},V)$ is canonical. Since simply connected Lie groups are $2$-connected, Theorem~\ref{van Est image} can help us determine when a Lie $2$-algebra integrates.
\begin{theorem}\label{2 algebra}
Let $\mathcal{L}$ be a Lie $2$-algebra represented by the $3$-form $\omega\,.$ Let $G$ be the simply connected integration of $\mathfrak{g}\,.$ Then if the periods $P(\omega)$ of $\omega$ form a discrete subgroup of $V\,,$ then $\mathcal{L}$ integrates to a class in $H_0^2(G,V/P(\omega))\,.$ 
\end{theorem}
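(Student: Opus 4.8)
The plan is to recast the Lie $2$-algebra $\mathcal{L}$ as a class in truncated Lie algebroid cohomology and then read off its integrability from the image characterization in \Cref{van Est image}. First I would set $\mathfrak{m}:=V$, $Z:=P(\omega)$ and $M:=V/P(\omega)$. Because $P(\omega)$ is discrete by hypothesis, the quotient homomorphism $\exp\colon V\to V/P(\omega)$ is a surjective submersion of abelian groups and sits in an exponential sequence
\[
0\to P(\omega)\to V\xrightarrow{\exp} M\,,
\]
which is exactly the input required by \Cref{van Est image}. To make $M$ into a $\mathfrak{g}$-module one must check that the given representation of $\mathfrak{g}$ on $V$ descends to the quotient; by property $3$ in the definition of a $\mathfrak{g}$-module this is equivalent to $\mathfrak{g}$ acting trivially on $P(\omega)$. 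I would verify this by noting that the period homomorphism $\pi_3(G)\to V$, $\sigma\mapsto\int_\sigma\omega$, is equivariant for the (trivial) action on $\pi_3(G)$ and the representation on $V$; since a connected group acts trivially on its own homotopy groups, every period vector is $G$-fixed, so $P(\omega)\subseteq V^{\mathfrak{g}}$ and the representation passes to $V/P(\omega)$.

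Granting this, $M=V/P(\omega)$ is a $\mathfrak{g}$-module whose exponential is a surjective submersion, so by \Cref{g-module} (using that $G$ is source simply connected, being a simply connected Lie group) it integrates to a $G$-module. Since the truncated complex defining $H^*_0(\mathfrak{g},M)$ involves only the positive-degree groups $\mathcal{C}^n(\mathfrak{g},M)=\mathcal{O}(\Lambda^n\mathfrak{g}^*\otimes\mathfrak{m})$, it depends only on $\mathfrak{m}=V$ together with its representation; hence the closed $3$-form $\omega$ determines one and the same canonical class $[\omega]\in H^2_0(\mathfrak{g},V)=H^2_0(\mathfrak{g},M)$, namely the class assigned to $\mathcal{L}$.

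Finally I would apply \Cref{van Est image} with $n=2$. A simply connected Lie group is $2$-connected, so $G\rightrightarrows G^0$ is source $2$-connected and the hypotheses hold. Its ``in particular'' clause asserts that the closed $3$-form $\omega$ represents a class in $H^2_0(\mathfrak{g},M)$ lying in the image of $VE_0$ if and only if
\[
\int_{S^3_x}\omega\in Z\qquad\text{for all }x\in G^0\text{ and all }S^3_x\,,
\]
and since $G^0$ is a point, the source fibre is $G$, and $Z=P(\omega)$ is by definition the subgroup generated by precisely these integrals, the condition holds tautologically. Therefore $[\omega]=VE_0(\alpha)$ for some $\alpha\in H^2_0(G,M)=H^2_0(G,V/P(\omega))$, and this $\alpha$ is the desired integration of $\mathcal{L}$.

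The only genuinely delicate step is the well-definedness of $M=V/P(\omega)$ as a $\mathfrak{g}$-module, i.e.\ the $G$-invariance of the period lattice; the discreteness hypothesis is what guarantees that $V/P(\omega)$ is a manifold and that $\exp$ is a submersion, so that \Cref{g-module} and \Cref{van Est image} apply. Everything after that is immediate, since the defining property of $P(\omega)$ is exactly the period condition the main theorem requires for membership in the image of $VE_0$.
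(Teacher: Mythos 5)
Your proof is correct and takes essentially the same route as the paper, which treats this theorem as an immediate consequence of Theorem~\ref{van Est image}: take $n=2$, $\mathfrak{m}=V$, $Z=P(\omega)$, $M=V/P(\omega)$, use that a simply connected Lie group is $2$-connected so its source fibers are $2$-connected, and observe that the period condition $\int_{S^3}\omega\in Z$ holds tautologically by the definition of $P(\omega)$. The one point you elaborate that the paper leaves implicit --- that the period group consists of $G$-invariant vectors (by equivariance of the period homomorphism and connectedness of $G$), so that the representation on $V$ descends and $V/P(\omega)$ is genuinely a $\mathfrak{g}$-module integrating to a $G$-module via Theorem~\ref{g-module} --- is argued correctly and is exactly the detail needed for the statement to make sense.
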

\theoremstyle{definition}\begin{rmk}
Note that in~\cite{andre} it is shown that the obstruction to integrating a Lie $2$-algebra to a Lie $2$-group is that the periods of $\omega$ form a discrete subgroup of $V\,,$ ie. the obstruction is the same as the one in the above theorem. To explain this, we note the following: it is shown in~\cite{schommer} that to every class in $H^2_0(G,S^1)$ there corresponds an equivalence class of Lie $2$-groups. We expect that under this correspondence,~\Cref{2 algebra} shows that the Lie $2$-algebras which satisfy the hypotheses of this theorem integrate to Lie $2$-groups.
\end{rmk}
\subsection{van Est Map: Heisenberg Action Groupoids}\label{heisenberg action}
In this section we will apply the tools developed in the previous sections to integrate a particular Lie algebroid extension and show that we get a Heisenberg action groupoid.
\\\\Consider the space $\mathbb{C}^2$ with divisor $D=\{xy=0\}\,.$ Then the $2$-form \begin{align*}
    \omega=\frac{dx\wedge dy}{xy}
    \end{align*}
    is a closed form in $C_0^2(T_{\mathbb{C}^2}(-\log{D}),\mathbb{C}_{\mathbb{C}^2})\begin{footnote}{On $X\backslash D$ the $2$-form $\omega/2\pi i$ is the curvature of the Deligne line bundle associated to the holomorphic functions $x$ and $y\,.$}\end{footnote}\,.$ The source simply connected integration of $T_{\mathbb{C}^2}(-\log{D})$ is 
$\mathbb{C}^2\ltimes \mathbb{C}^2\,,$ where the action of $\mathbb{C}^2$ on itself is given by
\begin{align*}
    (a,b)\cdot (x,y)=(e^ax,e^by)\,.
\end{align*}
Since the source fibers are contractible Theorem~\ref{van Est image} tells us that the central extension of $T_{\mathbb{C}^2}(-\log{D})$ defined by $\omega$ integrates to an $\mathbb{C}_{\mathbb{C}^2}$ central extension of $\mathbb{C}^2\ltimes \mathbb{C}^2\,.$ We will describe the central extension here. First we will compute the integration of $\omega:$ we define coordinates on $\mathbf{B}^{\bullet\le 2} (\mathbb{C}{\mathop{\times}}\mathbb{C}\ltimes \mathbb{C}{\mathop{\times}}\mathbb{C})$ as follows:
\begin{align*}
&(x,y)\in \mathbb{C}^2= \mathbf{B}^0 (\mathbb{C}{\mathop{\times}}\mathbb{C}\ltimes \mathbb{C}{\mathop{\times}}\mathbb{C})\,,
\\&(a,b,x,y)\in \mathbb{C}^2{\mathop{\times}}\mathbb{C}^2=\mathbf{B}^1 (\mathbb{C}{\mathop{\times}}\mathbb{C}\ltimes \mathbb{C}{\mathop{\times}}\mathbb{C})\,,
\\& (a',b',a,b,x,y)\in \mathbf{B}^2 (\mathbb{C}{\mathop{\times}}\mathbb{C}\ltimes \mathbb{C}{\mathop{\times}}\mathbb{C})\,.
\end{align*}
On $\mathbf{E}^{\bullet\le 2} (\mathbb{C}{\mathop{\times}}\mathbb{C}\ltimes \mathbb{C}{\mathop{\times}}\mathbb{C})$ we have coordinates
\begin{align*}
&(a,b,x,y)\in \mathbf{E}^0 (\mathbb{C}{\mathop{\times}}\mathbb{C}\ltimes \mathbb{C}{\mathop{\times}}\mathbb{C})\,,
\\&(a',b',a,b,x,y)\in \mathbf{E}^1 (\mathbb{C}{\mathop{\times}}\mathbb{C}\ltimes \mathbb{C}{\mathop{\times}}\mathbb{C})\,,
\\& (a'',b'',a',b',a,b,x,y)\in \mathbf{E}^2 (\mathbb{C}{\mathop{\times}}\mathbb{C}\ltimes \mathbb{C}{\mathop{\times}}\mathbb{C})\,,
\end{align*}
where the map $\kappa:\mathbf{E}^{\bullet\le 2} (\mathbb{C}{\mathop{\times}}\mathbb{C}\ltimes \mathbb{C}{\mathop{\times}}\mathbb{C})\to\mathbf{B}^{\bullet\le 2} (\mathbb{C}{\mathop{\times}}\mathbb{C}\ltimes \mathbb{C}{\mathop{\times}}\mathbb{C})$
is given by
\begin{align*}
    &(a,b,x,y)\mapsto (e^a x,e^b y)\,,
    \\& (a',b',a,b,x,y)\mapsto (a',b',e^a x, e^b y)\,,
    \\& (a'',b'',a',b',a,b,x,y)\mapsto (a'',b'',a',b',e^a x, e^b y)\,.
\end{align*}
When we right translate $\omega$ to $\mathbf{E}^0(\mathbb{C}\ltimes\mathbb{C})$ we get the fiberwise form $da\wedge db\,.$ This is exact, with primitive $a\,db\,.$ When we pullback $a\,db$ to $\mathbf{E}^1(\mathbb{C}\ltimes\mathbb{C})$ we get the fiberwise form $a'\,db\,,$ and this is exact, with primitive $a'b\,.$ When we pullback $a'b$ to $\mathbf{E}^2(\mathbb{C}\ltimes\mathbb{C})$ we get the function $a''b'\,,$ and this is $\kappa^*a'b\,.$ So the cocycle integrating $\omega$ is $f(a',b',a,b,x,y)=a'b\,.$ 
\\\\One can show that the central extension associated to this cocycle is an action groupoid of the complex Heisenberg group acting on $\mathbb{C}{\mathop{\times}}\mathbb{C}\,,$ ie. we have the following proposition:
\begin{proposition}
The logarithmic $2$-form $\frac{dx\wedge dy}{xy}$ on $\mathbb{C}^2$ with divisor $xy=0$ defines a Lie algebroid extention of $T_{\mathbb{C}^2}(-\log{\{xy=0\}})\,.$ This Lie algebroid extension integrates to an extension of $\mathbb{C}^2\ltimes \mathbb{C}^2$ given by a Heisenberg action groupoid. More precisely,
the extension is of the form
\begin{align}\label{heisenberg extension}
    0\to\mathbb{C}_{\mathbb{C}^2}\to H\ltimes\mathbb{C}^2 \to\mathbb{C}^2\ltimes \mathbb{C}^2\to 0\,,
\end{align}
where $H$ is
the subgroup of matrices of the form
\begin{align*}
    \begin{pmatrix}
    1 & a & c\\
    0 & 1 & b \\
    0 & 0 & 1
    \end{pmatrix}
\end{align*}
for $a\,,b\,,c\in\mathbb{C},$ and the action on $\mathbb{C}{\mathop{\times}}\mathbb{C}$ is given by $(a,b,c)\cdot (x,y)=(e^a x,e^b y)\,,$ where $(a,b,c)$ represents the above matrix.
\end{proposition}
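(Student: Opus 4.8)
The plan is to verify directly that the groupoid extension described by the cocycle $f(a',b',a,b,x,y) = a'b$ is isomorphic to the Heisenberg action groupoid claimed, then identify the kernel and the induced action. Let me trace through what the cocycle means and how the Heisenberg group structure emerges.

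The cocycle computation already shows the integrating cocycle is $a'b$. Now I need to understand what central extension this defines. A class in $H^1_0(G,\mathbb{C}_{\mathbb{C}^2})$ with $G = \mathbb{C}^2 \ltimes \mathbb{C}^2$ corresponds to an extension where the underlying space of morphisms is $\mathbf{B}^1 G \times \mathbb{C}$, with coordinates $(a,b,x,y,c)$, and the multiplication is twisted by the cocycle. The group law on the Heisenberg part should come from the standard groupoid extension formula: given two composable arrows, their product in the extension acquires the cocycle as the extra $\mathbb{C}$-component. So I'd write out the composition of $(a',b',\ldots,c')$ with $(a,b,\ldots,c)$ and check the third coordinate becomes $c + c' + a'b$ (up to sign/convention), matching the matrix multiplication of upper-triangular Heisenberg matrices.

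The key verification is matching group laws. For the Heisenberg matrices with entries $a,b,c$, the product is
\begin{align*}
\begin{pmatrix} 1 & a' & c' \\ 0 & 1 & b' \\ 0 & 0 & 1 \end{pmatrix}
\begin{pmatrix} 1 & a & c \\ 0 & 1 & b \\ 0 & 0 & 1 \end{pmatrix}
= \begin{pmatrix} 1 & a+a' & c+c'+a'b \\ 0 & 1 & b+b' \\ 0 & 0 & 1 \end{pmatrix}\,.
\end{align*}
So the $c$-coordinate picks up exactly the term $a'b$, which is precisely the integrating cocycle. I would establish an explicit map from the extension groupoid to $H \ltimes \mathbb{C}^2$ sending the twisted product $(a,b,x,y,c)$ to the pair consisting of the matrix with entries $(a,b,c)$ and the base point $(x,y)$, and check it is a groupoid isomorphism: the source/target maps correspond under $(a,b,c)\cdot(x,y) = (e^a x, e^b y)$, composability agrees, and the multiplications match by the displayed identity. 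The action groupoid structure of $H \ltimes \mathbb{C}^2$ is determined by the stated $H$-action on $\mathbb{C}^2$, which factors through $(a,b,c)\mapsto(a,b)$ and then the $\mathbb{C}^2$-action $(a,b)\cdot(x,y)=(e^a x, e^b y)$; I must confirm this is compatible with the projection $H\ltimes\mathbb{C}^2 \to \mathbb{C}^2\ltimes\mathbb{C}^2$ in \eqref{heisenberg extension}.

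The main obstacle I anticipate is bookkeeping the conventions so that the cocycle $a'b$ appears with the correct variables and sign in the composition, given the ordering conventions for $\mathbf{B}^\bullet G$ (where $s(g_i) = t(g_{i+1})$) and the left-action used to form $G \ltimes G$. I would carefully track which factor is $a'$ versus $a$ in the composable pair and confirm the cocycle condition $\delta^* f = 0$ is consistent with associativity of the Heisenberg multiplication. Once the group law matches, identifying the kernel as $\mathbb{C}_{\mathbb{C}^2}$ (the central $c$-coordinate over each base point) and exhibiting the short exact sequence \eqref{heisenberg extension} is immediate, since setting $a=b=0$ gives the central copy of $\mathbb{C}$ and quotienting by $c$ recovers $\mathbb{C}^2\ltimes\mathbb{C}^2$. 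The remaining verification that this is the unique such integration follows from Theorem~\ref{van Est image}, since the source fibers are contractible (hence $2$-connected) and the van Est map is an isomorphism in the relevant degree.
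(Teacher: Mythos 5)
Your proposal is correct and takes essentially the same route as the paper: the paper also starts from the computed integrating cocycle $f(a',b',a,b,x,y)=a'b$, uses the obvious splitting $(a,b,x,y)\mapsto\big((a,b,0),(x,y)\big)$ of $H\ltimes\mathbb{C}^2$ to read off that the Heisenberg action groupoid extension has exactly this cocycle, and concludes the extensions coincide, with Theorem~\ref{van Est image} supplying the integration statement. Your version runs the same cocycle-matching in the opposite direction, building the twisted product on $\mathbf{B}^1 G\times\mathbb{C}$ and mapping $(a,b,x,y,c)\mapsto\big((a,b,c),(x,y)\big)$, which is an equivalent verification.
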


\section{The Canonical Module Associated to a Complex Manifold and Divisor}
Given a complex manifold $X$ and a (simple normal crossings) divisor $D\,,$ we construct a natural module for the Lie groupoid $\text{Pair}(X,D)$ (which is the terminal integration of $T_X(-\log{D})\,,$ the Lie algebroid whose sheaf of sections is the sheaf of sections of $T_X$ which are tangent to $D)\,.$ These are modules for which the underlying surjective submersion does not define a fiber bundle, and in particular the underlying family of abelian groups is not locally trivial. Generically the fiber will be $\mathbb{C}^*\,,$ but over $D$ the fibers will degenerate to $\mathbb{C}^*{\mathop{\times}} \mathbb{Z}^k\,,$ for some $k$ depending on the point $D\,.$
\subsection{The Module $\mathbb{C}^*_{\mathbb{C}}(*\{0\})$}
Here we will do a warm up example for the general case to come in the next section. More precisely, we will construct
a family of abelian groups whose sheaf of sections is isomorphic to the sheaf of nonvanishing meromorphic functions with a possible pole or zero
only at the origin, and we will show that it is naturally a module for the terminal groupoid integrating $T_\mathbb{C}(-\log{\{0\}})\,,$ the Lie algebroid whose sheaf of sections is
isomorphic to the sheaf of sections of $T\mathbb{C}$ vanishing at the origin. This space was defined in~\cite{luk}.
\\\\Consider the action groupoid $\mathbb{C}^*\ltimes\mathbb{C}\rightrightarrows\mathbb{C}\,,$ where the action of $\mathbb{C}^*$ on $\mathbb{C}$ is given by
\begin{align*}
    a\cdot x=ax\,.
\end{align*}This is the terminal groupoid integrating $T_\mathbb{C}(-\log{\{0\}})\,.$ We will construct a module for this groupoid as follows:
consider the family of abelian groups given by
\begin{align*}
{\mathbb{C}}{\mathop{\times}}\mathbb{C}^*{\mathop{\times}}\mathbb{Z}\overset{p_1}{\to}\mathbb{C}\,.
\end{align*}
This family of abelian groups is a $\mathbb{C}^*\ltimes\mathbb{C}$-module with action given by
\begin{align}
(a,x)\cdot(x,y,i)=(ax,a^{-i}y,i)\,.
\end{align}
There is a submodule given by
\begin{align*}
\mathbb{C}{\mathop{\times}}\mathbb{Z}\backslash \{(0,j):j\ne 0\}\overset{p_1}{\to}\mathbb{C}\,,
\end{align*}
where the embedding into ${\mathbb{C}}{\mathop{\times}}\mathbb{C}^*{\mathop{\times}}\mathbb{Z}$ is given by $(x,j)\mapsto (x,x^{-j},j)\,,$ for $x\ne 0\,,$ and $(0,0)\mapsto (0,1,0)\,.$
We can then form the quotient to get another module, denoted $\mathbb{C}^*_{\mathbb{C}}(*\{0\})\,.$ Formally, we have the following:
\theoremstyle{definition}\begin{definition}
 We define the space $\mathbb{C}^*_{\mathbb{C}}(*\{0\})$ as
\begin{align*}
   \mathbb{C}^*_{\mathbb{C}}(*\{0\}):= {\mathbb{C}}{\mathop{\times}}\mathbb{C}^*{\mathop{\times}}\mathbb{Z}/\sim\,,\,(x,y,i)\sim (x,x^{-j}y,i+j)\,,\;x\ne 0\,.
\end{align*}
$\blacksquare$\end{definition}
\begin{proposition}The space $\mathbb{C}^*_{\mathbb{C}}(*\{0\})$ is a complex manifold and there is a holomorphic surjective submersion $\pi:M\to\mathbb{C}$ given by $\pi(x,y,i)=x\,$ The space $\mathbb{C}^*_{\mathbb{C}}(*\{0\})$ is a family of abelian groups with product defined by a
\begin{align*}
(x,y,i)\cdot(x,y',j)=(x,yy',i+j)\,.
\end{align*}
It is a $\mathbb{C}^*\ltimes\mathbb{C}$-module with action given by
\begin{align*}
    (a,x)\cdot(x,y,i)=(ax,a^{-i}y,i)\,,
\end{align*}
and there is a short exact sequence of modules given by
\begin{align*}
    0\to \mathbb{C}{\mathop{\times}}\mathbb{Z}\backslash \{(0,j):j\ne 0\}\to {\mathbb{C}}{\mathop{\times}}\mathbb{C}^*{\mathop{\times}}\mathbb{Z}\to \mathbb{C}^*_{\mathbb{C}}(*\{0\})\to 0\,. 
\end{align*}
The fiber of $\mathbb{C}^*_{\mathbb{C}}(*\{0\})$ over a point $x\ne 0$ is isomorphic to $\mathbb{C}^*\,,$ and the fiber over $x=0$ is isomorphic to $\mathbb{C}^*{\mathop{\times}}\mathbb{Z}\,.$ 
\end{proposition}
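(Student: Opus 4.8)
The plan is to build the complex-manifold structure on $M := \mathbb{C}^*_{\mathbb{C}}(*\{0\})$ by an explicit atlas and then to check that all the remaining claimed structure is visibly holomorphic in these charts. Let $q\colon \mathbb{C}\times\mathbb{C}^*\times\mathbb{Z}\to M$ be the quotient map, and for each $i\in\mathbb{Z}$ set $V_i := q(\mathbb{C}\times\mathbb{C}^*\times\{i\})$. Since the relation $(x,y,i)\sim(x,x^{-j}y,i+j)$ forces $j=0$ once the third coordinate is fixed, the restriction of $q$ to $\mathbb{C}\times\mathbb{C}^*\times\{i\}$ is injective, so I would take $\phi_i\colon V_i\to\mathbb{C}\times\mathbb{C}^*$, $\phi_i(q(x,y,i))=(x,y)$, as chart maps. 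Every class over $x\ne 0$ has a representative with any prescribed third coordinate, so the $V_i$ cover $M$, and on $V_i\cap V_{i'}$, which for $i\ne i'$ is exactly the locus $x\ne 0$, the transition $\phi_{i'}\circ\phi_i^{-1}(x,y)=(x,x^{-(i'-i)}y)$ is holomorphic.

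The one genuinely non-formal point, and the step I expect to be the main obstacle, is that this gluing takes place along the open set $\{x\ne 0\}$, so Hausdorffness is not automatic (the family is not locally trivial near $x=0$). I would verify it directly: two points over $x=0$ lying in different charts, $q(0,y,i)$ and $q(0,y',i')$ with $i\ne i'$, are separated because along any sequence $x_n\to 0$ the $\phi_i$- and $\phi_{i'}$-coordinates of a common class over $x_n$ differ by the factor $x_n^{-(i'-i)}$, which tends to $0$ or $\infty$; hence small coordinate balls around the two points have disjoint preimages. All other pairs of points are separated either within a single chart or by the continuous map $\pi$, and second countability is clear since $\mathbb{Z}$ is countable. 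In each chart $\phi_i$ the map $\pi$ is the projection $(x,y)\mapsto x$, so it is a holomorphic surjective submersion.

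For the algebraic structure I would check compatibility with $\sim$ fibrewise and read off holomorphy in the charts. The product $(x,y,i)\cdot(x,y',j)=(x,yy',i+j)$ respects $\sim$: if $(x,y,i)\sim(x,x^{-k}y,i+k)$ and $(x,y',j)\sim(x,x^{-l}y',j+l)$, the two products differ by $x^{-(k+l)}$ in the $y$-slot and by $k+l$ in the $\mathbb{Z}$-slot, hence are again $\sim$. In charts the product is $((x,y),(x,y'))\mapsto(x,yy')$, so it is holomorphic, with unit $q(x,1,0)$ and inverse $q(x,y^{-1},-i)$, making $\pi\colon M\to\mathbb{C}$ a holomorphic family of abelian groups. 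Likewise $(a,x)\cdot(x,y,i)=(ax,a^{-i}y,i)$ descends: applied to $(x,x^{-j}y,i+j)$ it gives $(ax,a^{-(i+j)}x^{-j}y,i+j)$, which over the nonzero point $ax$ is $\sim(ax,a^{-i}y,i)$ via the exponent $j$. In charts it is the holomorphic map $(x,y)\mapsto(ax,a^{-i}y)$ preserving $V_i$, and a direct check gives that it is a left action by group homomorphisms covering the $\mathbb{C}^*\ltimes\mathbb{C}$-action on $\mathbb{C}$, so $M$ is a $\mathbb{C}^*\ltimes\mathbb{C}$-module.

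Finally, I would identify the submodule and compute the fibres. The assignment $(x,j)\mapsto(x,x^{-j},j)$ for $x\ne0$ together with $(0,0)\mapsto(0,1,0)$ is a holomorphic closed embedding of $\mathbb{C}\times\mathbb{Z}\setminus\{(0,j):j\ne0\}$ onto a sub-family of abelian groups $K\subset\mathbb{C}\times\mathbb{C}^*\times\mathbb{Z}$, and it is equivariant for the action $(a,x)\cdot(x,j)=(ax,j)$ since $(ax,(ax)^{-j},j)$ again lies in the image. By construction the relation $\sim$ is precisely that of differing by an element of $K$ in each fibre, so $M=(\mathbb{C}\times\mathbb{C}^*\times\mathbb{Z})/K$ and the quotient sequence
\[
0\to K\to \mathbb{C}\times\mathbb{C}^*\times\mathbb{Z}\to M\to 0
\]
is a short exact sequence of $\mathbb{C}^*\ltimes\mathbb{C}$-modules. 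For the fibres, over $x\ne0$ every class has a unique representative with $i=0$, so $\pi^{-1}(x)\cong\mathbb{C}^*$ with its multiplicative structure; over $x=0$ there are no identifications, so $\pi^{-1}(0)=\bigsqcup_{i\in\mathbb{Z}}\{(0,y,i):y\in\mathbb{C}^*\}\cong\mathbb{C}^*\times\mathbb{Z}$ as groups, completing the proposition.
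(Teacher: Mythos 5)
Your proposal is correct and takes essentially the same approach as the paper: the identical atlas indexed by $i\in\mathbb{Z}$ with holomorphic transitions $(x,y)\mapsto(x,x^{-(i'-i)}y)$, and the same Hausdorffness mechanism for two points over the origin in distinct charts, namely the degeneration of the gluing factor $x^{i-i'}$ as $x\to 0$ (the paper phrases this as the explicit inequality $|x^{i-j}y|>|y'|$ where you argue with sequences, but it is the same estimate). You in fact go further than the paper's proof, which verifies only the complex-manifold structure: your checks that the product and the $\mathbb{C}^*\ltimes\mathbb{C}$-action descend through $\sim$, that the sequence is exact because $\sim$ is precisely fibrewise translation by the submodule $K$, and the fibre computations are all left implicit in the paper.
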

\begin{proof}
We prove that it is a complex manifold. First we show that we can cover the space with charts whose transition functions are holomorphic. For each $i\in\mathbb{Z}\,,$ we get a chart given by $\mathbb{C}\times\mathbb{C}^*\,,$ taking $(x,y,i)\mapsto (x,y)\,.$ On the intersection between the $i$ and $j$ coordinate systems, the transition function is given by $(x,y)\mapsto (x,x^{-j}y)\,,$ which is holomorphic. 
\newline\newline To prove the space is Hausdorff, we observe that away from $x=0\,,$ the space is just $\mathbb{C}^*\times\mathbb{C}^*\,.$ Now take two points $(0,y,i)\,,(x,y',j)\,,$ $x\ne 0\,.$ We get disjoint neighborhoods of these points by choosing small enough neighborhoods $U_i\,, U_j\,,$ such that the projections onto the $x$-coordinate are disjoint. Now given two distinct points $(0,y,i)\,,(0,y',j)\,,$ with $j>i$ we obtain two disjoint neighborhoods by choosing $x\in \mathbb{C}$ such that $|x^{i-j}y|>|y'|\,,$ and then choosing small enough disks around $y\,,y'\,.$ Now suppose we take two distinct points $(0,y,i)\,,(0,y',i)\,.$ We get two disjoint neighborhoods by choosing disjoint neighborhoods of $y\,,y'\in\mathbb{C}^*\,,$ and taking all $x\in\mathbb{C}^*\,.$
\end{proof}
\begin{proposition}\label{sheaf identification}The sheaf $\mathcal{O}(\mathbb{C}^*_{\mathbb{C}}(*\{0\}))$ (where sections here are taken to be holomorphic) is isomorphic to the sheaf of meromorphic functions on $\mathbb{C}$ with poles or zeroes only at $x=0\,,$ denoted $\mathcal{O}^*(*\{0\})\,.$
\end{proposition}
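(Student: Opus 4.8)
The plan is to construct an explicit isomorphism of sheaves of abelian groups
$\Phi\colon \mathcal{O}(\mathbb{C}^*_{\mathbb{C}}(*\{0\}))\to\mathcal{O}^*(*\{0\})$ and to verify that it is bijective. The guiding observation is that the quantity $x^i y$ is invariant under the equivalence relation $(x,y,i)\sim(x,x^{-j}y,i+j)$ as long as $x\ne 0\,,$ since $x^{i+j}(x^{-j}y)=x^i y\,.$ Thus over the locus $x\ne 0$ the assignment $(x,y,i)\mapsto x^i y$ is a well-defined group isomorphism from the fiber (which is $\mathbb{C}^*$ there) onto $\mathbb{C}^*\,;$ this is precisely what lets one read off a genuine function from a section.

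First I would define $\Phi$ on sections. Given $s\in\mathcal{O}(\mathbb{C}^*_{\mathbb{C}}(*\{0\}))(U)$ and $x\in U$ with $x\ne 0\,,$ write $s(x)=[(x,y,i)]$ for any representative and set $f(x)=x^i y\,;$ by the invariance above this is independent of the chosen representative and defines a holomorphic nonvanishing function on $U\setminus\{0\}\,.$ The remaining point is the behaviour at $0\in U\,.$ Since the fiber of $\mathbb{C}^*_{\mathbb{C}}(*\{0\})$ over $0$ is $\mathbb{C}^*{\mathop{\times}}\mathbb{Z}\,,$ the value $s(0)$ singles out a unique index $i_0\,,$ and the image of the chart $\mathbb{C}{\mathop{\times}}\mathbb{C}^*{\mathop{\times}}\{i_0\}$ is an open neighborhood of $s(0)\,.$ By continuity of $s$ there is a disk $D\ni 0$ on which $s(x)=[(x,y(x),i_0)]$ with $y$ holomorphic and nonvanishing, so $f(x)=x^{i_0}y(x)$ on $D\setminus\{0\}\,;$ hence $f$ extends meromorphically across $0$ with order exactly $i_0\,,$ giving $f\in\mathcal{O}^*(*\{0\})(U)\,.$ Being defined pointwise, $\Phi$ commutes with restriction, and it is a homomorphism because $(x,y,i)\cdot(x,y',i')=(x,yy',i+i')$ maps to $x^{i+i'}yy'=(x^i y)(x^{i'}y')\,,$ i.e.\ to the product of the corresponding meromorphic functions.

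Next I would check bijectivity. For injectivity, if $\Phi(s)\equiv 1$ then on any connected component meeting $0$ one has $x^{i_0}y(x)\equiv 1$ with $y$ holomorphic nonvanishing at $0\,,$ forcing $i_0=0$ and $y\equiv 1\,,$ so $s$ is the identity section there; the components avoiding $0$ are handled the same way with $i=0\,.$ For surjectivity, take $f\in\mathcal{O}^*(*\{0\})(U)\,.$ On the component containing $0$ set $i=\operatorname{ord}_0 f$ and $g=x^{-i}f\,,$ which is holomorphic and nonvanishing (the factor $x^{-i}$ exactly cancels the zero or pole of $f$ at $0$ and contributes nothing elsewhere), and define $s(x)=[(x,g(x),i)]\,;$ on the components not meeting $0$ set $s(x)=[(x,f(x),0)]\,.$ These assemble into a holomorphic section with $\Phi(s)=f\,,$ using that the quotient map restricted to each index chart is a biholomorphism onto its image. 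Together these show $\Phi$ is an isomorphism of sheaves of abelian groups.

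The step I expect to be the main obstacle is the local analysis at $0\,.$ One must argue carefully that a holomorphic section is confined to a single index chart in a neighborhood of $0$ (so that the integer $i_0\,,$ and therefore the order of the zero or pole, is well-defined), and recognize that the degenerate fiber $\mathbb{C}^*{\mathop{\times}}\mathbb{Z}$ over $0$ is exactly what records this order. Away from $0$ the construction is the harmless identification of $(U\setminus\{0\}){\mathop{\times}}\mathbb{C}^*$ with itself, and the entire content of the proposition is that the gluing at $0$ converts the extra $\mathbb{Z}$-factor into the integer order of a meromorphic function.
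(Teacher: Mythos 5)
Your proposal is correct and uses the same map as the paper's proof, namely $s(x)=(x,f(x),i)\mapsto x^i f(x)\,;$ the paper simply asserts this is an isomorphism of sheaves, whereas you supply the verification (well-definedness under the gluing relation, the single-chart analysis near $0\,,$ and bijectivity). No gaps; your extra detail on why a holomorphic section is confined to one index chart near $0$ is exactly the content the paper leaves implicit.
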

\begin{proof}
Consider the morphism of sheaves defined as follows: for an open set $U\subset\mathbb{C}$ and a holomorphic section $s(x)=(x,f(x),i)$ of $\mathbb{C}^*_{\mathbb{C}}(*\{0\})$ over $U\,,$ define a meromorphic function on $U\,,$ with a possible pole/zero only at $x=0\,,$ by $x^if(x)\,,\,x\in U\,.$ This map is an isomorphism of sheaves.
\end{proof}
Now to any $G$-module there is an associated $G$-representation, and the representation associated to $\mathbb{C}^*_{\mathbb{C}}(*\{0\})$ is the trivial one, ie. $\mathfrak{m}\cong\mathbb{C}{\mathop{\times}}\mathbb{C}$ with the projection map being the projection onto the first factor, and the action of $\mathbb{C}^*\ltimes\mathbb{C}$ is given by 
\begin{align*}
(a,x)\cdot(x,y)=(ax,y)\,.
\end{align*}
We identify $\mathfrak{m}$ with points $(x,y,0)\in\mathbb{C}{\mathop{\times}}\mathbb{C}{\mathop{\times}}\mathbb{Z}\,,$ where the second $\mathbb{C}$ is identified with the Lie algebra of $\mathbb{C}^*\,.$ The sheaf of sections of $\mathfrak{m}$ is naturally isomorphic to the sheaf of $\mathbb{C}$-valued functions on $\mathbb{C}\,.$
\begin{proposition}The Chevalley-Eilenberg complex associated to $\mathbb{C}^*_{\mathbb{C}}(*\{0\})$ is isomorphic to the complex
\begin{align*}
    \mathcal{O}_\mathbb{C}^*(*\{0\})\overset{\mathrm{dlog}}{\to}\Omega^1_\mathbb{C}(\log D)\,.
\end{align*}
\end{proposition}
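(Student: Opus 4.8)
The plan is to exploit the fact that $\mathfrak{g}=T_\mathbb{C}(-\log\{0\})$ is a rank-one Lie algebroid, so that $\Lambda^n\mathfrak{g}^*=0$ for $n\ge 2$ and the Chevalley-Eilenberg complex of Definition~\ref{forms} collapses to the two-term complex
\[
\mathcal{C}^0(\mathfrak{g},M)\xrightarrow{d_{\text{CE}}\log}\mathcal{C}^1(\mathfrak{g},M)\,,
\]
where $M=\mathbb{C}^*_{\mathbb{C}}(*\{0\})$ and $D=\{0\}$. The degree-zero term is $\mathcal{C}^0(\mathfrak{g},M)=\mathcal{O}(M)$, which is identified with $\mathcal{O}^*_\mathbb{C}(*\{0\})$ by Proposition~\ref{sheaf identification}. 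For the degree-one term, $\mathcal{C}^1(\mathfrak{g},M)=\mathcal{O}(\mathfrak{g}^*\otimes\mathfrak{m})$; since the sheaf of sections of $\mathfrak{g}$ is generated by $x\partial_x$, its dual $\mathfrak{g}^*$ is generated by $dx/x$, which is precisely $\Omega^1_\mathbb{C}(\log\{0\})$, and since the representation $\mathfrak{m}$ was computed above to be trivial with $\mathcal{O}(\mathfrak{m})\cong\mathcal{O}_\mathbb{C}$, the tensor product gives $\mathcal{C}^1(\mathfrak{g},M)\cong\Omega^1_\mathbb{C}(\log\{0\})$. This establishes the isomorphism at the level of the underlying sheaves.

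The substantive step is to verify that under these two identifications the differential $d_{\text{CE}}\log$ becomes $\mathrm{dlog}$. By definition $d_{\text{CE}}\log\,s(X)=\tilde{L}_X(s)$, so it suffices to evaluate $\tilde{L}_X$ on the generator $X=x\partial_x$ of $\mathfrak{g}$ and compare with the logarithmic derivative. I would take a local section $s(x)=(x,f(x),i)$ of $M$, corresponding under Proposition~\ref{sheaf identification} to the meromorphic function $g=x^i f(x)$, and run the formula for $\tilde{L}$ from Theorem~\ref{g-module}: choosing the curve $\gamma(\epsilon)=(e^\epsilon,x)$ in the source fiber through the identity, and unwinding the action $(a,x)\cdot(x,y,i)=(ax,a^{-i}y,i)$ together with the fiberwise group law, yields
\[
\tilde{L}_X(s)=\frac{d}{d\epsilon}\Big\vert_{\epsilon=0} f(x)^{-1}e^{i\epsilon}f(e^\epsilon x)=i+x\frac{f'(x)}{f(x)}\,.
\]
On the other hand, contracting $\mathrm{dlog}\,g=dg/g$ with $x\partial_x$ gives $x\,g'(x)/g(x)=i+x f'(x)/f(x)$, which agrees. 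Since $x\partial_x$ generates $\mathfrak{g}$ and both sides are $\mathcal{O}_\mathbb{C}$-linear in the dual variable, this identifies $d_{\text{CE}}\log$ with $\mathrm{dlog}$ and completes the isomorphism of complexes.

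I expect the main obstacle to be the bookkeeping in this last computation: one must be careful with the conventions for source and target, for the inverse in the groupoid, and for the fiberwise group law on $\mathbb{C}^*\times\mathbb{Z}$, since the $\mathbb{Z}$-component must cancel in order for the derivative to land in the identity component $\mathfrak{m}$ of the module. A secondary point worth checking explicitly is that the isomorphism of Proposition~\ref{sheaf identification} is compatible with the trivialization $\mathcal{O}(\mathfrak{m})\cong\mathcal{O}_\mathbb{C}$ chosen when passing from the $\mathbb{C}^*$-factor of the module to its Lie algebra, so that the two identifications are genuinely intertwined by the differential rather than merely abstractly isomorphic.
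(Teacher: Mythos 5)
Your proposal is correct and follows essentially the same route as the paper's proof: both identify a local section $s(x)=(x,f(x),n)$ with the meromorphic function $x^n f(x)$, differentiate along the curve $\epsilon\mapsto(e^\epsilon,x)$ in the source fiber using the module action $(a,x)\cdot(x,y,i)=(ax,a^{-i}y,i)$, obtain $n+xf'(x)/f(x)$, and match this with $\mathrm{dlog}(x^n f)$ contracted with the anchor image $x\partial_x$. The only difference is that you spell out the bookkeeping for the degree-one term (rank-one collapse, $\mathfrak{g}^*$ generated by $dx/x$, triviality of $\mathfrak{m}$) that the paper leaves implicit in its phrase ``under the identification of sheaves.''
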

\begin{proof}We will compute $\mathrm{d}_\text{CE}\,\mathrm{log}:$ consider the meromorphic function $x^nf(x)\,,$ $x\in U\,,$ where $f$ is holomorphic and nonvanishing. We identify it with the local section of $\mathbb{C}^*_{\mathbb{C}}(*\{0\})$ given by $s(x)=(x,f(x),n)\,.$ Now the anchor map is given by \begin{align*}
    \alpha:\textit{Lie}(\mathbb{C}^*\ltimes\mathbb{C})\to T\mathbb{C}\,,\,\alpha(\partial_x,x)=x\partial_x\,.
    \end{align*}
Then we can compute that
\begin{align*}
&\tilde{L}_{(\partial_x,x)}s(x)=\frac{d}{d\varepsilon}\Big\vert_{\varepsilon=0}\,(x,e^{n\varepsilon}f(e^{\varepsilon}x)f(x)^{-1},0)
=(x,n+xf'(x)f(x)^{-1},0)
\\&=(x,\mathrm{dlog} (x^nf)\,(x\partial_x),0)=(x,\mathrm{dlog} (x^nf)\,\alpha(\partial_x,x),0)\,,
\end{align*}
so $f$ differentiates to $\mathrm{dlog} (x^nf)\,,$ so that $\mathrm{d}_\text{CE}\,\mathrm{log}$ corresponds to $\mathrm{dlog}$ under the identification of sheaves used in Proposition~\ref{sheaf identification}. This completes the proof.
\end{proof}
\subsection{The Module $\mathbb{C}^*_X(*D)$ and Pair$(X,D)$}
Here we will generalize the construction in the previous section to arbitrary complex manifolds and smooth divisors.
\begin{proposition}
Let $X$ be a complex manifold of complex dimension $n\,,$ and let $D$ be a smooth divisor. Then there is a canonical family of abelian groups $\mathbb{C}^*_X(*D)\to X$ such that $\mathcal{O}(\mathbb{C}^*_X(*D))$ (where sections here are taken to be holomorphic) is isomorphic to $\mathcal{O}^*(*D)\,,$ the sheaf of nonvanishing meromorphic  functions with poles or zeros only on $D\,.$
\end{proposition}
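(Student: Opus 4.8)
The plan is to reduce the statement to the warm-up case treated in the previous subsection by covering $X$ and gluing copies of the local model $\mathbb{C}^*_{\mathbb{C}}(*\{0\})$. Since $D$ is a smooth divisor, I would choose an open cover $\{U_\alpha\}$ of $X$ together with holomorphic functions $f_\alpha\in\mathcal{O}(U_\alpha)$ such that $D\cap U_\alpha=\{f_\alpha=0\}$ with $df_\alpha$ nonvanishing along $D$ (on charts disjoint from $D$ one simply takes $f_\alpha$ to be a nonvanishing holomorphic function). On overlaps one has $f_\alpha=g_{\alpha\beta}f_\beta$ with $g_{\alpha\beta}\in\mathcal{O}^*(U_\alpha\cap U_\beta)$; these $g_{\alpha\beta}$ satisfy the cocycle condition and are precisely the transition functions of the line bundle $\mathcal{O}_X(D)$.

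On each chart I would form the local model exactly as in the previous subsection, namely
\[
M_\alpha:=U_\alpha\times\mathbb{C}^*\times\mathbb{Z}/\sim_\alpha,\qquad (x,y,i)\sim_\alpha(x,f_\alpha(x)^{-j}y,i+j)\ \text{ for } x\in U_\alpha\setminus D,
\]
so that, by the results already established there, each $M_\alpha$ is a complex manifold, a family of abelian groups over $U_\alpha$ under $(x,y,i)(x,y',i')=(x,yy',i+i')$, and carries a canonical isomorphism $\mathcal{O}(M_\alpha)\cong\mathcal{O}^*(*(D\cap U_\alpha))$ sending $(x,g(x),k)$ to the meromorphic function $f_\alpha^{\,k}g$. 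To glue these into a global object I would define, over $U_\alpha\cap U_\beta$, the transition maps
\[
\phi_{\alpha\beta}\colon M_\beta\to M_\alpha,\qquad (x,y,k)\mapsto(x,g_{\alpha\beta}(x)^{-k}y,k).
\]

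The bulk of the verification then consists of three essential checks, carried out in order. First, that each $\phi_{\alpha\beta}$ descends to the quotients, i.e.\ respects $\sim_\beta$ and $\sim_\alpha$; this is where the identity $f_\alpha=g_{\alpha\beta}f_\beta$ enters, and it is the computation that forces the exponent $-k$ in the formula for $\phi_{\alpha\beta}$. Second, that $\{\phi_{\alpha\beta}\}$ satisfies $\phi_{\alpha\beta}\phi_{\beta\gamma}=\phi_{\alpha\gamma}$, which follows immediately from the cocycle condition on the $g_{\alpha\beta}$. Third, that each $\phi_{\alpha\beta}$ is a fiberwise group isomorphism commuting with the projection to $X$, which is immediate from the formula. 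Granting these, the $M_\alpha$ glue to a complex manifold $\mathbb{C}^*_X(*D)$ with a holomorphic surjective submersion $\pi$ to $X$ and a fiberwise abelian group structure; Hausdorffness reduces to the local statement proved in the warm-up, since the $\phi_{\alpha\beta}$ are biholomorphisms over the overlaps and points lying over distinct base points are separated by preimages of separating neighborhoods in $X$.

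Finally, the local sheaf isomorphisms were arranged precisely so as to be compatible with the $\phi_{\alpha\beta}$: on an overlap a section corresponds to the same meromorphic function $h$ computed in either chart, and equality of the orders of vanishing along $D$ together with $f_\alpha=g_{\alpha\beta}f_\beta$ reproduces exactly the relation $g_\alpha=g_{\alpha\beta}^{-k}g_\beta$ encoded by $\phi_{\alpha\beta}$. Hence the local isomorphisms patch to a global isomorphism $\mathcal{O}(\mathbb{C}^*_X(*D))\cong\mathcal{O}^*(*D)$. Canonicity follows by observing that the construction depends only on $\mathcal{O}_X(D)$ with its canonical section: a refinement of the cover or a different choice of local equations changes the $g_{\alpha\beta}$ by a coboundary of nonvanishing units, inducing a canonical isomorphism of the glued spaces compatible with the identification of sheaves. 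I expect the first of the three checks, the descent of $\phi_{\alpha\beta}$ through the equivalence relations, to be the main obstacle, since it is the only place where the geometry of $D$ genuinely interacts with the pole-order bookkeeping carried by the $\mathbb{Z}$-factor; everything else is formal.
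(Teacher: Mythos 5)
Your proposal is correct and follows essentially the same route as the paper: the paper covers $X$ by polydiscs with adapted coordinates (so its $x_{i,1}$ plays the role of your $f_\alpha$), forms the same local models $U\times\mathbb{C}^*\times\mathbb{Z}/\sim$, and glues them by exactly your twisted maps $(x,y,k)\mapsto(x,g^{-k}y,k)$ using the cocycle $g$ relating the local defining equations. Your write-up is merely more explicit about the descent, cocycle, and sheaf-patching verifications (and about charts disjoint from $D$), which the paper leaves implicit.
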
 
\begin{proof}We can construct a family of abelian groups as follows: choose an open cover $\{\mathbb{D}^n_i\}_i$ of $X$ by polydiscs $($ie.  $\mathbb{D}_i=\{z\in\mathbb{C}:|z|<1\})\,,$ with coordinates $(x_{i,1},\mathbf{x}_i)=(x_{i,1}\,,x_{i,2}\,,\ldots\,,x_{i,n})$ on $\mathbb{D}^n_i\,,$ in such a way that 
\begin{align*}D\cap \mathbb{D}^n_i=\{x_{i,1}=0\}\,.
\end{align*}
Then on $\mathbb{D}^n_i$ form the family of abelian groups $\mathbb{D}^n_i{\mathop{\times}}\mathbb{C}^*{\mathop{\times}}\mathbb{Z}/\sim\,,$ where \begin{align*}
(x_{i,1},\mathbf{x}_i,y,k)\sim (x_{i,1},\mathbf{x}_i,x_{i,1}^{-l}y,k+l) \textit{  for } x_{i,1}\ne 0\,,
\end{align*}
where the surjective submersion is given by the projection onto $(x_{i,1},\mathbf{x}_i)\,,$ and where the product is given by 
\begin{align*}
    (x_{i,1},\mathbf{x}_i,y,k)\cdot (x_{i,1},\mathbf{x}_i,y',l)=(\mathbf{x}_i,yy',k+l)\,.
\end{align*}
We can glue these families of abelian groups together in the following way: on $\mathbb{D}^n_i\cap\mathbb{D}^n_j$ we have a nonvanishing holomorphic function $g_{ij}$ satisfying $x_{j,1}=g_{ij}x_{i,1}\,.$ Now let 
\begin{align*}
(x_{i,1},\mathbf{x}_i,y,k)\sim(x_{j,1},\mathbf{x}_j,g_{ij}^{-k}y,k)\,.
\end{align*}
This gluing preserves the fiberwise group structure, hence we obtain a family of abelian groups, denoted 
\begin{align*}
    \mathbb{C}^*_X(*D)\overset{\pi}{\to}X\,.
\end{align*}
As in the previous section, where this was done for $(X,D)=(\mathbb{C},\{0\})\,,$ the sheaf $\mathcal{O}(\mathbb{C}^*_X(*D))$ is isomorphic to $\mathcal{O}^*(*D)\,.$
\end{proof}
\begin{proposition}[see~\cite{pym}] There is a terminal integration of $T_X(-\log{} D) (denoted by denoted $\text{Pair}(X,D))\,,$\,,$ the Lie algebroid whose sheaf of
sections is isomorphic to the sheaf of sections of $T_X$ which are tangent to $D\,.$ 
\end{proposition}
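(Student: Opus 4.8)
The plan is to construct $\text{Pair}(X,D)$ explicitly, by the same gluing procedure used to build the module $\mathbb{C}^*_X(*D)$ in the previous proposition, and then to verify both that it integrates $T_X(-\log D)$ and that it is terminal among source-connected integrations. Away from the divisor $T_X(-\log D)$ agrees with $T_{X\setminus D}$, which is integrated by the ordinary pair groupoid $(X\setminus D)\times(X\setminus D)$, so the only real issue is to extend this integration across $D$ in a way that produces the correct (nontrivial) isotropy over $D$, rather than the degenerate isotropy one would get from $X\times X$.

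First I would establish the local model. Near a point of a smooth divisor, choose a polydisc chart $\mathbb{D}^n$ with coordinates $(x_1,\mathbf{x})$ so that $D=\{x_1=0\}$; then $T_X(-\log D)$ is freely generated by $x_1\partial_{x_1},\partial_{x_2},\ldots,\partial_{x_n}$. This identifies the algebroid locally with an exterior product of $T_{\mathbb{C}}(-\log\{0\})$ in the $x_1$-direction with $T_{\mathbb{C}^{n-1}}$ in the transverse directions, so the local integration is the product of the groupoid $\mathbb{C}^*\ltimes\mathbb{C}$ from the warm-up example (in the $x_1$-direction) with the pair groupoid of the transverse polydisc; concretely a local arrow records the transverse pair together with a ratio $x_1'/x_1\in\mathbb{C}^*$, interpreted as the exponentiated residue along $x_1=0$. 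I would then glue these local pieces: on an overlap $\mathbb{D}^n_i\cap\mathbb{D}^n_j$ the two defining equations of $D$ differ by the nonvanishing holomorphic function $g_{ij}$ with $x_{j,1}=g_{ij}x_{i,1}$, and the $\mathbb{C}^*$-component of an arrow transforms by exactly the $g_{ij}$-cocycle used to glue $\mathbb{C}^*_X(*D)$. Because this cocycle is precisely the transition data of the line bundle $\mathcal{O}(D)$, the gluing is canonical and associative and produces a complex Lie groupoid $\text{Pair}(X,D)\rightrightarrows X$; for a simple normal crossings divisor one performs the analogous product construction along each branch, the combinatorics being governed by which stratum of $D$ a given arrow connects.

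Next I would verify the two defining properties. Computing the Lie algebroid of $\text{Pair}(X,D)$ in the local model and matching the anchor (the anchor carries the generator of the $\mathbb{C}^*$-isotropy direction to $x_1\partial_{x_1}$, exactly as in the warm-up computation) identifies it with $T_X(-\log D)$, and since the identification is compatible with the gluing cocycle it holds globally. For terminality, the key observation is that the isotropy of $\text{Pair}(X,D)$ over a point of $D$ is $\mathbb{C}^*=\mathbb{C}/2\pi i\mathbb{Z}$, the quotient of the source-simply-connected isotropy $\mathbb{C}$ by the full monodromy lattice $2\pi i\mathbb{Z}$ generated by loops in the source fibers; hence the source-simply-connected integration surjects onto $\text{Pair}(X,D)$, and since every source-connected integration is a quotient of the source-simply-connected one by a discrete normal subgroupoid contained in this lattice, each such integration maps canonically to $\text{Pair}(X,D)$, which is the terminal (universal) property.

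The hard part will be the gluing and terminality over the deepest strata of a simple normal crossings divisor: there one must check that the iterated product of $\mathbb{C}^*$-factors glues consistently (the relevant cocycles being the transition functions of the several branch line bundles $\mathcal{O}(D_k)$), and that the resulting isotropy---a product of $\mathbb{C}^*$'s---really is the maximal discrete quotient of the source-simply-connected isotropy, so that terminality is not spoiled by any further admissible quotient; this last point is where the Crainic--Fernandes monodromy analysis genuinely enters. A conceptually cleaner route, which I would use as a cross-check, is to realise $\text{Pair}(X,D)$ as the complement of the strict transforms of $D\times X$ and $X\times D$ inside the (iterated) blow-up of $X\times X$ along $D\times D$; this is the logarithmic analogue of the b-double space, and for $(X,D)=(\mathbb{C},\{0\})$ it reproduces exactly $\mathbb{C}^*\ltimes\mathbb{C}$. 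The existence asserted here is due to~\cite{pym}.
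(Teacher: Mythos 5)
Your first two paragraphs are, almost verbatim, the paper's entire proof: the paper defines $\text{Pair}(X,D)$ by exactly this atlas --- an arrow from chart $\mathbb{D}^n_i$ to chart $\mathbb{D}^n_j$ is a tuple $(a,\mathbf{x}_j,x_{i,1},\mathbf{x}_i)$ with $a\in\mathbb{C}^*$ the ratio and $(\mathbf{x}_j,\mathbf{x}_i)$ the transverse pair, subject to the target $(ax_{i,1},\mathbf{x}_j)$ lying in the chart --- glued over overlaps via the $g_{ij}\,.$ One precision you should add: the ratio does not transform by a single transition function (the way sections of $\mathbb{C}^*_X(*D)$ do), but by the quotient of transition functions evaluated at the two endpoints of the arrow, $a\mapsto a\,g_{jl}(ax_{i,1})/g_{ik}(x_{i,1})\,,$ which is exactly what makes the gluing commute with source, target and multiplication. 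Everything past your second paragraph is material the paper does not prove at all: the proposition carries ``see~\cite{pym}'', and the paper's proof neither computes the Lie algebroid of the glued groupoid nor addresses terminality, whereas you sketch both. Your terminality outline is correct, but the step you flag as genuinely hard can be closed more cheaply than by the full Crainic--Fernandes monodromy analysis: if $H$ is a source-connected integration, the kernel $N$ of the canonical morphism $\Pi_1(X,D)\to H$ is the preimage of the unit manifold under an \'etale morphism, hence is \'etale over $X\,;$ the kernel $K$ of $\Pi_1(X,D)\to\text{Pair}(X,D)$ is closed and, over the dense open set $X\setminus D\,,$ equals the entire isotropy (there $\text{Pair}(X,D)$ is the ordinary pair groupoid); so any element of $N$ over a point of $D$ is a limit, along a local section of $N\,,$ of elements of $N\subset K$ lying over $X\setminus D\,,$ whence $N\subset K$ and $H$ factors through $\text{Pair}(X,D)\,.$ This argument is insensitive to the depth of the stratum, so the simple normal crossings case requires no extra work beyond consistency of the gluing. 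Finally, your ``cross-check'' via the blow-up of $X\times X$ along $D\times D$ is not merely an alternative: it is the actual construction in~\cite{pym}, so invoking it settles existence exactly as the paper's citation does.
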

\begin{proof}The terminal integration, $\text{Pair}(X,D)\,,$ can be described locally as follows (here the notation is as in the previous proposition):
the set of morphisms $\mathbb{D}^n_i\to\mathbb{D}^n_j$ is given by all
\begin{align*}
  &(a,\mathbf{x}_j,x_{i,1},\mathbf{x}_i)\in\mathbb{C}^*{\mathop{\times}}\mathbb{D}_j^{n-1}{\mathop{\times}}\mathbb{D}_i{\mathop{\times}}\mathbb{D}_i^{n-1}
  \\&\text{such that } (ax_{i,1},\mathbf{x}_j)\in \mathbb{D}^n_j\,.
\end{align*}
The source, target and multiplication maps are:
\begin{align*}
    &s(a,\mathbf{x}_j,x_{i,1}\,,\mathbf{x}_i)=(x_{i,1}\,,\mathbf{x}_i)\in \mathbb{D}_i^{n}\,,
    \\&t(a,\mathbf{x}_j,x_{i,1}\,,\mathbf{x}_i)=(ax_{i,1},\mathbf{x}_j)\in\mathbb{D}_j^{n}\,,
    \\&(a',\mathbf{x}_k,ax_{i,1},\mathbf{x}_j)\cdot(a,\mathbf{x}_j,x_{i,1}\,,\mathbf{x}_i)
    \\&=(a'a,\mathbf{x}_k,x_{i,1}\,,\mathbf{x}_i)\in\mathbb{C}^*{\mathop{\times}}\mathbb{D}_k^{n-1}{\mathop{\times}}\mathbb{D}_i{\mathop{\times}}\mathbb{D}_i^{n-1}\,.
\end{align*}
The gluing maps on the groupoid are induced by the gluing maps on $X\,,$ that is, 
\begin{align*}
   &(a,\mathbf{x}_j,x_{i,1}\,,\mathbf{x}_i)\sim \Big(a\frac{g_{jl}(ax_{i,1})}{g_{ik}(x_{i,1})},\mathbf{x}_l,x_{k,1}\,,\mathbf{x}_k\Big) 
   \end{align*}
  if
\begin{align*}
&(x_{i,1},\mathbf{x}_i)\in \mathbb{D}_i^{n}\sim (x_{k,1},\mathbf{x}_k)\in \mathbb{D}_k^{n}\,, 
\\&(ax_{i,1},\mathbf{x}_j)\in \mathbb{D}_j^{n}\sim (x_{l,1},\mathbf{x}_l)\in \mathbb{D}_l^{n}\,.
\end{align*}
\end{proof}
\begin{proposition}The morphism 
\begin{align}\label{TX-module}
\mathrm{dlog}:\mathcal{O}^*(*D)\to \Omega_X^1(\log{} D)
\end{align}
endows $\mathbb{C}^*_X(*D)$ with the structure of a $T_X(-\log D)$-module, and this structure integrates to give $\mathbb{C}^*_X(*D)$ the structure of a $\mathrm{Pair}(X,D)$-module.
\end{proposition}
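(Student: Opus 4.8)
The plan is to prove the two assertions separately: first that $\mathrm{dlog}$ defines a $T_X(-\log D)$-module structure in the sense of \Cref{forms}, and then that this infinitesimal structure integrates to an action of $\mathrm{Pair}(X,D)$. First I would identify the data of the would-be module. As in the local case $\mathbb{C}^*_{\mathbb{C}}(*\{0\})$, the fibres of $\mathbb{C}^*_X(*D)$ have abelian Lie algebra $\mathbb{C}$, so the associated representation $\mathfrak{m}$ is the trivial line bundle with sheaf of sections $\mathcal{O}_X$, the exponential $\exp\colon\mathfrak{m}\to\mathbb{C}^*_X(*D)$ is the fibrewise exponential, and its kernel is the locally constant sheaf $2\pi i\,\mathbb{Z}$. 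The representation $L$ of $T_X(-\log D)$ on $\mathfrak{m}$ is the tautological one, $L_X(h)=\alpha(X)(h)$, where $\alpha\colon T_X(-\log D)\hookrightarrow TX$ is the anchor, and I set $\tilde{L}_X(s):=(\mathrm{dlog}\,s)(\alpha(X))$ for a local section $s$ of $\mathbb{C}^*_X(*D)$, i.e. a nonvanishing meromorphic function with zeros and poles only on $D$. The essential point making this well defined is the duality between logarithmic forms and log-tangent fields: $\mathrm{dlog}\,s=ds/s\in\Omega^1_X(\log D)$ and $\alpha(X)\in T_X(-\log D)$, and the natural contraction $\Omega^1_X(\log D)\otimes T_X(-\log D)\to\mathcal{O}_X$ is holomorphic, since the logarithmic poles of $\mathrm{dlog}\,s$ along $D$ are cancelled exactly by the tangency of $\alpha(X)$ to $D$. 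Hence $\tilde{L}_X(s)$ is a genuine section of $\mathfrak{m}$.

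I would then verify the three axioms. Condition $(1)$, $\tilde{L}_{fX}(s)=f\tilde{L}_X(s)$, is immediate from the $\mathcal{O}_X$-linearity of the contraction. Condition $(3)$, $\tilde{L}_X(\exp\sigma)=L_X(\sigma)$, follows because $\mathrm{dlog}(e^{\sigma})=d\sigma$, so that $(\mathrm{dlog}\,e^{\sigma})(\alpha(X))=d\sigma(\alpha(X))=\alpha(X)(\sigma)=L_X(\sigma)$. Condition $(2)$ is the only one with content: writing $\omega:=\mathrm{dlog}\,s$ and applying Cartan's formula $d\omega(\alpha(X),\alpha(Y))=\alpha(X)\big(\omega(\alpha(Y))\big)-\alpha(Y)\big(\omega(\alpha(X))\big)-\omega\big([\alpha(X),\alpha(Y)]\big)$, together with $d\omega=d(ds/s)=0$ and the fact that $\alpha$ is a Lie algebroid morphism, rearranges precisely to $\tilde{L}_{[X,Y]}(s)=(L_X\tilde{L}_Y-L_Y\tilde{L}_X)(s)$. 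Thus the closedness of the logarithmic form $\mathrm{dlog}\,s$ is exactly what encodes the flatness condition $(2)$.

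For the integration I would exhibit the $\mathrm{Pair}(X,D)$-action explicitly and check that it differentiates to the structure just built. On each polydisc $\mathbb{D}^n_i$ the restriction of $\mathbb{C}^*_X(*D)$ is the local model $\mathbb{D}^n_i\times\mathbb{C}^*\times\mathbb{Z}/\!\sim$ and the restriction of $\mathrm{Pair}(X,D)$ is the chart groupoid described above, so I define the local action, generalizing the case $\mathbb{C}^*_{\mathbb{C}}(*\{0\})$, by $(a,\mathbf{x}_j,x_{i,1},\mathbf{x}_i)\cdot(x_{i,1},\mathbf{x}_i,y,k)=(ax_{i,1},\mathbf{x}_j,a^{-k}y,k)$. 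I would check that this descends through the relations defining both spaces, is associative and unital, and then glues across charts: the module transition $(x_{i,1},\mathbf{x}_i,y,k)\sim(x_{j,1},\mathbf{x}_j,g_{ij}^{-k}y,k)$ and the groupoid transition involving the factor $a\,g_{jl}(ax_{i,1})/g_{ik}(x_{i,1})$ must be compatible with the $a^{-k}$-twist, which reduces to the cocycle identity for $\{g_{ij}\}$. Finally I would differentiate the resulting global action and confirm, exactly as in the computation already carried out for $\mathbb{C}^*_{\mathbb{C}}(*\{0\})$, that $d_{\text{CE}}\log$ recovers $\mathrm{dlog}$, so the integrated action differentiates to the module structure of the first part.

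The main obstacle is this last, global step, and specifically the gluing: one must check that the locally defined actions patch to a single $\mathrm{Pair}(X,D)$-action, which is the cocycle-compatibility computation above. A conceptual alternative worth recording is that, since $\exp\colon\mathfrak{m}\to\mathbb{C}^*_X(*D)$ is a surjective submersion, \Cref{g-module} integrates the $T_X(-\log D)$-module to a module for the source-simply-connected integration $\tilde{G}$ (locally $\mathbb{C}\ltimes\mathbb{C}$); the remaining task is then to descend the action along the covering $\tilde{G}\to\mathrm{Pair}(X,D)$, equivalently to show that the monodromy around the $\mathbb{C}^*$-loops in the source fibres acts trivially. This holds precisely because the periods of $\mathrm{dlog}$ are integral and therefore lie in $\ker(\exp)=2\pi i\,\mathbb{Z}$, so that $\mathrm{Pair}(X,D)$ is exactly the integration on which the action is defined. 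Either route completes the proof.
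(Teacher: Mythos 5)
Your primary argument is correct and is essentially the paper's own proof: the paper's proof consists precisely of writing down the same chart-wise action $(a,\mathbf{x}_j,x_{i,1},\mathbf{x}_i)\cdot(x_{i,1},\mathbf{x}_i,y,k)=(ax_{i,1},\mathbf{x}_j,a^{-k}y,k)$, asserting that it is a well-defined action by fiberwise isomorphisms, and asserting that it differentiates to the structure \eqref{TX-module}. Your verification of the three $\mathfrak{g}$-module axioms (via the duality of $\Omega^1_X(\log D)$ with $T_X(-\log D)$, closedness of $\mathrm{dlog}\,s$, and $\mathrm{dlog}\,e^{\sigma}=d\sigma$) and of descent through the equivalence relations and chart transitions only makes explicit what the paper leaves implicit.

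One correction, since you close by claiming that ``either route completes the proof'': the conceptual alternative does not work as stated. It starts from the premise that $\exp:\mathfrak{m}\to\mathbb{C}^*_X(*D)$ is a surjective submersion, so that the second part of Theorem~\ref{g-module} applies; but $\exp$ is not surjective here. Over a point of $D$ the fiber of $\mathbb{C}^*_X(*D)$ is $\mathbb{C}^*\times\mathbb{Z}$ (more generally $\mathbb{C}^*\times\mathbb{Z}^k$ in the normal crossing case), and the exponential of the fiber $\mathbb{C}$ of $\mathfrak{m}$ reaches only the identity component $\mathbb{C}^*\times\{0\}$. Consequently Theorem~\ref{g-module} cannot be invoked, and no argument that proceeds purely by differentiation-and-reintegration can produce the action on the components $k\neq 0$: a fiberwise group homomorphism $\mathbb{C}^*\times\mathbb{Z}\to\mathbb{C}^*\times\mathbb{Z}$ preserving components is not determined by its restriction to the identity component (its value on $(1,1)$ is free data). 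In the explicit construction this missing data is exactly the factor $a^{-k}$, and it is pinned down only by the geometry, e.g. by continuity from the dense locus $X\setminus D$, where the relation $(x,y,k)\sim(x,x^{k}y,0)$ moves every point into the identity component. A smaller additional issue is that $\mathrm{Pair}(X,D)$ is the terminal, not the source simply connected, integration of $T_X(-\log D)$, so even granting the first step you would still owe the descent argument you sketch. So the explicit route, which coincides with the paper's, is the proof; the alternative, as stated, is not.
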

\begin{proof}Define an action of $\mathrm{Pair}(X,D)$ on $\mathbb{C}^*_X(*D)$ as follows (the notation is as in the previous two propositions):
\begin{align*}
    &(a,\mathbf{x}_j,x_{i,1}\,,\mathbf{x}_i)\cdot(x_{i,1},\mathbf{x}_i,y,k)
    \\&=(ax_{i,1},\mathbf{x}_j,a^{-k}y,k)\,.
\end{align*}
This is a well-defined action by fiberwise isomorphisms, and it indeed differentiates to the $T_X(-\log{}D)$-module defined by
\eqref{TX-module}.
\end{proof}
Essentially the same construction can be done in the case that $D$ is a simple normal crossing divisor. In a neighborhood $U$ of a simple crossing divisor which is biholomorphic to a polydisk, we can choose coordinates $\mathbf{x}=(x_1,\ldots,x_n)$ on $\mathbb{D}^n$ such that
the simple normal crossing divisor is given by $x_1\cdots x_k=0\,.$ Then
\begin{align*}
   & \mathbb{C}^*_X(*D)\vert_{\mathbb{D}^n}=\mathbb{D}^n{\mathop{\times}}\mathbb{C}^*{\mathop{\times}}\mathbb{Z}^k/\sim\,,
    \,(\mathbf{x},y,\mathbf{i})\sim (\mathbf{x},x_{j_1}^{-m_{j_1}}\cdots x_{j_l}^{-m_{j_l}}y,\mathbf{i}+\mathbf{m})
   \\&\text{away from } x_{j_1}\cdots x_{j_l}=0\,, \text{ where } j_1,\ldots,j_l\in \{1,\ldots,k\}
   \\&\text{and where } m_{j_1},\ldots, m_{j_l}\text{  are the nonzero components of }\mathbf{m}\in\mathbb{Z}^k\,.
\end{align*}
Alternatively, it can locally be described as \begin{align*}
    \mathbb{C}^*_X(*{\{x_1=0\}})\otimes_{\mathbb{C}^*}\cdots\otimes_{\mathbb{C}^*}\mathbb{C}^*_X(*{\{x_k=0\}})\,,
    \end{align*}
where the $\mathbb{C}^*$-action is the one induced by the action of $\mathbb{C}^*_U$ on $\mathbb{C}^*_X(*D)\,,$ which comes from the embedding $\mathbb{C}^*_U\xhookrightarrow{}\mathbb{C}^*_X(*D)\,.$
\newline\newline To summarize this section, we have proven the following:
\begin{theorem}
Let $X$ be a complex manifold and let $D$ be a simple normal crossing divisor. There is a family of abelian groups 
\begin{align*}
    \mathbb{C}^*_X(*D)\overset{\pi}{\to}X
\end{align*}
whose sheaf of holomorphic sections is isomorphic to $\mathcal{O}_X^*(*D)\,.$
Furthermore, there is a canonical action of $\mathrm{Pair}(X,D)$ on $M$ making it into a $\mathrm{Pair}(X,D)$-module, and this module structure integrates the canonical $T_X(-\log{}D)$-module structure on $M$
induced by the morphism
\begin{align*}
\mathrm{dlog}:\mathcal{O}_X^*(*D)\to \Omega_X^1(\log{} D)\,.
\end{align*}
\end{theorem}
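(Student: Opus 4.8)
The plan is to assemble the theorem from the constructions already carried out for a smooth divisor in this section and to extend them to the simple normal crossings case by a gluing argument; the only genuinely new work is the compatibility of the local models on overlaps. First I would fix the local picture: around any point of $X$ choose a polydisc chart $\mathbb{D}^n$ with coordinates $\mathbf{x}=(x_1,\ldots,x_n)$ in which $D$ is cut out by $x_1\cdots x_k=0$, and form the local family of abelian groups
\[
\mathbb{D}^n{\mathop{\times}}\mathbb{C}^*{\mathop{\times}}\mathbb{Z}^k/\!\sim\,,
\]
with the identifications and fiberwise product displayed in the text preceding the statement. Exactly as in the smooth-divisor case treated above, one checks that this is a Hausdorff complex manifold, with holomorphic transition maps between the charts indexed by $\mathbf{i}\in\mathbb{Z}^k$, and that it is a family of abelian groups over $\mathbb{D}^n$. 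The local sheaf identification then sends a holomorphic section $(\mathbf{x},f(\mathbf{x}),\mathbf{i})$ to the meromorphic function $x_1^{i_1}\cdots x_k^{i_k}f(\mathbf{x})$, which is nonvanishing with poles or zeros only along $D$; this gives an isomorphism onto $\mathcal{O}^*(*D)$ over the chart.

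Next I would glue. On an overlap of two such charts the individual branches of $D$ are cut out by coordinate functions differing by nonvanishing holomorphic units, and I would glue the local families using these units exactly as in the smooth case, twisting the $\mathbb{C}^*$-factor of each $\mathbb{Z}^k$-sector by the appropriate monomial in the units determined by $\mathbf{i}$. The key verifications are that this gluing preserves the fiberwise group law and satisfies the cocycle condition on triple overlaps, so that the local models patch to a global family of abelian groups $\mathbb{C}^*_X(*D)\overset{\pi}{\to}X$. Because the local isomorphisms to $\mathcal{O}^*(*D)$ are defined intrinsically through the meromorphic function a section determines, rather than through any particular defining equation of $D$, they are compatible with the transition data and glue to a global isomorphism $\mathcal{O}(\mathbb{C}^*_X(*D))\cong\mathcal{O}_X^*(*D)$. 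Alternatively, presenting $\mathbb{C}^*_X(*D)$ locally as the $\mathbb{C}^*$-tensor product of the single-branch modules $\mathbb{C}^*_X(*\{x_j=0\})$ and gluing those makes the cocycle bookkeeping more transparent.

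Finally, for the $\mathrm{Pair}(X,D)$-module structure I would define the action locally by the formula used in the smooth case, generalized so that a morphism scaling the normal coordinate $x_j$ by $a_j\in\mathbb{C}^*$ multiplies the fiber coordinate $y$ in the sector $\mathbf{i}$ by $\prod_{j}a_j^{-i_j}$. This is a well-defined action by fiberwise group isomorphisms, and applying the differentiation computation of the smooth-divisor case to each branch shows it differentiates to the $T_X(-\log D)$-module structure induced by $\mathrm{dlog}:\mathcal{O}_X^*(*D)\to\Omega_X^1(\log D)$; that $\mathrm{Pair}(X,D)$ is the terminal integration of $T_X(-\log D)$ (together with Theorem~\ref{g-module}) then corroborates that this is the unique integration. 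The main obstacle I anticipate is precisely the overlap bookkeeping: verifying the cocycle condition for the multi-index gluing and checking that the sheaf isomorphism and the action are simultaneously intertwined by the transition functions, since the powers of the unit functions enter both the gluing of the $\mathbb{Z}^k$-sectors and the twisting of the action, and these two twists must cancel consistently.
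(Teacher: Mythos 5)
Your proposal follows essentially the same route as the paper: the paper constructs $\mathbb{C}^*_X(*D)$ for a smooth divisor by gluing the local models $\mathbb{D}^n_i{\mathop{\times}}\mathbb{C}^*{\mathop{\times}}\mathbb{Z}/\!\sim$ via the unit functions $g_{ij}$, defines the $\mathrm{Pair}(X,D)$-action chartwise and differentiates it to $\mathrm{dlog}$, and then states that ``essentially the same construction'' with $\mathbb{Z}^k$-sectors (equivalently, the fiberwise $\mathbb{C}^*$-tensor product of the single-branch modules) handles the simple normal crossings case. Your multi-index gluing, the intrinsic identification of sections with meromorphic functions $x_1^{i_1}\cdots x_k^{i_k}f(\mathbf{x})$, and the action twisted by $\prod_j a_j^{-i_j}$ are exactly the details the paper leaves implicit, and they check out (the per-branch cocycle and compatibility verifications reduce to the smooth-divisor computations already done).
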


\section{Integration by Prequantization}
In this section we describe an alternative approach to integration of classes in Lie algebroid cohomology that may sometimes be used, and which doesn't directly involve the van Est map (more accurately, this method could be combined with the previous method). We call it integration by prequantization because in the case that the Lie algebroid is the tangent bundle and one is trying to integrate a $2$-form $\omega$, this method uses the line bundle whose first Chern class is the cohomology class of $\omega\,.$ 
We will first describe this method and then give some examples. 
\\\\Suppose we have a $G$-module $N$ and we are interested in integrating a class in the cohomology of the truncated complex, $\alpha\in H^*_0(\mathfrak{g},N)\,.$ 
Now suppose we have a $G$-module $M$ such that $\mathfrak{m}=\mathfrak{n}\,,$ and such that there is a map $N\to M$ of $G$-modules which differentiates to the identity map on $\mathfrak{n}\,.$ 
In this case
the morphism 
\[
\mathcal{O}(M)\xrightarrow{\text{d}_{\text{CE}}\text{log}}\mathcal{C}^1(\mathfrak{g},M)\cong \mathcal{C}^1(\mathfrak{g},N)
\]
induces a morphism
\begin{align*}
H^*(G^0,\mathcal{O}(M))\to H_0^*(\mathfrak{g},N)\,.    
\end{align*}
Then one can try lift $\alpha$ to a class $\tilde{\alpha}\in H^*(G^0,\mathcal{O}(M))\,.$ If a lift can be found, then one can attempt to integrate $\alpha$ to a class in $H^*_0(G,N)$ by showing that $\delta^*\tilde{\alpha}$ is in the image of the map $H^*_0(G,N)\to H^*_0(G,M)\,.$ If this succeeds then this class in $H^*_0(G,N)$ integrates $\alpha\,.$ We can summarize this method with the following proposition:
\begin{proposition} Let $G$ be a Lie groupoid, and let $N,M$ be $G$-modules with the same underlying Lie algebroids $\mathfrak{n}\,.$ Suppose further that there is a map of $G$-modules $f:N\to M$ which differentiates to the identity on $\mathfrak{n}$ (in particular this means that the Lie algebroids of $N$ and $M$ are the same as $G$-representations). The following diagram is commutative:
\[
    \begin{tikzcd}[row sep=3em, column sep = 3em]
 & H^*_0(G,M) \arrow[bend left=60,"VE_0"]{dd}
 \\ H^*_0(G,N)\arrow{ur}{f}\arrow{d}{VE_0}& H^*(G^0,\mathcal{O}(M))\arrow{u}{\delta^*}\arrow{d}{\mathrm{d}_{\mathrm{CE}}\mathrm{log}} 
 \\H^*_0(\mathfrak{g},N)\arrow{r}{\cong}& H^*_0(\mathfrak{g},M)
\end{tikzcd}
\]
\end{proposition}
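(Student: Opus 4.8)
The proposition is the assertion that a single pentagon commutes, and I would prove it by splitting it into its two constituent cells. The right-hand cell is the square whose vertices are $H^*(G^0,\mathcal{O}(M))$, $H^*_0(G,M)$ and $H^*_0(\mathfrak{g},M)$ (the latter appearing twice), with edges $\delta^*$, $VE_0$, $d_{\text{CE}}\log$ and the identity: its commutativity is exactly the leftmost square of Lemma~\ref{commute} applied to the $G$-module $M$, so this half requires no new argument. Everything therefore reduces to the left-hand cell, the naturality identity
\[
VE_0^M\circ f = {\cong}\circ VE_0^N\,,
\]
where $VE_0^N$ and $VE_0^M$ denote the truncated van Est maps for $N$ and $M$, and $\cong$ is the canonical identification $H^*_0(\mathfrak{g},N)\xrightarrow{\sim}H^*_0(\mathfrak{g},M)$ induced by the equality $\mathfrak{n}=\mathfrak{m}$ of the underlying representations.

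To prove this naturality I would trace the construction of $VE_0$ from Theorem~\ref{canonical edit}. Recall that $VE_0$ is the composite of the pullback $\kappa^{-1}$ along $\kappa\colon\mathbf{E}^\bullet G\to\mathbf{B}^\bullet G$ with the comparison isomorphism built from the resolution
\[
\widehat{\kappa^{-1}\mathcal{O}(M)}_\bullet\xhookrightarrow{}\widehat{\mathcal{O}(\kappa^*M)}_\bullet\xrightarrow{\text{dlog}}\Omega_{\kappa\,\bullet}^1(\kappa^*M)\xrightarrow{\text{d}}\Omega_{\kappa\,\bullet}^2(\kappa^*M)\to\cdots\,.
\]
The module morphism $f\colon N\to M$ induces, level by level, a morphism of sheaves $\mathcal{O}(N)^0_\bullet\to\mathcal{O}(M)^0_\bullet$ on $\mathbf{B}^\bullet G$, and hence of their pullbacks and normalizations on $\mathbf{E}^\bullet G$; since $\kappa^{-1}$ and the Godement construction are functorial, the first leg of $VE_0$ is natural in the module. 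For the second leg I would observe that, by Theorem~\ref{g-module}, $f$ differentiates to a morphism of $\mathfrak{g}$-modules $df\colon\mathfrak{n}\to\mathfrak{m}$, and the defining compatibility $\tilde L_X(f\circ s)=df\circ\tilde L_X(s)$ of such a morphism is exactly what is needed for $f$ (in degree $0$) together with $df$ (in positive degrees) to constitute a morphism of the two resolutions above, commuting with $\text{dlog}$ and with the foliated differential $\text{d}$. Consequently $f$ induces a morphism of the total complexes computing the two cohomologies, and $VE_0$ is natural with respect to it.

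Finally I would invoke the hypothesis that $df$ is the identity. Under the identification $\mathfrak{n}=\mathfrak{m}$ the resolutions for $N$ and $M$ agree in every positive degree, so the induced morphism between the truncated complexes $0\to\Omega_{\kappa\,\bullet}^1\to\Omega_{\kappa\,\bullet}^2\to\cdots$ is literally the identity. By the isomorphism $H^*_0(\mathfrak{g},-)\cong H^{*+1}(\mathbf{E}^\bullet G,\,0\to\Omega_{\kappa\,\bullet}^1(-)\to\cdots)$ of Theorem~\ref{canonical edit}, the map on $H^*_0(\mathfrak{g},-)$ induced by $f$ is precisely the canonical identification $\cong$; combined with the naturality established above this yields $VE_0^M\circ f={\cong}\circ VE_0^N$, completing the left-hand cell and hence the proposition.

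The main obstacle is the bookkeeping in the middle step: one must verify that the degree-$0$ piece $f$ and the positive-degree pieces $df$ assemble into a genuine morphism of the resolving complexes compatibly with $\text{dlog}$, and that this morphism descends correctly through the Godement double complex and the normalization $\widehat{(\,\cdot\,)}$ used in the truncated van Est map. This is where the hypothesis that $f$ is a morphism of $G$-modules (and not merely of the underlying families of abelian groups) is essential, since it is precisely the $\text{dlog}$-compatibility that would fail for a general fibrewise homomorphism.
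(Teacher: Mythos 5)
Your proposal is correct, and in fact it supplies an argument that the paper itself omits: the proposition is stated there as a summary of the prequantization method and is given no proof at all. Your decomposition is exactly the right one. The right-hand cell is precisely the leftmost square of Lemma~\ref{commute} applied to $M$ (that lemma is also stated without proof, but it is a legitimate citation within the paper), and the left-hand cell is the naturality of $VE_0$ in the coefficient module, which follows, as you argue, from the level-by-level functoriality of every ingredient in Theorem~\ref{canonical edit} ($\kappa^{-1}$, the normalization $\widehat{(\,\cdot\,)}$, the Godement resolutions, and the foliated resolutions), together with the observation that $df=\mathrm{id}$ forces the induced map on the truncated complexes $0\to\Omega^1_{\kappa\,\bullet}\to\cdots$ to be the identity, so that the induced map on $H^*_0(\mathfrak{g},-)$ is the canonical identification.

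One small imprecision in your closing remark: on $\mathbf{E}^\bullet G$ the compatibility of a fibrewise homomorphism with the foliated $\mathrm{dlog}$ is automatic (it is the chain rule for the Maurer--Cartan form under a homomorphism of Lie groups, applied fibre by fibre), so that is not where $G$-equivariance is essential. What genuinely requires $f$ to be a morphism of $G$-modules is (i) that $f$ induce a morphism of sheaves on the simplicial manifold $\mathbf{B}^\bullet G$ at all, since the face maps involve the $G$-action, and (ii) that the induced maps preserve invariant sections and hence descend to the Chevalley--Eilenberg complexes, where $d_{\mathrm{CE}}\log$ is defined via $\tilde{L}_X$ and commuting with it is exactly the $\mathfrak{g}$-module morphism condition of Theorem~\ref{g-module}. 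This does not affect the validity of your argument, only the attribution of where the hypothesis is used.
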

\theoremstyle{definition}\begin{exmp}
Let $X$ be a manifold and let $\omega$ be a closed $2$-form which has integral periods. Then there is a class $g\in H^1(X,\mathcal{O}^*)$ which lifts $\omega\,,$ ie. a principal $\mathbb{C}^*$-bundle. We then have that $\delta^*g\in H^1_0(\textrm{Pair}(X),\mathbb{C}^*_X)$ integrates $\omega\,.$ 
\end{exmp}
\theoremstyle{definition}\begin{exmp}
Consider the trivial $(\mathbb{C}^*\ltimes\mathbb{C}\rightrightarrows \mathbb{C})$-module $\mathbb{C}^*_{\mathbb{C}}\,,$ and let $\mathfrak{g}$ be its Lie algebroid. Consider the class in $H^0_0(\mathfrak{g},\mathbb{C}^*_{\mathbb{C}})$ given by $\frac{dz}{z}\,.$ This class is not in the image of 
\[
\mathcal{O}^*_{\mathbb{C}}\xrightarrow{\mathrm{d}\mathrm{log}}\mathcal{C}^1(\mathfrak{g},\mathbb{C}^*_{\mathbb{C}})\,.
\]
However, $\mathbb{C}^*_{\mathbb{C}}\xhookrightarrow{} \mathbb{C}^*_{\mathbb{C}}(*\{0\})$ $($where $\mathbb{C}^*_{\mathbb{C}}(*\{0\})$ is as in the previous section$)\,,$ and they have the same Lie algebroids, and in addition the class $\frac{dz}{z}$ is in the image of 
\[
\mathcal{O}^*_{\mathbb{C}}(*D)\xrightarrow{\mathrm{d}_{CE}\mathrm{log}}\mathcal{C}^1(\mathfrak{g},\mathbb{C}^*_{\mathbb{C}})\,,
\]
namely $\mathrm{d}\mathrm{log}\,z=\frac{dz}{z}\,.$ We then have that $\delta^*z\,(a,z)=a\,,$ which is $\mathbb{C}^*$-valued. Hence the morphism $(a,z)\mapsto a$ integrates $\frac{dz}{z}\,.$
\end{exmp}
To get examples involving the integration of extensions, we have the following proposition:
\begin{proposition}
Let $X$ be a complex manifold with smooth divisor $D\,,$ and let $\Pi_1(X,D)\rightrightarrows X$ be the source simply connected integration of $T_X(-\log{D})\,.$ Then the subgroup of classes in $H^1_0(T_X(-\log{D}),\mathbb{C}_{X})$ which are integral on $X\backslash D$ embeds into $H^1_0(\Pi_1(X,D),\mathbb{C}^*_{X})\,.$
\end{proposition}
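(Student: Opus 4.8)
The plan is to reduce the statement to the main theorem, \Cref{van Est image}, applied to $G=\Pi_1(X,D)$ and the module $M=\mathbb{C}^*_X$, whose Lie algebroid is $\mathbb{C}_X$ and which fits into the exponential sequence $0\to Z\to\mathbb{C}_X\xrightarrow{\exp}\mathbb{C}^*_X$ with $Z=\ker\exp$ the locally constant kernel. First I would record a purely formal point: since $\mathbb{C}_X$ (viewed as a representation) and $\mathbb{C}^*_X$ (viewed as a module) have the same underlying Lie algebroid $\mathbb{C}_X$, their Chevalley--Eilenberg complexes $\mathcal{C}^\bullet(T_X(-\log D),-)$ agree in every degree $\ge 1$ and differ only in degree $0$. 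The truncated complexes are therefore literally identical, so $\exp$ induces an isomorphism
\[
H^1_0(T_X(-\log D),\mathbb{C}_X)\xrightarrow{\;\cong\;}H^1_0(T_X(-\log D),\mathbb{C}^*_X)\,.
\]
Because $\Pi_1(X,D)$ is source simply connected we are in the case $n=1$ of \Cref{van Est image}, so $VE_0\colon H^1_0(\Pi_1(X,D),\mathbb{C}^*_X)\to H^1_0(T_X(-\log D),\mathbb{C}^*_X)$ is injective, and its image consists of the classes $[\omega]$ of closed Lie algebroid $2$-forms for which $\int_{S^2_x}\omega\in Z$ for every $x\in X$ and every $2$-sphere $S^2_x$ in the source fiber over $x$.

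It therefore suffices to prove that a class integral on $X\setminus D$ satisfies this period condition, for then it lies in $\mathrm{im}(VE_0)$ and $VE_0^{-1}\circ\exp$ furnishes the desired embedding, injective because both factors are. For $x\in X\setminus D$ this is immediate: the open orbit of $T_X(-\log D)$ is $X\setminus D$, and the source fiber $s^{-1}(x)$ is its source-simply-connected cover, so $t\colon s^{-1}(x)\to X\setminus D$ induces an isomorphism on $\pi_2$. Under the translation of \Cref{translation} one has $\int_{S^2_x}t^*\omega=\int_{t(S^2_x)}\omega$, and as $[S^2_x]$ ranges over $\pi_2(s^{-1}(x))$ the cycle $t_*[S^2_x]$ ranges over all of $\pi_2(X\setminus D)$. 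Thus the condition at points of $X\setminus D$ is exactly integrality on $X\setminus D$.

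The hard part is to propagate the period condition across the divisor, i.e. to the source fibers over $x\in D$. The key point is that, after translation (\Cref{translation}), $\omega$ becomes a genuinely \emph{smooth} foliated $2$-form along the source fibers: the logarithmic poles of $\omega$ are absorbed by the anchor $T_X(-\log D)\to T_X$ (exactly as $\tfrac{dx\wedge dy}{xy}$ right-translates to the smooth form $da\wedge db$ in \Cref{heisenberg action}), so it is nonsingular on each $s^{-1}(x)$. Consequently the fiberwise period $x\mapsto\int_{S^2_x}t^*\omega$ is continuous in $x$ for any continuous family of $2$-spheres $S^2_x\subset s^{-1}(x)$, and since the source map is a submersion such families extend across $D$. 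On the dense open set $X\setminus D$ these periods lie in the discrete group $Z$, hence are locally constant there, and by continuity they lie in $Z$ over all of $X$, including over $D$. This verifies the hypothesis of \Cref{van Est image} and completes the argument. The one delicate step I would treat carefully is precisely this continuous extension of the $2$-spheres through $D$, together with the identification of $\pi_2$ of the degenerate source fibers over $D$ (where the isotropy is the contractible group integrating $\ker\alpha$); everything else is formal once the main theorem and the coincidence of truncated complexes are in hand.
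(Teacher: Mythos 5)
Your strategy is genuinely different from the paper's, but it has a real gap, and it sits exactly where you lean on the sphere--period criterion. The proposition lives in the holomorphic category (this is forced by the phrase ``integral on $X\setminus D$'', by the reference~\cite{luk}, and by the appearance of $\mathcal{O}^*_X(*D)$ in this section), where $H^1_0(T_X(-\log D),\mathbb{C}_X)$ is the \emph{hyper}cohomology of the complex of sheaves $0\to\Omega^1_X(\log D)\to\Omega^2_X(\log D)\to\cdots\,.$ These sheaves are not acyclic, so a class is in general given by \v{C}ech data $(\eta_{ij},\omega_i)$ and need not admit a global closed representative. The period condition $\int_{S^2_x}\omega\in Z$ is only the ``in particular'' clause of \Cref{van Est image}, valid for classes of global closed Lie algebroid forms; for general classes the image of $VE_0$ is characterized by triviality of the translated class $t^*_x\alpha$ (\Cref{translation}) on each source fiber. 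This distinction is not pedantic here: by the very result of~\cite{luk} your argument presupposes, the integral classes are exactly those of meromorphic line bundles, i.e.\ classes of \v{C}ech type $\{\mathrm{dlog}\,g_{ij}\}\,,$ and these typically have no global closed representative (already in the limiting case $D=\emptyset\,,$ $X=\mathbb{P}^1\,,$ the integral classes form $\mathbb{Z}\subset H^1(\mathbb{P}^1,\Omega^1)$ while the group of global closed $2$-forms is $0$). Consequently both halves of your verification---the $\pi_2$ computation over $X\setminus D$ and, more seriously, the continuity-of-periods argument across $D\,,$ which is intrinsically about numerical periods of a globally defined fiberwise form---do not apply to the classes the proposition is actually about. (In the smooth category, where the Chevalley--Eilenberg sheaves are fine and every class is a global form, your argument is essentially sound; but there ``integral on $X\setminus D$'' and the meromorphic setting lose their meaning.)

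For comparison, the paper's proof never touches the period condition over $D$: it takes a meromorphic line bundle $\alpha\in H^1(X,\mathcal{O}^*_X(*D))$ prequantizing the integral class (this is where~\cite{luk} enters), forms $\delta^*\alpha\in H^1_0(\Pi_1(X,D),\mathbb{C}^*_X(*D))\,,$ and uses the short exact sequence of $\Pi_1(X,D)$-modules $0\to\mathbb{C}^*_X\to\mathbb{C}^*_X(*D)\to\mathrm{\acute{e}t}(\iota_*\mathcal{O}(\mathbb{Z}_D))\to 0$ together with the vanishing of $H^0_0$ of the quotient to lift $\delta^*\alpha$ uniquely to $H^1_0(\Pi_1(X,D),\mathbb{C}^*_X)\,.$ If you want to salvage your route, you must verify the translated-class criterion fiberwise: over $X\setminus D$ this follows from Hurewicz on the universal cover together with the exponential sequence, but over points of $D$ you still need to show that integrality on $X\setminus D$ controls the restriction of the class to the degenerate source fibers; that limiting/residue step is exactly what the paper's exact-sequence argument packages, and it is the step your continuity sketch cannot reach once the class is no longer a global form.
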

\begin{proof}
Let $\omega\in H^1_0(T_X(-\log{D}),\mathbb{C}_X)$ be a class which is prequantizable, which means that it is in the image of the map 
\[
H^1(X,\mathcal{O}^*_X(*D))\to H^1_0(T_X(-\log{D}),\mathbb{C}_X)\,.
\]
It is proved in~\cite{luk} that this is equivalent to $\omega$ being integral on $X\backslash D\,.$ 
\\\\There is a short exact sequence of $\Pi_1(X,D)$-modules
\begin{align*}
    0\to \mathbb{C}^*_X\overset{\iota}{\to} \mathbb{C}^*_X(*D)\overset{\pi}{\to} \mathrm{\acute{e}t}(\iota_*\mathcal{O}(\mathbb{Z}_D))\to 0
\end{align*}
$($where $\iota:D\to X$ is the inclusion and \'{e}t means the \'{e}tal\'{e} space, which may be non-Hausdorff, but this is fine $)\,.$ From this we get the long exact sequence 
\begin{align*}
&H^0_0(\Pi_1(X,D),\mathrm{\acute{e}t}(\iota_*\mathcal{O}(\mathbb{Z}_D)))\to H^1_0(\Pi_1(X,D),\mathbb{C}^*_X)\to H^1_0(\Pi_1(X,D),\mathbb{C}^*_X(*D))
\\&\to H^1_0(\Pi_1(X,D),\mathrm{\acute{e}t}(\iota_*\mathcal{O}(\mathbb{Z}_D)))\,.    
\end{align*}
Now $H^0_0(\Pi_1(X,D),\mathrm{\acute{e}t}(\iota_*\mathcal{O}(\mathbb{Z}_D)))=0$ since a morphism of groupoids must be $0$ on the identity bisection, so since the fibers of $\mathrm{\acute{e}t}(\iota_*\mathcal{O}(\mathbb{Z}_D))$ are discrete and the source fibers $\Pi_1(X,D)$ are connected, any such morphism must be identically $0\,.$ So we get the long exact sequence
\begin{align*}
  0  \to H^1_0(\Pi_1(X,D),\mathbb{C}^*_X)\to H^1_0(\Pi_1(X,D),\mathbb{C}^*_X(*D))\to H^1_0(\Pi_1(X,D),\mathrm{\acute{e}t}(\iota_*\mathcal{O}(\mathbb{Z}_D)))\,.
\end{align*}
If we let $\alpha\in H^1(X,\mathbb{C}_X^*(*D))\,,$ then $t^*\alpha-s^*\alpha\in H^1_0(\Pi_1(X,D),\mathbb{C}_X^*(*D))\,,$ and 
\begin{align*}
\pi(t^*\alpha-s^*\alpha)=t^*\pi(\alpha)-s^*\pi(\alpha)=0\,,
\end{align*}
where the latter equality follows from the fact that $\pi(\alpha)$ is a module for the full subgroupoid over $D\,,$ which follows from the following: there is a morphism from the full subgroupoid over $D$ to $\Pi_1(D)\,,$ and $\pi(\alpha)$ is a module for $\Pi_1(D)$ since $\pi(\alpha)$ is a local system. 
\\\\Hence there is a unique lift of $\alpha$ to $H^1_0(\Pi_1(X,D),\mathbb{C}^*_X)\,.$ Hence all of the prequantizable classes in $H^2(T_X(-\log{D}),\mathbb{C}_X)$ integrate to classes in $H^1_0(\Pi_1(X,D),\mathbb{C}^*_X)\,,$
\end{proof}
What this proposition means is that any closed logarithmic $2$-form on a complex manifold $X$ with smooth divisor $D\,,$ which has integral periods on $X\backslash D\,,$ defines a $\mathbb{C}^*$-groupoid extension of $\Pi_1(X,D)\,.$
\theoremstyle{definition}\begin{exmp}We can specialize to the case $X=\mathbb{P}^2$ and where $D$ is a smooth projective curve of degree $\ge 3$ and genus $g$ in $\mathbb{P}^2\,.$ Then as proved in~\cite{luk}, the prequantizable subgroup of $H^1_0(T_{\mathbb{P}^2}(-\log{D}),\mathbb{C}^*_X)$ is isomorphic to $\mathbb{Z}^{2g}\,.$ Hence $\mathbb{Z}^{2g}\xhookrightarrow{}H^1_0(\Pi_1(\mathbb{P}^2,D),\mathbb{C}^*_{\mathbb{P}^2})\,.$
\end{exmp}
\appendix
\section{Appendix}
\subsection{Derived Functor Properties}\label{derived functor}
In this section we discuss some vanishing results for the derived functors of locally fibered maps. These results are particularly useful
when using the Leray spectral sequence.
\begin{definition}
A map $f:X\to Y$ between topological spaces is called locally fibered if for all points $x\in X$ there exists open sets $U\ni x$ and $V\ni f(x)\,,$ a topological space $F$ and a homeomorphism $\phi:U\to F{\mathop{\times}} V$ such that the following diagram commutes:
\[
\begin{tikzcd}
U \arrow{r}{\phi}\arrow{dr}{f} & F{\mathop{\times}} V\arrow{d}{p_2} \\
& V
\end{tikzcd}
\]
$\blacksquare$\end{definition}
\theoremstyle{definition}\begin{exmp}
If $X\,,Y$ are smooth manifolds and $f:X\to Y$ is a surjective submersion, then $f$ is locally fibered.
\end{exmp}
\begin{proposition}[The Canonical Resolution]\label{canonical resolution}
Let $M\to X$ be a family of abelian groups. Then there is a canonical acyclic resolution of $\mathcal{O}(M)$ that differs from the Godement resolution. It is given by the following: for a sheaf $\mathcal{S}\,,$ let $\mathbb{G}^0(\mathcal{S})$ be the first sheaf in the Godement resolution of $\mathcal{S}\,,$ ie. the sheaf of germs of $\mathcal{S}\,.$ Let $\mathcal{G}^0(M)$ be the sheaf of all sections of $M$ (including discontinuous ones). Let 
\begin{align*}
    \mathcal{G}^{n+1}(M)=\mathbb{G}^0(\textrm{coker}[\mathcal{G}^{n-1}(M)\to \mathcal{G}^n(M)])
\end{align*}
for $n\ge 0\,,$ where $\mathcal{G}^{-1}(M):=\mathcal{O}(M)\,.$ We then have the following acylic resolution of $\mathcal{O}(M):$
\begin{align*}
    0\to\mathcal{O}(M)\to \mathcal{G}^{0}(M)\to \mathcal{G}^1(M)\to\cdots\,.
\end{align*}
\end{proposition}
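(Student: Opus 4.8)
The plan is to recognize the complex as a variant of the Godement resolution in which only the degree-zero term has been modified, and then to reduce the statement to two standard facts: that every sheaf appearing in the complex is flasque (hence acyclic), and that the complex is exact. The only nonstandard object is $\mathcal{G}^0(M)$; for $n\ge 1$ the sheaf $\mathcal{G}^n(M)=\mathbb{G}^0(\cdots)$ is an ordinary Godement sheaf of germs of the appropriate cokernel, so the entire construction is nothing but the usual Godement resolution of the cokernel of $\mathcal{O}(M)\hookrightarrow\mathcal{G}^0(M)$, glued onto that one short exact sequence.

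First I would establish acyclicity. For an open set $U\subseteq X$ one has $\mathcal{G}^0(M)(U)=\prod_{x\in U}M_x$, the group of arbitrary (not necessarily continuous) sections of $\pi\colon M\to X$ over $U$, equipped with the fiberwise group structure; the restriction map to a smaller open set is the corresponding projection of products, which is surjective because each fiber $M_x$ is nonempty (it contains the identity element of the abelian group structure). Hence $\mathcal{G}^0(M)$ is flasque. The same computation shows every Godement sheaf $\mathbb{G}^0(\mathcal{S})(U)=\prod_{x\in U}\mathcal{S}_x$ is flasque, so each $\mathcal{G}^n(M)$ with $n\ge 1$ is flasque as well. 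Since flasque sheaves are acyclic on an arbitrary topological space (no paracompactness hypothesis is needed), all of the $\mathcal{G}^n(M)$ are acyclic, which is exactly the property required to use this complex as a resolution computing cohomology.

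Next I would prove exactness. The inclusion $\mathcal{O}(M)\hookrightarrow\mathcal{G}^0(M)$ of continuous sections into all sections is injective, so setting $\mathcal{C}^0:=\mathrm{coker}[\mathcal{O}(M)\to\mathcal{G}^0(M)]$ yields a short exact sequence $0\to\mathcal{O}(M)\to\mathcal{G}^0(M)\to\mathcal{C}^0\to 0$. By the defining recursion the differential $\mathcal{G}^0(M)\to\mathcal{G}^1(M)$ factors as $\mathcal{G}^0(M)\twoheadrightarrow\mathcal{C}^0\hookrightarrow\mathbb{G}^0(\mathcal{C}^0)=\mathcal{G}^1(M)$, and consequently $\mathrm{coker}[\mathcal{G}^0(M)\to\mathcal{G}^1(M)]$ coincides with the first Godement cokernel of $\mathcal{C}^0$; iterating, the truncated complex $\mathcal{G}^1(M)\to\mathcal{G}^2(M)\to\cdots$ is precisely the canonical Godement resolution of $\mathcal{C}^0$. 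Exactness of the full complex then follows by splicing the short exact sequence above with the Godement resolution of $\mathcal{C}^0$, the latter being exact because the Godement construction is exact on stalks.

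The computations are routine; the only point that genuinely requires care is conceptual rather than technical, namely that $\mathcal{G}^0(M)$ is built from the full fibers $M_x$ of the space $M$ (all sections) rather than from the germs of the sheaf $\mathcal{O}(M)$. This distinction is the entire reason for introducing the resolution: when $M\to X$ is not locally trivial — as in the modules $\mathbb{C}^*_X(*D)$ studied later — the evaluation $\mathcal{O}(M)_x\to M_x$ need not be an isomorphism, and it is the flasque sheaf of genuine sections of $M$ that interacts well with pushforward along locally fibered maps. Canonicity is immediate, since no choices enter the construction and every step is functorial in $M$.
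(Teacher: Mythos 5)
The paper states this proposition without proof (it appears in the appendix as a bare assertion), so there is no argument of the paper's own to compare against; your proposal supplies the missing justification, and it is correct. Your two steps are exactly what the recursive definition is built to deliver: flasqueness of $\mathcal{G}^0(M)$ and of all Godement sheaves gives acyclicity with no paracompactness hypothesis, and splicing the short exact sequence $0\to\mathcal{O}(M)\to\mathcal{G}^0(M)\to\mathcal{C}^0\to 0$ with the Godement resolution of $\mathcal{C}^0=\mathrm{coker}[\mathcal{O}(M)\to\mathcal{G}^0(M)]$ (which your cokernel identification shows is precisely the tail $\mathcal{G}^1(M)\to\mathcal{G}^2(M)\to\cdots$) gives exactness. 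The only steps left implicit — injectivity of $\mathcal{O}(M)\to\mathcal{G}^0(M)$ and of $\mathcal{C}^0\hookrightarrow\mathbb{G}^0(\mathcal{C}^0)$, the latter by the separation axiom for sheaves — are standard, and you correctly identify the essential point that distinguishes this from the Godement resolution of $\mathcal{O}(M)$: degree zero uses the fibers $M_x$ rather than the stalks $\mathcal{O}(M)_x$, which is what makes the resolution interact well with restriction to fibers in Corollary~\ref{restriction is zero}.
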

\begin{definition}[see \cite{Bernstein}]\label{def n acyclic}
A continuous map $f:X\to Y$ is called $n$-acyclic if it satisfies the following conditions:
\begin{enumerate}
    \item For any sheaf $\mathcal{S}$ on $Y$ the adjunction morphism $\mathcal{S}\mapsto R^0f_*(f^{-1}\mathcal{S})$ is an isomorphism and 
    $R^if_*(f^{-1}\mathcal{S})=0$ for all $i=1\,,\ldots\,,n\,.$
    \item For any base change $\tilde{Y}\to Y$ the induced map $f:X{\mathop{\times}}_Y \tilde{Y}\to\tilde{Y}$ satisfies property 1.
\end{enumerate}
$\blacksquare$\end{definition}
\begin{theorem}[see \cite{Bernstein}, criterion 1.9.4]\label{n acyclic}
Let $f:X\to Y$ be a locally fibered map. Suppose that all fibers of $f$ are $n$-acyclic (ie. $n$-connected). Then $f$ is $n$-acyclic. 
\end{theorem}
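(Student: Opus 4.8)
The plan is to verify the two conditions of Definition~\ref{def n acyclic} directly, reducing everything to a computation of the stalks of the higher direct images $R^if_*(f^{-1}\mathcal S)$ in terms of fiberwise cohomology. First I would observe that the class of locally fibered maps is stable under base change: if $U\cong F\times V$ is a trivializing chart for $f$ and $g:\tilde Y\to Y$ is arbitrary, then the pullback of $U$ is $\cong F\times g^{-1}(V)$, a trivializing chart for $\tilde f:X\times_Y\tilde Y\to\tilde Y$; moreover the fiber of $\tilde f$ over $\tilde y$ is canonically $f^{-1}(g(\tilde y))$, hence still $n$-connected. Thus condition~(2) follows from condition~(1) applied to every base change, and it suffices to prove condition~(1): that $\mathcal S\to R^0f_*(f^{-1}\mathcal S)$ is an isomorphism and $R^if_*(f^{-1}\mathcal S)=0$ for $1\le i\le n$, for an arbitrary sheaf $\mathcal S$ on $Y$.

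Since these are assertions about sheaves on $Y$, they may be checked on stalks. Recalling that $R^if_*\mathcal F$ is the sheafification of $V\mapsto H^i(f^{-1}(V),\mathcal F)$, the stalk at $y\in Y$ is
\[
(R^if_*f^{-1}\mathcal S)_y\;\cong\;\varinjlim_{V\ni y}H^i\big(f^{-1}(V),\,f^{-1}\mathcal S\big).
\]
The heart of the argument is to identify this colimit with the cohomology of the single fiber $F_y:=f^{-1}(y)$ with the restricted coefficients,
\[
\varinjlim_{V\ni y}H^i\big(f^{-1}(V),\,f^{-1}\mathcal S\big)\;\cong\;H^i\big(F_y,\,(f^{-1}\mathcal S)\vert_{F_y}\big).
\]
Here the local product structure enters: since $f\vert_{F_y}$ is the constant map to $y$, the restriction $(f^{-1}\mathcal S)\vert_{F_y}$ is the constant sheaf $\underline{\mathcal S_y}$ with value the stalk $\mathcal S_y$; and on a trivializing chart $f^{-1}(V)\cong F\times V$ the sheaf $f^{-1}\mathcal S$ becomes the pullback $p_2^{-1}(\mathcal S\vert_V)$, whose cohomology over $F\times V$ degenerates, in the colimit over shrinking $V$, to the cohomology of $F$ with constant coefficients $\mathcal S_y$.

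Finally I would feed in the connectivity hypothesis. Because the fibers are $n$-connected (and, being fibers of a locally fibered map in the setting of interest, locally contractible), the Hurewicz theorem together with universal coefficients gives $H^0(F_y,\underline{\mathcal S_y})=\mathcal S_y$ and $H^i(F_y,\underline{\mathcal S_y})=0$ for $1\le i\le n$, for every coefficient group. Tracing the degree-zero identification shows that the unit $\mathcal S\to R^0f_*f^{-1}\mathcal S$ is, stalkwise, the identity $\mathcal S_y\to\mathcal S_y$ (pullback of a section to the constant section along the fiber), hence an isomorphism, while $R^if_*f^{-1}\mathcal S=0$ for $1\le i\le n$. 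Combined with the base-change reduction of the first paragraph, this yields that $f$ is $n$-acyclic.

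The main obstacle is the middle step, the base-change/continuity isomorphism, and specifically the passage from a single product chart to the entire fiber $F_y$: local fiberedness is a condition at each point of $X$, so a priori $F_y$ is only covered by charts $U_x\cong F_x'\times V_x$ whose base neighborhoods $V_x$ need not admit a common refinement when $F_y$ is noncompact. I would handle this by covering $F_y$ by such product charts and running the Mayer--Vietoris (\v{C}ech-to-derived-functor) spectral sequence for the cover, reducing the computation to products $F'\times V$ where the connectivity of the pieces controls the fiber direction; this assembling is precisely the content of criterion~1.9.4 of~\cite{Bernstein}, which the theorem cites. The remaining inputs---stalkwise checking, the identification of $(f^{-1}\mathcal S)\vert_{F_y}$ as a constant sheaf, and the cohomology of $n$-connected spaces with constant coefficients---are routine.
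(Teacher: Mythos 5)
The first thing to note is that the paper does not prove this theorem at all: it is imported as an external result (criterion 1.9.4 of \cite{Bernstein}), so there is no internal proof to compare yours against, and your attempt has to stand on its own. Its skeleton is the natural one, and several steps are correct: base-change stability of local fiberedness (with fibers of the pulled-back map canonically homeomorphic to fibers of $f$) does reduce condition (2) of Definition~\ref{def n acyclic} to condition (1); the stalk formula $(R^if_*f^{-1}\mathcal{S})_y\cong\varinjlim_{V\ni y}H^i\bigl(f^{-1}(V),f^{-1}\mathcal{S}\bigr)$ is correct; and $(f^{-1}\mathcal{S})\vert_{F_y}$ is indeed the constant sheaf $\underline{\mathcal{S}_y}$.

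The genuine gap is the step you yourself flag as the ``main obstacle,'' and your proposed repair is circular. The identification $\varinjlim_{V\ni y}H^i\bigl(f^{-1}(V),f^{-1}\mathcal{S}\bigr)\cong H^i\bigl(F_y,\underline{\mathcal{S}_y}\bigr)$ is not a formal consequence of the local product structure: it is an assertion of proper-base-change type for a map that is not proper, and for noncompact fibers (the case this paper actually needs, e.g.\ contractible source fibers) it is precisely the hard content of the theorem. One must show that a class on $f^{-1}(V)$ vanishing on the fiber already vanishes on $f^{-1}(V')$ for some smaller $V'\ni y$, and this does not follow from covering $F_y$ by charts $U_x\cong F_x'\times V_x$ and invoking Mayer--Vietoris, because those charts have no common base neighborhood $V$ and the \v{C}ech patching in the fiber direction must be controlled uniformly as $V$ shrinks; even the globally trivial case $F\times V\to V$ requires a nontrivial interchange of the filtered colimit over $V$ with cohomology on the noncompact space $F\times V$. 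Your text resolves this by saying the assembling ``is precisely the content of criterion 1.9.4 of \cite{Bernstein}'' --- but criterion 1.9.4 \emph{is} the theorem being proved, so as written you are assuming the statement to prove it. Two smaller points: the Hurewicz/universal-coefficients step silently identifies singular with sheaf cohomology, which needs the fibers to be locally contractible and paracompact (harmless for manifold fibers, but it is exactly here that ``$n$-connected'' gets converted into the hypothesis actually used, namely vanishing of reduced cohomology in degrees $\le n$ for every coefficient group); and your degree-zero claim that the unit map is stalkwise the identity also depends on the unproved middle isomorphism. In short, the proposal reproduces the easy reductions and leaves the core of Bernstein--Lunts's argument unproved.
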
 
\begin{corollary}\label{restriction is zero}
Let $f:X\to Y$ be a locally fibered map and suppose that all fibers of $f$ are $n$-acyclic. Let $M$ be a family of abelian groups. Then if \begin{align*}
\alpha\in R^{n+1}f_*(f^{-1}\mathcal{O}(M))(Y)
\end{align*}
satisfies $\alpha\vert_{f^{-1}(y)}=0$  for all $y\in Y$ $($note that $\alpha\vert_{f^{-1}(y)}\in H^{n+1}(f^{-1}(y),f^{-1}M_y))\,,$ then $\alpha=0\,.$
\end{corollary}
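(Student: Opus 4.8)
The plan is to reduce the vanishing of the global section $\alpha$ to a stalkwise statement and then to identify each stalk with the cohomology of the corresponding fiber. Since a section of a sheaf is zero precisely when all of its stalks vanish, it suffices to show that for every $y\in Y$ the germ $\alpha_y\in\big(R^{n+1}f_*(f^{-1}\mathcal O(M))\big)_y$ is zero. To this end I would introduce the base-change (fibre-restriction) homomorphism
\[
\rho_y : \big(R^{n+1}f_*(f^{-1}\mathcal O(M))\big)_y \longrightarrow H^{n+1}\big(f^{-1}(y),\,(f^{-1}\mathcal O(M))\vert_{f^{-1}(y)}\big)
\]
associated to the Cartesian square obtained by pulling $f$ back along the inclusion $\{y\}\hookrightarrow Y$; by construction $\rho_y(\alpha_y)=\alpha\vert_{f^{-1}(y)}$. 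Thus the corollary follows once I prove that $\rho_y$ is injective, for then the hypothesis $\alpha\vert_{f^{-1}(y)}=0$ forces $\alpha_y=0$ for all $y$, hence $\alpha=0$.

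The injectivity of $\rho_y$ in top degree is the crux of the argument, and it is where the hypotheses enter. First, Theorem~\ref{n acyclic} shows that $f$ is $n$-acyclic in the sense of Definition~\ref{def n acyclic}; in particular, writing $\mathcal S:=\mathcal O(M)$ one has $R^0f_*(f^{-1}\mathcal S)\cong\mathcal S$ and $R^qf_*(f^{-1}\mathcal S)=0$ for $1\le q\le n$, and---crucially---these identities are stable under arbitrary base change, so they persist after restriction to any open $V\ni y$ and after pullback to the point $y$. I would then feed the vanishing of $R^1,\dots,R^n$ into the Leray spectral sequence of Theorem~\ref{spectral} for the restricted map $f^{-1}(V)\to V$,
\[
E_2^{p,q}=H^p\big(V,\,R^qf_*(f^{-1}\mathcal S)\big)\;\Longrightarrow\;H^{p+q}\big(f^{-1}(V),\,f^{-1}\mathcal S\big).
\]
Because the rows $1\le q\le n$ are identically zero, the corner term $E_2^{0,n+1}=H^0(V,R^{n+1}f_*(f^{-1}\mathcal S))$ receives no differentials (incoming ones would originate in negative columns), so it survives up to a subobject to $E_\infty$. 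Comparing this spectral sequence with its degenerate analogue over the point $y$---where the base is a point, the sequence collapses onto the vertical edge, and the abutment is exactly $H^{n+1}(f^{-1}(y),\cdot)$---and passing to the colimit over the neighbourhoods $V$ of $y$ yields the injectivity (in fact the bijectivity) of $\rho_y$. This top-degree comparison is precisely the statement which, while not isolated as a separate assertion, is established in the course of proving the criterion in~\cite{Bernstein}.

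The main obstacle is to justify this base-change comparison in top degree $n+1$ without any properness assumption on $f$. I expect the cleanest way to make it self-contained is to exploit that a locally fibered map is, over a sufficiently small contractible neighbourhood $V$ of $y$, a trivial bundle $f^{-1}(V)\cong f^{-1}(y)\times V$ over $V$; a K\"unneth decomposition then expresses $H^{n+1}(f^{-1}(V),f^{-1}\mathcal S)$ in terms of the groups $H^a(f^{-1}(y);\mathbb Z)$ and $H^b(V,\mathcal S)$, and since $\varinjlim_{V\ni y}H^b(V,\mathcal S)=0$ for $b\ge 1$, only the terms with $b=0$ survive in the colimit, identifying the stalk with $H^{n+1}(f^{-1}(y),\mathcal S_y)$ and $\rho_y$ with this identification. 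Either route---the direct Leray degeneration argument or the local-triviality computation---delivers the injectivity of $\rho_y$ and hence the corollary. I would present the Leray argument as the primary one, since it uses only the $n$-acyclicity packaged in Theorem~\ref{n acyclic} and does not require extracting a global trivialisation over $V$.
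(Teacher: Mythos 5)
Your opening reduction is fine: a global section of a sheaf vanishes if and only if all of its germs do, and the base-change map $\rho_y$ does satisfy $\rho_y(\alpha_y)=\alpha\vert_{f^{-1}(y)}$. But this means the entire corollary is equivalent to the injectivity of $\rho_y$, and neither of your two sketches proves it; in the generality in which you argue it is actually false. Your arguments use no property of $\mathcal{S}=\mathcal{O}(M)$ beyond its being a sheaf on $Y$, so, if correct, they would prove stalk-to-fiber injectivity for every sheaf $\mathcal{S}$. They cannot: in the Leray spectral sequence for $f^{-1}(V)\to V$, the \emph{other} corner term $E_2^{n+1,0}=H^{n+1}(V,\mathcal{S})$ receives and emits no differentials at all (rows $1,\dots,n$ vanish and everything else is out of range), so $H^{n+1}(V,\mathcal{S})=E_\infty^{n+1,0}$ injects into $H^{n+1}(f^{-1}(V),f^{-1}\mathcal{S})$ as the classes pulled back from the base; every such class restricts to zero on every fiber, because that restriction factors through $H^{n+1}(\{y\},\mathcal{S}_y)=0$. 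Hence injectivity of $\rho_y$ forces $\varinjlim_{V\ni y}H^{n+1}(V,\mathcal{S})=0$, which is precisely the unproven assertion your K\"unneth variant also rests on. That vanishing holds for (locally) constant coefficients on a locally contractible base, but not for general sheaves: take $Y=\mathbb{R}$, $y=0$, $n=0$, $f$ the projection $\mathbb{R}\times S^1\to\mathbb{R}$, and $\mathcal{S}=j_!\mathbb{Z}$ with $j\colon\bigcup_{k\ge1}\bigl(\tfrac{1}{k+1},\tfrac{1}{k}\bigr)\hookrightarrow\mathbb{R}$; applying the connecting homomorphism of $0\to j_!\mathbb{Z}\to\mathbb{Z}\to\mathbb{Z}_{\mathbb{R}\setminus U}\to 0$ (pulled back to $\mathbb{R}\times S^1$) to the locally constant function equal to $k$ on $\{1/k\}$ and $0$ on the unbounded components produces a class in $R^{1}f_*(f^{-1}\mathcal{S})(Y)$ with nonzero germ at $0$ whose restriction to every fiber vanishes. (Such $\mathcal{S}$ are not alien to the paper: they are sheaves of sections of \'{e}tale families of abelian groups with jumping fibers, of the same kind as $\mathbb{C}\times\mathbb{Z}\setminus\{(0,j):j\ne 0\}$ used in the construction of $\mathbb{C}^*_{\mathbb{C}}(*\{0\})$.) Two further inaccuracies: the corner $E_2^{0,n+1}$ does not survive to $E_\infty$, since it supports the differential $d_{n+2}$ into $E_{n+2}^{n+2,0}$, a subquotient of $H^{n+2}(V,\mathcal{S})$; and criterion 1.9.4 of \cite{Bernstein} controls $R^qf_*f^{-1}$ only for $q\le n$ --- no degree-$(n+1)$ base-change injectivity is established there, so the appeal to that reference does not close the gap.

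The paper's own proof is organized precisely to avoid any stalkwise base change for $\mathcal{O}(M)$, and this is where the structure of a family of abelian groups enters. It uses the canonical resolution of Proposition~\ref{canonical resolution}, whose zeroth term $\mathcal{G}^0(M)$ is the sheaf of \emph{all}, possibly discontinuous, sections of $M\to Y$ --- a flasque sheaf assembled from the fiber groups $M_y$ rather than from the stalks of $\mathcal{O}(M)$. Pulling this resolution back by the exact functor $f^{-1}$ and applying Theorem~\ref{n acyclic} to the resolving sheaves (which are pullbacks of sheaves on $Y$) kills $R^qf_*$ for $1\le q\le n$ on them, so that $R^{n+1}f_*(f^{-1}\mathcal{O}(M))\to R^{n+1}f_*(f^{-1}\mathcal{G}^0(M))$ is injective; the vanishing of $\alpha$ is then tested in the latter sheaf, which can be compared with fiberwise cohomology exactly because $\mathcal{G}^0(M)$ is built from the groups $M_y$ themselves. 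If you want to salvage your route, you must isolate and prove a property of $\mathcal{O}(M)$ that fails for sheaves like $j_!\mathbb{Z}$ --- for instance $\varinjlim_{V\ni y}H^{b}(V,\mathcal{O}(M))=0$ for $b\ge 1$ --- and then still supply an actual top-degree comparison of the stalk of $R^{n+1}f_*$ with fiber cohomology; the paper's resolution by $\mathcal{G}^\bullet(M)$ is the device that accomplishes both at once, and it has no counterpart in your proposal.
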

\begin{proof}
By Proposition~\ref{canonical resolution} we have the following resolution of $\mathcal{O}(M)\,:$
\begin{align*}
    0\to\mathcal{O}(M)\to \mathcal{G}^{0}(M)\to \mathcal{G}^1(M)\to\cdots\,.
\end{align*}
Since $f^{-1}$ is an exact functor, we obtain the following resolution of $f^{-1}\mathcal{O}(M):$
\begin{align}\label{resolution}
    0\to f^{-1}\mathcal{O}(M)\to f^{-1}\mathcal{G}^{0}(M)\to f^{-1}\mathcal{G}^1(M)\to\cdots\,.
\end{align}
Hence, \begin{align*}
    R^\bullet f_*(f^{-1}\mathcal{O}(M))=R^\bullet f_*(f^{-1}\mathcal{G}^{0}(M)\to f^{-1}\mathcal{G}^1(M)\to\cdots)\,.
\end{align*}
One can show that $\alpha\vert_{f^{-1}(y)}=0$ for all $y\in Y\iff \alpha\mapsto 0$ under the map induced by $f^{-1}\mathcal{O}(M)\to f^{-1}\mathcal{G}^{0}(M)\,,$
hence we obtain the result by using Theorem~\ref{n acyclic}.
\end{proof}
\begin{theorem}
Let $f:X\to Y$ be a locally fibered map such that all its fibers are $n$-acyclic. Let $M\to Y$ be a family of abelian groups. 
Then the following is an exact sequence for $0\le k\le n+1:$
\begin{align*}
& 0\to H^k(Y,\mathcal{O}(M))\overset{f^{-1}}{\to} H^k(X, f^{-1}\mathcal{O}(M))
\\& \to
\prod_{y\in Y} H^k(f^{-1}(y),\mathcal{O}((f^*M)\vert_{f^{-1}(y)}))\,.
\end{align*}
In particular, $H^k(Y,\mathcal{O}(M))\overset{}{\to} H^k(X, f^{-1}\mathcal{O}(M))$
is an isomorphism for $k=0\,,\ldots\,,n$ and is injective for $k=n+1\,.$
\end{theorem}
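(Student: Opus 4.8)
The plan is to run the Leray spectral sequence of $f$ against the single sheaf $f^{-1}\mathcal{O}(M)$ and read off its low-degree edge behaviour once the relevant higher direct images have been killed. First I would invoke Theorem~\ref{spectral} to obtain
\begin{align*}
E_2^{pq}=H^p\big(Y,R^qf_*(f^{-1}\mathcal{O}(M))\big)\Rightarrow H^{p+q}\big(X,f^{-1}\mathcal{O}(M)\big).
\end{align*}
Since $f$ is locally fibered with $n$-acyclic fibers, Theorem~\ref{n acyclic} shows $f$ is $n$-acyclic, so by part (1) of Definition~\ref{def n acyclic} the adjunction morphism gives $R^0f_*(f^{-1}\mathcal{O}(M))\cong\mathcal{O}(M)$ while $R^qf_*(f^{-1}\mathcal{O}(M))=0$ for $1\le q\le n$. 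Thus on the $E_2$-page the rows $1\le q\le n$ vanish, the bottom row is $E_2^{p0}=H^p(Y,\mathcal{O}(M))$, and the first row that can survive above it is $q=n+1$.

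Next I would extract the edge maps. Because the rows $1,\dots,n$ are zero, no nonzero differential can enter or leave $E_r^{k,0}$ for $k\le n$ (an incoming differential would originate in a vanishing row or in bidegree with negative $p$), so $E_\infty^{k,0}=E_2^{k,0}$ and the edge map $f^{-1}\colon H^k(Y,\mathcal{O}(M))\to H^k(X,f^{-1}\mathcal{O}(M))$ is an isomorphism for $0\le k\le n$. For $1\le k\le n$ the fibers being $n$-connected forces the fiber cohomology $H^k(f^{-1}(y),\mathcal{O}((f^*M)\vert_{f^{-1}(y)}))$ to vanish, so the claimed three-term sequence is exact for trivial reasons, while $k=0$ is exactly the adjunction isomorphism.

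The genuinely new content is degree $n+1$, where the vanishing band $1\le q\le n$ upgrades the usual five-term sequence to the edge sequence
\begin{align*}
0&\to H^{n+1}(Y,\mathcal{O}(M))\xrightarrow{f^{-1}} H^{n+1}(X,f^{-1}\mathcal{O}(M))\\
&\xrightarrow{\;e\;} H^0\big(Y,R^{n+1}f_*(f^{-1}\mathcal{O}(M))\big)\xrightarrow{\,d_{n+2}\,} H^{n+2}(Y,\mathcal{O}(M)),
\end{align*}
so in particular $\ker e=\operatorname{im} f^{-1}$. I would then identify $e$ with honest restriction to fibers: composing $e$ with the canonical map from global sections of $R^{n+1}f_*(f^{-1}\mathcal{O}(M))$ to its stalks, and using the base-change clause (2) of Definition~\ref{def n acyclic} to identify the stalk at $y$ with $H^{n+1}(f^{-1}(y),\mathcal{O}((f^*M)\vert_{f^{-1}(y)}))$, recovers the map
\begin{align*}
\mathrm{res}\colon H^{n+1}(X,f^{-1}\mathcal{O}(M))\to\prod_{y\in Y}H^{n+1}\big(f^{-1}(y),\mathcal{O}((f^*M)\vert_{f^{-1}(y)})\big).
\end{align*}
Corollary~\ref{restriction is zero} states precisely that the comparison map $H^0(Y,R^{n+1}f_*(f^{-1}\mathcal{O}(M)))\hookrightarrow\prod_y H^{n+1}(f^{-1}(y),\cdots)$ is injective (a global section vanishing on every fiber is zero). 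Writing $\mathrm{res}=\iota\circ e$ with $\iota$ injective gives $\ker(\mathrm{res})=\ker e=\operatorname{im} f^{-1}$, which is exactness in degree $n+1$. Together with the previous paragraph this yields the exact three-term sequence for all $0\le k\le n+1$, and the ``in particular'' statement (isomorphism for $k\le n$, injectivity for $k=n+1$) is immediate.

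The main obstacle I expect is the identification in the third paragraph: one must check that the spectral-sequence edge map $e$ genuinely agrees with geometric restriction to fibers, i.e. that the abstract comparison $(R^{n+1}f_*\,f^{-1}\mathcal{O}(M))_y\to H^{n+1}(f^{-1}(y),\cdots)$ is naturally the one to which Corollary~\ref{restriction is zero} applies, and that this identification is natural in $y$. This is where the base-change condition in the definition of $n$-acyclicity does the real work; every other step is bookkeeping with the spectral sequence.
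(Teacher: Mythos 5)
Your route is the same as the paper's: its entire proof consists of the two ingredients you use, namely the Leray spectral sequence (Theorem~\ref{spectral}) degenerated by the $n$-acyclicity of $f$ (Theorem~\ref{n acyclic}), plus Corollary~\ref{restriction is zero} supplying injectivity of the comparison from $H^0\big(Y,R^{n+1}f_*(f^{-1}\mathcal{O}(M))\big)$ into the product of fiber cohomologies; your degree-$(n+1)$ edge-sequence analysis is exactly the bookkeeping the paper leaves implicit.

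Two corrections, neither fatal. First, your claim that $n$-connectedness of the fibers forces $H^k\big(f^{-1}(y),\mathcal{O}((f^*M)\vert_{f^{-1}(y)})\big)=0$ for $1\le k\le n$ is false in general: the coefficient sheaf is the sheaf of sections of the trivial family $f^{-1}(y)\times M_y\,,$ and when the abelian Lie group $M_y$ has compact factors this cohomology can be nonzero at $k=n\,.$ For example, take $F=S^{n+1}$ (which is $n$-connected) and $M_y=S^1$; the exponential sequence $0\to\mathbb{Z}\to\mathcal{O}(\mathbb{R})\to\mathcal{O}(S^1)\to 0$ and softness of $\mathcal{O}(\mathbb{R})$ give $H^n(F,\mathcal{O}(S^1))\cong H^{n+1}(F,\mathbb{Z})=\mathbb{Z}\ne 0\,.$ The exactness you want in degrees $1\le k\le n$ does not require this vanishing: since $f^{-1}$ is surjective in those degrees, it suffices that the composite $H^k(Y,\mathcal{O}(M))\xrightarrow{f^{-1}} H^k(X,f^{-1}\mathcal{O}(M))\to H^k(f^{-1}(y),\cdot)$ vanish, and it does because $f\circ\iota_y$ is the constant map at $y$ (here $\iota_y:f^{-1}(y)\hookrightarrow X$ is the inclusion), so the composite factors through the cohomology of a point, which is zero for $k\ge 1\,.$ Second, clause (2) of Definition~\ref{def n acyclic} only controls $R^if_*$ for $i\le n\,,$ so it does not identify the stalk of $R^{n+1}f_*(f^{-1}\mathcal{O}(M))$ at $y$ with $H^{n+1}(f^{-1}(y),\cdot)\,;$ but no such identification is needed. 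All your argument requires is the natural map from germs at $y$ (the direct limit of $H^{n+1}(f^{-1}(V),\cdot)$ over open $V\ni y$) to $H^{n+1}(f^{-1}(y),\cdot)\,,$ and the injectivity on global sections of the resulting map is precisely the content of Corollary~\ref{restriction is zero}. With these repairs your argument is complete and coincides with the paper's.
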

\begin{proof}
This follows from the Leray spectral sequence (see section $0$) and Corollary~\ref{restriction is zero}.
\end{proof}
Similarly, we can generalize this result to simplicial manifolds:
\begin{theorem}\label{spectral theorem}
Let $f:X^\bullet\to Y^\bullet$ be a locally fibered morphism of simplicial topological spaces such that, in each degree, all of its fibers are $n$-acyclic. Let $M_\bullet\to Y^\bullet$ be a simplicial family of abelian groups. 
Then the following is an exact sequence for $0\le k\le n+1:$
\begin{align*}\label{Short exact}
& 0\xrightarrow{} H^k(Y^\bullet,\mathcal{O}(M_\bullet))\xrightarrow{f^{-1}}H^k(X^\bullet, f^{-1}\mathcal{O}(M_\bullet))
\\ &\xrightarrow{}
\prod_{y\in Y^0} H^k(f^{-1}(y),\mathcal{O}((f^*M_0)\vert_{f^{-1}(y)}))\,.
\end{align*}
In particular, $H^k(Y^\bullet,\mathcal{O}(M_\bullet))\overset{}{\to} H^k(X^\bullet, f^{-1}\mathcal{O}(M_\bullet))$
is an isomorphism for $k=0\,,\ldots\,,n$ and is injective for $k=n+1\,.$\footnote{See Section~\ref{derived functor} for more details.}
\end{theorem}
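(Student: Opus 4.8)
The plan is to mirror the non-simplicial theorem proved just above, replacing the ordinary Leray spectral sequence by its simplicial counterpart \Cref{spectral} and the single map $f:X\to Y$ by the morphism of simplicial spaces $f:X^\bullet\to Y^\bullet$. First I would form the spectral sequence of \Cref{spectral} for the sheaf $f^{-1}\mathcal{O}(M_\bullet)$, so that $E_2^{p,q}=H^p(Y^\bullet,R^qf_*(f^{-1}\mathcal{O}(M_\bullet)))$ converges to $H^{p+q}(X^\bullet,f^{-1}\mathcal{O}(M_\bullet))$.

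The key input is the computation of the derived pushforwards $R^qf_*(f^{-1}\mathcal{O}(M_\bullet))$. These are computed degreewise, $(R^qf_*(f^{-1}\mathcal{O}(M_\bullet)))_m=R^q(f_m)_*(f_m^{-1}\mathcal{O}(M_m))$, where $f_m:X^m\to Y^m$ is the degree-$m$ component. Since each $f_m$ is locally fibered with $n$-acyclic fibers, \Cref{n acyclic} shows $f_m$ is $n$-acyclic, so by \Cref{def n acyclic} the adjunction morphism gives $R^0(f_m)_*(f_m^{-1}\mathcal{O}(M_m))\cong\mathcal{O}(M_m)$ and $R^q(f_m)_*(f_m^{-1}\mathcal{O}(M_m))=0$ for $1\le q\le n$. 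Naturality of the adjunction makes these isomorphisms compatible with the simplicial structure maps, so as sheaves on $Y^\bullet$ one has $R^0f_*(f^{-1}\mathcal{O}(M_\bullet))\cong\mathcal{O}(M_\bullet)$ and $R^qf_*(f^{-1}\mathcal{O}(M_\bullet))=0$ for $1\le q\le n$.

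With this vanishing the spectral sequence collapses in the relevant range: the bottom row reads $E_2^{p,0}=H^p(Y^\bullet,\mathcal{O}(M_\bullet))$, the rows $1\le q\le n$ are zero, and the first higher nonzero row is $q=n+1$. A routine edge-map argument then yields that $f^{-1}:H^k(Y^\bullet,\mathcal{O}(M_\bullet))\to H^k(X^\bullet,f^{-1}\mathcal{O}(M_\bullet))$ is an isomorphism for $k\le n$, while in degree $n+1$ the only surviving associated graded pieces are $E_\infty^{n+1,0}=E_2^{n+1,0}$ and $E_\infty^{0,n+1}\subseteq E_2^{0,n+1}$, producing the exact sequence
\begin{equation*}
0\to H^{n+1}(Y^\bullet,\mathcal{O}(M_\bullet))\xrightarrow{f^{-1}}H^{n+1}(X^\bullet,f^{-1}\mathcal{O}(M_\bullet))\to H^0(Y^\bullet,R^{n+1}f_*(f^{-1}\mathcal{O}(M_\bullet)))\,.
\end{equation*}

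The final, and most delicate, step is to identify this edge target with the product $\prod_{y\in Y^0}H^{n+1}(f^{-1}(y),\ldots)$. I would factor the edge map as the restriction $r$ of \eqref{restriction} to degree $0$ followed by passage to the class in $R^{n+1}(f_0)_*(f_0^{-1}\mathcal{O}(M_0))$, and then evaluate a section of $R^{n+1}(f_0)_*$ over $Y^0$ on the fibers $f_0^{-1}(y)$ to land in $H^{n+1}(f_0^{-1}(y),\mathcal{O}((f^*M_0)|_{f_0^{-1}(y)}))$. The crucial point is that this section-to-fibers map is injective, which is exactly \Cref{restriction is zero} applied to $f_0:X^0\to Y^0$ (and rests on \Cref{canonical resolution}). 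Hence the composite fiber-restriction map has kernel equal to the kernel of the edge map, namely $\mathrm{im}(f^{-1})$; for $k\le n$ the product map is zero on the (surjective) image of $f^{-1}$, giving exactness in those degrees as well. The main obstacle is this last identification: one must check that the concrete ``restrict to degree $0$, then restrict to each fiber'' map agrees with the abstract spectral-sequence edge homomorphism into $H^0(Y^\bullet,R^{n+1}f_*(\cdots))$, where the degree-$0$ invariant-sections functor $\Gamma_{\mathrm{inv}}$ supplies the extra simplicial bookkeeping absent from the non-simplicial argument.
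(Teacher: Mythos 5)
Your proposal is correct and takes essentially the same route as the paper: the paper proves the non-simplicial version by combining the Leray spectral sequence with the $n$-acyclicity criterion of Theorem~\ref{n acyclic} and Corollary~\ref{restriction is zero}, and then asserts the simplicial case follows ``similarly,'' which is exactly the degreewise pushforward computation, collapse, and edge-map analysis you spell out. In particular, your final step --- embedding $H^0(Y^\bullet, R^{n+1}f_*(f^{-1}\mathcal{O}(M_\bullet)))$ into the product of fiber cohomologies via Corollary~\ref{restriction is zero} --- is precisely the intended use of that corollary in the paper's argument.
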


\subsection{Lie Groupoids}\label{BG functor}
In this section we briefly review some important concepts in the theory of Lie groupoids.
\begin{definition}
A groupoid is a category $G\rightrightarrows G^0$ for which the objects $G^0$ and morphisms $G$ are sets and for which every morphism is invertible. A Lie groupoid is a groupoid $G\rightrightarrows G^0$ such that $G^0\,, G$ are smooth manifolds\begin{footnote}{We allow for the possibility that the manifolds are not Hausdorff, but all structure maps should be locally fibered.}\end{footnote}, such that the source and target maps, denoted $s\,,t$ respectively, are submersions, and such that all structure maps are smooth, ie.
\begin{align*}
    & i:G^0\to G
    \\ & m:G\sideset{_s}{_{t}}{\mathop{\times}} G\to G
    \\& \text{inv}:G\to G
\end{align*}
are smooth (these maps are the identity, multiplication/composition and inversion, respectively). A morphism between Lie groupoids $G\to H$ is a smooth functor between them.
$\blacksquare$\end{definition}
\begin{definition}
Let $G\rightrightarrows G^0\,, K\rightrightarrows K^0$ be Lie groupoids. A Morita map $\phi:G\to K$ is a map such that
    \begin{enumerate}
        \item $\phi:G^0\to K^0$ is a surjective submersion
        \item The following diagram is Cartesian
        \[
\begin{tikzcd}
G \arrow{r}{(s,t)} \arrow{d}{\phi} & G^0{\mathop{\times}} G^0 \arrow{d}{(\phi\,,\phi)} \\
K \arrow{r}{(s,t)} & K^0{\mathop{\times}} K^0
\end{tikzcd}
\]
    \end{enumerate}
We say that $G\,,K$ are Morita equivalent groupoids if either:
\begin{enumerate}
    \item there is a Morita map between $G\to K$ or $K\to G$
    \item there is a third groupoid $H$ such that both $G\,,H$ and $H\,,K$ are Morita equivalent in the sense of 1.
\end{enumerate} 
Note that 1. is a special case of 2. In the case of 2. we say that $G\leftarrow H\rightarrow K$ is a Morita equivalence.
$\blacksquare$\end{definition}
\begin{definition}
There is a functor 
\[\mathbf{B}^\bullet:\text{groupoids}\to \text{simplicial spaces}\,,\,G\mapsto\mathbf{B}^\bullet G\,,
\]
where $\mathbf{B}^0 G=G^0\,,\,\mathbf{B}^1 G=G\,,$ and 
\[\mathbf{B}^n G=\underbrace{G\sideset{_t}{_{s}}{\mathop{\times}} G \sideset{_t}{_{s}}{\mathop{\times}} \cdots\sideset{_t}{_{s}}{\mathop{\times}} G}_{n \text{ times}}\,,
\]
the space of 
$n$-composable arrows. Here the face maps are the source and target maps for $n=1\,,$ and for $(g_0\,,\ldots\,,g_n)\in \mathbf{B}^{n+1}G\,,$
\begin{align*}
& d_{n+1,0}(g_0\,,\ldots\,,g_n)=(g_1\,,\ldots\,,g_n)\,, 
\\& d_{n+1,i}(g_0\,,\ldots\,,g_n)=(g_0\,,\ldots\,,g_{i-1}g_i\,,\hat{g}_i\,,\ldots\,,g_n)\,,\,\;1\le i\le n
\\& d_{n+1,n+1}(g_0\,,\ldots\,,g_n)=(g_0\,,\ldots\,,g_{n-1})\,.
\end{align*}
The degeneracy maps are $\text{Id}:G^0\to G$ for $n=0\,,$ and 
\begin{align*}
\\& \sigma_{n-1,i}(g_0\,,\ldots\,,g_{n-1})=(g_0\,,\ldots\,,g_{i-1}\,,\text{Id}(t(g_i))\,,\hat{g}_i\,,\ldots\,,g_{n-1})\,,\,\;0\le i\le n-1
\\& \sigma_{n-1,n}(g_0\,,\ldots\,,g_{n-1})=(g_0\,,\ldots\,,g_{i-1}\,,\hat{g}_i\,,\ldots\,,g_{n-1}\,,\text{Id}(s(g_{n-1})))\,.
\end{align*}
A morphism $f:G\to H$ gets sent to $\mathbf{B}^\bullet f:\mathbf{B}^\bullet G\to \mathbf{B}^\bullet H\,,$ which acts as $f$ does for $n=0\,,1\,,$ and
\begin{align*}
    \mathbf{B}^n f(g_0\,,\ldots\,,g_{n-1})=(f(g_0)\,,\ldots\,,f(g_{n-1}))
\end{align*}
for $n> 1\,.$
$\blacksquare$\end{definition}
\subsection{Cohomology of Sheaves on Stacks}\label{appendix:Cohomology of Sheaves on Stacks}
In this section we briefly review the Grothendieck topology and sheaves on a differentiable stack, as well as their cohomology. The following definitions are based on~\cite{Kai}.
\begin{definition}
We call a family of morphisms $\{P_i\to P\}_i$ in $[G^0/G]$ a covering family if the corresponding family of morphisms on the base manifolds $\{M_i\to M\}_i$ is a covering family for the site of smooth manifolds, ie. a family of \'{e}tale maps such that $\coprod_i M_i\to M$ is surjective. This defines a Grothendieck topology on $[G^0/G]\,,$ thus we can now speak of sheaves on $[G^0/G]\,,$ ie. contravariant functors $\mathcal{S}:[G^0/G]\to\mathbf{Ab}$ such that the following diagram is an equalizer for all covering families $\{P_i\to P\}_i:$
\begin{align*}
\mathcal{S}(P)\to\prod_i \mathcal{S}(P_i)\rightrightarrows \prod_{i,j} \mathcal{S}(P_i{\mathop{\times}}_P P_j)\,.
\end{align*}
A morphism between sheaves $\mathcal{S}$ and $\mathcal{F}$ is a natural transformation from $\mathcal{S}$ to $\mathcal{F}\,.$
$\blacksquare$\end{definition}
\begin{definition}\label{stack cohomology}
Let $\mathcal{S}$ be a sheaf on $[G^0/G]\,.$ Define the global sections functor $\Gamma:\textrm{Sh}([G^0/G])\to \mathbf{Ab}$ by
\begin{align*}
\Gamma([G^0/G],\mathcal{S}):=\text{Hom}_{\text{sh}([G^0/G])}(\mathbb{Z}\,,\,\mathcal{S})\,,
\end{align*}
where $\mathbb{Z}$ is the sheaf on $[G^0/G]$ which assigns to the object
\[
\begin{tikzcd}
P\arrow{r}{}\arrow{d} & G^0
\\M
\end{tikzcd}
\]
the abelian group $H^0(M,\mathbb{Z})\,.$
$\blacksquare$\end{definition}
\begin{definition}
The global sections functor $\Gamma:\textrm{Sh}([G^0/G])\to \mathbf{Ab}$ is left exact and the category of sheaves on $[G^0/G]$ has enough injectives, so we define $H^*([G^0/G],\mathcal{S}):=R^*\Gamma(\mathcal{S})\,.$
$\blacksquare$\end{definition}
\begin{theorem}[see~\cite{Kai}]\label{stack groupoid cohomology}
Let $\mathcal{S}$ be a sheaf on $[G^0/G]\,.$ 
Then 
\begin{align*}
    H^*([G^0/G],\mathcal{S})\cong H^*(\mathbf{B}^\bullet G,\mathcal{S}(\mathbf{B}^\bullet G))\,.
\end{align*}
\end{theorem}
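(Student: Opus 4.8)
The plan is to exhibit both sides as derived functors of a single composite and to apply the Grothendieck spectral sequence. Let $F\colon \mathrm{Sh}([G^0/G])\to\mathrm{Sh}(\mathbf{B}^\bullet G)$ denote the functor $\mathcal{S}\mapsto\mathcal{S}(\mathbf{B}^\bullet G)$ built in Section~\ref{stacks to lie groupoids}, and let $\Gamma_{\mathrm{inv}}$ be the functor of Definition~\ref{cohomology simplicial}, so that $H^*(\mathbf{B}^\bullet G,\mathcal{S}(\mathbf{B}^\bullet G))=R^*\Gamma_{\mathrm{inv}}(F(\mathcal{S}))$ by definition, while $H^*([G^0/G],\mathcal{S})=R^*\Gamma([G^0/G],\mathcal{S})$ as in Definition~\ref{stack cohomology}. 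The first step is to verify the factorization $\Gamma([G^0/G],-)\cong\Gamma_{\mathrm{inv}}\circ F$ on the level of functors. A global section, i.e. a natural transformation $\mathbb{Z}\to\mathcal{S}$, is determined by a section of $\mathcal{S}$ over the atlas object together with the requirement that it descend; since $\mathbf{B}^\bullet G$ is the \v{C}ech nerve of the cover $G^0\to[G^0/G]$, descent identifies this datum with $\ker[\Gamma(\mathcal{S}(\mathbf{B}^0 G))\to\Gamma(\mathcal{S}(\mathbf{B}^1 G))]=\Gamma_{\mathrm{inv}}(F(\mathcal{S}))$. This is a Yoneda/descent computation and should be routine.

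Second, I would check that $F$ is exact. In each simplicial degree the functor $\mathcal{S}\mapsto\mathcal{S}(\mathbf{B}^q G)$ is the restriction of $\mathcal{S}$ to the slice of the site over the canonical object $\mathbf{B}^{q+1}G\to\mathbf{B}^q G$, and restriction to a slice (inverse image along a localization) is exact. Hence $F$ is exact and $R^qF=0$ for $q>0$, so the Grothendieck spectral sequence for $\Gamma_{\mathrm{inv}}\circ F$ collapses to $R^p(\Gamma_{\mathrm{inv}}\circ F)(\mathcal{S})\cong R^p\Gamma_{\mathrm{inv}}(F(\mathcal{S}))$, which is exactly the isomorphism to be proved — provided $F$ sends injective sheaves to $\Gamma_{\mathrm{inv}}$-acyclic sheaves.

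This last hypothesis is the main obstacle, and I would dispatch it in two stages for an injective $\mathcal{I}$. First, because slicing the site at an object admits an exact left adjoint (extension $j_!$), the restriction $\mathcal{S}\mapsto\mathcal{S}(\mathbf{B}^q G)$ preserves injectivity; thus each $\mathcal{I}(\mathbf{B}^q G)$ is injective, hence flasque, hence acyclic on $\mathbf{B}^q G$. By Remark~\ref{acyclic resolution} this reduces $H^*(\mathbf{B}^\bullet G,\mathcal{I}(\mathbf{B}^\bullet G))$ to the naive cohomology of the cochain complex $\Gamma(\mathcal{I}(\mathbf{B}^0 G))\to\Gamma(\mathcal{I}(\mathbf{B}^1 G))\to\cdots$. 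Second, I would identify this complex with the \v{C}ech complex of $\mathcal{I}$ for the cover $G^0\to[G^0/G]$; since a flasque sheaf is acyclic for \v{C}ech cohomology of any cover, the augmented complex $\Gamma([G^0/G],\mathcal{I})\to\Gamma(\mathcal{I}(\mathbf{B}^0 G))\to\cdots$ is a resolution, so the naive cohomology vanishes in positive degrees and equals $\Gamma([G^0/G],\mathcal{I})$ in degree $0$. Combining the two stages gives $R^{>0}\Gamma_{\mathrm{inv}}(F(\mathcal{I}))=0$, completing the verification.

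I expect the genuine difficulty to lie entirely in this third step, and specifically in the site-theoretic bookkeeping: that the canonical objects $\mathbf{B}^{q+1}G\to\mathbf{B}^q G$ really do present the slices of the stack's site so that localization applies, and that the naive simplicial complex coincides with the \v{C}ech complex of the atlas (up to the usual $EG$-versus-$BG$ index shift). The level-wise acyclicity of injectives and the flasque \v{C}ech vanishing are the two places where the covering property of the atlas $G^0\to[G^0/G]$ is essential; everything else is formal manipulation of derived functors. It is worth noting that one cannot shortcut the argument using the Godement resolution on the stack, since those sheaves are not stack-acyclic (as remarked after the Godement construction), so the injective resolution really is needed here.
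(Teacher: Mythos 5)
The first thing to note is that the paper contains no proof of Theorem~\ref{stack groupoid cohomology}: it is quoted from Behrend--Xu~\cite{Kai}, so there is no internal argument to measure yours against. Judged on its own terms, your d\'evissage is essentially the standard proof, and it is in substance the proof in~\cite{Kai}, where the same ingredients are packaged as the \v{C}ech (descent) spectral sequence of the atlas: your factorization $\Gamma([G^0/G],-)\cong\Gamma_{\mathrm{inv}}\circ F$, the exactness of $F$, and the $\Gamma_{\mathrm{inv}}$-acyclicity of $F(\mathcal{I})$ for injective $\mathcal{I}$ are exactly what make that spectral sequence collapse. Your closing observation that the Godement sheaves on the stack cannot be used as a shortcut is also correct and consistent with the paper's own remark.

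Two pieces of bookkeeping in your third step need to be stated correctly, though both are standard and neither breaks the argument. First, $G^0\to[G^0/G]$ is \emph{not} a covering family in the Grothendieck topology the paper puts on $[G^0/G]$ (coverings there are families of bundle maps over a fixed object whose base maps are \'{e}tale and jointly surjective), so ``a flasque sheaf is acyclic for \v{C}ech cohomology of any cover'' is not the right tool --- indeed flasqueness is not a meaningful notion on this site. The correct formulation is that the sheaf represented by the atlas object $(\mathbf{B}^1G\to\mathbf{B}^0G)$ is an epimorphism onto the terminal sheaf (this is local triviality of principal bundles), its \v{C}ech nerve is the system of canonical objects over $\mathbf{B}^\bullet G$, and the complex of free abelian sheaves on that nerve, augmented by $\mathbb{Z}$, is exact; applying $\mathrm{Hom}(-,\mathcal{I})$ with $\mathcal{I}$ injective then gives the exactness of the augmented complex $\Gamma([G^0/G],\mathcal{I})\to\Gamma(\mathcal{I}(\mathbf{B}^0G))\to\Gamma(\mathcal{I}(\mathbf{B}^1G))\to\cdots$ that you want. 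Second, localization at the canonical object only shows that $\mathcal{I}$ restricted to the \emph{slice} site is injective, and that slice is a big site of $\mathbf{B}^qG$ (all manifolds mapping to it), not its small site of open sets; to conclude that $\mathcal{I}(\mathbf{B}^qG)$ is acyclic on the manifold $\mathbf{B}^qG$ --- which is what Remark~\ref{double complex} requires --- you additionally need the comparison between big-site and small-site cohomology (equivalently, that restriction from the slice to the opens preserves injectives, via its exact left adjoint). With these two substitutions your proof is complete.
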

 \subsection{Abelian Extensions}\label{abelian extensions}
Here we review abelian extensions and central extensions of Lie groupoids and Lie Algebroids. 
\begin{definition}Let $M$ be a $G$-module for a Lie groupoid $G\rightrightarrows G^0$. A Lie groupoid extension of $G$ by $M$ is given by a Lie groupoid 
$E\rightrightarrows G^0$ and a sequence of morphisms
\begin{align*}
   1\to M\overset{\iota}{\to} E\overset{\pi}{\to} G\to 1\,,
\end{align*}
such that $\iota\,,\,\pi$ are the identity on $G^0\,;$ such that $\iota$ is an embedding and $\pi$ is a surjective submersion; such that if $m\in M\,,\,e\in E$ satisfy $s(m)=s(e)\,,$ then $e\iota(m)=\iota(\pi(e)\cdot m)e\,;$ in addition, we require that $E\to G$ be principal $M$-bundle with respect to the right action. If $M$ is a trivial $G$-module then $E$ will be called a central extension. If $A$ is an abelian Lie group then associated to it is a canonical trivial $G$-module given by $A_{G^0}\,,$ and by an $A$-central extension of $G$ we will mean an
extension of $G$ by the trivial $G$-module $A_{G^0}\,.$ Furthermore, there is a natural action of $M$ on $E\,,$ and we assume that with this action $E$ is a principal $M$-bundle.
$\blacksquare$\end{definition}
\begin{definition}Let $\mathfrak{m}$ be a $\mathfrak{g}$-representation for a Lie algebroid $\mathfrak{g}\to N$. A Lie algebroid
extension of $\mathfrak{g}$ by $\mathfrak{m}$ is given by a Lie algebroid 
$\mathfrak{e}\to N$ and an exact sequence of the form
\begin{align*}
    0\to \mathfrak{m}\overset{\iota}{\to}\mathfrak{e}\overset{\pi}{\to}\mathfrak{g}\to 0\,,
\end{align*}
such that $\iota\,,\,\pi$ are the identity on $N\,,$ and such that if $X\,,Y$ are local sections over an open set $U\subset N$ of
$\mathfrak{m}\,,\mathfrak{e}\,,$ respectively, 
then $\iota(L_{\pi(Y)}X)=[Y,\iota(X)]\,.$ If $\mathfrak{m}$ is a trivial $\mathfrak{g}$-module then $\mathfrak{e}$ will be called a central extension. Similarly to the previous definition, if $V$ is a finite dimensional vector space then associated to it is a canonical trivial $\mathfrak{g}$-module given by $N{\mathop{\times}} V\,,$ and by a $V$-central extension of $\mathfrak{g}$ we will mean an
extension of $\mathfrak{g}$ by the trivial $\mathfrak{g}$-module $N{\mathop{\times}} V\,.$
$\blacksquare$\end{definition}

\begin{proposition}[see \cite{Kai} and \cite{luk}]
With the above definitions, $H^1_0(G,M)$ classifies extensions of $G$ by $M\,,$ and $H^1_0(\mathfrak{g},M)$ classifies extensions of $\mathfrak{g}$ by $\mathfrak{m}\,.$
\end{proposition}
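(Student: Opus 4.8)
The plan is to establish, in each of the two cases, a bijection between isomorphism classes of extensions and cohomology classes by the classical procedure of passing from an extension to a gluing cocycle via local sections (respectively local splittings) and back by regluing. I would treat the groupoid case first and then obtain the Lie algebroid case as its infinitesimal shadow, that is, by the same argument with $d_{\text{CE}}$ in place of the simplicial differential.

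For the groupoid case, recall that $H^1_0(G,M)=H^2(\mathbf{B}^\bullet G,\mathcal{O}(M)^0_\bullet)$, which by Remark~\ref{double complex} I would compute from the double complex whose entries are $\check{C}^p(\mathcal{U}_n,\mathcal{O}(M)^0_n)$, with $\mathcal{U}_n$ a cover of $\mathbf{B}^n G$ refining the pullback of a fixed cover $\{U_i\}$ of $G^0$, equipped with the Čech differential and the simplicial differential $\delta^*$. Since $\mathcal{O}(M)^0_0$ is the trivial sheaf on $G^0=\mathbf{B}^0 G$, a total-degree-$2$ cocycle has only two nonzero components: a Čech $1$-cochain $g_{ij}\in\mathcal{O}(M)_1$ on $\mathbf{B}^1G=G$ and a Čech $0$-cochain $h_{ijk}\in\mathcal{O}(M)_2$ on $\mathbf{B}^2 G$, and the closedness condition is exactly the mixed Čech–simplicial identity appearing in the proof of Theorem~\ref{groupoid extension}. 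Given an extension $1\to M\xrightarrow{\iota} E\xrightarrow{\pi} G\to 1$, since $\pi$ is a surjective submersion I would choose local sections $\sigma_i$ over the pieces of $G$ lying over $U_i$ that restrict to the identity bisection over $G^0$ — this normalization is precisely what forces the $n=0$ component to vanish and lands the class in the \emph{truncated} group — and set $g_{ij}=\sigma_i^{-1}\sigma_j$ and $h_{ijk}=\sigma_k^{-1}\sigma_i\sigma_j$, the values lying in $M$ because $\pi$ kills them. The groupoid axioms for $E$ together with $\pi$ being a morphism give the cocycle identity, and a change of sections $\sigma_i'=\iota(m_i)\sigma_i$ changes $(g,h)$ by the corresponding coboundary, so the assignment is well defined on isomorphism classes.

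Conversely, from a normalized cocycle $(g_{ij},h_{ijk})$ I would reconstruct $E$ as the principal $M$-bundle over $G$ glued from $\pi^{-1}(\mathcal{U}_1\text{-pieces})\times M$ using the $g_{ij}$ as transition functions, and define the partial multiplication on $E$ over composable arrows using $h_{ijk}$ as the multiplicativity twist, the compatibility $e\,\iota(m)=\iota(\pi(e)\cdot m)\,e$ being built in from the $G$-module structure. The cocycle conditions are exactly associativity of this multiplication and well-definedness of the gluing, so $E$ is a Lie groupoid extension, and one then checks that the two constructions are mutually inverse up to isomorphism. The Lie algebroid case is the linearized analogue: here $H^1_0(\mathfrak{g},M)=\mathbb{H}^2(0\to\mathcal{C}^1(\mathfrak{g},M)\to\mathcal{C}^2(\mathfrak{g},M)\to\cdots)$, a class is represented by local curvatures $\omega_i\in\mathcal{C}^2$ and gluing $1$-forms $\alpha_{ij}\in\mathcal{C}^1$ with $d_{\text{CE}}\omega_i=0$, $\omega_j-\omega_i=d_{\text{CE}}\alpha_{ij}$ and $\delta\alpha=0$ (the vanishing of $\mathcal{C}^0$ in the truncated complex makes the gluing \emph{strict}, reflecting that $\mathfrak{m}$-valued transitions add rather than multiply), and the extension $0\to\mathfrak{m}\to\mathfrak{e}\to\mathfrak{g}\to 0$ is assembled from local splittings $\sigma_i:\mathfrak{g}|_{U_i}\to\mathfrak{e}|_{U_i}$ exactly as in the classical Chevalley–Eilenberg classification.

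I expect the main obstacle to be the reconstruction direction in the groupoid case: verifying that the glued object $E$ is genuinely a \emph{Lie} groupoid — that the smooth structure descends, that multiplication and inversion are smooth, and that the (possibly non-Hausdorff) total space is an honest principal $M$-bundle over $G$ — rather than merely a set-theoretic extension. The cocycle identities translate into these checks, but keeping track of the three distinct covers (on $G^0$, on $G$, and on $\mathbf{B}^2 G$) and of the normalization that pins the class to the truncated group is the delicate bookkeeping; everything else is the standard descent-and-regluing argument, which is why the result is attributed to \cite{Kai} and \cite{luk}.
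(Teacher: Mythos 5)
Your proposal is correct and takes essentially the approach the paper itself relies on: the paper states this proposition without proof, deferring to \cite{Kai} and \cite{luk}, and the one related argument it does contain --- the extension-to-cocycle direction in the proof of Theorem~\ref{groupoid extension}, with sections $\sigma_i$ normalized along the identity bisection and $g_{ij}=\sigma_i^{-1}\sigma_j\,,$ $h_{ijk}=\sigma_k^{-1}\sigma_i\sigma_j$ --- coincides with your forward direction, while your regluing construction and the Chevalley--Eilenberg half match the arguments of the cited references. You have also correctly isolated the only two genuinely technical points (refining the covers on each $\mathbf{B}^nG$ compatibly with the face maps so that the \v{C}ech double complex of Remark~\ref{double complex} computes the derived-functor cohomology of Definition~\ref{cohomology simplicial}, and verifying that the reglued $E$ is a smooth principal $M$-bundle over $G$ with smooth multiplication), so no gap needs to be flagged.
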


\end{document}